\newcommand{\+}{\protect\nobreakdash-}
\renewcommand{\:}{\colon}
\newcommand{\rarrow}{\longrightarrow}
\newcommand{\larrow}{\longleftarrow}
\newcommand{\birarrow}{\rightrightarrows}
\newcommand{\ot}{\otimes}
\newcommand{\ocn}{\odot}
\newcommand{\tim}{\rightthreetimes}
\newcommand{\bu}{{\text{\smaller\smaller$\scriptstyle\bullet$}}}
\newcommand{\lrarrow}{\mskip.5\thinmuskip\relbar\joinrel\relbar\joinrel
 \rightarrow\mskip.5\thinmuskip\relax}
\DeclareMathOperator{\Hom}{Hom}
\DeclareMathOperator{\Ext}{Ext}
\DeclareMathOperator{\Tor}{Tor}
\DeclareMathOperator{\id}{id}
\DeclareMathOperator{\im}{im}
\DeclareMathOperator{\coker}{coker}
\DeclareMathOperator{\pd}{pd}
\newcommand{\bB}{\mathbb B}
\newcommand{\bF}{\mathbb F}
\newcommand{\bG}{\mathbb G}
\newcommand{\bH}{\mathbb H}
\newcommand{\bI}{\mathbb I}
\newcommand{\bJ}{\mathbb J}
\newcommand{\bM}{\mathbb M}
\newcommand{\bP}{\mathbb P}
\newcommand{\bfB}{\boldsymbol{\mathfrak B}}
\newcommand{\bfF}{{\boldsymbol{\mathfrak F}}}
\newcommand{\bfG}{{\boldsymbol{\mathfrak G}}}
\newcommand{\bfH}{{\boldsymbol{\mathfrak H}}}
\newcommand{\A}{\mathfrak A}
\newcommand{\C}{\mathfrak C}
\newcommand{\D}{\mathfrak D}
\newcommand{\E}{\mathfrak E}
\newcommand{\I}{\mathfrak I}
\newcommand{\J}{\mathfrak J}
\newcommand{\M}{\mathfrak M}
\newcommand{\R}{\mathfrak R}
\newcommand{\sS}{\mathfrak S}
\newcommand{\U}{\mathfrak U}
\newcommand{\N}{\mathcal N}
\newcommand{\sA}{\mathsf A}
\newcommand{\sC}{\mathsf C}
\newcommand{\sD}{\mathsf D}
\newcommand{\sF}{\mathsf F}
\newcommand{\sP}{\mathsf P}
\newcommand{\sQ}{\mathsf Q}
\newcommand{\sT}{\mathsf T}
\newcommand{\boZ}{\mathbb Z}
\newcommand{\boQ}{\mathbb Q}
\newcommand{\modl}{{\operatorname{\mathsf{--mod}}}}
\newcommand{\modr}{{\operatorname{\mathsf{mod--}}}}
\newcommand{\bimod}{{\operatorname{\mathsf{--mod--}}}}
\newcommand{\contra}{{\operatorname{\mathsf{--contra}}}}
\newcommand{\discr}{{\operatorname{\mathsf{discr--}}}}
\newcommand{\ctra}{{\operatorname{\mathsf{-ctra}}}}
\newcommand{\Sets}{\mathsf{Sets}}
\newcommand{\Ab}{\mathsf{Ab}}
\newcommand{\Add}{\mathsf{Add}}
\newcommand{\Com}{\mathsf{Com}}
\newcommand{\Dh}{\mathrm{DH}}
\newcommand{\CT}{\mathrm{CT}}
\newcommand{\IL}{\mathrm{IL}}
\newcommand{\PL}{\mathrm{PL}}
\newcommand{\il}{\mathrm{il}}
\newcommand{\pl}{\mathrm{pl}}
\newcommand{\tp}{\mathrm{tp}}
\newcommand{\mh}{\mathrm{mh}}
\newcommand{\rop}{{\mathrm{op}}}
\newcommand{\sop}{{\mathsf{op}}}
\newcommand{\Section}[1]{\bigskip\section{#1}\medskip}
\theoremstyle{plain}
\newtheorem{thm}{Theorem}[section]
\newtheorem{lem}[thm]{Lemma}
\newtheorem{prop}[thm]{Proposition}
\newtheorem{cor}[thm]{Corollary}
\theoremstyle{definition}
\newtheorem{exs}[thm]{Examples}
\newtheorem{rem}[thm]{Remark}
\begin{document}

\title{Flat ring epimorphisms of countable type}
\author{Leonid Positselski}

\address{Institute of Mathematics, Czech Academy of Sciences,
\v Zitn\'a~25, 115~67 Prague~1, Czech Republic; and
\newline\indent Laboratory of Algebraic Geometry, National Research
University Higher School of Economics, Moscow 119048; and
\newline\indent Sector of Algebra and Number Theory, Institute for
Information Transmission Problems, Moscow 127051, Russia; and
\newline\indent Department of Mathematics, Faculty of Natural Sciences,
University of Haifa, Mount Carmel, Haifa 31905, Israel}

\email{positselski@yandex.ru}

\begin{abstract}
 Let $R\rarrow U$ be an associative ring epimorphism such that $U$ is
a flat left $R$\+module.
 Assume that the related Gabriel topology $\bG$ of right ideals in $R$
has a countable base.
 Then we show that the left $R$\+module $U$ has projective dimension
at most~$1$.
 Furthermore, the abelian category of left contramodules over
the completion of $R$ at $\bG$ fully faithfully embeds into the
Geigle--Lenzing right perpendicular subcategory to $U$ in the category
of left $R$\+modules, and every object of the latter abelian category is
an extension of two objects of the former one.
 We discuss conditions under which the two abelian categories are
equivalent.
 Given a right linear topology on an associative ring $R$, we consider
the induced topology on every left $R$\+module, and for a perfect
Gabriel topology $\bG$ compare the completion of a module with
an appropriate Ext module.
 Finally, we characterize the $U$\+strongly flat left $R$\+modules by
the two conditions of left positive-degree Ext-orthogonality to all left
$U$\+modules and all $\bG$\+separated $\bG$\+complete left $R$\+modules.
\end{abstract}

\maketitle

\tableofcontents

\section{Introduction}
\bigskip

\setcounter{subsection}{-1}
\subsection{{}}
 Ring epimorphisms and Gabriel topologies are a popular subject of
contemporary research in associative and commutative ring theory,
where nontrivial work is being done~\cite{AS,Hrb,AH,MS,AMSTV}.
 To be more precise, one has to say that these are two overlapping,
but different subjects: \emph{perfect} right Gabriel topologies
correspond bijectively to left \emph{flat} ring epimorphisms.
 In this paper, we are mostly dealing with this overlap, imposing
additional conditions as the need arises.

 The most important of such additional conditions is the one of
\emph{countable type}.
 A left flat ring epimorphism $u\:R\rarrow U$ is said to be of
countable type if the related Gabriel topology $\bG$ of right ideals
in $R$ has a countable base.
 In Section~\ref{perfect-of-type-lambda-secn} we discuss how ubiquitous
(perfect) Gabriel topologies of countable type are among (perfect)
Gabriel topologies in general; in Sections~\ref{fully-faithful-secn}
and~\ref{projdim-1-secn}\+-\ref{when-Delta=Lambda-secn}
we show what one can do with (mostly perfect) Gabriel topologies of
countable type; and in the final
Sections~\ref{when-Delta=Lambda-secn}\+-\ref{faithful-perfect-secn} we
combine these results, proving theorems about completions and
contramodules related to some perfect Gabriel topologies of
uncountable type.

 Another condition on which many of our results depend is that, even
when a right Gabriel topology on an associative ring $R$ is not perfect,
we often need it to have a base consisting of finitely generated right
ideals.
 For perfect Gabriel topologies, this holds automatically.

 Concerning ubiquitousness of topologies of countable type, the main
obstacle to that appears to be unrelated to the topologies being perfect
or even Gabriel.
 The right linear topology axiom~(T3) from the book~\cite{St},
characterizing filters of right ideals defining a topological ring
structure (with continuous multiplication) on an associative ring,
allows to produce an open ideal from every pair (open ideal,
element of the ring).
 When the ring is commutative, this axiom is trivial; but for rings
that are noncommutative enough and uncountable enough, it potentially
may prevent existence of countably based right linear topologies.

 In Section~\ref{perfect-of-type-lambda-secn} of this paper, we impose
an additional axiom~(T$_\lambda$) to control this problem.
 Given an associative ring $R$, an infinite cardinal~$\lambda$, and
a right Gabriel topology $\bG$ on $R$ satisfying~(T$_\lambda$) and having
a base consisting of finitely generated right ideals, we show that $\bG$
is the union of Gabriel topologies $\bH\subset\bG$ having bases of
the cardinality not exceeding~$\lambda$ consisting of finitely generated
right ideals.
 When $\bG$ is a perfect Gabriel topology, we assume additionally that
the $\bG$\+torsion in $R$ is $\lambda$\+bounded (e.~g., $\bG$ is
faithful) and then show that $\bG$ is the union of perfect Gabriel
topologies $\bP\subset\bG$ with bases of the cardinality not
exceeding~$\lambda$.

\subsection{{}}
 The main heroes of this paper are, of course, the \emph{contramodules}.
 In the previous papers~\cite{BP0,PSl0,PSl} of the present author with
collaborators, we applied contramodule techniques in order to describe
flat modules over certain commutative rings as direct summands of
transfinitely iterated extensions of flat modules of special type
(namely, the localizations of the ring with respect to various
multiplicative subsets).
 In these approaches, going back to the work of Trlifaj~\cite{Trl0,Trl}
and Bazzoni--Salce~\cite{BS,BS2}, one starts from defining certain
subclasses of flat modules, and then proceeds to show that, under
certain assumptions, the whole class of flat modules coincides with
such a subclass.

 In the present paper, we make the first steps towards the important
goal of extending the results of these papers, and particularly
of~\cite{PSl}, to noncommutative rings.
 In Section~\ref{strongly-flat-weakly-cotorsion-secn}, we define
the class of \emph{$U$\+strongly flat} left $R$\+modules for a left
flat ring epimorphism $u\:R\rarrow U$.
 Assuming that the related right Gabriel topology $\bG$ on $R$ has
a countable base, we characterize $U$\+strongly flat left $R$\+modules
by a pair of conditions which, in the additional assumption that $\bG$
has a base consisting of two-sided ideals, reduces to a direct
generalization of what was suggested, for commutative case,
in~\cite[Optimistic Conjecture~1.1]{PSl}.
 More precisely, a flat left $R$\+module $F$ is $U$\+strongly flat if
and only if the left $U$\+module $U\ot_RF$ is projective and, for
every two-sided ideal $H\subset R$ belonging to $\bG$, the left
$R/H$\+module $F/HF$ is projective.

\subsection{{}}
 The most important results of this paper are proved in
Section~\ref{projdim-1-secn}.
 First of all, it is the theorem that, for any left flat ring
epimorphism $u\:R\rarrow U$ of countable type, the flat left $R$\+module
$U$ has projective dimension at most~$1$.
 The proof uses contramodules over the completion $\R$ of
the topological ring $R$ with respect to its perfect right Gabriel
topology~$\bG$.
 Here $\R$ is viewed as a complete, separated topological ring in
the completion (``projective limit'') topology.

 Here it should be noted that such a result is easily provable for
rings $R$ in which every left ideal has a countable set of generators.
 Indeed, in these assumptions the left $R$\+module $U$ is countably
presented (as one can see from its explicit construction as
the ring/bimodule of quotients $U=R_\bG$), and a countably presented
flat module over an associative ring always has projective dimension
at most~$1$ (see~\cite[Corollary~2.23]{GT}).
 Our contramodule-based approach, while much more technical, allows
to obtain an extra generality.

 Furthermore, in Section~\ref{fully-faithful-secn} we prove that,
for any right Gabriel topology $\bG$ on an associative ring $R$ having
a countable base of finitely generated right ideals, the forgetful
functor from the abelian category of left $\R$\+contramodules
$\R\contra$ to the abelian category of left $R$\+modules
$R\modl$ is fully faithful.
 In Section~\ref{projdim-1-secn}, we compare the category of
left $\R$\+contramodules for the completion $\R$ of an associative
ring $R$ with respect to its perfect Gabriel topology $\bG$ of
countable type with the Geigle--Lenzing abelian perpendicular
subcategory $U^{\perp_{0,1}}\subset R\modl$.
 We show that the Geigle--Lenzing perpendicular subcategory (which
we also denote by $R\modl_{u\ctra}$ and call ``the full subcategory of
$u$\+contramodule left $R$\+modules'') is, generally speaking,
a wider full subcategory in $R\modl$ than $\R\contra$: a left
$R$\+module belongs to $R\modl_{u\ctra}\subset R\modl$ if and only if it
is an extension of two left $R$\+modules belonging to
$\R\contra\subset R\modl$ (i.~e., admitting left $\R$\+contramodule
structures).

 When $u\:R\rarrow U$ is an \emph{injective} left flat ring epimorphism
of countable type (or, in other words, the corresponding perfect right
Gabriel topology $\bG$ on $R$ is \emph{faithful}), we show
in Section~\ref{projdim-1-secn} that the two full subcategories
$R\modl_{u\ctra}$ and $\R\contra\subset R\modl$ actually coincide.
 In Section~\ref{faithful-perfect-secn}, we extend this result to left
flat ring epimorphisms of uncountable type, showing that, under certain
assumption, there is an equivalence of abelian categories
$\R\contra\simeq R\modl_{u\ctra}$ for an injective ring epimorphism
$u\:R\rarrow U$ such that $U$ is a flat left $R$\+module of
projective dimension at most~$1$.
 The additional assumption here is that of the condition~(T$_\omega$),
which is needed in order to use the results of
Section~\ref{perfect-of-type-lambda-secn}; in particular, it is
satisfied automatically for right linear topology with a base of
two-sided ideals.

 The similar questions for multiplicative subsets and finitely generated
ideals in commutative rings (or central multiplicative
subsets/centrally finitely generated ideals in associative rings)
were discussed at length in the examples in~\cite[Sections~2, 3,
and~5]{Pper}.
 In this paper, we extend this discussion to the realm of left flat
ring epimorphisms and perfect Gabriel topologies.

 In Section~\ref{when-Delta=Lambda-secn} we offer a treatment, in
the generality of left flat ring epimorphisms and the related
right Gabriel topologies, of a classical topic, going back
to Nunke~\cite{Nun} and Matlis~\cite{Mat}.
 This concerns connections between the completion of an $R$\+module
$M$ in what was classically called the ``$R$\+topology'' and
the Ext module $\Ext^1_R(K,M)$, where $K$ was classically defined as
the quotient module $Q/R$ of the field of fractions $Q$ of the ring $R$
by its subring $R\subset Q$.
 A natural morphism from the Ext module to the completion was
constructed and a sufficient condition for it to be an isomorphism
was established in Matlis' memoir~\cite[Proposition~2.4 and
Theorem~6.10]{Mat}.

 An original idea of how to produce, under certain assumption,
a map in the opposite direction (i.~e., from the completion to
the Ext module) was suggested in a recent preprint by Facchini
and Nazemian~\cite[Sections~3\+-4]{FN}.
 We seize on their idea and use it, in combination with Matlis'
classical approach, in order to obtain an isomorphism of left
$R$\+modules between the completion $\Lambda_\bG(M)$ of
a $u$\+torsion-free left $R$\+module $M$ with respect to a perfect
right Gabriel topology $\bG$ on $R$, and an Ext (or rather,
derived category Hom) module $\Ext^1_R(K^\bu_{R,U},M)$.
 Here $u\:R\rarrow U$ is the left flat ring epimorphism related
to~$\bG$ and $K^\bu_{R,U}$ is the two-term complex $R\rarrow U$.

 Some of the technical assumptions on the Gabriel topology $\bG$
mentioned above in this introduction are required for our
proof of this result (which we first prove for perfect Gabriel
topologies with a countable base, and then extend to
the uncountable case using the results of
Section~\ref{perfect-of-type-lambda-secn}).

\subsection{{}}
 Section~\ref{preliminaries-secn} contains preliminary material on
linear topologies, Gabriel topologies, completions, discrete/torsion
modules, rings of quotients, and contramodules.
 The reader may wish to consult~\cite[Section~1]{BP} for further
preliminaries on contramodules with further references, and
the overview~\cite{Prev} for a more leisurely introduction.

 In Sections~\ref{F-systems-secn}\+-\ref{separated-contramodules-secn},
we spell out and develop the technique of \emph{$\bF$\+systems},
which first appeared, in a slightly disguised form, in
the paper~\cite[Section~6]{PR}.
 This is, in fact, an important technique for working with contramodules
over noncommutative topological rings, without which even very simple
constructions, such that the completion of a left module with respect to
a topology of right ideals in a ring, cannot be confidently performed.
 In fact, we discuss such a completion construction in
Section~\ref{strongly-flat-weakly-cotorsion-secn}, and its (more
complicated) contramodule version in
Section~\ref{separated-contramodules-secn}.
 The observation that separated contramodules, particularly over
topological rings with countably based right linear topologies, can be
described in terms of covariant $\bF$\+systems was instrumental
in~\cite[Section~6]{PR}, and is also important in this paper.

 In Section~\ref{fully-faithful-secn}, we discuss the important
question when the forgetful functor $\R\contra\rarrow R\modl$ is
fully faithful, building upon the argument that first appeared
in~\cite[Theorem~1.1]{Psm} and was subsequently developed
in~\cite[Section~3]{Pper}.
 In this paper, we improve upon these results of
the papers~\cite{Psm,Pper}, providing an equivalent
\emph{characterization} of countably based right linear topologies
$\bF$ on associative rings $R$ for which such full-and-faithfulness
property holds.
 We also show that it always holds for Gabriel topologies $\bG$ with
a countable base of finitely generated right ideals.
 It is worth mentioning that the contramodule Nakayama lemma was
a key technical tool for proving the full-and-faithfulness
theorem in~\cite{Psm} and remains so in the present paper.

 In Section~\ref{extending-subcategories-secn}, we discuss the following
question, which sheds some light on the theorem that every
$u$\+contramodule $R$\+module is an extension of
two $\R$\+contramodules.
 Given an abelian category $\sA$ and a full subcategory $\sC\subset\sA$
closed under kernels, cokernels, and finite direct sums, it is clear
that $\sC$ is an abelian category with an exact embedding functor
$\sC\rarrow\sA$.
 Consider the full subcategory $\sC_\sA^{(2)}\subset\sA$ of all objects
in $\sA$ that can be presented as extensions of two objects from~$\sA$.
 Is it true that $\sC_\sA^{(2)}$ is necessarily also an abelian category
with an exact embedding functor $\sC_\sA^{(2)}\rarrow\sA$\,?
 We show that the answer is ``yes''.

\subsection*{Acknowledgement}
 The conversations and communications with Jan \v St\!'ov\'\i\v cek and
Silvana Bazzoni were very helpful to me when working on this project.
 In particular, the conversations and communications with Silvana
provided a substantial part of my original motivation for learning
the classical material on Gabriel topologies and the rings of quotients.
 The influence of my later communications with her is felt particularly
strongly in Section~\ref{faithful-perfect-secn}, and
most of all in Remark~\ref{two-topological-rings-remark}
(and my work on Sections~\ref{perfect-of-type-lambda-secn} and
and~\ref{when-Delta=Lambda-secn} was largely motivated by the purposes
of Section~\ref{faithful-perfect-secn}).
 The influence of my conversations and communications with
Jan \v St\!'ov\'\i\v cek is felt in
Section~\ref{perfect-of-type-lambda-secn},
and particularly in Lemma~\ref{perfect-topologies-and-direct-limits},
Remark~\ref{left-countably-generated-remark}, and
Corollary~\ref{direct-limit-of-projdim1-cor}.
 I~also wish to thank Jan Trlifaj, Michal Hrbek, and Alexander Sl\'avik
for illuminating discussions.
 I~would like to thank the anonymous referee for reading the manuscript
carefully and suggesting many improvements.
 The author's research is supported by research plan RVO:~67985840
and the Israel Science Foundation grant~\#\,446/15.

\Section{Preliminaries on Topological Rings}  \label{preliminaries-secn}

 The material of Sections~\ref{prelim-linear-topologies}\+-%
\ref{prelim-sheafification} below is fairly standard;
we use the book~\cite{St} as the main reference (see also
the original sources~\cite{Gab,PG} and the Bourbaki
exercises~\cite[Exercices~II.2.16--22]{Bour}).
 The material of Sections~\ref{prelim-contramodules}\+-%
\ref{prelim-contratensor} was developed by the present
author~\cite{Psemi,Pweak,Prev,PR}.

\subsection{Linear topologies} \label{prelim-linear-topologies}
 A topological abelian group $A$ is said to have a \emph{linear topology}
if open subgroups form a base of neighborhoods of zero in~$A$.
 A linear topology on a topological group $A$ is uniquely determined
by the set $\bF$ of all open subgroups of~$A$.
 Conversely, a set $\bF$ of subgroups of an abelian group $A$ is the set
of all open subgroups in some linear topology on $A$ if and only if it
is a filter, i.~e., the following three conditions
are satisfied~\cite[Section~VI.4]{St}:
\begin{enumerate}
\renewcommand{\theenumi}{T\arabic{enumi}}
\setcounter{enumi}{-1}
\item $A\in\bF$;
\item if $V\in\bF$ and $V\subset U\subset A$, then $U\in\bF$;
\item if $U\in\bF$ and $V\in\bF$, then $U\cap V\in\bF$.
\end{enumerate}

 All topologies considered in this paper will be linear.
 Abusing terminology, we will call $\bF$ ``a topology on~$A$''.

 A set of subgroups $\bB$ in an abelian group $A$ is said to be
a \emph{base} of (neighborhoods of zero in) a topology $\bF$ on $A$
if $\bF$ consists precisely of all the subgroups $U\in A$ for
which there exists $V\in\bB$ such that $V\subset U$.
 A set of subgroups $\bB$ in an abelian group $A$ is a base of
a topology if and only if $\bB$ is nonempty and for any two
subgroups $U$, $V\in\bB$ there exists a subgroup $W\in\bB$ such
that $W\subset U\cap V$.

 A topological abelian group $A$ is said to be \emph{separated} if
the intersection of all its open subgroups is the zero subgroup,
that is $\bigcap_{U\in\bF}U=0$.
 In other words, it means that the natural abelian group homomorphism
$\lambda_{A,\bF}\:A\rarrow\varprojlim_{U\in\bF}A/U$ is injective.
 A topological abelian group $A$ is said to be \emph{complete} if
the map $\lambda_{A,\bF}$ is surjective.

 The abelian group $\A=\varprojlim_{U\in\bF}A/U$ is called
the \emph{completion} of the topological abelian group~$A$.
 The abelian group $\A$ is endowed with the projective limit topology
$\bfF$ consisting of all the subgroups $\U_U\subset\A$ of the form
$\U_U=\ker(\A\to A/U)$, where $U\in\bF$ and $\A\rarrow A/U$ is
the natural projection map.

 A topological abelian group $A$ is separated and complete if and only
if the natural homomorphism of topological groups $A\rarrow\A$ from $A$
to its completion $\A$ is an isomorphism of topological groups.
 For any topological abelian group $A$, its completion $\A$ is
separated and complete in the projective limit topology.

\subsection{Right linear topologies} \label{prelim-right-linear}
 A topological ring $R$ is said to have a \emph{right linear topology}
if open right ideals form a base of neighborhoods of zero in~$R$.
 A set $\bF$ of right ideals in an associative ring $R$ is the set of
open right ideals in a right linear topology of $R$ if and only if, in
addition to the conditions (T0\+-T2), it also satisfies the following
condition~\cite[Section~VI.4]{St}:
\begin{enumerate}
\renewcommand{\theenumi}{T\arabic{enumi}}
\setcounter{enumi}{2}
\item if a right ideal $I\subset R$ belongs to $\bF$ and $s\in R$,
then the right ideal
$$
 (I:s)=\{r\in R\mid sr\in I\}\subset R
$$
belongs to~$\bF$.
\end{enumerate}

 For any topological ring $R$ with a right linear topology $\bF$,
the topological abelian group $\R=\varprojlim_{I\in\bF}R/I$ with its
projective limit topology $\bfF$ has a unique topological ring structure
such that the natural map $R\rarrow\R$ is a ring homomorphism.
 Given two elements $\mathfrak s=(s_I\in R/I)_{I\in\bF}$ and
$\mathfrak r=(r_I\in R/I)_{I\in\bF}$ in $\R$, in order to compute
the $I$\+component $t_I$ of their product $\mathfrak t=\mathfrak s
\mathfrak r\in\R$, one chooses preimages $\tilde s_I\in R$ and
$\tilde r_I\in R$ of the elements $s_I$ and $r_I\in R/I$, and
considers the ideal $J=(I:\tilde s_I)\in\bF$.
 Then one puts $t_I=\tilde s_I\tilde r_J+I$.
 The topology $\bfF$ on the topological ring $\R$ is right linear.

\subsection{Discrete modules} \label{prelim-discrete}
 Let $R$ be a topological ring with a right linear topology~$\bF$.
 Then a right $R$\+module $\N$ is said to be \emph{discrete} if for
every element $b\in\N$ its annihilator $I_b=\{r\in R\mid br=0\}
\subset R$ belongs to~$\bF$.
 Equivalently, this means that the right action map $\N\times R\rarrow
\N$ is continuous as a function of two variables with respect to
the given topology on $R$ and the discrete topology on~$\N$.
 The full subcategory of all discrete right $R$\+modules $\discr R$
is closed under subobjects, quotient objects, and infinite direct
sums in the abelian category of right $R$\+modules $\modr R$.
 It follows that $\discr R$, just like $\modr R$, is a Grothendieck
abelian category.

 The $R$\+module structure of any discrete right $R$\+module $\N$ can be
extended in a unique way to a discrete right module structure over
the completion $\R=\varprojlim_{I\in\bF}R/I$ of the ring~$R$.
 Hence the categories of discrete right $R$\+modules and discrete
right $\R$\+modules are naturally equivalent (in fact, isomorphic),
$\discr R\simeq\discr\R$.

\subsection{Gabriel topologies} \label{prelim-gabriel-topologies}
 Let $\sA$ be an abelian category with infinite products and coproducts
in which subobjects of any given object form a set.
 A pair of classes of objects $\sT$, $\sF\subset\sA$ is called
a \emph{torsion pair} if an object $T\in\sA$ belongs to $\sT$ if and
only if $\Hom_\sA(T,F)=0$ for all $F\in\sF$, and an object $F\in\sA$
belongs to $\sF$ if and only if $\Hom_\sA(T,F)=0$ for all $T\in\sT$.
 The class $\sT$ in a torsion pair $(\sT,\sF)$ is called
the \emph{torsion class}, and the class $\sF$ is called
the \emph{torsion-free class}.

 A class of objects $\sT\subset\sA$ is the torsion class of some
torsion pair in $\sA$ if and only if it is closed under quotients,
extensions, and coproducts.
 Similarly, a class of objects $\sF\subset\sA$ is the torsion-free
class of some torsion pair in $\sA$ if and only if it is closed under
subobjects, extensions, and products~\cite[Section~VI.2]{St}.

 Given a torsion pair $(\sT,\sF)$ in $\sA$, for every object $A\in\sA$
there exists a (unique and functorial) short exact sequence $0\rarrow T
\rarrow A\rarrow F\rarrow 0$ in $\sA$ with $T\in\sT$ and $F\in\sF$.
 The object $T$ is the maximal torsion subobject of $A$, and the object
$F$ is the maximal torsion-free quotient of~$A$.

 A class of objects $\sP\subset\sA$ is said to be a \emph{pretorsion
class} if it is closed under quotients and coproducts.
 A (pre)torsion class $\sP$ is said to be \emph{hereditary} if it is
closed under subobjects.
 A torsion pair $(\sT,\sF)$ is said to be hereditary if its
torsion class $\sT$ is hereditary.

 Let $R$ be an associative ring.
 For any right linear topology $\bF$ on $R$, the class of all discrete
right $R$\+modules $\discr R$ with respect to the topology $\bF$ is
a hereditary pretorsion class in the abelian category of right
$R$\+modules $\modr R$.
 Conversely, for any hereditary pretorsion class $\sP\subset\modr R$
there exists a unique right linear topology $\bF$ on $R$ such that
$\sP=\discr R$.
 Given a hereditary pretorsion class $\sP$ in $\modr R$, the topology
$\bF$ can be recovered as the set of all right ideals $I\subset R$
such that $R/I\in\sP$, while given a right linear topology $\bF$ on $R$,
the hereditary pretorsion class $\sP$ is defined as the class of all
right $R$\+modules $\N$ such for every element $b\in\N$ the annihilator
of~$b$ in $R$ belongs to~$\bF$ \cite[Section~IV.4]{St}.

 A right linear topology $\bG$ on $R$ is called a (right) \emph{Gabriel
topology} if the related hereditary pretorsion class $\sT=\discr R$
is a torsion class (i.~e., it is closed under extensions in $\modr R$).
 Thus right Gabriel topologies on $R$ correspond bijectively to
hereditary torsion classes in $\modr R$.
 A right linear topology $\bG$ on $R$ is a Gabriel topology if and only
it satisfies the following condition~\cite[Section~IV.5]{St}:
\begin{enumerate}
\renewcommand{\theenumi}{T\arabic{enumi}}
\setcounter{enumi}{3}
\item if $I\subset R$ is a right ideal and there exists a right ideal
$J\in\bG$ such that $(I:s)\in\bG$ for all $s\in J$, then $I\in\bG$.
\end{enumerate}

 The right $R$\+modules belonging to the hereditary torsion class $\sT$
corresponding to a Gabriel topology $\bG$ on $R$ are said to be
\emph{$\bG$\+torsion}, and the right $R$\+modules belonging to
the related torsion-free class $\sF\subset\modr R$ are called
\emph{$\bG$\+torsion-free}.
 So the words ``$\bG$\+torsion right $R$\+module'' are synonymous with
``discrete right $R$\+module with respect to the $\bG$\+topology
on~$R$''.

 Given a topological ring $R$ with a right linear topology $\bG$,
one can consider its completion $\R=\varprojlim_{I\in\bG}R/I$ and view
it as a topological ring in the projective limit topology~$\bfG$.
 If $\bG$ is a Gabriel topology on $R$, then $\bfG$ is a Gabriel
topology on~$\R$.

\subsection{Localization} \label{prelim-localization}
 Another name for a hereditary torsion class is a \emph{localizing
subcategory}.
 Let $\sA$ be an abelian category with exact coproduct functors in which
subobjects of any given object form a set, and let $\sT\subset\sA$ be
a full subcategory closed under subobjects, quotients, extensions, and
coproducts.
 The \emph{quotient category} $\sA/\sT$ is the category whose objects
are the objects of $\sA$ and morphisms $A\rarrow B$ can be equivalently
described as the equivalence classes of
\begin{enumerate}
\renewcommand{\theenumi}{\alph{enumi}}
\item morphisms $A'\rarrow B'$ in $\sA$, where $A'$ is a subobject in
$A$ with $A/A'\in\sT$ and $B'=B/T$ is a quotient object of $B$ by
a subobject $T\in\sT$; or
\item fractions $A\rarrow B'\larrow B$ in $\sA$, where both the kernel
and cokernel of the morphism $B\rarrow B'$ belong to~$\sT$; or
\item fractions $A\larrow A'\rarrow B$ in $\sA$, where both the kernel
and cokernel of the morphism $A'\rarrow A$ belong to~$\sT$.
\end{enumerate}

 The quotient category $\sA/\sT$ is an abelian category with (exact)
coproducts.
 The natural functor $\sA\rarrow\sA/\sT$ is exact and
preserves coproducts.
 Hence, assuming that the category $\sA$ has a set of generators,
the functor $\sA\rarrow\sA/\sT$ has a right adjoint functor
$\sA/\sT\rarrow\sA$.
 The functor $\sA/\sT\rarrow\sA$ is fully faithful, so it allows to
view the quotient category $\sA/\sT$ as a full subcategory in~$\sA$.

 We denote by $L$ the composition of functors $\sA\rarrow\sA/\sT
\rarrow\sA$.
 The functor $L$ is the reflector onto the full subcategory $\sA/\sT
\subset\sA$. 
 For every object $A\in\sA$, there is a natural adjunction morphism
$A\rarrow L(A)$ inducing an isomorphism of the Hom groups
$\Hom_\sA(L(A),B)\simeq\Hom_\sA(A,B)$ for every object
$B\in\sA/\sT\subset\sA$.

 Let us now consider the particular case when $\sA=\modr R$ is
the category of right modules over an associative ring~$R$.
 Then the hereditary torsion class $\sT$ corresponds to a right Gabriel
topology $\bG$ on~$R$.
 Let $L_\bG\:\modr R\rarrow\modr R$ denote the related
localization functor.

 The right $R$\+modules belonging to the full subcategory
$(\modr R)/\sT\subset\modr R$ (that is, to the essential image of
the functor~$L_\bG$) are said to be \emph{$\bG$\+closed}.
 A right $R$\+module $B$ is $\bG$\+closed if and only if any
right $R$\+module morphism $A'\rarrow A''$ with $\bG$\+torsion kernel
and cokernel induces an isomorphism $\Hom_R(A'',B)\simeq\Hom_R(A',B)$.
 So any $\bG$\+closed right $R$\+module is $\bG$\+torsion-free.

 Consider the free right $R$\+module $R$ with one generator.
 Applying the functor $L_\bG$, we obtain a right $R$\+module $L_\bG(R)$.
 The left action of $R$ by right $R$\+module endomorphisms of
the right $R$\+module $R$ induces a left action of $R$ in $L_\bG(R)$,
making it an $R$\+$R$\+bimodule.
 The adjunction map $R\rarrow L_\bG(R)$ is a morphism of
$R$\+$R$\+bimodules.

 There exists a unique associative ring structure on $L_\bG(R)$ 
making the map $R\rarrow L_\bG(R)$ an associative ring homomorphism
in a way compatible with the $R$\+$R$\+bimodule structure on $L_\bG(R)$.
 Moreover, for any right $R$\+module $N$, the right $R$\+module
structure on $L_\bG(N)$ extends uniquely to a right $L_\bG(R)$\+module
structure.
 The ring $L_\bG(R)$ is the universal ring acting on the right on
(the underlying abelian groups of) all the $\bG$\+closed right
$R$\+modules, and any right $R$\+linear morphism between
$\bG$\+closed right $R$\+modules is right $L_\bG(R)$\+linear.

\subsection{Sheafification construction} \label{prelim-sheafification}
 Let $R$ be an associative ring and $\bG$ be a Gabriel topology of
right ideals in~$R$.
 The localization functor $L_\bG$ has the following explicit
construction, which is essentially a particular case of
the ``additive sheaf theory'' \cite[Section~IX.1]{St}.

 Let $N$ be a right $R$\+module.
 Consider the inductive limit of abelian groups
$$
 N_{(\bG)}=\varinjlim\nolimits_{I\in\bG}\Hom_{R^\rop}(I,N),
$$
where $R^\rop$ is the ring opposite to $R$ and $\Hom_{R^\rop}(K,N)$
denotes the group of morphisms $K\rarrow N$ in the category of
right $R$\+modules $\modr R$.
 The diagram is indexed by the partially ordered set of all right ideals
$I\in\bG$ with respect to the inverse inclusion, and for any two right
ideals $J\subset I$, \ $I$, $J\in\bG$ the map $\Hom_{R^\rop}(I,N)\rarrow
\Hom_{R^\rop}(J,N)$ is induced by the inclusion morphism $J\rarrow I$.
 According to the condition~(T2), this diagram is indexed by
a directed poset~$\bG$.

 By the definition of the inductive limit, the construction above means
that elements of $N_{(\bG)}$ are represented by right $R$\+module
morphisms $I\rarrow N$, where $I\in\bG$.
 Two such morphisms $I\overset\alpha\rarrow N$ and $J\overset\beta
\rarrow N$, where $I$, $J\in\bG$, represent the same element of
$N_{(\bG)}$ if and only if there exists a right ideal $K\in\bG$ such
that $K\subset I\cap J$ and $\alpha|_K=\beta|_K$.

 To endow the abelian group $N_{(\bG)}$ with a natural right $R$\+module
structure, consider an element $\beta\in N_{(\bG)}$ represented by
a right $R$\+module morphism $I\rarrow N$, and an element $s\in R$.
 According to the condition~(T3), we have $(I:s)\in\bG$.
 Let $\beta s\in N_{(\bG)}$ be the element represented by
the composition
$$
 (I:s)\overset s\lrarrow I\overset\beta\lrarrow N,
$$
where $(I:s)\overset s\rarrow I$ is the right $R$\+module morphism
of left multiplication with~$s$.
 There is a natural morphism of right $R$\+modules
$\varkappa_N\:N\rarrow N_{(\bG)}$ assigning to an element $b\in N$
the element of $N_{(\bG)}$ represented by the right $R$\+module morphism
$R\overset b\rarrow N$ (where $R\in\bG$ by the condition~(T0)).

 The kernel of the morphism $N\rarrow N_{(\bG)}$ coincides with
the maximal $\bG$\+torsion submodule $t_\bG(N)$ of the right
$R$\+module~$N$.
 The cokernel $N_{(\bG)}/\im\varkappa_N$ is $\bG$\+torsion, since for
any element $\beta\in N_{(\bG)}$ represented by a right $R$\+module
morphism $I\rarrow N$ one has $\beta s\in\im\varkappa_N$
for all $s\in I$.
 Using the condition~(T4), one shows that the right $R$\+module
$N_{(\bG)}$ is $\bG$\+torsion-free for any right $R$\+module~$N$.

 For any right $R$\+module $N$ the two maps $\varkappa_{N_{(\bG)}}$
and $(\varkappa_N)_{(\bG)}\:N_{(\bG)}\rarrow N_{(\bG)(\bG)}$ coincide,
since so do their compositions with the map $\varkappa_N$, whose
cokernel is $\bG$\+torsion, while $N_{(\bG)(\bG)}$
is $\bG$\+torsion-free.
 The functor $N\longmapsto N_{(\bG)}$ is left exact by construction,
hence the map $N_{(\bG)}\rarrow N_{(\bG)(\bG)}$ is an isomorphism for
any $\bG$\+torsion-free right $R$\+module~$N$ \cite[Lemma~IX.1.6]{St}.
 For any $\bG$\+torsion-free right $R$\+module $N$, the right
$R$\+module $N_{(\bG)}$ is $\bG$\+closed.
 The functor $N\longmapsto L_\bG(N)$ can be computed in two
alternative ways as
$$
 L_\bG(N)=(N/t_\bG(N))_{(\bG)}=N_{(\bG)(\bG)}
$$
for any right $R$\+module~$N$.

 Following the notation in~\cite{St}, when the $R$\+$R$\+bimodule
$L_\bG(R)$ is viewed as a ring, it is denoted by $R_\bG$, and when
the right $R$\+module $L_\bG(N)$ is viewed as a right $R_\bG$\+module,
it is denoted by~$N_\bG$.
 So one has $N_\bG=N_{(\bG)(\bG)}$.
 The ring $R_\bG$ is called the \emph{ring of quotients} of
the ring $R$ with respect to the Gabriel topology~$\bG$.
 The right $R_\bG$\+module $N_\bG$ is called the \emph{module of
quotients} of a right $R$\+module~$N$.

 The direct limit/``sheafification'' construction above can be used
to describe explicitly the multiplication in $R_\bG$ and the right
action of $R_\bG$ in~$N_\bG$.
 Let us discuss the ring structure (the construction of the right
action being similar).

 Let $\sigma\:R\rarrow S$ be a ring homomorphism such that both
the kernel and the cokernel of~$\sigma$ are $\bG$\+torsion right
$R$\+modules.
 Assume that either the map~$\sigma$ is injective, or otherwise $S$ is
a $\bG$\+torsion-free right $R$\+module.
 Our aim is to construct an associative ring structure on the right
$R$\+module (or $S$\+$R$\+bimodule) $S_{(\bG)}$ making
$\varkappa_S\:S\rarrow S_{(\bG)}$ a ring homomorphism.

 Let $\alpha$ and $\beta\in S_{(\bG)}$ be two elements represented by
right $R$\+module morphisms $I\overset\alpha\rarrow S$ and $J\overset
\beta\rarrow S$, where $I$ and $J\subset R$ are two right ideals
belonging to~$\bG$.
 Denote by $\iota=\iota_I\:I\rarrow R$ and $\iota_J\:J\rarrow R$
the identity inclusion maps.
 Consider the composition of the inclusion $\iota_I\:I\rarrow R$ with
the map $\sigma\:R\rarrow S$, and denote by $\widetilde K$ the fibered
product of the right $R$\+modules $I$ and $J$ over $S$ with respect to
the morphisms $I\overset\iota\rarrow R\overset\sigma\rarrow S$
and $J\overset\beta\rarrow S$.
 Let $\tilde\gamma\:\widetilde K\rarrow I$ and $\tilde\eta\:\widetilde K
\rarrow J$ be the two related morphisms of right $R$\+modules.

 The cokernel of the morphism $\tilde\eta\:\widetilde K\rarrow J$ is
a submodule of the cokernel of the composition $I\rarrow R\rarrow S$,
which is an extension of the cokernel of the morphism $\sigma\:R
\rarrow S$ and of a certain quotient module of~$R/I$.
 Since the class of all $\bG$\+torsion right $R$\+modules is closed
under submodules, quotients, and extensions, the cokernel of
the morphism $\tilde\eta\:\widetilde K\rarrow J$ is $\bG$\+torsion.
 The cokernel of the composition $\widetilde K\rarrow J\rarrow R$
of the morphism~$\tilde\eta$ with the inclusion $\iota_J\: J\rarrow R$
is an extension of $R/J$ and the cokernel of~$\tilde\eta$;
hence it is $\bG$\+torsion, too.

 The kernel of the morphism $\tilde\eta\:\widetilde K\rarrow J$ is
a submodule of the kernel of the composition $I\rarrow R\rarrow S$,
which is a submodule of $\ker\sigma$.
 Hence the kernel of~$\tilde\eta$ is also $\bG$\+torsion.
 Consider the composition
$$
 \widetilde K\overset{\tilde\gamma}\lrarrow I\overset\alpha\lrarrow S.
$$
 If the morphism~$\sigma$ is injective, then so is the morphism
$\tilde\eta\:\widetilde K\rarrow J$.
 Therefore, $K$ defined as $K=\widetilde K$ is
a right $R$\+submodule in $J$, hence a right ideal in~$R$.
 If the right $R$\+module $S$ is $\bG$\+torsion-free, then the morphism
$\alpha\widetilde\gamma\:\widetilde K\rarrow S$ annihilates
the kernel of~$\tilde\eta$.
 In this case, we set $K$ to be the image of the morphism
$\tilde\eta\:\widetilde K\rarrow J$; so, once again, $K\subset J$
is a right ideal in~$R$.
 In both cases, the quotient module $R/K$, which coincides with
the cokernel of the morphism~$\tilde\eta$, is $\bG$\+torsion;
hence $K\in\bG$.

 In both cases, we have a right $R$\+module morphism $K\rarrow S$
induced by $\alpha\tilde\gamma$.
 This morphism represents the desired element
$\alpha\beta\in S_{(\bG)}$.

 Applying this construction for the first time to the ring $R=S$
and the identity morphism $\sigma=\id_R$, we construct
a ring structure on the $R$\+$R$\+bimodule~$R_{(\bG)}$.
 Applying the same construction for the second time to the ring
$S=R_{(\bG)}$ and the ring homomorphism $\sigma=\varkappa_R$,
we obtain a ring structure on the $R$\+$R$\+bimodule
$R_\bG=R_{(\bG)(\bG)}$, which was our aim.

 Alternatively, notice that the maximal $\bG$\+torsion submodule
$t_\bG(R)\subset R$ of the right $R$\+module $R$ is a two-sided ideal
in the ring~$R$.
 Applying the construction above to the ring $S=R/t_\bG(R)$ and
the natural surjective ring homomorphism $\sigma\:R\rarrow S$,
one obtains the same ring structure on the $R$\+$R$\+bimodule
$R_\bG=(R/t_\bG(R))_{(\bG)}$.

\subsection{Contramodules} \label{prelim-contramodules}
 The following constructions and definitions go back to
the book~\cite[Remark~A.3]{Psemi},
the memoir~\cite[Section~1.2]{Pweak},
the overview~\cite[Sections~2.1 and~2.3]{Prev},
and the paper~\cite[Sections~1.2 and~5]{PR}.

 For any abelian group $A$ and a set $X$, we will use the notation
$A[X]=A^{(X)}$ for the direct sum of $X$ copies of~$A$.
 Alternatively, one can view $A[X]$ as the group of all finite formal
linear combinations $\sum_{x\in X}a_xx$ of elements of $X$ with
the coefficients in~$A$.
 For any map of sets $X\rarrow Y$, one has the obvious induced
(``pushforward'') map $A[X]\rarrow A[Y]$; so $X\longmapsto A[X]$ is
a covariant functor $A[{-}]\:\Sets\rarrow\Ab$ from the category of
sets to the category of abelian groups.

 In particular, if $R$ is an associative ring, then $R[X]$ is
a notation for the free left $R$\+module with $X$~generators.
 One can also consider the underlying set of the abelian group $R[X]$
and view it as an abstract set.
 Then, for any associative ring $R$ and a set $X$, there are natural
maps of sets
$$
 \epsilon_{R,X}\:X\rarrow R[X] \quad\text{and}\quad
 \phi_{R,X}\:R[R[X]]\rarrow R[X].
$$
 Here $\epsilon_{R,X}$ is the ``point measure'' map assigning to
an element $x_0\in X$ the corresponding generator of the free left
$R$\+module $R[X]$, or in other words, the formal linear combination
$\sum_{x\in X}r_xx$ with $r_{x_0}=1$ and $r_x=0$ for $x\ne x_0$.
 The map $\phi_{R,X}$ is the ``opening of parentheses'' map assigning
to a formal linear combination of formal linear combinations of
elements of $X$ with the coefficients in $R$ a formal linear
combination of elements of $X$ with the coefficients in $R$ (using
both the additive and the multiplicative structures on~$R$).

 The natural transformations $\epsilon_R$ and $\phi_R$ endow
the functor
$$
 \bM_R=R[{-}]\:\Sets\lrarrow\Sets
$$
with the structure of a \emph{monad} on the category of sets.
 The category of left $R$\+modules can be defined as the category
of algebras (or, in the additive language that we prefer as more
suitable to our context, ``modules'') over the monad~$\bM_R$.
 In other words, the datum of a left $R$\+module structure on a set
$M$ is equivalent to the datum of a map of sets $\mu_M\:R[M]\rarrow M$
satisfying the associativity and unity equations of an algebra/module
over the monad~$\bM_R$.

 Now let $\A$ be a complete, separated topological abelian group (with
a linear topology~$\bfF$).
 Then we denote by $\A[[X]]$ the abelian group
$$
 \A[[X]]=\varprojlim\nolimits_{\U\in\bfF}(\A/\U)[X],
$$
where the projective limit is taken over all the open subgroups
$\U\subset\A$.
 Alternatively, $\A[[X]]$ can be defined as the group of all infinite
formal linear combinations $\{\sum_{x\in X}a_xx\}$ with the coefficients
$a_x\in\A$ forming a family of elements of $\A$ converging to zero
in the topology $\bfF$ of~$\A$.
 The latter condition means that for every $\U\in\bfF$ the set of all
$x\in X$ for which $a_x\notin\U$ must be finite.

 For any map of sets $f\:X\rarrow Y$, there is the induced
``push-forward'' homomorphism
$$
 \A[[f]]\:\A[[X]]\lrarrow\A[[Y]]
$$
taking an infinite formal linear combination $\sum_{x\in X}a_xx$
to the infinite formal linear combination $\sum_{y\in Y}b_yy$ with
the coefficients
$$
 b_y=\sum_{x:f(x)=y}a_x\in\A.
$$
 Here the infinite sum of elements of $\A$ is understood as
the limit of finite partial sums in the (complete and separated,
by assumption) topology $\bfF$ of~$\A$.
 Thus we have a functor $\A[[{-}]]\:\Sets\rarrow\Ab$ from the category
of sets to the category of abelian groups corresponding to any complete,
separated topological abelian group~$\A$.

 Let $\R$ be a complete, separated topological associative ring with
a right linear topology.
 For any set $X$, we have a natural ``point measure'' map
$$
 \epsilon_{\R,X}\:X\lrarrow\R[[X]],
$$
which can be constructed as the composition $X\rarrow\R[X]
\rarrow\R[[X]]$.
 Furthermore, an ``opening of parentheses'' map
$$
 \phi_{\R,X}\:\R[[\R[[X]]]]\lrarrow\R[[X]]
$$
can be constructed using the multiplication of pairs of elements in $\R$
and the formation of infinite sums, computed as the limits of finite
partial sums in the topology of~$\R$.
 The assumption that the topology on $\R$ is right linear guarantees
convergence~\cite[Section~2.1]{Prev}, \cite[Section~5]{PR}.

 As in the case of a discrete ring $R$ above, the natural
transformations $\epsilon_\R$ and $\phi_\R$ defined for a complete,
separated topological ring $\R$ with a right linear topology, endow
the functor
$$
 \bM_\R=\R[[{-}]]\:\Sets\lrarrow\Sets
$$
with the structure of a monad on the category of sets.
 By the definition, a \emph{left\/ $\R$\+contramodule} is
an algebra/module over this monad.
 In other words, a left $\R$\+contramodule $\C$ is a set endowed
with a \emph{left contraaction} map
$$
 \pi_\C\:\R[[\C]]\lrarrow\C,
$$
which must be a map of sets satisfying the following equations.
 The two composition
$$
 \R[[\R[[\C]]]]\birarrow\R[[\C]]\rarrow\C
$$
of the maps $\phi_{\R,\C}$ and $\R[[\pi_\C]]\:\R[[\R[[\C]]]]\birarrow
\R[[\C]]$ with the map $\pi_\C\:\R[[\C]]\rarrow\C$ should be equal
to each other; and the composition
$$
 \C\rarrow\R[[\C]]\rarrow\C
$$
of the map $\epsilon_{\R,\C}\:\C\rarrow\R[[\C]]$ with the map
$\pi_\C\:\R[[\C]]\rarrow\C$ should be equal to
the identity map~$\id_\C$.
 A \emph{morphism} of left $\R$\+contramodules $f\:\C\rarrow\D$ is
a morphism of algebras/modules over the monad $\bM_\R$; in other words,
it is a map of sets forming a commutative square diagram with
the contraaction maps $\pi_\C$, $\pi_\D$ and the push-forward
map $\R[[f]]\:\R[[\C]]\rarrow\R[[\D]]$.

 The category of left $\R$\+contramodules $\R\contra$ is abelian.
 For any left $\R$\+contra\-module $\C$, the composition $\R[\C]\rarrow
\R[[\C]]\rarrow\C$ of the natural inclusion $\R[\C]\rarrow\R[[\C]]$
with the contraaction map $\pi_\C\:\R[[\C]]\rarrow\C$ defines
a map $\mu_\C\:\R[\C]\rarrow\C$ endowing $\C$ with a left
$\R$\+module structure.
 This defines a natural forgetful functor $\R\contra\rarrow\R\modl$,
which is exact and preserves infinite
products~\cite[Section~1.2]{Pweak}, \cite[Section~1.1]{PR},
\cite[Lemma~1.1]{Pper}.

 For any set $X$, the set $\R[[X]]$ has a natural left
$\R$\+contramodule structure with the contraaction map
$\pi_{\R[[X]]}=\phi_X$.
 The left $\R$\+contramodule $\R[[X]]$ is called the \emph{free}
left $\R$\+contramodule generated by a set~$X$.
 For any left $\R$\+contramodule $\D$, there is a natural isomorphism
of abelian groups
$$
 \Hom^\R(\R[[X]],\D)\simeq\Hom_{\Sets}(X,\D)=\D^X,
$$
where $\Hom^\R(\C,\D)$ denotes the abelian group of morphisms
$\C\rarrow\D$ in the category $\R\contra$.
 It follows that free left $\R$\+contramodules are projective objects
in $\R\contra$.
 There are also enough of them; so there are enough projectives in
$\R\contra$ and a left $\R$\+contramodule is projective if and only if
it is a direct summand of a free one.
 The free left $\R$\+contramodule with one generator $\R=\R[[*]]$ is
a projective generator of $\R\contra$.
 If $\lambda$~denotes the cardinality of a base $\bfB$ of
the topology $\bfF$ on $\R$, then the category $\R\contra$ is locally
$\lambda^+$\+presentable and its projective generator $\R$ is
$\lambda^+$\+presentable~\cite[Sections~1.2 and~5]{PR}.

\subsection{Contratensor product} \label{prelim-contratensor}
 Let $\R$ be a complete, separated topological associative ring with
a right linear topology.
 Let $A$ be an associative ring, and let $\N$ be an $A$\+$\R$\+bimodule
which is discrete as a right $\R$\+module.
 Let $V$ be a left $A$\+module.
 Then the Hom group $\D=\Hom_A(\N,V)$ has a natural left
$\R$\+contramodule structure (extending its familiar left $\R$\+module
structure).
 Given an element $\mathfrak r=\sum_{d\in\D}r_dd\in\R[[\D]]$, one
defines its image $\pi_\D(\mathfrak r)\in\D$ under the left
contraaction map~$\pi_\D$ by the rule
$$
 \pi_\D(\mathfrak r)(b)=\sum_{d\in\D}d(br_d)\in V
$$
for all $b\in\N$, where the sum in the right-hand side is finite
because the annihilator of $b$ is open in~$\R$ and the family of
elements $(r_d\in\R)_{d\in\D}$ converges to zero in~$\R$.

 The \emph{contratensor product} $\N\ocn_\R\C$ of a discrete right
$\R$\+module $\N$ and a left $\R$\+contramodule $\C$ is an abelian
group constructed as the cokernel of (the difference of)
the following pair of maps
$$
 \N\ot_\boZ\R[[\C]]\birarrow\N\ot_\boZ\C.
$$
 The first map $\N\ot_\boZ\R[[\C]]\rarrow\N\ot_\boZ\C$ is obtained by
applying the functor $\N\ot_\boZ{-}$ to the contraaction map
$\pi_\C\:\R[[\C]]\rarrow\C$.
 The second map $\N\ot_\boZ\R[[\C]]\rarrow\N\ot_\boZ\C$ is defined
by the rule
$$
 b\ot\sum_{c\in\C}r_cc\longmapsto\sum_{c\in\C}br_c\ot c\in\N\ot_\boZ\C
$$
for any $b\in\N$ and $\mathfrak r=\sum_{c\in\C}r_cc\in\R[[\C]]$.
 Here, once again, the sum in the right-hand side is finite because
the annihilator of $b$ is open in~$\R$ and the family of
elements $(r_c\in\R)_{c\in\C}$ converges to zero in~$\R$.

 The conventional tensor product $\N\ot_\R\C$ of the right $\R$\+module
$\N$ and the underlying left $\R$\+module of the left $\R$\+contramodule
$\C$ can be constructed as the cokernel of the pair of maps
$\N\ot_\boZ\R[\C]\birarrow\N\ot_\boZ\C$ obtained as the composition
$\N\ot_\boZ\R[\C]\rarrow\N\ot_\boZ\R[[\C]]\birarrow\N\ot_\boZ\C$.
 Hence there is a natural surjective map of abelian groups
$$
 \N\ot_\R\C\lrarrow\N\ocn_\R\C.
$$

 For any left $A$\+$\R$\+bimodule $\N$ which is discrete as a right
$\R$\+module, left $\R$\+contramodule $\C$, and a left $A$\+module $V$
there is a natural isomorphism of abelian groups
$$
 \Hom_A(\N\ocn_\R\C,\>V)\simeq\Hom^\R(\C,\Hom_A(\N,V)),
$$
where the left $A$\+module structure on the contratensor product
$\N\ocn_\R\C$ is induced by the left $A$\+module structure on~$\N$.
 For any discrete right $\R$\+module $\N$ and any set $X$, there is
a natural isomorphism of abelian groups
$$
 \N\ocn_\R\R[[X]]\simeq\N[X].
$$

 For any associative ring $R$, any subgroup $A\subset R$, and any
left $R$\+module $M$, we denote, as usual, by $AM=A\cdot M$
the subgroup in $M$ spanned by all the elements $am$ with $a\in A$
and $m\in M$.
 Alternatively, the subgroup $AM\subset M$ can be defined as the image
of the composition $A[M]\rarrow R[M]\rarrow M$ of the natural inclusion
$A[M]\rarrow R[M]$ with the (monad) action map $\mu_M\:R[M]\rarrow M$.

 For any complete, separated topological associative ring $\R$ with
a right linear topology, any closed subgroup $\A\subset\R$, and any
left $\R$\+contramodule $\C$, we denote by $\A\tim\C\subset\C$
the image of the composition
$$
 \A[[\C]]\lrarrow\R[[\C]]\lrarrow\C
$$
of the natural inclusion $\A[[\C]]\lrarrow\R[[\C]]$ with
the contraaction map $\pi_\C\:\R[[\C]]\rarrow\C$.
 So $\A\tim\C$ is a subgroup in~$\C$.
 Clearly, one has $\A\C\subset\A\tim\C$.

 Let $\J\subset\R$ be a closed right ideal.
 Then for any set $X$ one has
$$
 \J\tim\R[[X]]=\J[[X]]\subset\R[[X]].
$$

 Let $\I\subset\R$ be an open right ideal.
 Then $\R/\I$ is a discrete right $\R$\+module and, for any left
$\R$\+contramodule $\C$, the quotient group $\C/(\I\tim\C)$ can
be computed as the contratensor product
$$
 \C/(\I\tim\C)\simeq(\R/\I)\ocn_\R\C.
$$

\Section{Perfect Gabriel Topologies of Type~$\lambda$}
\label{perfect-of-type-lambda-secn}

 A homomorphism of associative rings $u\:R\rarrow U$ is called
a \emph{ring epimorphism} if it is an epimorphism in the category of
associative rings, i.~e., for any two homomorphisms of associative
rings $v'$, $v''\:U\rarrow V$ the equation $v'u=v''u$ implies $v'=v''$.
 Equivalently, this means that the multiplication map
$U\otimes_RU\rarrow U$ is an isomorphism of
$R$\+$R$\+bimodules~\cite[Proposition~XI.1.2]{St}.
 If this is the case, then the two $R$\+$R$\+bimodule morphisms
$U\birarrow U\otimes_RU$ induced by~$u$ are also isomorphisms and,
in addition, are equal to each other.
 An associative ring homomorphism $u\:R\rarrow U$ is a ring epimorphism
if and only if the induced functor of restriction of scalars for
left modules $u_*\:U\modl\rarrow R\modl$ is fully faithful, or
equivalently, the similar functor for right modules $u_*\:\modr U\rarrow
\modr R$ is fully faithful.

 A \emph{homological} ring epimorphism is a ring epimorphism~$u$
such that $\Tor^R_i(U,U)=0$ for all $i>0$.
 A ring epimorphism~$u$ is said to be \emph{left flat} if $U$ is a flat
left $R$\+module.
 Obviously, any (left or right) flat ring epimorphism is homological.

 Let $u\:R\rarrow U$ be a left flat ring epimorphism.
 Then the functor of extension of scalars $u^*\:\modr R\rarrow\modr U$
taking a right $R$\+module $N$ to the right $U$\+module $u^*(N)=N\ot_RU$
is an exact functor left adjoint to the fully faithful functor
$u_*\:\modr U\rarrow\modr R$.
 It follows that $\modr U$ is the quotient category of $\modr R$ by
the localizing subcategory (hereditary torsion class) $\sT$ of all right
$R$\+modules annihilated by~$u^*$, that is,
$\modr U\simeq(\modr R)/\sT$.

 The corresponding Gabriel topology $\bG$ on $R$ is the set of all right
ideals $I\subset R$ such that $R/I\ot_RU=0$.
 The localization functor $L_\bG\:\modr R\rarrow\modr R$ is isomorphic
to the functor of tensor product ${-}\ot_RU$.
 Hence the ring $U$ together with the ring homomorphism~$u$ can be
recovered from the Gabriel topology $\bG$ on $R$ using the construction
of the ring of quotients $R_\bG$ of an associative ring $R$
with respect to its right Gabriel topology~$\bG$ (see
Sections~\ref{prelim-localization}\+-\ref{prelim-sheafification}).
 So there is a natural isomorphism of associative rings $U\simeq R_\bG$
forming a commutative triangle diagram with the natural homomorphisms
$R\rarrow R_\bG$ and $u\:R\rarrow U$  \cite[Theorem~XI.2.1]{St}.

 Following the terminology in~\cite{St}, right Gabriel topologies $\bG$
on $R$ corresponding to left flat ring epimorphisms $u\:R\rarrow U$ in
this way are called \emph{perfect}.
 A left $R$\+module $M$ is said to be \emph{$\bG$\+divisible} if
$T\ot_RM=0$ for all $T\in\sT$, or equivalently, $R/I\ot_RM=0$ for all
$I\in\bG$ (which means, in other words, that $IM=M$ for all $I\in\bG$).
 A right Gabriel topology $\bG$ on $R$ is perfect if and only if
the left $R$\+module $R_\bG$ is
$\bG$\+divisible~\cite[Proposition~XI.3.4]{St}.
 Any perfect Gabriel topology has a base consisting of finitely
generated right ideals.

 Let $\lambda$~be a cardinal.
 We will say that a right linear topology $\bF$ on an associative
ring $R$ is \emph{of type\/~$\lambda$} if it has a base $\bB$ of
cardinality not exceeding~$\lambda$.
 A poset $\Xi$ is said to be \emph{$\lambda^+$\+directed} if
for any its subset $\Upsilon\subset\Xi$ of the cardinality
not exceeding~$\lambda$ there exists an element $\xi\in\Xi$
such that $\xi\ge\upsilon$ for all $\upsilon\in\Upsilon$.
 Our aim in this section is to describe perfect Gabriel topologies
on associative (and particularly, commutative) rings $R$ as
$\lambda^+$\+directed unions of perfect Gabriel topologies of
type~$\lambda$.

 The following lemma is a noncommutative generalization
of~\cite[Lemma~2.3]{Hrb} (see also
the classical~\cite[Lemma~VI.5.3]{St}).

\begin{lem} \label{finitely-generated-gabriel-lemma}
 Let $R$ be an associative ring and\/ $\bG$ be a right linear topology
on $R$ with a base of finitely generated right ideals (i.~e., $\bG$
satisfies the conditions~(T0\+-T3) of
Sections~\ref{prelim-linear-topologies}\+-\ref{prelim-right-linear}).
 Then\/ $\bG$ is a Gabriel topology on $R$ (i.~e., $\bG$ satisfies
the condition~(T4) of Section~\ref{prelim-gabriel-topologies})
if and only if the following condition~(T4$'$) is satisfied:
\begin{enumerate} \upshape
\renewcommand{\theenumi}{T\arabic{enumi}$'$}
\setcounter{enumi}{3}
\item if $J\in\bG$ is a finitely generated right ideal, then for some
(or equivalently, for any) finite set of generators $s_1$,~\dots,~$s_m$
of $J$ and any right ideal $K\in\bG$ the right ideal
$s_1K+\dotsb+s_mK\subset R$ belongs to~$\bG$.
\end{enumerate}
\end{lem}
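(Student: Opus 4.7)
The plan is to prove the two directions separately and to verify the parenthetical ``some vs.\ any'' equivalence in (T4$'$). The unifying tool is a small ``coefficient absorption'' observation coming from (T2)--(T3): whenever $K\in\bG$ and finitely many elements $r_1,\dotsc,r_m\in R$ are given, the right ideal $L=\bigcap_{i=1}^m(K:r_i)$ lies in $\bG$, and every $\ell\in L$ satisfies $r_i\ell\in K$ for all $i$.

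For (T4)$\Rightarrow$(T4$'$), I would fix a finitely generated $J=s_1R+\dotsb+s_mR\in\bG$ and an arbitrary $K\in\bG$, set $I=s_1K+\dotsb+s_mK$, and check the hypothesis of (T4) using $J$ itself as the test ideal. For each $s=\sum_is_ir_i\in J$, the absorption observation yields $L\in\bG$ with $r_iL\subset K$ for all $i$, whence $sL\subset I$, i.e.\ $L\subset(I:s)$, so $(I:s)\in\bG$ by (T1). Then (T4) gives $I\in\bG$, which is (T4$'$) for the chosen generators.

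For (T4$'$)$\Rightarrow$(T4), suppose $I\subset R$ is a right ideal and some $J\in\bG$ satisfies $(I:s)\in\bG$ for all $s\in J$. Using the base of finitely generated right ideals, I would shrink $J$ to a finitely generated $J'\in\bG$ with $J'\subset J$, and fix generators $s_1,\dotsc,s_m$ of $J'$. Setting $K=\bigcap_{i=1}^m(I:s_i)\in\bG$, one has $s_iK\subset I$ for each $i$ by construction, so $s_1K+\dotsb+s_mK\subset I$; by (T4$'$) this sum belongs to $\bG$, and (T1) forces $I\in\bG$.

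For the ``some vs.\ any'' equivalence, if $\{s_i\}$ and $\{t_j\}$ are two finite generating sets of the same $J$ with $s_i=\sum_jt_jb_{ji}$, then applying the absorption observation to the $b_{ji}$ shrinks any $K\in\bG$ to a $K'\in\bG$ with $s_iK'\subset\sum_jt_jK$, after which $\sum_is_iK'\in\bG$ (by hypothesis) forces $\sum_jt_jK\in\bG$ via (T1). The only step that requires a moment's care is this last reduction: because $K$ is only a \emph{right} ideal, left multiplication by the $b_{ji}$ need not preserve it, and one really must pass from $K$ to the smaller $K'$. No other step poses any real obstacle.
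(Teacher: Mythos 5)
Your proof is correct and follows essentially the same route as the paper: the same test ideal $J$ and intersection $\bigcap_i(K:r_i)$ in one direction, and the same choice $K=\bigcap_i(I:s_i)$ after reducing to a finitely generated $J$ in the other. The only difference is that you spell out the ``some vs.\ any'' equivalence (correctly, including the point that one must shrink $K$ to $K'$ because left multiplication by the $b_{ji}$ need not preserve a right ideal), whereas the paper merely remarks that this follows from the same kind of argument.
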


\begin{proof}
 As one can see from the arguments below, for any fixed right ideal
$J\in\bG$, the condition~(T4$'$) remains unchanged when one varies
its finite set of generators $s_1$,~\dots, $s_m$, provided that one
is allowed to vary simultaneously the ideal $K$ as well.

 ``Only if'': set $I=s_1K+\dotsb+s_mK$.
 Let $s=s_1r_1+\dotsb+s_mr_m$ be an element of~$J$ (where $r_1$,~\dots,
$r_m\in R$).
 By the axioms~(T2) and~(T3), the right ideal $H=(K:r_1)\cap\dotsb
\cap(K:r_m)$ belongs to~$\bG$.
 For any element $h\in H$, we have $sh=s_1r_1h+\dotsb+s_mr_mh\in
s_1K+\dotsb+s_mK=I$, so $H\subset(I:s)$.
 By the axiom~(T1), it follows that the right ideal $(I:s)$ belongs
to~$\bG$.
 It remains to apply~(T4) in order to conclude that $I\in\bG$.

 ``If'': since $\bG$ has a base of finitely generated ideals, without
loss of generality we can assume the right ideal $J\in\bG$ in~(T4) to be
finitely generated.
 By assumption, we have $(I:s)\in\bG$ for all $s\in J$, and
in particular for $s=s_1$, $s=s_2$,~\dots, $s=s_m$.
 Hence, by the axiom~(T2), the right ideal $K=(I:\nobreak s_1)\cap
\dotsb\cap(I:\nobreak s_m)$ belongs to~$\bG$.
 By~(T4$'$), we have $s_1K+\dotsb+s_mK\in\bG$.
 Now $s_1K+\dotsb+s_mK\subset I$ by construction, hence by~(T1)
it follows that $I\in\bG$.
\end{proof}

 Let $\bF$ be a right linear topology on an associative ring $R$ and
$\lambda$~be an infinite cardinal.
 Consider the following condition:
\begin{itemize}
\item[(T$_\lambda$)] for every right ideal $I\in\bF$, there exists
a subset of right ideals $F_I\subset\bF$ of the cardinality not
exceeding~$\lambda$ such that for every element $s\in R$ there exists
a right ideal $J\in F_I$ for which $sJ\subset I$.
\end{itemize}

\begin{exs} \label{T-lambda-examples}
 (1)~Any linear topology of ideals in a commutative ring $R$ satisfies
the condition~(T$_\omega$) for the countable cardinal~$\omega$ (hence
also~(T$_\lambda$) for any infinite cardinal~$\lambda$).
 It suffices to take $F_I=\{I\}$ for every ideal $I\in\bF$.

 (2)~More generally, any right linear topology $\bF$ with a base $\bB$
consisting of two-sided ideals in an associative ring $R$
satisfies~(T$_\omega$).
 For every $I\in\bF$, choose $J\in\bB$ such that $J\subset I$, and set
$F_I=\{J\}$.

 (3)~If the cardinality of (the underlying set of) an associative
ring~$R$ does not exceed~$\lambda$, then any right linear topology
on $R$ satisfies~(T$_\lambda$).

 (4)~More generally, let $Z\subset R$ denote the center of
an associative ring~$R$.
 Let $\bF$ be a right linear topology on~$R$.
 Assume that the $Z$\+module $R/I$ has a set of generators of
the cardinality not exceeding~$\lambda$ for every right ideal $I\in\bF$.
 Then $\bF$ satisfies~(T$_\lambda$).

 Indeed, let $(s_\gamma\in R)_{\gamma\in\Gamma}$ be a set of preimages
in $R$ of a set of generators $\bar s_\gamma\in R/I$ of the
$Z$\+module~$R/I$.
 For every finite subset $\Delta\subset\Gamma$ in the set of indices,
consider the right ideal $J_\Delta=\bigcap_{\delta\in\Delta}
(I:s_\delta)\in\bF$.
 Then the set $F_I=\{J_\Delta\}$ all such right ideals in $R$ has
the desired property, because for any $s\in\sum_{\delta\in\Delta}
z_\delta s_\delta + I\subset R$, where $z_\delta\in Z$ for all
$\delta\in\Delta$, one has $sJ_\Delta\subset I$.
\end{exs}

\begin{rem} \label{unions-of-topologies-remark}
 It is clear from the form of the conditions (T0\+-T3) and~(T4) that
the set of all right linear topologies on an associative ring $R$ is
closed under directed unions (over nonempty sets of indices), and so is
the set of all right Gabriel topologies on $R$.
 In other words, if $\Xi$ is a nonempty set of right linear (resp.,
right Gabriel) topologies on $R$ such that for any two topologies
$\bF_1$, $\bF_2\in\Xi$ there exists a topology $\bF\in\Xi$ with
$\bF_1\cup\bF_2\subset\bF$, then $\bigcup_{\bF\in\Xi}\bF$ is a right
linear (resp., Gabriel) topology on~$R$.
 Moreover, if every $\bF\in\Xi$ has a base of finitely generated right
ideals, then so does $\bigcup_{\bF\in\Xi}\bF$.
 If every $\bF\in\Xi$ satisfies (T$_\lambda$), then so does
$\bigcup_{\bF\in\Xi}\bF$.

 Moreover, if $\Xi$ is a nonempty directed set of perfect Gabriel
topologies on $R$, then the Gabriel topology
$\bG=\bigcup_{\bH\in\Xi}\bH$ is also perfect.
 Indeed, by~\cite[Proposition~XI.3.4]{St}, a Gabriel topology $\bG$
on $R$ is perfect if and only if the ring of quotients $R_\bG$ is
a $\bG$\+divisible left $R$\+module, that is $R/I\ot_R R_\bG=0$ for
every $I\in\bG$.
 For any two embedded Gabriel topologies $\bH\subset\bG$ on $R$,
there is a natural ring homomorphism $R_\bH\rarrow R_\bG$ forming
a commutative triangle diagram with the ring homomorphisms
$R\rarrow R_\bH$ and $R\rarrow R_\bG$.
 Now if $I\subset R$ is a right ideal belonging to $\bG$ and
$\bG=\bigcup_{\bH\in\Xi}\bH$, then there exists $\bH\in\Xi$ such that
$I\in\bH$.
 Hence, if $\bH$ is a perfect Gabriel topology, we have $R/I\ot_RR_\bG=
(R/I\ot_RR_\bH)\ot_{R_\bH}R_\bG=0$.
\end{rem}

\begin{prop} \label{finitely-based-gabriel-lambda-prop}
 Let\/ $R$ be an associative ring, $\lambda$~be an infinite cardinal,
$\bG$ be a right Gabriel topology on $R$ satisfying~(T$_\lambda$) and
having a base consisting of finitely generated right ideals, and\/
$\bI\subset\bG$ be a subset of the cardinality not exceeding~$\lambda$.
 Then there exists a subset\/ $\bI\subset\bH\subset\bG$ such that\/
$\bH$ is a Gabriel topology on~$R$ having a base of the cardinality
not exceeding~$\lambda$ consisting of finitely generated right ideals.
\end{prop}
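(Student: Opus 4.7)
The plan is to build a base $\bB$ of finitely generated right ideals for $\bH$ by iterative closure, starting from a finitely generated substitute for $\bI$ and closing under the operations needed for (T2)--(T4). The two central tools are Lemma~\ref{finitely-generated-gabriel-lemma}, which lets us replace the Gabriel axiom (T4) by the more manageable condition (T4$'$) involving only finitely generated generators, and the hypothesis (T$_\lambda$), which assigns to each $J\in\bG$ a set $F_J\subset\bG$ of cardinality at most~$\lambda$ that governs the (T3)-closure.

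Concretely, set $\bB_0$ to consist of $R$ together with, for each $I\in\bI$, a finitely generated $L_I\in\bG$ with $L_I\subset I$; such $L_I$ exists because $\bG$ has a base of finitely generated right ideals. Fix, once and for all, a finite set of generators for every finitely generated right ideal that ever enters the construction. Recursively, given $\bB_n$ of cardinality $\le\lambda$, define $\bB_{n+1}\supset\bB_n$ by adjoining:
(a)~for every pair $J_1,J_2\in\bB_n$, a finitely generated $L\in\bG$ with $L\subset J_1\cap J_2$ (which exists since $J_1\cap J_2\in\bG$ by (T2) for $\bG$);
(b)~for every $J\in\bB_n$, the ideals of $F_J$ provided by (T$_\lambda$), each replaced by a chosen finitely generated sub-ideal in $\bG$;
(c)~for every $J\in\bB_n$ with fixed generators $s_1,\dots,s_m$ and every $K\in\bB_n$, a finitely generated $L\in\bG$ with $L\subset s_1K+\dots+s_mK$ (the latter lies in $\bG$ by Lemma~\ref{finitely-generated-gabriel-lemma} applied to $\bG$). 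Items~(a) and~(c) add at most $|\bB_n|^2\le\lambda$ ideals and item~(b) adds at most $|\bB_n|\cdot\lambda\le\lambda$ ideals, so $|\bB_{n+1}|\le\lambda$. Set $\bB=\bigcup_{n<\omega}\bB_n$ (of cardinality $\le\lambda$) and let $\bH$ be the set of all right ideals of $R$ containing some element of~$\bB$. By construction $\bH\subset\bG$, and $\bI\subset\bH$ thanks to the choice of~$\bB_0$.

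It remains to check that $\bH$ is a Gabriel topology. Axioms (T0) and (T1) are immediate. For (T2), given $I_1,I_2\in\bH$, choose $J_i\subset I_i$ with $J_i\in\bB_n$ for some $n$; step~(a) furnishes $L\in\bB_{n+1}$ with $L\subset J_1\cap J_2\subset I_1\cap I_2$. For (T3), given $I\in\bH$ and $s\in R$, take $J\subset I$ with $J\in\bB_n$; by (T$_\lambda$) there is $K\in F_J$ with $sK\subset J$, and step~(b) provides a finitely generated $L\subset K$ in $\bB_{n+1}$, whence $sL\subset J\subset I$, i.e., $L\subset(I:s)$. To verify (T4), suppose $J\in\bH$ and $(I:s)\in\bH$ for every $s\in J$; pick $J_0\subset J$ with $J_0\in\bB$ having canonical generators $t_1,\dots,t_k$, and for each $i$ choose $K_i\in\bB$ with $K_i\subset(I:t_i)$; by iterated use of~(a) find $K\in\bB$ with $K\subset K_1\cap\dots\cap K_k$; then~(c) applied to $J_0$ and $K$ yields $L\in\bB$ with $L\subset t_1K+\dots+t_kK\subset I$, so $I\in\bH$.

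The only delicate point is ensuring that the closure under (T3) does not blow up the cardinality: axiom (T3) involves a quantifier over all $s\in R$, which is potentially of arbitrarily large cardinality, and without further assumptions each $J\in\bB_n$ would a~priori contribute $|R|$ many new ideals $(J:s)$. This is precisely the obstacle that the hypothesis (T$_\lambda$) is designed to overcome: it replaces the quantifier over~$R$ by a quantifier over a fixed $\lambda$-sized set $F_J$, which keeps each step of the recursion within the $\lambda$-regime.
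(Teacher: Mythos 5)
Your proof is correct and follows essentially the same route as the paper's: an $\omega$\+step closure of a finitely generated replacement of $\bI$ under the (T2)\+, (T3)\+ (via (T$_\lambda$)) and (T4$'$)\+operations, with the same cardinality bookkeeping. The only cosmetic difference is that you verify axiom~(T4) for $\bH$ directly by inlining the ``if'' direction of Lemma~\ref{finitely-generated-gabriel-lemma}, where the paper simply invokes that lemma to conclude that $\bB$ is a base of a Gabriel topology.
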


\begin{proof}
 It suffices to iterate the following procedure over the set of all
nonnegative integers $\omega=\boZ_{\ge0}$.
 For every right ideal $I\in\bI$, choose a finitely generated right
ideal $I'\subset I$, \ $I'\in\bG$, and denote by $\bI_0\subset\bG$
the set of all right ideals $I'$ so obtained.
 To make sure that $\bI_0$ is nonempty, one can also include the unit
right ideal $R$ into~$\bI_0$.
 Given a subset $\bI_n\subset\bG$ consisting of finitely generated
right ideals, for every $I\in\bI_n$ consider a subset $F_I\subset\bG$
provided by the condition~(T$_\lambda$), and choose a finitely generated
right ideal $J'\subset J$, \ $J'\in\bG$, for every $J\in F_I$.
 Denote by $\bI_n'\subset\bG$ the union of $\bI_n$ with the set of all
right ideals $J'$ so obtained.
 For every pair of right ideals $J$ and $K\in\bI_n'$, choose a finite
set of generators $s_1$,~\dots,~$s_m$ of the right ideal $J$, and
consider the right ideal $I=s_1K+\dotsb+s_mK\in\bG$.
 Denote by $\bI_n''\subset\bG$ the union of $\bI_n'$ with the set of all
right ideals $I$ so obtained.
 For every pair of right ideals $I$ and $J\in\bI_n''$, choose a finitely
generated right ideal $K\subset I\cap J$, \ $K\in\bG$.
 Denote by $\bI_{n+1}$ the union of $\bI_n''$ with the set of all
right ideals $K$ so obtained.
 Set $\bB=\bigcup_{n<\omega}\bI_n$.
 Then $\bB\subset\bG$ is a set of finitely generated right ideals,
the cardinality of $\bB$ does not exceed~$\lambda$, and $\bB$ is
a base of a Gabriel topology $\bH$ on $R$ such that
$\bI\subset\bH\subset\bG$.
\end{proof}

\begin{cor} \label{finitely-based-gabriel-lambda-directed-cor}
 Let $R$ be an associative ring, $\lambda$~be an infinite cardinal, and
$\bG$ be a right Gabriel topology on $R$ satisfying~(T$_\lambda$) and
having a base of consisting of finitely generated right ideals.
 Let\/ $\Xi$ denote the set of all Gabriel topologies\/ $\bH\subset\bG$
having a base of the cardinality not exceeding~$\lambda$ consisting of
finitely generated right ideals.
 Then the set\/ $\Xi$ is $\lambda^+$\+directed by inclusion and
one has\/ $\bG=\bigcup_{\bH\in\Xi}\bH$. 
\end{cor}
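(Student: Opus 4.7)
The plan is to deduce this corollary directly from Proposition~\ref{finitely-based-gabriel-lambda-prop}, applying it in two slightly different ways to obtain the two assertions.

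First I would verify that $\bG=\bigcup_{\bH\in\Xi}\bH$. For any $I\in\bG$, applying Proposition~\ref{finitely-based-gabriel-lambda-prop} to the singleton $\bI=\{I\}$ produces a Gabriel topology $\bH\subset\bG$ with a base of the desired cardinality of finitely generated right ideals such that $I\in\bH$. Hence $\bH\in\Xi$ and $I\in\bigcup_{\bH\in\Xi}\bH$; the reverse inclusion is obvious.

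Next I would verify $\lambda^+$\+directedness. Let $\Upsilon\subset\Xi$ be a subset of cardinality at most~$\lambda$. For each $\bH_\xi\in\Upsilon$, fix a base $\bB_\xi\subset\bH_\xi$ of finitely generated right ideals with $|\bB_\xi|\le\lambda$, and set $\bI=\bigcup_{\xi\in\Upsilon}\bB_\xi\subset\bG$. Then $|\bI|\le\lambda\cdot\lambda=\lambda$, so Proposition~\ref{finitely-based-gabriel-lambda-prop} yields a Gabriel topology $\bH\in\Xi$ with $\bI\subset\bH\subset\bG$. Since $\bH$ satisfies~(T1) and contains a base of each~$\bH_\xi$, we have $\bH_\xi\subset\bH$ for every $\xi\in\Upsilon$, as required.

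There is no real obstacle here once Proposition~\ref{finitely-based-gabriel-lambda-prop} is in hand; the only point that requires a moment of care is the passage from ``$\bH$ contains a base of $\bH_\xi$'' to ``$\bH\supset\bH_\xi$'', which uses the upward-closure axiom~(T1) for $\bH$. The cardinal arithmetic $\lambda\cdot\lambda=\lambda$ for infinite~$\lambda$ keeps $\bI$ within the range in which the proposition applies.
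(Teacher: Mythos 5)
Your proposal is correct and is precisely the routine deduction the paper intends by its one-line proof (``Follows immediately from Proposition~\ref{finitely-based-gabriel-lambda-prop}''): singletons give the union statement, and unions of $\le\lambda$ bases of cardinality $\le\lambda$ give $\lambda^+$\+directedness via the same proposition. The two points you flag --- using~(T1) to pass from containing a base of $\bH_\xi$ to containing $\bH_\xi$ itself, and $\lambda\cdot\lambda=\lambda$ --- are exactly the right details to check.
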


\begin{proof}
 Follows immediately from
Proposition~\ref{finitely-based-gabriel-lambda-prop}.
\end{proof}

 We recall from Section~\ref{prelim-sheafification} that for any right
Gabriel topology $\bG$ on an associative ring $R$ the maximal
$\bG$\+torsion submodule $t_\bG(R)\subset R$ of the right $R$\+module
$R$ is a two-sided ideal, which can be alternatively constructed as
the kernel of the ring homomorphism $R\rarrow R_\bG$.
 A Gabriel topology $\bG$ on $R$ is said to be \emph{faithful} if
$t_\bG(R)=0$.
 More generally, we will say that a right $R$\+module $N$ has
\emph{$\lambda$\+bounded\/ $\bG$\+torsion} if there exists a set of
right ideals $\bJ_0\subset\bG$ of the cardinality
not exceeding~$\lambda$ such that for every element $b\in t_\bG(N)$
there is a right ideal $J\in\bJ_0$ for which $bJ=0$.

 Clearly, if $\Xi$ is a nonempty directed set of faithful Gabriel
topologies on $R$, then the Gabriel topology $\bigcup_{\bH\in\Xi}\bH$
is also faithful.
 If $\Xi$ is a nonempty directed set of faithful Gabriel topologies
on $R$ such that the cardinality of $\Xi$ does not exceed~$\lambda$
and the right $R$\+module $R$ has $\lambda$\+bounded $\bH$\+torsion
for every $\bH\in\Xi$, then $R$ has $\lambda$\+bounded $\bG$\+torsion
for the Gabriel topology $\bG=\bigcup_{\bH\in\Xi}\bH$ as well.

\begin{lem} \label{sheafification-and-direct-limits}
 Let $R$ be an associative ring, $\Xi$ be a directed set of right
Gabriel topologies on $R$, and\/ $\bG=\bigcup_{\bH\in\Xi}\bH$ be
their union.
 Let $N$ be a right $R$\+module such that $t_\bH(N)=t_\bG(N)$ for
all\/ $\bH\in\Xi$.
 Then there is a natural isomorphism of right $R$\+modules\/
$N_\bG\simeq\varinjlim_{\bH\in\Xi}N_\bH$.
\end{lem}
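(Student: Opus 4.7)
The plan is to reduce to the torsion-free case and then interchange two directed colimits. Both steps are essentially formal once set up correctly.

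First I would observe that a $\bH$-torsion element is automatically $\bG$-torsion (since $\bH\subset\bG$), so $t_\bH(N)\subset t_\bG(N)$ for every $\bH\in\Xi$ always; the hypothesis $t_\bH(N)=t_\bG(N)$ just asserts the reverse containment. Replace $N$ by $N'=N/t_\bG(N)$. By definition of the sheafification/localization functor, $N_\bG=(N/t_\bG(N))_{(\bG)}=N'_{(\bG)}$, and the same hypothesis gives $N_\bH=(N/t_\bH(N))_{(\bH)}=N'_{(\bH)}$ for every $\bH\in\Xi$. Moreover $N'$ is $\bG$-torsion-free, hence $\bH$-torsion-free for each $\bH\in\Xi$. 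Recalling from Section~\ref{prelim-sheafification} that $M_{(\bF)}$ for any $\bF$-torsion-free module $M$ is already $\bF$-closed, we thus have $N_\bG=N'_{(\bG)}$ and $N_\bH=N'_{(\bH)}$, both now given simply by sheafifications of the same module.

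Next I would exploit the directed-union decomposition $\bG=\bigcup_{\bH\in\Xi}\bH$. Viewed as partially ordered sets by reverse inclusion, each $\bH$ is a directed subposet of $\bG$ (by axiom~(T2)), and the union is directed by the directedness of~$\Xi$. The standard fact that a directed colimit indexed by a directed union of directed subposets equals the iterated directed colimit yields
\begin{equation*}
 N'_{(\bG)}=\varinjlim\nolimits_{I\in\bG}\Hom_{R^\rop}(I,N')
 \;\simeq\;\varinjlim\nolimits_{\bH\in\Xi}\,\varinjlim\nolimits_{I\in\bH}\Hom_{R^\rop}(I,N')
 \;=\;\varinjlim\nolimits_{\bH\in\Xi}N'_{(\bH)},
\end{equation*}
as abelian groups. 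The transition maps $N'_{(\bH_1)}\rarrow N'_{(\bH_2)}$ for $\bH_1\subset\bH_2$ are the tautological ones coming from the inclusion of index posets, and they agree with the natural structure maps $N_{\bH_1}\rarrow N_{\bH_2}$ induced by the ring homomorphisms $R_{\bH_1}\rarrow R_{\bH_2}$ considered in Remark~\ref{unions-of-topologies-remark}.

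Finally, I would verify that this isomorphism is compatible with the right $R$-module structures, i.e.\ that it respects the twisting formula from Section~\ref{prelim-sheafification} defining $\beta s$ for $\beta\in N'_{(\bF)}$ and $s\in R$. This is immediate because the formula only involves composing a representing morphism with left multiplication by~$s$ on a suitable right ideal in the ambient topology, and such an ideal can be chosen inside any $\bH\in\Xi$ containing the original representative. Naturality of the isomorphism in $N$ is then automatic from the functoriality of $\Hom_{R^\rop}(I,-)$. The only real subtlety, which I expect to be the main obstacle, is the torsion-reduction step: one must keep clearly in mind the distinction between $N_{(\bF)}$ and $N_\bF=(N/t_\bF(N))_{(\bF)}$, and it is precisely the hypothesis $t_\bH(N)=t_\bG(N)$ that allows these quotients to be matched uniformly across $\bH\in\Xi$.
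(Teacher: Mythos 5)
Your proposal is correct and follows essentially the same route as the paper: rewrite $N_\bG=(N/t_\bG(N))_{(\bG)}$ and $N_\bH=(N/t_\bH(N))_{(\bH)}$, use the hypothesis $t_\bH(N)=t_\bG(N)$ to replace both by sheafifications of the single torsion-free module $N'=N/t_\bG(N)$, and then identify the colimit over $\bG$ with the iterated colimit over $\Xi$. The paper compresses the final colimit-interchange step into ``it is clear from this construction''; your write-up just supplies the details it leaves implicit.
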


\begin{proof}
 For any directed set of right Gabriel topologies $\Xi$ on $R$ and
any right $R$\+module $N$, one has a natural diagram of right
$R$\+modules $(N_\bH)_{\bH\in\Xi}$ indexed by the poset $\Xi$, which
is commutative together with the natural right $R$\+module morphisms
$N\rarrow N_\bH\rarrow N_\bG$.
 These right $R$\+modules can be described in terms of the
sheafification construction of Section~\ref{prelim-sheafification} as
$$
 N_\bH=(N/t_\bH(N))_{(\bH)} \quad\text{and}\quad
 N_\bG=(N/t_\bG(N))_{(\bG)}.
$$
 Now when $t_\bH(N)=t_\bG(N)$ for all $\bH\in\Xi$, it is clear from
this construction that $N_\bG$ is the direct limit of $N_\bH$ over
$\bH\in\Xi$.
\end{proof}

\begin{rem}
 When the ring $R$ is right $\lambda^+$\+Noetherian (i.~e., every right
ideal in $R$ has a set of generators of the cardinality~$\le\lambda$),
or the Gabriel topology $\bG=\bigcup_{\bH\in\Xi}\bH$ has a base
consisting of right ideals with at most $\lambda$ generators and
the ring $R$ is right $\lambda^+$\+coherent (i.~e., every right ideal
in $R$ with at most $\lambda$ generators can be defined, as a right
$R$\+module, by a set of relations of the cardinality~$\le\lambda$),
one can drop the assumption $t_\bH(N)=t_\bG(N)$ in
Lemma~\ref{sheafification-and-direct-limits}
for a $\lambda^+$\+directed set of Gabriel topologies~$\Xi$.
 Indeed, the functor $N\longmapsto N_{(\bG)}$ commutes with direct
limits over $\lambda^+$\+directed posets of indices in this case, so
one has
\begin{multline*}
 N_\bG=(N/t_\bG(N))_{(\bG)}=
 \bigl(\varinjlim\nolimits_{\bH'\in\Xi}N/t_{\bH'}(N)\bigr)_{(\bG)} =
 \varinjlim\nolimits_{\bH'\in\Xi}(N/t_{\bH'}(N))_{(\bG)} \\ =
 \varinjlim\nolimits_{\bH''\in\Xi}
 \varinjlim\nolimits_{\bH'\in\Xi}(N/t_{\bH'}(N))_{(\bH'')} 
 = \varinjlim\nolimits_{\bH\in\Xi}(N/t_\bH(N))_{(\bH)}
 = \varinjlim\nolimits_{\bH\in\Xi} N_\bH.
\end{multline*}
 Hence the $\lambda$\+bounded $\bG$\+torsion assumption can be dropped
in Proposition~\ref{perfect-gabriel-lambda-prop} and
Corollary~\ref{perfect-gabriel-lambda-directed-cor} below when
the ring $R$ is right $\lambda^+$\+coherent.
\end{rem}

 The next lemma shows that one does not need the condition on torsion
when dealing with directed unions of perfect Gabriel topologies.

\begin{lem} \label{perfect-topologies-and-direct-limits}
 Let $R$ be an associative ring, $\Xi$ be a directed set of perfect
right Gabriel topologies on $R$, and\/ $\bG=\bigcup_{\bH\in\Xi}\bH$
be their union.
 Then there is a natural isomorphism of associative rings
$R_\bG\simeq\varinjlim_{\bH\in\Xi}R_\bH$.
\end{lem}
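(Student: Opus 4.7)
The plan is to recognize $S:=\varinjlim_{\bH\in\Xi}R_\bH$ as another left flat ring epimorphism target for~$R$, to identify its associated perfect Gabriel topology with~$\bG$, and then to conclude by the uniqueness part of the correspondence between perfect right Gabriel topologies and left flat ring epimorphisms recalled at the beginning of this section.

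First I would set up the directed system and the comparison map. For $\bH\subset\bH'$ in~$\Xi$, the inclusion of topologies forces the class of $\bH$\+torsion right $R$\+modules to be contained in the class of $\bH'$\+torsion ones, so every $\bH'$\+closed right $R$\+module is automatically $\bH$\+closed. Applying this to $R_{\bH'}$ and invoking the universal property of the reflector $L_\bH$, one obtains a canonical ring homomorphism $R_\bH\to R_{\bH'}$, compatible in $\bH\subset\bH'$. This turns $(R_\bH)_{\bH\in\Xi}$ into a filtered inductive system of rings whose colimit is~$S$. The analogous reasoning with $R_\bG$ in place of $R_{\bH'}$ produces compatible ring homomorphisms $R_\bH\to R_\bG$, and hence the desired natural ring homomorphism $\pi\:S\to R_\bG$.

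Next I would verify that $R\to S$ is itself a left flat ring epimorphism. Flatness is automatic since $S$ is a filtered colimit of the flat left $R$\+modules~$R_\bH$. For the epimorphism property, any two ring homomorphisms $f$,~$g\:S\to A$ agreeing on the image of $R$ in~$S$ must agree after composition with every map $R\to R_\bH$; since each such map is itself a ring epimorphism (by perfectness of $\bH$), $f$ and $g$ coincide on every~$R_\bH$, hence on the colimit~$S$. By~\cite[Theorem~XI.2.1]{St}, the map $R\to S$ therefore corresponds to a unique perfect right Gabriel topology $\bG'$ on~$R$, explicitly given by $\bG'=\{I\subset R\mid R/I\ot_RS=0\}=\{I\subset R\mid IS=S\}$.

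The bulk of the work is to show $\bG'=\bG$. The inclusion $\bG\subset\bG'$ is the easy direction: any $I\in\bG$ belongs to some $\bH_0\in\Xi$, so for every $\bH'\in\Xi$ with $\bH_0\subset\bH'$ one has $IR_{\bH'}=R_{\bH'}$ by perfectness of~$\bH'$, and passing to the cofinal colimit over such~$\bH'$ gives $IS=S$. The reverse inclusion $\bG'\subset\bG$ is the main obstacle. Given $I\in\bG'$, write $1=\sum_ka_ks_k$ in~$S$ with $a_k\in I$ and $s_k\in S$; using the directedness of~$\Xi$, choose $\bH\in\Xi$ such that each~$s_k$ is the image of some $s_{k,\bH}\in R_\bH$, and then, exploiting the filtered-colimit description of~$S$, enlarge $\bH$ to an $\bH'\in\Xi$ in which the relation $1=\sum_ka_ks_{k,\bH'}$ already holds in~$R_{\bH'}$. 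This produces $IR_{\bH'}=R_{\bH'}$; perfectness of~$\bH'$ then yields $I\in\bH'\subset\bG$. Combined with the first two steps, this forces $\pi\:S\to R_\bG$ to be an isomorphism of rings.
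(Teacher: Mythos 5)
Your proposal is correct and follows essentially the same route as the paper: form $S=\varinjlim_{\bH\in\Xi}R_\bH$, check that $R\to S$ is a left flat ring epimorphism, let $\bG'$ be its perfect Gabriel topology, and show $\bG'=\bG$ — where your "write $1=\sum_k a_ks_k$ and push the relation into some $R_{\bH'}$" step is exactly the paper's observation that the generator $\bar 1$ of the cyclic module $U/IU=\varinjlim_{\bH}R_\bH/IR_\bH$ vanishes if and only if it already vanishes in some $R_{\bH'}/IR_{\bH'}$.
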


\begin{proof}
 As in Lemma~\ref{sheafification-and-direct-limits}, for any directed
set of right Gabriel topologies $\Xi$ on $R$ one has a diagram
of associative rings $(R_\bH)_{\bH\in\Xi}$ indexed by the poset $\Xi$,
which is commutative together with the natural ring homomorphisms
$R\rarrow R_\bH\rarrow R_\bG$.
 Now, if the Gabriel topology $\bH$ is perfect for every $\bH\in\Xi$,
then so is the Gabriel topology $\bG$ (see
Remark~\ref{unions-of-topologies-remark}).
 Set $U=\varinjlim_{\bH\in\Xi}R_\bH$; then there is a commutative
diagram of ring homomorphisms $R\rarrow U\rarrow R_\bG$.
 The map $R\rarrow U$ is a ring epimorphism, since so are the maps
$R\rarrow R_\bH$; and the left $R$\+module $U$ is flat, since
the left $R$\+modules $R_\bH$~are.
 Denote by $\bG'$ the perfect right Gabriel topology on $R$
corresponding to the left flat ring epimorphism $R\rarrow U$;
so $\bG'$ consists of all the right ideals $I\subset R$ such that
$R/I\ot_RU=0$, and one has $U=R_{\bG'}$.

 Let us show that $\bG'=\bG$.
 As we have already seen, $\bH\subset\bG'$ for all $\bH\in\Xi$,
since $R/I\ot_RU=(R/I\ot_RR_\bH)\ot_{R_\bH}U=0$ for any $I\in\bH$.
 Conversely, for any right ideal $I\subset R$ the right $U$\+module
$U/IU=R/I\ot_RU=\varinjlim_{\bH\in\Xi}R/I\ot_RR_\bH
=\varinjlim_{\bH\in\Xi}R/IR_\bH$ is the inductive limit of
the right $R_\bH$\+modules $R_\bH/IR_\bH$.
 These are cyclic right modules with given natural generators
$\bar 1\in U/IU$ and $\bar 1\in R_\bH/IR_\bH$, and the maps in
the diagram take the generator to the generator.
 Hence one has $U/IU=0$ if and only if the generator $\bar 1\in
U/IU$ is a zero element, which holds if and only if there exists
$\bH\in\Xi$ such that the generator $\bar 1\in R_\bH/IR_\bH$
is a zero element, which means that $R_\bH/IR_\bH=0$.
 Thus $I\in\bG'$ implies existence of $\bH\in\Xi$ for which $I\in\bH$.
 We can conclude that $\bG'=\bG$ and $U=R_{\bG'}=R_\bG$.
\end{proof}

\begin{prop} \label{perfect-gabriel-lambda-prop}
 Let $R$ be an associative ring, $\lambda$~be an infinite cardinal,
and\/ $\bG$~be a perfect right Gabriel topology on $R$
satisfying~(T$_\lambda$) and such that the right $R$\+module $R$ has
$\lambda$\+bounded\/ $\bG$\+torsion.
 Let\/ $\bI\subset\bG$ be a subset of the cardinality not
exceeding~$\lambda$.
 Then there exists a subset\/ $\bI\subset\bP\subset\bG$ such that\/
$\bP$ is a perfect Gabriel topology on $R$ with a base of
the cardinality not exceeding~$\lambda$.
\end{prop}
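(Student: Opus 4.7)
The plan is to enlarge $\bI$ iteratively, in the style of Proposition~\ref{finitely-based-gabriel-lambda-prop}, to a subset $\bP\subset\bG$ of cardinality at most~$\lambda$ that is a perfect Gabriel topology, by supplementing the closure procedure with additional conditions witnessing perfectness. First, without loss of generality I enlarge $\bI$ to contain a set $\bJ_0\subset\bG$ of cardinality $\le\lambda$ which witnesses that the right $R$\+module $R$ has $\lambda$\+bounded $\bG$\+torsion. This forces $t_\bP(R)=t_\bG(R)=:t$ for every Gabriel topology $\bP$ with $\bJ_0\subset\bP\subset\bG$, so that both $R_\bP$ and $R_\bG$ arise as sheafifications of the same quotient $R/t$, with respect to $\bP$ and $\bG$ respectively.

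Next I construct a chain $\bI=\bI_0\subset\bI_1\subset\dotsb$ of subsets of $\bG$ of cardinality at most~$\lambda$, iterating over $n<\omega$ the closure procedure of Proposition~\ref{finitely-based-gabriel-lambda-prop} (so that the union becomes a Gabriel topology with a base of finitely generated right ideals) supplemented by the following \emph{perfectness closure}: for each finitely generated right ideal $I\in\bI_n$, use that $\bG$ is perfect to fix an expression $1=\sum_{k=1}^{m}s_k\rho_k$ in $R_\bG$ with $s_k\in I$ and $\rho_k\in R_\bG$, then represent each $\rho_k\in R_\bG=(R/t)_{(\bG)}$ by a right $R$\+module morphism $r_k\colon J_k\rarrow R/t$ with $J_k\in\bG$, and include all such $J_k$ in $\bI_{n+1}$. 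Setting $\bP:=\bigcup_n\bI_n$ produces a Gabriel topology with $\bI\subset\bP\subset\bG$, of cardinality at most~$\lambda$, with a base of finitely generated right ideals; and $t_\bP(R)=t$ since $\bJ_0\subset\bP$.

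The substantive step is to verify that $\bP$ is perfect, i.e., $IR_\bP=R_\bP$ for every $I\in\bP$. The key auxiliary claim is that the natural ring homomorphism $R_\bP\rarrow R_\bG$ is injective, which I would prove directly from the sheafification description of Section~\ref{prelim-sheafification}: if morphisms $\alpha\colon J_1\rarrow R/t$ and $\beta\colon J_2\rarrow R/t$ with $J_1,J_2\in\bP$ agree on some $K\in\bG$ with $K\subset J_1\cap J_2$, then their difference on $J_1\cap J_2$ vanishes on $K$ and so factors through the $\bG$\+torsion module $(J_1\cap J_2)/K$ into the $\bG$\+torsion-free module $R/t$, hence is zero; thus $\alpha$ and $\beta$ agree on the whole of $J_1\cap J_2\in\bP$ and already represent the same element of $R_\bP$. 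Granting this injectivity, for each finitely generated $I\in\bI_n$ the ideals $J_k$ and morphisms $r_k$ added at the next step lie in $\bP$ and so yield elements $\rho_k^\bP\in R_\bP$ whose images in $R_\bG$ are the chosen $\rho_k$; consequently $1-\sum_k s_k\rho_k^\bP\in R_\bP$ maps to $0\in R_\bG$ and therefore vanishes, giving $1\in IR_\bP$. Any $I\in\bP$ contains some finitely generated $I'\in\bI_n$, whence $R_\bP=I'R_\bP\subset IR_\bP$ and so $IR_\bP=R_\bP$.

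The main obstacle is the transfer of perfectness witnesses from $\bG$ to $\bP$: the elements $\rho_k$ exist inside $R_\bG$ for free, but one must arrange that they descend along $R_\bP\rarrow R_\bG$, and this relies on the injectivity of that map, which is in turn made available by the $\lambda$\+bounded torsion hypothesis together with the inclusion of $\bJ_0$ at the outset.
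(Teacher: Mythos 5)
Your argument is correct, but it takes a genuinely different route from the paper's. The paper reduces everything to direct limits: having arranged $\bJ_0\subset\bH\subset\bG$ so that $t_\bH(R)=t_\bG(R)$, it applies Lemma~\ref{sheafification-and-direct-limits} to write $R_\bG=\varinjlim_{\bH\in\Xi}R_\bH$ over the $\lambda^+$\+directed family $\Xi$ supplied by Corollary~\ref{finitely-based-gabriel-lambda-directed-cor}, then descends the vanishing of the cyclic module $R/I\ot_RR_\bG$ to a finite stage $\bH(I)\in\Xi$ for each $I$ in the current base, interleaving these choices over~$\omega$; perfectness of the union follows by base change through $R_{\bH(I)}$. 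You instead work with the sheafification model $R_\bP=(R/t)_{(\bP)}$ and transfer explicit divisibility witnesses: a finite expression $1=\sum_ks_k\rho_k$ in $R_\bG$, the finitely many ideals $J_k\in\bG$ supporting representatives of the~$\rho_k$, and the injectivity of $R_\bP\rarrow R_\bG$, which you correctly deduce from the vanishing of morphisms from $\bG$\+torsion to $\bG$\+torsion-free modules once $t_\bP(R)=t_\bG(R)$ is forced by including~$\bJ_0$. Both proofs exploit the same underlying finiteness (an element of $R_\bG$ is supported on a single ideal of $\bG$, and $1\in IR_\bG$ on finitely many) and invoke the $\lambda$\+bounded torsion hypothesis at the same point; yours trades the two direct-limit lemmas for the injectivity lemma and is arguably more self-contained, while the paper's reuses machinery (Lemmas~\ref{sheafification-and-direct-limits} and~\ref{perfect-topologies-and-direct-limits}) that it needs anyway for Corollary~\ref{perfect-gabriel-lambda-directed-cor}. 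Two small points to tidy: $\bigcup_n\bI_n$ is only a \emph{base} of the Gabriel topology $\bP$ (the topology itself is its upward closure, of unrestricted cardinality); and you should note explicitly that the canonical ring homomorphism $R_\bP\rarrow R_\bG$ coincides with the map induced on sheafifications by restricting the index poset --- this follows from the uniqueness of a morphism completing the triangle with $R\rarrow R_\bP$ and $R\rarrow R_\bG$, since $R_\bG$ is $\bP$\+closed.
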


\begin{proof}
 As in the proof of
Proposition~\ref{finitely-based-gabriel-lambda-prop}, one iterates
a certain procedure over the smallest countable ordinal~$\omega$.
 By~\cite[Proposition~XI.3.4]{St}, the right Gabriel topology $\bG$
has a base consisting of finitely generated right ideals.
 Let $\bJ_0$ be a set of right ideals in $R$ of the cardinality not
exceeding~$\lambda$ such that for every $r\in t_\bG(R)$ there exists
$J\in\bJ_0$ for which $rJ=0$.
 By Proposition~\ref{finitely-based-gabriel-lambda-prop}, one can choose
a right Gabriel topology $\bH_0$ on $R$ with a base of the cardinality
not exceeding~$\lambda$ such that $\bI\cup\bJ_0\subset\bH_0\subset\bG$.

 Let $\bH_n$ be another right Gabriel topology with a base of $\bB_n$
the cardinality not exceeding~$\lambda$ such that
$\bH_0\subset\bH_n\subset\bG$.
 By Corollary~\ref{finitely-based-gabriel-lambda-directed-cor},
the set $\Xi$ of all right Gabriel topologies $\bH$ on $R$ having a base
of the cardinality not exceeding~$\lambda$ consisting of finitely
generated right ideals and such that $\bH_n\subset\bH\subset\bG$ is
$\lambda^+$\+directed by inclusion, and one has
$\bG=\bigcup_{\bH\in\Xi}\bH$.

 As in the proof of Lemma~\ref{perfect-topologies-and-direct-limits},
we have a diagram of associative rings and ring homomorphisms
$(R_\bH)_{\bH\in\Xi}$ indexed by the poset~$\Xi$, which is commutative
together with the ring homomorphisms $R\rarrow R_\bH\rarrow R_\bG$.
 Notice that $t_\bG(R)=t_\bH(R)$ for all $\bH\in\Xi$, as
$\bJ_0\subset\bH$.
 By Lemma~\ref{sheafification-and-direct-limits}, we have
$R_\bG=\varinjlim_{\bH\in\Xi}R_\bH$.

 Since $\bG$ is a perfect Gabriel topology, for any right ideal
$I\in\bG$ one has $R/I\ot_RR_\bG=0$.
 In particular, this holds for any right ideal $I$ belonging to our
base $\bB_n$ of the Gabriel topology~$\bH_n$.
 Since the ring $R_\bG$ is a direct limit of the rings $R_\bH$, there
exists a Gabriel topology $\bH(I)\in\Xi$ such that
$R/I\ot_RR_{\bH(I)}=0$.
 Choose a Gabriel topology $\bH_{n+1}$ with a base of the cardinality
not exceeding~$\lambda$ such that $\bH_n\subset\bH_{n+1}\subset\bG$ and
$\bH(I)\subset\bH_{n+1}$ for all $I\in\bB_n$.
 Then $\bP=\bigcup_{n<\omega}\bH_n$ is a perfect Gabriel topology on $R$
with a base of the cardinality not exceeding~$\lambda$, and one has
$\bI\subset\bP\subset\bG$.
\end{proof}

\begin{cor} \label{perfect-gabriel-lambda-directed-cor}
 Let $R$ be an associative ring, $\lambda$~be an infinite cardinal,
and\/ $\bG$~be a perfect right Gabriel topology on $R$
satisfying~(T$_\lambda$) and such that the right $R$\+module $R$ has
$\lambda$\+bounded $\bG$\+torsion.
 Let\/ $\Upsilon$ denote the set of all perfect Gabriel topologies\/
$\bP\subset\bG$ having a base of the cardinality not exceeding~$\lambda$.
 Then the set\/ $\Upsilon$ is $\lambda^+$\+directed by inclusion
and one has\/ $\bG=\bigcup_{\bP\in\Upsilon}\bP$.
 Moreover, the ring of quotients of $R$ with respect to\/ $\bG$ is
the direct limit over\/ $\Upsilon$ of the rings of quotients of $R$ with
respect to\/~$\bP$,
$$
 R_\bG=\varinjlim\nolimits_{\bP\in\Upsilon}R_\bP.
$$
\end{cor}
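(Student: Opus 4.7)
The corollary should be a straightforward assembly of Proposition~\ref{finitely-based-gabriel-lambda-prop}'s analog (Proposition~\ref{perfect-gabriel-lambda-prop}) with Lemma~\ref{perfect-topologies-and-direct-limits}, so no new technical ingredient should be needed; the real work was in establishing Proposition~\ref{perfect-gabriel-lambda-prop}. I would organize the argument in three short steps, matching the three assertions.

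First, to show $\Upsilon$ is $\lambda^+$\+directed, let $\{\bP_\alpha\}_{\alpha\in A}\subset\Upsilon$ be a family with $|A|\le\lambda$, and for each $\alpha$ fix a base $\bB_\alpha\subset\bP_\alpha$ with $|\bB_\alpha|\le\lambda$. Set $\bI=\bigcup_{\alpha\in A}\bB_\alpha\subset\bG$; then $|\bI|\le\lambda\cdot\lambda=\lambda$. Apply Proposition~\ref{perfect-gabriel-lambda-prop} to this $\bI$ to produce $\bP\in\Upsilon$ with $\bI\subset\bP\subset\bG$. Since each $\bB_\alpha\subset\bP$ is a base of $\bP_\alpha$ and $\bP$ is upward closed (by~(T1)), we conclude $\bP_\alpha\subset\bP$ for every~$\alpha$.

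Second, to show $\bG=\bigcup_{\bP\in\Upsilon}\bP$, the inclusion $\bigcup_{\bP\in\Upsilon}\bP\subset\bG$ is by definition of $\Upsilon$. For the reverse inclusion, given any $I\in\bG$, apply Proposition~\ref{perfect-gabriel-lambda-prop} to the singleton $\bI=\{I\}$ to obtain $\bP\in\Upsilon$ with $I\in\bP$.

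Finally, since $\Upsilon$ is directed (indeed $\lambda^+$\+directed) by the first step, consists of perfect Gabriel topologies, and has union $\bG$ by the second step, Lemma~\ref{perfect-topologies-and-direct-limits} applies directly and yields the isomorphism of associative rings
$$
 R_\bG\simeq\varinjlim\nolimits_{\bP\in\Upsilon}R_\bP,
$$
completing the proof. No step here presents an obstacle: all the difficulty has been absorbed into Proposition~\ref{perfect-gabriel-lambda-prop} (which is where the hypothesis~(T$_\lambda$) and the $\lambda$\+boundedness of $\bG$\+torsion are used) and into Lemma~\ref{perfect-topologies-and-direct-limits} (where the perfectness is used to bypass the torsion hypothesis needed in Lemma~\ref{sheafification-and-direct-limits}).
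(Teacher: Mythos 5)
Your proof is correct and follows exactly the route the paper takes: the paper's own proof is simply ``Follows from Proposition~\ref{perfect-gabriel-lambda-prop} and Lemma~\ref{perfect-topologies-and-direct-limits},'' and your three steps (directedness via applying the proposition to the union of $\lambda$\+many bases, the union equality via singletons, and the direct-limit formula via the lemma) are precisely the routine details being left to the reader there.
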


\begin{proof}
 Follows from Proposition~\ref{perfect-gabriel-lambda-prop}
and Lemma~\ref{perfect-topologies-and-direct-limits}.
\end{proof}

\Section{$\bF$-Systems and Pseudo-$\bF$-Systems}  \label{F-systems-secn}

 In this section we discuss some techniques of working with
contramodules over topological associative rings with right linear
topology, which were developed originally in~\cite[Section~6]{PR}.
 Here we rephrase them in a more intuitively accessible language.
 These techniques become particularly powerful when the topology on
the ring has a countable base, as we will see in the next
Section~\ref{separated-contramodules-secn}.

 Let $R$ be an associative ring, $\bF$ be a right linear topology on
$R$, and $\R=\varprojlim_{I\in\bF}R/I$ be the completion of $R$ with
respect to $\bF$, viewed as a topological ring in the projective
limit topology~$\bfF$.
 There is a natural continuous (completion) homomorphism of topological
rings $\rho\:R\rarrow\R$.

 Notice that there is a natural bijection between the set $\bF$ of
all open right ideals $I\subset R$ and the set $\bfF$ of all open right
ideals $\I\subset\R$, given by the rules $I=\rho^{-1}(\I)$ and
$\I=\varprojlim_{J\in\bF,\,J\subset I}I/J=\ker(\R\to R/I)$.
 The ring homomorphism $\rho\:R\rarrow\R$ induces an isomorphism
of quotient groups/modules $R/I\simeq\R/\I$.

 Denote the full subcategory of cyclic discrete right modules $R/I$
in $\discr R$ by~$\sQ_\bF$.
 The objects of $\sQ_\bF$ are indexed by the open right
ideals $I\in\bF$.

 According to~(T2), the poset of open right ideals $\bF$ is
downwards directed by inclusion.
 When the need arises to distinguish this poset from the topology $\bF$
on the ring $R$, we will denote the poset $\bF$, and the category
associated with it, by~$\Pi_\bF$.
 For any pair of open right ideals $J\subset I\subset R$, one of which
is contained in the other one, there is the natural surjective morphism
of right $R$\+modules $R/J\rarrow R/I$.
 Hence $\Pi_\bF$ is naturally a subcategory in~$\sQ_\bF$ (with
the same set of objects).
 The morphisms in $\Pi_\bF$ are exactly those morphisms of right
$R$\+modules $R/J\rarrow R/I$ which form a commutative triangle
diagram with the projections $R\rarrow R/I$ and $R\rarrow R/J$.

 According to Section~\ref{prelim-discrete}, the categories of discrete
right modules over the topological rings $R$ and $\R$ are naturally
equivalent (in fact, isomorphic), $\discr R\simeq\discr\R$.
 The restriction of this equivalence of additive categories to the full
subcategories $\sQ_\bF\subset\discr R$ and $\sQ_\bfF\subset\discr\R$
provides an equivalence of preadditive categories
$\sQ_\bF\simeq\sQ_\bfF$, which further restricts to an equivalence
of categories $\Pi_\bF\simeq\Pi_\bfF$.

 Furthermore, given an additive category $\sA$, let $\Com(\sA)$ denote
the additive category of ($\boZ$\+graded) complexes in~$\sA$.
 Given an associative ring $R$ with a right linear topology $\bF$,
let us consider the full subcategory in the category of
complexes of right $R$\+modules $\Com(\modr R)$ formed by the two-term
complexes $(I\to R)$, where $I\in\bF$ ranges over the open right ideals
in $R$ and the map $I\rarrow R$ is the identity inclusion.
 Here the term $I$ of the complex $(I\to R)$ sits in the cohomological
degree~$0$ and the term~$R$ in the cohomological degree~$1$.
 We denote this full subcategory by $\sP_\bF\subset\Com(\modr R)$.

 The abelian groups of morphisms in the preadditive category $\sP_\bF$
can be easily described.
 For any two open right ideals $I$ and $J\in\bF$, a morphism of
complexes $(J\to R)\rarrow (I\to R)$ is uniquely determined by
its action on the terms~$R$ in degree~$1$ (because the map $I\rarrow R$
is injective).
 The group of all such morphisms is naturally isomorphic to the subgroup
in $R$ consisting of all the elements $s\in R$ such that $sJ\subset I$.
 An element~$s$ satisfying this condition corresponds to the morphism
$(J\to R)\overset s\rarrow (I\to R)$ of left multiplication with~$s$.

 There is a natural additive functor $H^1\:\sP_\bF\rarrow\sQ_\bF$
induced by the passage to the discrete right $R$\+modules of cohomology
$H^1(I\to R)=R/I$ of the two-term complexes of right $R$\+modules
$(I\to R)$.
 This functor is bijective on objects and surjective on morphisms.
 The kernel of the map $\Hom_{\sP_\bF}(J\to R,\>I\to R)\rarrow
\Hom_{\sQ_\bF}(R/J,R/I)$ consists of all the morphisms $(J\to R)
\overset{s}\rarrow(I\to R)$ for which the element~$s$ belongs to
the right ideal~$I$.

 The inclusion $\Pi_\bF\rarrow\sQ_\bF$ of the category corresponding
to the poset $\bF$ into the category $\sQ_\bF$ lifts naturally to
an inclusion $\Pi_\bF\rarrow\sP_\bF$.
 For any two open right ideals $J\subset I\subset R$, the only morphism
$J\rarrow I$ in $\Pi_\bF$ corresponds to the morphism
$(J\to R)\overset1\rarrow(I\to R)$ in $\sP_\bF$ acting by
the identity inclusion $J\rarrow I$ in degree~$0$ and by
the identity isomorphism~$\id_R$ in degree~$1$.

 Notice that, unlike the categories $\Pi_\bF$ and $\sQ_\bF$,
the category $\sP_\bF$ \emph{does} change when one passes from
a topological ring $R$ with a right linear topology $\bF$ to
its completion $\R$ with its projective limit topology~$\bfF$.
 The natural functors $\Pi_\bF\rarrow\Pi_\bfF$ and $\sQ_\bF\rarrow
\sQ_\bfF$ are isomorphisms of categories, but the natural additive
functor $\sP_\bF\rarrow\sP_\bfF$ is \emph{not} an equivalence (even
though it is bijective on objects).

 More specifically, let $I\in\bF$ be an open right ideal in $R$ and
$\I\in\bfF$ be the corresponding open right ideal in~$\R$.
 Then the functor $\sP_\bF\rarrow\sP_\bfF$ assigns the two-term
complex $(\I\to\R)$ to the two-term complex $(I\to R)$.
 To construct the action of this functor on morphisms, one can use
the explicit description of morphisms in the category~$\sP_\bF$
given above.
 For any morphism $(J\to R)\overset s\rarrow(I\to R)$ in $\sP_\bF$,
the image~$\rho(s)\in\R$ of the element $s\in R$ defines
the corresponding morphism $(\J\to\R)\overset{\rho(s)}\lrarrow
(\I\to\R)$.

 In particular, one can observe that the functor $\sP_\bF\rarrow
\sP_\bfF$ takes the two-term complex of right $R$\+modules
$(R\to R)$ to the two-term complex of right $\R$\+modules $(\R\to\R)$.
 One has $\Hom_{\sP_\bF}(R\to R,\>R\to R)=R\ne
\R=\Hom_{\sP_\bfF}(\R\to\R,\>\R\to\R)$.
 For comparison, the images of these two-term complexes in
the categories $\sQ_\bF$ and $\sQ_\bfF$ are, of course, zero objects.

 Let $\sA$ be an additive category.
 By a \emph{covariant pseudo-$\bF$-system} in $\sA$ we will mean
a covariant additive functor $\sP_\bF\rarrow\sA$.
 Similarly, a \emph{contravariant pseudo-$\bF$-system} in $\sA$ is
a contravariant additive functor $\sP_\bF\rarrow\sA$.
 We will denote the category of covariant pseudo-$\bF$-systems
in $\sA$ by ${}_{\{\bF\}}\sA=\Add(\sP_\bF,\sA)$ and the category of
contravariant pseudo-$\bF$-systems in $\sA$ by $\sA_{\{\bF\}}=
\Add(\sP_\bF^\sop,\sA)$ (where $\Add(\sC,\sA)$ denotes the category of
additive functors $\sC\rarrow\sA$ and $\sC^\sop$ is the opposite
category to~$\sC$).
 When the category $\sA$ is abelian, so are the categories
${}_{\{\bF\}}\sA$ and~$\sA_{\{\bF\}}$.

 Composing an additive functor $\sP_\bfF^\sop\rarrow\sA$ or
$\sP_\bfF\rarrow\sA$ with the natural additive functor
$\sP_\bF\rarrow\sP_\bfF$ discussed above, one can construct
the underlying pseudo-$\bF$-system of a (contravariant or covariant)
pseudo-$\bfF$-system.

 A \emph{left $R$\+module object} $M$ in $\sA$ is an object endowed
with a left action of $R$, i.~e., with a ring homomorphism
$R\rarrow\Hom_\sA(M,M)$.
 Similarly, a \emph{right $R$\+module object} $N$ in $\sA$ is an object
endowed with a right action of $R$, i.~e., with a ring homomorphism
$R^\rop\rarrow\Hom_\sA(N,N)$.
 Denote the category of left $R$\+module objects in $\sA$ by ${}_R\sA$
and the category of right $R$\+module objects in $\sA$ by~$\sA_R$.

\begin{lem} \label{modules-from-pseudo-systems-lem}
\textup{(a)} Let\/ $\sA$ be an additive category with direct limits
and $M\:\sP_\bF^\sop\rarrow\sA$ be a contravariant pseudo-$\bF$-system
in\/~$\sA$.
 Then the object
$$
 \il(M)=\varinjlim\nolimits_{I\in\bF}M(I\to R)\in\sA,
$$
where the direct limit is taken over the poset\/ $\Pi_\bF^\sop$, carries
a natural right action of the ring~$R$. \par
\textup{(b)} Let\/ $\sA$ be an additive category with inverse limits
and $D\:\sP_\bF\rarrow\sA$ be a covariant pseudo-$\bF$-system
in\/~$\sA$.
 Then the object
$$
 \pl(D)=\varprojlim\nolimits_{I\in\bF}D(I\to R)\in\sA,
$$
where the inverse limit is taken over the poset\/ $\Pi_\bF$, carries
a natural left action of the ring~$R$.
\end{lem}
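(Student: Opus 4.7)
The plan is to produce, for each $s\in R$, an endomorphism of $\il(M)$ (resp.\ of $\pl(D)$) by exploiting axiom~(T3), which guarantees $(I:s)\in\bF$, together with the description of $\Hom$ groups in $\sP_\bF$ recalled above: these show that left multiplication by~$s$ is a well-defined morphism $s\:((I:s)\to R)\rarrow(I\to R)$ in $\sP_\bF$ for every $I\in\bF$. I~treat part~(a) in detail; part~(b) is entirely dual.

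For part~(a), fix $s\in R$. Applying the contravariant functor $M$ and composing with the canonical morphism into the direct limit produces, for each $I\in\bF$, a morphism
\[
\sigma^s_I\colon M(I\to R)\overset{M(s)}\lrarrow M((I:s)\to R)\lrarrow\il(M)
\]
in~$\sA$. For any pair of open right ideals $J\subset I$ in $\bF$ one has $(J:s)\subset(I:s)$, and the square in $\sP_\bF$ with vertices $((J:s)\to R)$, $(J\to R)$, $((I:s)\to R)$, $(I\to R)$ commutes, both compositions being left multiplication by~$s$ as an element of~$R$. Applying $M$ yields a commutative square in $\sA$; combined with the defining compatibility of the direct limit, this shows that the family $(\sigma^s_I)_{I\in\bF}$ is compatible with the transition morphisms of $\il(M)=\varinjlim_{I\in\bF}M(I\to R)$, and hence assembles, by the universal property, into a unique morphism $a_s\colon\il(M)\rarrow\il(M)$.

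It remains to check that $s\longmapsto a_s$ defines a right action of~$R$ on~$\il(M)$. Additivity in~$s$ is inherited from additivity of the $\Hom$ groups in $\sP_\bF$; for $s=1$ one has $(I:1)=I$ and the morphism $1\:(I\to R)\rarrow(I\to R)$ is the identity, so $a_1=\id_{\il(M)}$. For $s$, $t\in R$, the identity $((I:s):t)=(I:st)$ implies that in $\sP_\bF$ the morphism of left multiplication by~$st$ factors as
\[
((I:st)\to R)\overset t\lrarrow((I:s)\to R)\overset s\lrarrow(I\to R);
\]
applying the contravariant functor~$M$ reverses the order of composition and, after passing to the limit, yields $a_{st}=a_t\circ a_s$, which is exactly the right-action axiom.

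Part~(b) is dual: for $s\in R$ and $I\in\bF$, one precomposes the canonical morphism $\pl(D)\rarrow D((I:s)\to R)$ with the map $D((I:s)\to R)\rarrow D(I\to R)$ obtained by applying the covariant functor~$D$ to the same morphism $s\:((I:s)\to R)\rarrow(I\to R)$ in $\sP_\bF$; compatibility with the transition morphisms of the inverse limit follows from the same commutative square as in~(a), so the universal property of $\pl(D)=\varprojlim_{I\in\bF}D(I\to R)$ produces an endomorphism $b_s\:\pl(D)\rarrow\pl(D)$. The verification of the action axioms is identical, except that the covariance of~$D$ now preserves the order of composition, yielding $b_{st}=b_s\circ b_t$, i.e.\ a left action. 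No step presents a real obstacle; the only item demanding care is keeping track of left versus right, which is dictated entirely by the variance of the functor.
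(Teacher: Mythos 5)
Your proposal is correct and follows essentially the same route as the paper: for each $s\in R$ one uses the morphism $s\:((I{:}s)\to R)\rarrow(I\to R)$ in $\sP_\bF$ supplied by axiom~(T3), assembles the resulting cocone (resp.\ cone) into an endomorphism of the (co)limit, and checks the ring-homomorphism axioms via $((I{:}s){:}t)=(I{:}st)$ and the (anti)functoriality of $M$ (resp.\ $D$). The only cosmetic difference is that the paper writes out part~(b) and dualizes to get~(a), while you do the reverse.
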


\begin{proof}
 The abbreviation ``$\il$'' stands for ``inductive limit'', while
the abbreviation ``$\pl$'' means ``projective limit''.
 Let us construct the left action of $R$ in the object $\pl(D)$
in part~(b) (part~(a) being dual).
 Given an element $r\in R$, for every open right ideal
$I\in\bF$ we have the morphism $((I:r)\to R)\overset r\rarrow
(I\to R)$ in the category~$\sP_\bF$.
 Consider the natural projection $\pl(D)\rarrow D((I:r)\to R)$ from
the inverse limit to one of the objects in the diagram, and its
composition
$$
 \pl(D)\lrarrow D((I:r)\to R)\overset{D(r)}\lrarrow D(I\to R)
$$
with the morphism~$D(r)$.
 The collection of such morphisms $\pl(D)\rarrow D(I\to\nobreak R)$,
defined for all the open right ideals $I\in\bF$, forms a compatible
cone (i.~e., a commutative diagram) with all the morphisms in
the diagram $D|_{\Pi_\bF}\:\Pi_\bF\rarrow\sA$.
 Hence we obtain the induced morphism
$$
 \pl(D)\rarrow\varprojlim\nolimits_{I\in\bF}D(I\to R)=\pl(D),
$$
providing the desired left action of the element $r\in R$ in the object
$\pl(D)\in\sA$.
 It is straightforward to check that the map $R\rarrow
\Hom_\sA(\pl(D),\pl(D))$ constructed in this way is a ring homomorphism,
using the assumption that $D\:\sP_\bF\rarrow\sA$ is an additive functor.
\end{proof}

 The functor $\il\:\sA_{\{\bF\}}\rarrow\sA_R$ is left adjoint to
the ``constant contravariant pseudo-$\bF$-system'' functor
$\sA_R\rarrow\sA_{\{\bF\}}$.
 The latter functors assigns to every right $R$\+module
object $N$ in $\sA$ the contravariant pseudo-$\bF$-system taking
an object $(I\to R)\in\sP_\bF$ to the object $N\in\sA$ for every
$I\in\bF$ and a morphism $(J\to R)\overset s\rarrow(I\to R)$ in
$\sP_\bF$ to the right multiplication morphism $s\:N\rarrow N$.
 Similarly, the functor $\pl\:{}_{\{\bF\}}\sA\rarrow{}_R\sA$ is right
adjoint to the ``constant covariant pseudo-$\bF$-system'' functor
${}_R\sA\rarrow{}_{\{\bF\}}\sA$, which assigns to every left $R$\+module
object $C$ in $\sA$ the covariant pseudo-$\bF$-system taking an object
$(I\to R)\in\sP_\bF$ to the object $C\in\sA$ for every $I\in\bF$ and
a morphism $(J\to R)\overset s\rarrow(I\to R)$ in $\sP_\bF$ to
the left multiplication morphism $s\:C\rarrow C$.

 Let $\sA$ be an additive category.
 By the definition, a \emph{contravariant\/ $\bF$\+system} in $\sA$ is
a contravariant additive functor $\sQ_\bF\rarrow\sA$.
 A \emph{covariant\/ $\bF$\+system} in $\sA$ is a covariant additive
functor $\sQ_\bF\rarrow\sA$.
 We will denote the category of contravariant $\bF$\+systems in
$\sA$ by $\sA_\bF$ and the category of covariant $\bF$\+systems in
$\sA$ by~${}_\bF\sA$.
 When the category $\sA$ is abelian, so are the categories $\sA_\bF$
and~${}_\bF\sA$.
 Composing an additive functor $\sQ_\bF^\sop\rarrow\sA$ or
$\sQ_\bF\rarrow\sA$ with the additive functor $H^1\:\sP_\bF\rarrow
\sQ_\bF$, one can view any (covariant or contravariant)
$\bF$\+system in $\sA$ as a (respectively, covariant or contravariant)
pseudo-$\bF$-system.
 This makes the category of $\bF$\+systems a full subcategory
in the category of pseudo-$\bF$-systems,
${}_{\bF}\sA\subset{}_{\{\bF\}}\sA$ and
$\sA_\bF\subset\sA_{\{\bF\}}$.

 We recall the notation $\Ab$ for the category of abelian groups.
 We also recall that every left $\R$\+contramodule $\C$ has
the underlying left $\R$\+module structure.
 The restriction of scalars with respect to the ring homomorphism
$\rho\:R\rarrow\R$ makes $\C$ a left $R$\+module.
 In this context, we will speak of ``the underlying left $R$\+module
structure'' of a left $\R$\+contramodule~$\C$.

\begin{prop} \label{discrete-modules-contramodules-from-systems}
\textup{(a)} The functor\/ $\il\:\Ab_{\{\bF\}}\rarrow\modr R$
constructed in Lemma~\ref{modules-from-pseudo-systems-lem}(a) takes
contravariant\/ $\bF$\+systems of abelian groups to discrete right
$R$\+modules.
 The resulting functor\/ $\IL\:\Ab_\bF\rarrow\discr R$\/ has a right
adjoint functor\/ $\Dh\:\discr R\rarrow\Ab_\bF$.
 The functor\/ $\Dh$ assigns to every discrete right $R$\+module\/ $\N$
the restriction of the contravariant functor of discrete right
$R$\+module homomorphisms\/ $\Hom_{R^\rop}({-},\N)\:\discr R\rarrow\Ab$
to the full subcategory\/ $\sQ_\bF\subset\discr R$. {\hbadness=1550\par}
\textup{(b)} The left $R$\+module\/ $\pl(D)$ assigned by the functor\/
$\pl\:{}_{\{\bF\}}\Ab\rarrow R\modl$ of
Lemma~\ref{modules-from-pseudo-systems-lem}(b) to a covariant\/
$\bF$\+system of abelian groups $D\:\sQ_\bF\rarrow\Ab$ is the underlying
left $R$\+module of a naturally defined left\/ $\R$\+contramodule\/
$\PL(D)$.
 The resulting functor\/ $\PL\:{}_\bF\Ab\rarrow\R\contra$ has a left
adjoint functor\/ $\CT\:\R\contra\rarrow{}_\bF\Ab$.
 The functor\/ $\CT$ assigns to every left\/ $\R$\+contramodule\/ $\C$
the restriction of the covariant functor of contratensor product\/
${-}\ocn_\R\C\:\discr\R\rarrow\Ab$ to the full subcategory\/
$\sQ_\bF=\sQ_\bfF\subset\discr\R$.
\end{prop}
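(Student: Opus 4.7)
For part~(a), the strategy is to first check that $\il(M)$ is discrete whenever $M$ factors through the quotient functor $H^1\colon \sP_\bF \to \sQ_\bF$. If $m \in M(R/I)$ represents an element of $\il(M)$ and $r \in I$, then the right action of $r$ constructed in Lemma~\ref{modules-from-pseudo-systems-lem}(a) is computed via $M(\bar r)$, where $\bar r\colon R/(I:r) \to R/I$ is the multiplication-by-$r$ morphism in $\sQ_\bF$; but $r \in I$ forces $(I:r) = R$, whence $M(R/R) = M(0) = 0$ by additivity. Hence $M(R/I)$ maps into the $I$-annihilator of $\IL(M)$, so $\IL(M)$ is discrete. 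The functor $\Dh$ is a well-defined contravariant $\bF$-system because $\Hom_{R^\rop}(-,\N)$ is a contravariant additive functor on $\discr R$. For the adjunction, a morphism $\phi\colon \IL(M) \to \N$ in $\discr R$ restricts to $\phi_I\colon M(R/I) \to \N$ with image in the $I$-annihilator, so factors as $M(R/I) \to \Hom_{R^\rop}(R/I,\N)$; naturality in $\bar s\colon R/J \to R/I$ follows from the $R$-linearity of $\phi$ together with the factorization of any such $\bar s$ as the projection $R/J \twoheadrightarrow R/(I:s)$ followed by multiplication by~$s$. In the reverse direction, evaluation at $\bar 1$ converts a natural transformation $\psi\colon M \to \Dh(\N)$ into a cone $M(R/I) \to \N$, which assembles into an $R$-linear morphism $\IL(M) \to \N$.

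For part~(b), Lemma~\ref{modules-from-pseudo-systems-lem}(b) equips $\pl(D)$ with a left $R$-module structure, and I would extend this to an $\R$-contraaction by specifying the projection of $\pi_{\PL(D)}(\mathfrak r)$ onto each $D(R/I)$. Given $\mathfrak r = \sum_c r_c c \in \R[[\PL(D)]]$, only finitely many coefficients lie outside the open ideal $\I \subset \R$ corresponding to $I$, so I set
\[
 \pi_{\PL(D)}(\mathfrak r)_I \;=\; \sum_{c\,:\,r_c \notin \I} D(\bar{\tilde r_c})\bigl(c_{(I:\tilde r_c)}\bigr) \;\in\; D(R/I),
\]
where $\tilde r_c \in R$ is any lift of the image of $r_c$ in $R/I \simeq \R/\I$, and $\bar{\tilde r_c}\colon R/(I:\tilde r_c) \to R/I$ is multiplication by $\tilde r_c$. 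Two lifts differ by an element of $I$, and multiplication by an element of $I$ is the zero morphism in $\sQ_\bF$; additivity of $D$ then gives independence of the lifts, and also shows that terms with $r_c \in \I$ would contribute zero if included. The compatibility of the family $(\pi_{\PL(D)}(\mathfrak r)_I)_I$ with the transition maps $D(R/J) \to D(R/I)$ of the system for $J \subset I$ uses the commutative triangle $R/(J:\tilde r_c) \twoheadrightarrow R/(I:\tilde r_c) \to R/I$ combined with the fact that elements $c \in \PL(D)$ satisfy $D(\text{proj})(c_{(J:\tilde r_c)}) = c_{(I:\tilde r_c)}$; extra terms with $r_c \in \I \setminus \J$ disappear because then $\tilde r_c \in I$. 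This yields a well-defined map $\pi_{\PL(D)}\colon \R[[\PL(D)]] \to \PL(D)$. The unit axiom is immediate from $(I:1) = I$ and $\bar 1 = \id_{R/I}$.

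For the adjunction in part~(b), the functor $\CT$ is well-defined because $(R/I)\ocn_\R\C$ is covariantly functorial in $R/I \in \sQ_\bF = \sQ_\bfF$. Given a contramodule morphism $f\colon \C \to \PL(D)$, its $I$-th component $f_I\colon \C \to D(R/I)$ annihilates $\I\tim\C$: for $\mathfrak r \in \I[[\C]]$ one computes $f_I(\pi_\C(\mathfrak r)) = \pi_{\PL(D)}(\R[[f]](\mathfrak r))_I$, and all coefficients of $\R[[f]](\mathfrak r)$ lie in $\I$, forcing the sum to be empty. Hence $f_I$ factors through $\C/(\I\tim\C) \simeq (R/I)\ocn_\R\C$, yielding a morphism $\CT(\C) \to D$ of covariant $\bF$-systems. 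Conversely, a morphism $g\colon \CT(\C) \to D$ composes with the projections $\C \twoheadrightarrow (R/I)\ocn_\R\C$ to give a compatible family of maps $\C \to D(R/I)$, which assembles into $\C \to \PL(D)$; contramodule-linearity of this composite reverses the previous vanishing calculation. The main obstacle I anticipate is verifying the associativity axiom $\pi_{\PL(D)} \circ \phi_{\R,\PL(D)} = \pi_{\PL(D)} \circ \R[[\pi_{\PL(D)}]]$, which requires unwinding the ``opening of parentheses'' map against the componentwise definition of $\pi$, carefully matching outer and inner lifts via multiplication in $R$ and invoking the functoriality and additivity of $D$ throughout; once this is in place, both the adjunction and the remaining formalities are routine.
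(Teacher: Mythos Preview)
Your proposal is correct and follows essentially the same approach as the paper: both parts verify discreteness (resp.\ define the contraaction) via the observation that $(I:r)=R$ when $r\in I$, so the relevant object of $\sQ_\bF$ is zero and additivity of the functor kills the term; and both establish the adjunctions by translating a single morphism into a compatible family indexed by $\bF$, using the factorization of an arbitrary morphism in $\sQ_\bF$ as a projection followed by a left-multiplication.

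The one difference worth noting is in part~(b): where you work in $\sQ_\bF$, choose lifts $\tilde r_c\in R$ of the coefficients $r_c\in\R$ modulo~$I$, and then verify independence of the lift, the paper instead passes to the equivalent category $\sQ_\bfF\simeq\sQ_\bF$ on the completed side, where each $r_c\in\R$ directly defines a morphism $\R/(\I:r_c)\to\R/\I$ with no lifting needed. This yields the cleaner formula $\psi_\I(\pi_\E(\mathfrak r))=\sum_e D(r_e)(\psi_{(\I:r_e)}(e))$ and makes well-definedness automatic; your explicit-lift version is equivalent but slightly more laborious. Like you, the paper does not spell out the associativity verification, deferring to~\cite[Lemma~6.2]{PR}.
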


\begin{proof}
 The abbreviation $\Dh$ stands for ``discrete module Hom'', while
the abbreviation $\CT$ means ``contratensor product''.
 Part~(a): let $M\:\sQ_\bF^\sop\rarrow\Ab$ be a contravariant
$\bF$\+system of abelian groups.
 Let $b\in\il(M)$ be an element represented by an element
$\tilde b \in M(R/I)$ for some open right ideal $I\subset R$, and let
$r\in R$ be an element.
 Consider the discrete right $R$\+module morphism $R/(I:r)\overset r
\rarrow R/I$ of left multiplication with~$r$.
 Applying the contravariant functor $M$ to this morphism, we obtain
an abelian group homomorphism $M(r)\:M(R/I)\lrarrow M(R/(I:r))$.
 By the definition, the element $br\in\il(N)$ is represented by
the element $M(r)(b)\in M(R/(I:r))$.

 To check that $\IL(M)=\il(M)$ is a discrete right $R$\+module, one
observes that $bI=0$ in $\il(M)$ whenever an element $b\in\il(M)$
is represented by an element $\tilde b\in M(R/I)$.
 Indeed, for any $r\in I$ one has $(I:r)=R$, so $R/(I:r)=0$ is a zero
object of the preadditive category $\sQ_\bF$, and $M(0)=0$
for an additive functor~$M$.

 The adjunction isomorphism $\Hom_{\discr R}(\IL(M),\N)\simeq
\Hom_{\Ab_\bF}(M,\Dh(\N))$ holds for any contravariant $\bF$\+system of
abelian groups $M$ and any discrete right $R$\+module $\N$, because
the datum of a right $R$\+module morphism
$$
 \varinjlim\nolimits_{I\in\bF}M(R/I)\lrarrow\N
$$
is equivalent to the datum of an $\bF$\+indexed family of abelian
group homomorphisms
$$
 M(R/I)\lrarrow\Hom_{R^\rop}(R/I,\N)\,\subset\,\N
$$
satisfying the compatibility equation for all the morphisms in
the category~$\sQ_\bF$.
 It is helpful to keep in mind that every morphism in $\sQ_\bF$ has
the form of a composition $R/J\rarrow R/(I:\nobreak s)\overset s
\rarrow R/I$, where $I$ and $J\subset(I:s)$ are open right ideals,
$s\in R$ is an element, $R/J\rarrow R/(I:s)$ is the morphism in
$\Pi_\bF$, and $R/(I:s)\overset s\rarrow R/I$ is the morphism of
left multiplication with~$s$.

 Part~(b): this is~\cite[Lemma~6.2(a,c)]{PR} (notice that the existence
of a countable base of the topology on $\R$, which is the running
assumption in~\cite[Section~6]{PR}, is not yet used
in~\cite[Lemma~6.2]{PR}).
 It will be convenient for us to work with covariant $\bfF$\+systems
of abelian groups instead of the covariant $\bF$\+systems here.
 Then one has $\CT(\C)(\R/\I)=\R/\I\ocn_\R\C=\C/(\I\tim\C)$ for any left
$\R$\+contramodule $\C$ and open right ideal $\I\subset\R$.
 Given an element $s\in\R$ and a pair of open right ideals $\I$ and
$\J\subset\R$ such that $s\J\subset\I$, applying the functor
$\CT(\C)\:\sQ_\bfF\rarrow\Ab$ to the morphism $\R/\J\overset s\rarrow
\R/\I$ in the category $\sQ_\bfF$ produces the map
$\C/(\J\tim\C)\rarrow\C/(\I\tim\C)$ induced by the abelian group
homomorphism $s\:\C\rarrow\C$ of left multiplication with~$s$.

 The functor $\PL\:{}_\bfF\Ab\rarrow\R\contra$ assigns to a covariant
$\bfF$\+system of abelian groups $D\:\sQ_\bfF\rarrow\Ab$ the abelian
group
$$
 \PL(D)=\varprojlim\nolimits_{\I\in\bfF}D(\R/\I),
$$
where the projective limit is taken over the directed poset $\Pi_\bfF$,
endowed with the following left $\R$\+contramodule structure.
 Denote by $\psi_\I$ the natural projection map
$\PL(D)\rarrow D(\R/\I)$.
 For any open right ideal $\I\subset\R$ and an element $r\in\R$,
we have the related morphism $\R/(\I:r)\overset{r}\rarrow\R/\I$
in the category~$\sQ_\bfF$.
 Set $\E=\PL(D)$; and suppose that we are given an element
$\mathfrak r=\sum_{e\in\E}r_ee\in\R[[\E]]$.
 Then the element $\pi_\E(\mathfrak r)\in\E=\PL(D)$ is defined
by the rule
$$
 \psi_\I(\pi_\E(\mathfrak r))=
 \sum\nolimits_{e\in\E}D(r_e)(\psi_{(\I:r_e)}(e)),
$$
where $\psi_{(\I:r_e)}(e)\in D(\R/(\I:r_e))$ and
$D(r_e)\:D(\R/(\I:r_e))\rarrow D(\R/\I)$.
 The sum in the right-hand side is finite, because one has
$r_e\in\I$ for all but a finite subset of elements $e\in\E$,
and $r_e\in\I$ implies $(\I:r_e)=\R$, so $\R/(\I:r_e)=0$ is a zero
object in $\sQ_\bfF$, and $D(0)=0$ for an additive functor
$D\:\sQ_\bfF\rarrow\Ab$.

 The adjunction isomorphism $\Hom^\R(\C,\PL(D))\simeq
\Hom_{{}_\bfF\Ab}(\CT(\C),D)$ holds for any left $\R$\+contramodule
$\C$ and any covariant $\bfF$\+system of abelian groups $D$,
because the datum of a left $\R$\+contramodule morphism
$$
 \C\lrarrow\varprojlim\nolimits_{\I\in\bfF}D(\R/\I)
$$
is equivalent to the datum of an $\bfF$\+indexed family of abelian
group homomorphisms
$$
 \C/(\I\tim\C)\lrarrow D(\R/\I)
$$
satisfying the compatibility equations for all the morphisms in
the category~$\sQ_\bfF$.
 The argument is similar to the one in part~(a), and it is helpful
to observe, from the construction of the left $\R$\+contramodule
structure on $\E=\PL(D)$ above, that one has $\psi_\I(e)=0$
for any $e\in\I\tim\E$.
\end{proof}

\Section{Separated Contramodules}  \label{separated-contramodules-secn}

 We keep the notation of the previous Section~\ref{F-systems-secn}.
 So $R$ is a topological associative ring with a right linear topology
$\bF$ and $\R$ is the completion of $R$ with respect to $\bF$, viewed
as a topological ring in the projective limit topology~$\bfF$.

 Let $J\subset I\subset R$ be a pair of open right
ideals in $R$, one of which is contained in the other one.
 The following exact sequence of discrete right $R$\+modules
will be important for us:
\begin{equation} \label{I-J-sequence}
 \bigoplus\nolimits_{s\in I} R/(J:s)\overset{(s)}\lrarrow R/J
 \lrarrow R/I\lrarrow 0,
\end{equation}
where the $s$\+indexed component of the map
$(s)_{s\in I}\:\bigoplus_{s\in I}R/(J:s)\rarrow R/J$ is the right
$R$\+module morphism $R/(J:s)\rarrow R/J$ acting by the left
multiplication with~$s$.

 Whenever $\sA$ is a complete abelian category, we will say that
a contravariant $\bF$\+system $M\:\sQ_\bF^\sop\rarrow\sA$ is
\emph{left exact} if for any two open right ideals
$I\subset J\subset R$ the short sequence of objects of~$\sA$
$$
 0\lrarrow M(R/I)\lrarrow M(R/J)\lrarrow
 \prod\nolimits_{s\in I}M(R/(J:s))
$$
obtained by applying the contravariant functor $M$ to
the sequence~\eqref{I-J-sequence} (and replacing formally
the undefined image of the direct sum with the product of the images)
is left exact in~$\sA$.

 Similarly, whenever $\sA$ is a cocomplete abelian category, we will
say that a covariant $\bF$\+system $D\:\sQ_\bF\rarrow\sA$ is
\emph{right exact} if for any two open right ideals
$I\subset J\subset R$ the short sequence of objects of~$\sA$
$$
 \coprod\nolimits_{s\in I} D(R/(J:s))\lrarrow D(R/J)\lrarrow
 D(R/I)\lrarrow 0
$$
obtained by applying the covariant functor $D$ to
the sequence~\eqref{I-J-sequence} (and replacing formally
the undefined image of the direct sum with the coproduct of
the images) is right exact in~$\sA$.

\begin{prop} \label{discrete-modules-contravariant-systems}
 In the context of
Proposition~\ref{discrete-modules-contramodules-from-systems}(a),
the composition\/ $\IL\circ\Dh$ of the two adjoint functors\/
$\Dh\:\discr R\rarrow\Ab_\bF$ and\/ $\IL\:\Ab_\bF\rarrow\discr R$ is
the identity functor\/ $\discr R\rarrow\discr R$.
 A contravariant\/ $\bF$\+system of abelian groups\/
$\sQ_\bF^\sop\rarrow\Ab$ belongs to the image of the functor\/ $\Dh$
if and only if it is left exact.
 So the functor\/ $\Dh$ is fully faithful, and it provides
an equivalence between the category of discrete right $R$\+modules\/
$\discr R$ and the full subcategory in\/ $\Ab_\bF$ formed by all
the left exact contravariant\/ $\bF$\+systems of abelian groups.
\end{prop}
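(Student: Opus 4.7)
The plan is to split the statement into three assertions and handle them in order.

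First, I would verify the identity $\IL\circ\Dh=\id_{\discr R}$ by direct computation. For any $\N\in\discr R$, one has
\[
\IL(\Dh(\N))=\varinjlim\nolimits_{I\in\bF}\Hom_{R^\rop}(R/I,\N)=\bigcup\nolimits_{I\in\bF}\{b\in\N\mid bI=0\},
\]
and this union exhausts $\N$ because every element of a discrete right $R$\+module has an open annihilator. Compatibility with the $R$\+module structures is built into the construction of Lemma~\ref{modules-from-pseudo-systems-lem}(a). Full-and-faithfulness of $\Dh$ then follows from the general fact that a right adjoint is fully faithful whenever the counit of the adjunction is an isomorphism. The easy direction of the image characterization is also immediate: for any $\N$, the $\bF$\+system $\Dh(\N)$ is left exact because it is obtained by applying the left-exact functor $\Hom_{R^\rop}({-},\N)$ to the right-exact sequence~\eqref{I-J-sequence}.

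The hard direction is to show that any left exact contravariant $\bF$\+system $M$ lies in the essential image of $\Dh$, i.e., that the unit $\eta_M\:M\to\Dh(\IL(M))$ of the adjunction $\IL\dashv\Dh$ is an isomorphism. The key observation, where I expect the main effort to lie, is the following: when $M$ is left exact, the canonical map $\varphi_J\:M(R/J)\to\IL(M)$ into the colimit is \emph{injective} for every $J\in\bF$. Indeed, if $\tilde c\in M(R/J)$ satisfies $\varphi_J(\tilde c)=0$, then there exists $K\in\bF$ with $K\subset J$ such that the image of $\tilde c$ in $M(R/K)$ vanishes; but this transition map is precisely the first arrow of the left exact sequence associated with the pair $K\subset J$, hence is injective, and so $\tilde c=0$.

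With this observation, both injectivity and surjectivity of $\eta_M$ at any $R/I$ become immediate. Injectivity follows from the injectivity of $\varphi_I$ (since the unit $M(R/I)\to\Dh(\IL(M))(R/I)=\Hom_{R^\rop}(R/I,\IL(M))$ sends $\tilde b$ to the morphism $1+I\mapsto\varphi_I(\tilde b)$). For surjectivity I would take a morphism $\phi\:R/I\to\IL(M)$, set $b:=\phi(1+I)$ (so $bI=0$), and represent $b=[\tilde c]$ with $\tilde c\in M(R/J)$, $J\in\bF$; after replacing $J$ by $J\cap I$ I may assume $J\subset I$. For each $s\in I$ the element $M(s)(\tilde c)\in M(R/(J:s))$ represents $bs=0$ in $\IL(M)$, and the injectivity of $\varphi_{(J:s)}$ forces $M(s)(\tilde c)=0$ in $M(R/(J:s))$. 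Hence $\tilde c$ lies in the kernel of $M(R/J)\to\prod_{s\in I}M(R/(J:s))$, which by left exactness for the pair $J\subset I$ coincides with the image of the inclusion $M(R/I)\hookrightarrow M(R/J)$; the corresponding preimage $\tilde b\in M(R/I)$ is the desired lift. Notably, no further shrinking of $J$ is needed, which is the payoff of the key observation.
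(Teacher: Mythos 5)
Your proposal is correct, and it coincides with the second (``alternative'') argument that the paper itself offers for the hard direction: proving directly that the adjunction unit $M\rarrow\Dh(\IL(M))$ is an isomorphism for every left exact $M$, which the paper compresses into the single sentence that this ``is but a restatement of the definition of left exactness.'' You have usefully supplied the details behind that sentence --- in particular the key observation that left exactness forces each structure map $M(R/J)\rarrow\IL(M)$ into the filtered colimit to be injective, which is exactly what makes both the injectivity and the surjectivity of the unit at each $R/I$ go through. The paper's primary argument instead invokes the Special Adjoint Functor Theorem together with a Kan extension $G_M(\N)=\varprojlim_{R/I\to\N}M(R/I)$ to represent $M$ by a discrete module; your route is more elementary and self-contained, at the cost of being specific to this adjunction rather than illustrating the general representability mechanism.
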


\begin{proof}
 For every discrete right $R$\+module $\N$, the contravariant
$\bF$\+system of abelian groups $\Dh(\N)$ is left exact, because
the functor $\Hom_R({-},\N)\:\discr R\rarrow\Ab$ is left exact.
 The adjunction morphism $\IL(\Dh(\N))\rarrow\N$ is an isomorphism
representing $\N$ as the direct limit/union
$\N=\varinjlim_{I\in\bF}\N_I$ of its subgroups $\Hom_R(R/I,\N)=\N_I
\subset\N$ of all elements annihilated by a given open right ideal
$I\subset R$.

 To prove that all the left exact contravariant $\bF$\+systems
of abelian groups belong to the essential image of the functor
$\Dh$, one can argue as follows.
 By the Special Adjoint Functor Theorem~\cite[Corollary~5.57]{Fa},
every contravariant functor $\discr R\rarrow\Ab$ taking colimits
to limits is representable by an object of $\discr R$, since
$\discr R$ is a cocomplete abelian category with a set of generators
(formed, e.~g., by the cyclic discrete right $R$\+modules~$R/I$).
 It remains to show that any left exact contravariant $\bF$\+system
of abelian groups $M$ can be extended to such a functor.
 For this purpose, one can use the construction of the Kan extension,
setting
$$
 G_M(\N)=\varprojlim\nolimits_{R/I\to\N}M(R/I)\qquad
 \text{for every \,$\N\in\discr R$},
$$
where the projective limit is taken over the diagram formed by
all the morphisms of discrete right $R$\+modules $R/I\rarrow\N$
(indexing the vertices of the diagram) and all the commutative
triangles $R/I\rarrow R/J\rarrow\N$ (indexing the arrows), where
$R/I\rarrow R/J$ ranges over the morphisms in~$\sQ_\bF$.
 For any contravariant $\bF$\+system $M$, the functor $G_M$ takes
coproducts to products (since the right $R$\+modules $R/I$ are
finitely generated); and it is straightforward to check that
the contravariant functor $G_M$ is left exact whenever
the contravariant $\bF$\+system $M$~is.

 Alternatively, one can prove explicitly from the construction that
the adjunction morphism $M\rarrow\Dh(\IL(M))$ is an isomorphism for
every left exact contravariant $\bF$\+system of abelian groups~$M$.
 For this purpose, one needs to check that the subgroup of all
elements annihilated by an open right ideal $I\subset R$ in
the discrete right $R$\+module $\IL(M)=\varinjlim_{J\in\bF}M(R/J)$
coincides with the image of the (injective) map
$M(R/I)\rarrow\varinjlim_{J\in\bF}M(R/J)$.
 This is but a restatement of the definition of left exactness
of a contravariant $\bF$\+system.
\end{proof}

 Let $\C$ be a left $\R$\+contramodule.
 Denote by $\bB_\C$ the set of all subgroups $\I\tim\C\subset\C$ of
the underlying abelian group of $\C$, where $\I$ ranges over
the open right ideals in~$\R$.
 Then $\bB_\C$ is a linear topology base on the underlying abelian
group of $\C$, as $\bB_\C$ is nonempty and
$$
 (\I\tim\C)\cap(\J\tim\C)\supset(\I\cap\J)\tim\C
$$
for any two open right ideals $\I$ and $\J\subset\R$.
 Notice a base of neighborhoods of zero in this topology on $\C$
is formed by subgroups and \emph{not} submodules; so this is not
a linear topology on a module in the sense of~\cite[Section~VI.4]{St}
(it is instructive to observe that topologies on \emph{right}
modules over a right linearly topological ring are considered
in~\cite[Section~VI.4]{St}, while our $\C$ is a \emph{left}
$\R$\+module).

 A left $\R$\+contramodule $\C$ is said to be \emph{separated}
(respectively, \emph{complete}) if it is a separated (resp., complete)
abelian group in the topology with a base~$\bB_\C$.
 In other words, $\C$ is called \emph{separated} if the natural map
of abelian groups $\lambda_{\R,\C}\:\C\rarrow\varprojlim_{\I\in\bfF}
\C/(\I\tim\C)$ is injective, and $\C$ is \emph{complete} if this map
is surjective.
 Clearly, $\C$ is separated if and only if
$\bigcap_{\I\in\bfF}\I\tim\C=0$.
 Any $\R$\+subcontramodule of a separated left $\R$\+contramodule
is separated.

\begin{prop} \label{contramodules-covariant-systems-prop}
 In the context of
Proposition~\ref{discrete-modules-contramodules-from-systems}(b),
for any covariant\/ $\bfF$\+system of abelian groups $D$
the left\/ $\R$\+contramodule\/ $\PL(D)$ is separated.
 For any left\/ $\R$\+contramodule\/ $\C$, the covariant\/
$\bfF$\+system of abelian groups\/ $\CT(\C)$ is right exact.
 For any left\/ $\R$\+contramodule\/ $\C$, the abelian group\/
$\varprojlim_{\I\in\bfF}\C/(\I\tim\C)$ is the underlying abelian group
of the left\/ $\R$\+contramodule\/ $\PL(\CT(\C))$, and
$\lambda_{\R,\C}$ is the adjunction morphism.
 In particular, $\lambda_{\R,\C}$ is a morphism of left\/
$\R$\+contramodules, and its kernel\/ $\bigcap_{\I\in\bfF}\I\tim\C$ is
an\/ $\R$\+subcontramodule in\/~$\C$.
\end{prop}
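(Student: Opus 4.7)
The plan is to treat the four assertions in the order they are stated, exploiting the ingredients already assembled in the previous proposition: the construction of $\PL(D)$ as an inverse limit with projections $\psi_\I$, the formula $\CT(\C)(\R/\I)=\C/(\I\tim\C)$, and (crucially) the observation $\psi_\I(e)=0$ for every $e\in\I\tim\PL(D)$ recorded there.

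For separatedness of $\PL(D)$, I would simply combine the cited observation with the fact that $\PL(D)=\varprojlim_{\I\in\bfF}D(\R/\I)$ is detected by its projections. If $e\in\bigcap_{\I\in\bfF}\I\tim\PL(D)$, then $\psi_\I(e)=0$ for every open $\I\subset\R$, so $e=0$.

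For right exactness of $\CT(\C)$, fix a pair of open right ideals $\J\subset\I\subset\R$. Surjectivity of $\C/(\J\tim\C)\rarrow\C/(\I\tim\C)$ and vanishing of the composition are immediate from $\J\tim\C\subset\I\tim\C$ and from $s\C\subset\I\tim\C$ for $s\in\I$. The only real work is to identify $(\I\tim\C)/(\J\tim\C)$ with the image of the first map. For this I would take any $c\in\I\tim\C$, write $c=\pi_\C(\mathfrak r)$ with $\mathfrak r=\sum_{d\in\C}r_dd\in\I[[\C]]$, and split $\mathfrak r=\mathfrak r_F+\mathfrak r_\J$ where $F$ is the finite set of indices with $r_d\notin\J$ (using that $(r_d)$ converges to zero and $\J$ is open). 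The tail $\pi_\C(\mathfrak r_\J)$ lies in $\J\tim\C$, while the head gives $c\equiv\sum_{d\in F}r_d\cdot d\pmod{\J\tim\C}$. Each summand $r_d\cdot d$ is the image of $d\in\C/((\J:r_d)\tim\C)$ under the $s=r_d$ component of the first map; the verification that this component is well-defined (i.e.\ that $s\cdot((\J:s)\tim\C)\subset\J\tim\C$) is a direct application of the associativity axiom $\pi_\C\circ\R[[\pi_\C]]=\pi_\C\circ\phi_\R$ to the element of $\R[[\R[[\C]]]]$ with a single coefficient $s$ concentrated at a witnessing $\mathfrak q\in(\J:s)[[\C]]$, using $s(\J:s)\subset\J$. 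I expect this well-definedness step to be the technical heart of the argument.

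For the third assertion, the underlying abelian group of $\PL(\CT(\C))$ is tautologically $\varprojlim_{\I\in\bfF}\C/(\I\tim\C)$, so it suffices to identify $\lambda_{\R,\C}$ with the unit of the adjunction $\CT\dashv\PL$ at $\C$. Under the adjunction isomorphism $\Hom^\R(\C,\PL(\CT(\C)))\simeq\Hom_{{}_\bfF\Ab}(\CT(\C),\CT(\C))$, the identity $\CT(\C)\to\CT(\C)$ corresponds by construction to the morphism whose composition with $\psi_\I$ is the canonical projection $\C\rarrow\C/(\I\tim\C)$ for every $\I\in\bfF$. This characterizes the unit uniquely as $\lambda_{\R,\C}$. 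Finally, the fourth assertion follows formally: the unit of an adjunction between abelian categories is always a morphism in the domain category, so $\lambda_{\R,\C}$ is $\R$\+contralinear; and since the forgetful functor $\R\contra\rarrow\Ab$ is exact (as recalled in Section~\ref{prelim-contramodules}), kernels computed in $\R\contra$ agree with kernels of abelian groups, whence $\bigcap_{\I\in\bfF}\I\tim\C=\ker(\lambda_{\R,\C})$ inherits an $\R$\+contramodule structure as a subobject of~$\C$.
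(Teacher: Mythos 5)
Your proof is correct. For the separatedness of $\PL(D)$ and for the last two assertions it coincides with the paper's argument, which likewise reads off separatedness from the observation $\psi_\I(e)=0$ for $e\in\I\tim\PL(D)$ and derives the identification of $\lambda_{\R,\C}$ with the adjunction unit (hence its contralinearity and the subcontramodule property of its kernel) formally from the constructions in Proposition~\ref{discrete-modules-contramodules-from-systems}(b). The one place where you take a genuinely different route is the right exactness of $\CT(\C)$: the paper disposes of this in one line by noting that the functor ${-}\ocn_\R\C\:\discr\R\rarrow\Ab$ is right exact (being defined as a cokernel, it preserves cokernels and coproducts), so applying it to the exact sequence~\eqref{I-J-sequence} of discrete right modules immediately produces a right exact sequence, and the identification $\R/\I\ocn_\R\C\simeq\C/(\I\tim\C)$ from Section~\ref{prelim-contratensor} converts this into the statement about the system $\I\longmapsto\C/(\I\tim\C)$. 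Your element-wise argument --- splitting $\mathfrak r\in\I[[\C]]$ into a finite head and a tail lying in $\J[[\C]]$, and checking $s\cdot\bigl((\J:s)\tim\C\bigr)\subset\J\tim\C$ by contraassociativity --- is a correct, self-contained unpacking of the same fact. It costs a little more work (including the implicit use of additivity of~$\pi_\C$, which itself is a consequence of the contramodule axioms), but it makes visible exactly where the axioms enter, whereas the paper's route hides the well-definedness of $\CT(\C)$ on morphisms inside the functoriality of the contratensor product in its first argument.
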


\begin{proof}
 The first assertion is~\cite[Lemma~6.2(b)]{PR}, while the second one
is explained in~\cite[paragraphs preceding Lemma~6.2]{PR}.
 The covariant $\bfF$\+system $\CT(\C)$ is right exact for every
left $\R$\+contramodule $\C$, because the functor ${-}\ocn_\R\C\:
\discr\R\rarrow\Ab$ is right exact.
 The left $\R$\+contramodule $\E=\PL(D)$ is separated for any covariant
$\bfF$\+system $D$, because, in the notation of the proof of
Proposition~\ref{discrete-modules-contramodules-from-systems}(b),
one has $\psi_\I(e)=0$ for any $e\in\I\tim\E$, so
$e\in\bigcap_{\I\in\bfF}\I\tim\E$ implies $e=0$.
 The remaining assertions follow from the constructions of the functors
$\CT$ and $\PL$ and the construction of the adjunction between them
in Proposition~\ref{discrete-modules-contramodules-from-systems}(b).
\end{proof}

\begin{thm} \label{separated-vs-complete-contramodules-thm}
 Assume that a complete, separated right linear topology\/ $\bfF$ on
a ring\/ $\R$ has a countable base.
 Then all left\/ $\R$\+contramodules are complete, so
the adjunction/completion morphism $\lambda_{\R,\C}\:\C\rarrow
\PL(\CT(\C))$ is surjective for any left\/ $\R$\+contramodule\/~$\C$.
 A left\/ $\R$\+contramodule  is separated if and only if it
belongs to the essential image of the functor\/~$\PL$.
 A covariant\/ $\bfF$\+system of abelian groups is right exact
if and only if it belongs to the essential image of the functor\/~$\CT$.
 The restrictions of the adjoint functors\/ $\CT$ and\/ $\PL$ are
mutually inverse equivalences between the full subcategory of
separated left\/ $\R$\+contramodules in\/ $\R\contra$ and the full
subcategory of right exact covariant\/ $\bfF$\+systems of abelian
groups in~${}_\bfF\Ab$.
\end{thm}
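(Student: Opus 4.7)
The statement splits into four assertions: (a) completeness of every left $\R$\+contramodule; (b) identification of separated contramodules as the essential image of~$\PL$; (c) identification of right exact $\bfF$\+systems as the essential image of~$\CT$; and (d) the resulting equivalence of categories. My plan is to prove (a) by an explicit construction exploiting the countable base, then to deduce (b), (c), and (d) from (a) by formal adjunction arguments (with one nontrivial step hidden in~(c)).

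For~(a), fix a descending countable base $\I_0\supset\I_1\supset\I_2\supset\dotsb$ of~$\bfF$, and let a compatible family $(\bar c_n)\in\varprojlim_n\C/(\I_n\tim\C)$ be given. Choose arbitrary lifts $c_n\in\C$; since $c_{n+1}-c_n\in\I_n\tim\C$, we can write $c_{n+1}-c_n=\pi_\C(\mathfrak r_n)$ for some $\mathfrak r_n=\sum_{c\in\C}r_{n,c}c\in\I_n[[\C]]$. I~would then assemble the formal combination $\mathfrak s=\sum_{c\in\C}\bigl(\sum_{n\ge 0}r_{n,c}\bigr)c$, observing that each inner sum converges in $\R$ because its tails lie in the open (hence, as a subgroup, closed) subgroup $\I_N$, and that the resulting family of coefficients converges to zero in~$\R$, since for any chosen~$\I_N$ the summands with $n\ge N$ already lie in~$\I_N$ while among the finitely many $n<N$ only finitely many indices~$c$ can contribute outside~$\I_N$. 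Thus $\mathfrak s\in\R[[\C]]$, and it admits for every $n\ge 0$ a decomposition $\mathfrak s=\mathfrak r_0+\dotsb+\mathfrak r_{n-1}+\mathfrak s_n$ with $\mathfrak s_n\in\I_n[[\C]]$ constructed by the same procedure. Setting $c=c_0+\pi_\C(\mathfrak s)$ and invoking additivity of the contraaction, a telescoping computation yields $c-c_n=\pi_\C(\mathfrak s_n)\in\I_n\tim\C$ for every $n$, which is the desired lift; this gives completeness, and hence surjectivity of $\lambda_{\R,\C}$.

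For~(b), Proposition~\ref{contramodules-covariant-systems-prop} shows that $\PL(D)$ is always separated; conversely a separated $\C$ has injective $\lambda_{\R,\C}$ by definition and surjective $\lambda_{\R,\C}$ by~(a), so $\C\simeq\PL(\CT(\C))$ lies in the essential image of~$\PL$. For~(c), one direction is again immediate from Proposition~\ref{contramodules-covariant-systems-prop}; for the converse, I~would show directly that, for right exact~$D$, the component $\PL(D)/(\I\tim\PL(D))\rarrow D(\R/\I)$ of the counit $\CT\PL(D)\rarrow D$ is bijective. Surjectivity follows from right exactness of~$D$ combined with the countable base: the surjections $D(\R/\I_{n+1})\twoheadrightarrow D(\R/\I_n)$ allow step-by-step lifting of any $x\in D(\R/\I)$ to a compatible family in~$\PL(D)$. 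The main obstacle, and the place where the countable base is genuinely used a second time, is injectivity. Given $e\in\PL(D)$ with $\psi_\I(e)=0$, I~would construct inductively finite combinations $\mathfrak r_n\in\I_{n-1}[\PL(D)]$ that kill off the $\I_n$-component of the remainder step by step, using the right exact sequence $\coprod_{s\in\I_{n-1}}D(\R/(\I_n:s))\rarrow D(\R/\I_n)\rarrow D(\R/\I_{n-1})\rarrow 0$ to produce coefficients and lifting the resulting elements of $D(\R/(\I_n:s))$ back to $\PL(D)$ via the surjectivity just established; then assemble the $\mathfrak r_n$ into a single $\mathfrak r\in\I[[\PL(D)]]$ by exactly the countable-base convergence trick of~(a), indexed over a disjoint union labeling set. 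The identity $\pi_{\PL(D)}(\mathfrak r)=e$ reduces via the contraaction formula for $\PL(D)$ to matching $\I_n$-components for all~$n$, using that $s\in\I_n$ forces $(\I_n:s)=\R$ so tail terms drop out, and concludes by separatedness of~$\PL(D)$. Part~(d) is then automatic: the adjunction $\CT\dashv\PL$ always restricts to an equivalence between the full subcategories on which the unit and counit are iso, and these subcategories have been identified in~(b) and~(c).
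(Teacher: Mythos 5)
Your argument is correct, but it is genuinely different in character from what the paper does: the paper's entire proof is a citation of~\cite[Lemma~6.3]{PR}, together with remarks on translating between covariant $\bfF$\+systems on $\sQ_\bfF$ and coproduct-preserving functors on $\discr\R$ via Kan extension. You have instead supplied a self-contained proof, and its two nontrivial ingredients --- the telescoping/reassembly trick $\mathfrak s=\sum_n\mathfrak r_n$ with $\mathfrak r_n\in\I_n[[\C]]$ for completeness, and the inductive correction $e^{(n)}=e^{(n-1)}-\pi_\E(\mathfrak r_n)$ with $\mathfrak r_n$ produced from the right exact sequences for the injectivity of the counit components --- are exactly the kind of countable-filtration arguments that the cited lemma rests on. What the citation buys the paper is brevity; what your write-up buys is that the reader sees where the countable base enters (twice, as you correctly emphasize) without leaving the $\bfF$\+system formalism. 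A few points that a full write-up should make explicit: (1)~finite additivity of the contraaction map $\pi_\C\:\R[[\C]]\rarrow\C$, which underlies both the telescoping identity $c-c_n=\pi_\C(\mathfrak s_n)$ and the decomposition $\pi_\E(\mathfrak r)=\sum_{m\le N}\pi_\E(\mathfrak r_m)+\pi_\E(\mathfrak t_N)$ with $\mathfrak t_N\in\I_N[[\E]]$ --- this follows from the monad axioms but is not literally part of the definition; (2)~the fact that open subgroups are closed, so that the tails $\sum_{n\ge N}r_{n,c}$ indeed land in $\I_N$; (3)~the counit $\CT(\PL(D))\rarrow D$ must be shown bijective at \emph{every} $\R/\I$ with $\I\in\bfF$, not only along the chosen chain $\I_0\supset\I_1\supset\dotsb$; your induction adapts (take the first correction term $\mathfrak r_1$ in $\I[\E]$ using the right exact sequence for the pair $\I_{n_0}\subset\I$ with $\I_{n_0}$ in the base, and continue along the chain), but this reduction deserves a sentence. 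None of these is a gap in the idea; they are routine verifications.
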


\begin{proof}
 This is~\cite[Lemma~6.3]{PR}.
 Notice that the exposition in~\cite[Section~6]{PR} uses
direct-limit preserving covariant additive functors $\discr\R\rarrow\Ab$
in lieu of our covariant $\bfF$\+systems $\sQ_\bfF\rarrow\Ab$.
 One readily observes that the two approaches are essentially
the same, as no need to apply such functors to any but cyclic
discrete right $\R$\+modules ever arises in~\cite{PR}.
 Alternatively, one can notice that, as in the proof of
Proposition~\ref{discrete-modules-contravariant-systems},
the construction of the Kan extension
$$
 F_C(\N)=\varinjlim\nolimits_{\R/\I\to\N}C(\R/\I)\qquad
 \text{for every \,$\N\in\discr\R$}
$$
can be used to extend a covariant additive functor
$C\:\sQ_\bfF\rarrow\Ab$ to a coproduct-preserving covariant additive
functor $F_C\:\discr\R\rarrow\Ab$.
 The functor $F_C$ is right exact whenever the covariant
$\bfF$\+system $C$~is.
\end{proof}

 The following lemma is quite basic.
 It will be useful in the proofs of Proposition~\ref{orthogonality-prop}
and Theorem~\ref{weakly-cotorsion-obtainable}.

\begin{lem} \label{quotient-of-separated-lemma}
 Let\/ $\R$ be a complete, separated topological ring in a right
linear topology\/~$\bfF$.
 Then any left $\R$\+contramodule\/ $\C$ is the cokernel of
an injective morphism of separated left\/ $\R$\+contramodules\/
$\E\rarrow\D$.
\end{lem}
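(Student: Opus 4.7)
The plan is to take a very concrete construction: represent $\C$ as a quotient of a free left $\R$-contramodule and check that both the free contramodule and the kernel of the surjection are separated.

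First, I would recall from Section~\ref{prelim-contramodules} that for every set $X$ the free left $\R$-contramodule $\R[[X]]$ is a projective generator of $\R\contra$. Therefore one can pick a set $X$ together with a surjective morphism of left $\R$-contramodules $\D=\R[[X]]\rarrow\C$; let $\E\subset\D$ be its kernel. Then $\E\rarrow\D$ is an injective morphism of left $\R$-contramodules with cokernel~$\C$, so it only remains to verify that $\D$ and $\E$ are separated.

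The key step is to check that every free $\R$-contramodule is separated. For any open right ideal $\I\subset\R$ (which is in particular a closed right ideal), the last formula of Section~\ref{prelim-contratensor} gives $\I\tim\R[[X]]=\I[[X]]\subset\R[[X]]$. An element $\sum_{x\in X}r_xx\in\R[[X]]$ lies in $\bigcap_{\I\in\bfF}\I[[X]]$ if and only if $r_x\in\bigcap_{\I\in\bfF}\I$ for every $x\in X$; since $\R$ itself is separated, this intersection is zero, so $r_x=0$ for all~$x$. Hence $\bigcap_{\I\in\bfF}\I\tim\D=0$, which means $\D$ is separated.

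Finally, $\E$ is separated because it is an $\R$-subcontramodule of the separated left $\R$-contramodule~$\D$, and the paper records (just before Proposition~\ref{contramodules-covariant-systems-prop}) that any $\R$-subcontramodule of a separated left $\R$-contramodule is separated. There is essentially no obstacle here: the only content beyond unwinding definitions is the computation $\I\tim\R[[X]]=\I[[X]]$ for open~$\I$, which is already supplied by the preliminaries.
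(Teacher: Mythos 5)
Your proposal is correct and follows essentially the same route as the paper's own proof: present $\C$ as a quotient of a free contramodule $\D=\R[[X]]$, use $\I\tim\R[[X]]=\I[[X]]$ together with $\bigcap_{\I\in\bfF}\I=0$ to see that $\D$ is separated, and conclude that the kernel $\E$ is separated as a subcontramodule of a separated contramodule. No gaps.
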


\begin{proof}
 For any left $\R$\+contramodule $\C$, there exists a set $X$ and
a surjective left $\R$\+contramodule morphism $\R[[X]]\rarrow\C$ onto
$\C$ from the free left $\R$\+contramodule $\R[[X]]$ (e.~g., one can
take $X=\C$ and the contraaction map $\pi_\C\:\R[[\C]]\rarrow\C$).
 Any free left $\R$\+contramodule $\D=\R[[X]]$ is separated, because
$\I\tim\R[[X]]=\I[[X]]$ for all $\I\in\bfF$, hence
$\bigcap_{\I\in\bfF}\I\tim\D=\bigcap_{\I\in\bfF}\I[[X]]=0$ in $\R[[X]]$
(since the topology $\bfF$ on $\R$ is separated by assumption, so
$\bigcap_{\I\in\bfF}\I=0$ in~$\R$).
 The kernel $\E$ of the surjective morphism $\D\rarrow\C$ is
an $\R$\+subcontramodule in a separated left $\R$\+contramodule $\D$,
hence $\E$ is also a separated left $\R$\+contramodule.
\end{proof}

 For any discrete right $\R$\+module $\N$ and any open right ideal
$\I\subset\R$, we denote by $\N_\I\subset\N$ the subgroup in
(the underlying abelian group of) $\N$ consisting of all the elements
annihilated by~$\I$.
 So one has $\N=\varinjlim_{\I\in\bfF}\N_\I$.

 The next lemma will be useful in the proof of
Theorem~\ref{fully-faithful-thm}.

\begin{lem} \label{embedding-into-hom-lemma}
 Let\/ $\R$ be a complete, separated topological ring in a right
linear topology\/~$\bfF$.
 Then a left $\R$\+contramodule\/ $\C$ is separated if and only if
it can be embedded as a subcontramodule into a left\/
$\R$\+contramodule of the form\/ $\Hom_\boZ(\N,\boQ/\boZ)$, where\/
$\N$ is a discrete right\/ $\R$\+module.
\end{lem}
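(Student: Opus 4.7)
The plan is to treat the two implications separately: the ``if'' direction is a direct computation of $\I\tim\Hom_\boZ(\N,\boQ/\boZ)$, while the ``only if'' direction uses the adjunction between the contratensor product and the ``character'' functor $\Hom_\boZ({-},\boQ/\boZ)$ together with the fact that $\boQ/\boZ$ is an injective cogenerator of $\Ab$.

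For the ``if'' direction, let $\N$ be a discrete right $\R$\+module and put $\D=\Hom_\boZ(\N,\boQ/\boZ)$ with its natural left $\R$\+contramodule structure from Section~\ref{prelim-contratensor}. Using the explicit contraaction formula $\pi_\D(\sum_d r_d d)(b)=\sum_d d(br_d)$, I would observe that if $\sum_d r_d d\in\I[[\D]]$, so $r_d\in\I$ for all $d$, then for any $b\in\N_\I$ one has $br_d=0$, hence $\pi_\D(\sum_d r_d d)$ vanishes on $\N_\I$. Thus every element of $\I\tim\D$ vanishes on $\N_\I$, and since $\N=\bigcup_{\I\in\bfF}\N_\I$ by discreteness of $\N$, we conclude $\bigcap_{\I\in\bfF}\I\tim\D=0$, i.e., $\D$ is separated. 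Because any $\R$\+subcontramodule of a separated contramodule is separated (noted before Proposition~\ref{contramodules-covariant-systems-prop}), this direction is complete.

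For the ``only if'' direction, suppose $\C$ is a separated left $\R$\+contramodule. The idea is to assemble enough contramodule morphisms into character modules attached to cyclic discrete right $\R$\+modules $\R/\I$, and combine them into a single injection. By the adjunction $\Hom^\R(\C,\Hom_\boZ(\N,\boQ/\boZ))\simeq\Hom_\boZ(\N\ocn_\R\C,\boQ/\boZ)$ of Section~\ref{prelim-contratensor} and the identification $(\R/\I)\ocn_\R\C\simeq\C/(\I\tim\C)$, each abelian group homomorphism $\phi\:\C/(\I\tim\C)\rarrow\boQ/\boZ$ produces a contramodule morphism $\sigma_{\I,\phi}\:\C\rarrow\Hom_\boZ(\R/\I,\boQ/\boZ)$, given explicitly by $\sigma_{\I,\phi}(c)(r+\I)=\phi(rc+\I\tim\C)$. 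Setting $\N=\bigoplus_{(\I,\phi)}\R/\I$ (a discrete right $\R$\+module as a direct sum of discrete ones) and using the natural isomorphism $\Hom_\boZ(\bigoplus_\alpha\N_\alpha,\boQ/\boZ)\simeq\prod_\alpha\Hom_\boZ(\N_\alpha,\boQ/\boZ)$, the family $(\sigma_{\I,\phi})$ assembles into a single contramodule morphism $\sigma\:\C\rarrow\Hom_\boZ(\N,\boQ/\boZ)$.

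It remains to verify that $\sigma$ is injective. If $\sigma(c)=0$, then $\sigma_{\I,\phi}(c)=0$ for every pair $(\I,\phi)$, so $\phi(c+\I\tim\C)=0$ for every $\I\in\bfF$ and every homomorphism $\phi\:\C/(\I\tim\C)\rarrow\boQ/\boZ$. Since $\boQ/\boZ$ is an injective cogenerator of $\Ab$, this forces $c+\I\tim\C=0$ in $\C/(\I\tim\C)$ for every $\I\in\bfF$, i.e., $c\in\bigcap_{\I\in\bfF}\I\tim\C$. By separatedness of $\C$, this intersection is zero, so $c=0$. The main bookkeeping step, and the only real point where care is needed, is checking that the adjunction intertwines the $\sigma_{\I,\phi}$ with $\phi$ exactly as described (via the identification $(\R/\I)\ocn_\R\C\simeq\C/(\I\tim\C)$ sending $(r+\I)\ocn c$ to $rc+\I\tim\C$); once this is in place, the argument is essentially formal.
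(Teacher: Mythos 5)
Your proposal is correct and follows essentially the same route as the paper: the ``if'' direction via the observation that $\I\tim\Hom_\boZ(\N,\boQ/\boZ)$ consists of maps annihilating $\N_\I$, and the ``only if'' direction via the adjunction $\Hom^\R(\C,\Hom_\boZ(\N,\boQ/\boZ))\simeq\Hom_\boZ(\N\ocn_\R\C,\boQ/\boZ)$ together with $(\R/\I)\ocn_\R\C\simeq\C/(\I\tim\C)$ and the compatibility of character modules with direct sums. The only cosmetic difference is that you index over all pairs $(\I,\phi)$ at once, whereas the paper picks one separating functional per element $c$ and then takes a product; both arguments are the same in substance.
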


\begin{proof}
 ``If'': for any associative ring $A$, an $\R$\+discrete
$A$\+$\R$\+bimodule $\N$, and a left $A$\+module $V$, the left
$\R$\+contramodule $\D=\Hom_A(\N,V)$ is separated.
 Indeed, for any open right ideal $\I\subset\R$, the subgroup
$\I\tim\D\subset\D$ consists of (some) $A$\+linear maps
$\N\rarrow V$ annihilating~$\N_\I$.
 Hence $\bigcap_{\I\in\bfF}\I\tim\D=0$.

 ``Only if'' (cf.~\cite[first proof of Corollary~7.8]{PR}): notice that
the class of all left $\R$\+contramodules of the form
$\Hom_\boZ(\N,\boQ/\boZ)$, where $\N\in\discr\R$, is closed under
products, as for any family of discrete right $R$\+modules $\N_\alpha$
one has $\prod_\alpha\Hom_\boZ(\N_\alpha,\boQ/\boZ)\simeq
\Hom_\boZ(\bigoplus_\alpha\N_\alpha,\>\boQ/\boZ)$.
 Thus, given a separated left $\R$\+contramodule $\C$, it suffices
to show that for every element $c\in\C$ there exists a discrete
right $\R$\+module $\N_c$ and a left $\R$\+contramodule morphism
$g_c\:\C\rarrow\Hom_\boZ(\N_c,\boQ/\boZ)$ such that $g_c(c)\ne0$.
{\emergencystretch=1.5em\par}

 Choose an open right ideal $\I\subset\R$ such that $c\notin\I\tim\C$
and an abelian group homomorphism $f_c\:\C/(\I\tim\C)\rarrow\boQ/\boZ$
such that $f_c(\bar c)\ne0$ (where $\bar c$~is the image of~$c$ in
$\C/(\I\tim\C)$).
 Set $\N_c=\R/\I$.
 The natural isomorphism $\Hom^\R(\C,\Hom_\boZ(\N_c,\boQ/\boZ))\simeq
\Hom_\boZ(\N_c\ocn_\R\C,\>\boQ/\boZ)$ from
Section~\ref{prelim-contratensor} allows to assign to the abelian
group homomorphism $f_c\:\N_c\ocn_\R\C\simeq\C/(\I\tim\C)\rarrow
\boQ/\boZ$ a left $\R$\+contramodule morphism $g_c\:\C\rarrow
\Hom_\boZ(\N_c,\boQ/\boZ)$.
 By construction, the element $g_c(c)$ is an abelian group homomorphism
$\N_c\rarrow\boQ/\boZ$ taking the element $\bar 1=1+\I\in\N_c$ to
the element $f_c(\bar c)\in\boQ/\boZ$.
 Hence $g_c(c)\ne0$ in $\Hom_\boZ(\N_c,\boQ/\boZ)$.
\end{proof}

 The following proposition is a stronger version of
Lemma~\ref{embedding-into-hom-lemma} (holding in the narrower
generality of a countable topology base).
 We will use it in the proof of Proposition~\ref{orthogonality-prop}.

\begin{prop} \label{separated-kernel-of-homs}
 Let\/ $\R$ be a complete, separated topological ring in a right
linear topology\/ $\bfF$ with a countable base.
 Then any separated left\/ $\R$\+contramodule can be presented as
the kernel of an\/ $\R$\+contramodule morphism between two left\/
$\R$\+contramodules of the form\/ $\Hom_\boZ(\N',\boQ/\boZ)$ and\/
$\Hom_\boZ(\N'',\boQ/\boZ)$, where\/ $\N'$ and $\N''$ are discrete
right\/ $\R$\+modules.
\end{prop}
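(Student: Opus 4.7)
The approach is to strengthen Lemma~\ref{embedding-into-hom-lemma} by producing an embedding $\C\hookrightarrow\D'=\Hom_\boZ(\N',\boQ/\boZ)$ whose cokernel is itself separated; once this is done, a second application of Lemma~\ref{embedding-into-hom-lemma} gives an embedding $\D'/\C\hookrightarrow\D''=\Hom_\boZ(\N'',\boQ/\boZ)$, and $\C=\ker(\D'\to\D'')$ is the desired presentation. Topologically, $\D'/\C$ is separated precisely when $\C$ is a closed subcontramodule of $\D'$ with respect to the filtration $\{\I\tim\D'\}_{\I\in\bfF}$, so the problem reduces to constructing a closed embedding.

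Fix a countable base $\I_1\supset\I_2\supset\dotsb$ of $\bfF$. For each $n$ let $\C_n=\C/(\I_n\tim\C)$ and set $\N_n=\Hom_\boZ(\C_n,\boQ/\boZ)$, viewed as a discrete right $\R$\+module annihilated on the right by $\I_n$. Take $\N'=\bigoplus_n\N_n$, so that $\D'=\Hom_\boZ(\N',\boQ/\boZ)=\prod_n\Hom_\boZ(\N_n,\boQ/\boZ)$. The double-dual embeddings $\C_n\hookrightarrow\Hom_\boZ(\N_n,\boQ/\boZ)$ assemble into an $\R$\+contramodule morphism $\C\to\D'$; this morphism is injective by the separatedness assumption $\bigcap_n\I_n\tim\C=0$.

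The key point is to show $\C$ is closed in $\D'$, which I split in two steps. First, a strictness property: if $c\in\C$ has image in $\I_k\tim\D'$, then $c\in\I_k\tim\C$. Indeed, by the computation in the proof of Lemma~\ref{embedding-into-hom-lemma}, any element of $\I_k\tim\Hom_\boZ(\N',\boQ/\boZ)$ annihilates $(\N')_{\I_k}$, and since $\N_k$ is killed by $\I_k$ the entire summand $\N_k$ lies in $(\N')_{\I_k}$; hence the $k$-th component of the image of $c$ vanishes on all of $\N_k$, and injectivity of $\C_k\hookrightarrow\C_k^{\vee\vee}$ forces the image of $c$ in $\C_k$ to be zero. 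Second, closure: given $d\in\D'$ with $d-c_k\in\I_k\tim\D'$ for suitable $c_k\in\C$ and each $k$, the differences $c_k-c_{k'}$ for $k'\ge k$ lie in $\I_k\tim\D'\cap\C$, hence by strictness in $\I_k\tim\C$; so $(c_k)_k$ defines a compatible system in $\varprojlim_k\C_k$, and by Theorem~\ref{separated-vs-complete-contramodules-thm} (all contramodules are complete when $\bfF$ has a countable base) it lifts to some $c\in\C$. Then $d-c\in\bigcap_k\I_k\tim\D'=0$ by separatedness of $\D'$, so $d=c\in\C$. The main obstacle is the strictness step, which dictates the specific choice of $\N'$ combining all the quotients $\C_n$ simultaneously and exploits the precise description of $\I\tim\Hom_\boZ(\N,\boQ/\boZ)$ from Lemma~\ref{embedding-into-hom-lemma}; the countable base assumption enters both here, to organize the filtrations of $\N'$, and in the closure step, to invoke completeness of~$\C$.
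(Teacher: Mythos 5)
Your overall architecture is exactly that of the paper's first proof: embed $\C$ into some $\D'=\Hom_\boZ(\N',\boQ/\boZ)$ so that $\C\cap(\I\tim\D')=\I\tim\C$ for every open right ideal $\I$, deduce that $\D'/\C$ is separated, and then apply Lemma~\ref{embedding-into-hom-lemma} once more to $\D'/\C$. Your ``strictness plus closure'' derivation of separatedness of the quotient is sound: the paper proves the same implication via exactness of countable inverse limits of surjective systems, while your direct argument using completeness of $\C$ (Theorem~\ref{separated-vs-complete-contramodules-thm}) and separatedness of $\D'$ is an acceptable substitute that uses the countable base in the same places.

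The gap is in the construction of $\N'$. You put $\N_n=\Hom_\boZ(\C_n,\boQ/\boZ)$ with $\C_n=\C/(\I_n\tim\C)$ and declare it ``a discrete right $\R$\+module annihilated on the right by $\I_n$.'' But $\I_n$ is only a right ideal, so $\I_n\tim\C$ is merely a subgroup, not an $\R$\+submodule, of the left $\R$\+module $\C$; hence $\C_n$ carries no left $\R$\+module structure and its Pontryagin dual carries no right $\R$\+module structure. (Equivalently, the subgroup of functionals on $\C$ vanishing on $\I_n\tim\C$ need not be a right $\R$\+submodule of $\Hom_\boZ(\C,\boQ/\boZ)$ when $\I_n$ is not two-sided.) So $\D'$ is not of the required form and the ``assembled'' contramodule morphism $\C\rarrow\D'$ is not defined. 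The repair is what the paper does: for each open ideal $\I_n$ and each functional $f\:\C/(\I_n\tim\C)\rarrow\boQ/\boZ$ needed to separate points (one for each $c\notin\I_n\tim\C$ suffices), use the cyclic discrete right $\R$\+module $\R/\I_n$ and the adjunction $\Hom^\R(\C,\Hom_\boZ(\R/\I_n,\boQ/\boZ))\simeq\Hom_\boZ(\R/\I_n\ocn_\R\C,\>\boQ/\boZ)=\Hom_\boZ(\C/(\I_n\tim\C),\boQ/\boZ)$ to produce a contramodule morphism $g_f\:\C\rarrow\Hom_\boZ(\R/\I_n,\boQ/\boZ)$; then take $\N'$ to be the direct sum of all these copies of $\R/\I_n$. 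With this replacement your strictness step goes through with the single element $1+\I_n\in(\R/\I_n)_{\I_n}$ playing the role of ``the entire summand $\N_n$,'' since $g_f(c)(1+\I_n)=f(\bar c)$, and the rest of your argument recovers the paper's proof.
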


\begin{proof}[First proof]
 It suffices to show that any separated left $\R$\+contramodule $\C$
can be embedded into a left $\R$\+contramodule $\D$ of the form
$\D=\Hom_\boZ(\N,\boQ/\boZ)$, where $\N\in\discr\R$, in such a way
that the quotient $\R$\+contramodule $\D/\C$ is also separated.
 (Then it remains to apply Lemma~\ref{embedding-into-hom-lemma} to
the left $\R$\+contramodule~$\D/\C$.)

 The argument is based on the following simple lemma.

\begin{lem}
 Let\/ $\R$ be a complete, separated topological ring in a right
linear topology\/ $\bfF$ with a countable base\/~$\bfB$.
 Let\/ $\D$ be a separated left\/ $\R$\+contramodule and\/
$\C\subset\D$ be an\/ $\R$\+subcontramodule.
 Suppose that for every open right ideal $\I\in\bfB$ the intersection\/
$\C\cap(\I\tim\D)\subset\D$ is equal to\/ $\I\tim\C$.
 Then the quotient\/ $\R$\+contramodule\/ $\D/\C$ is separated.
\end{lem}

\begin{proof}
 For any complete, separated topological ring $\R$ with a right linear
topology, any short exact sequence of left $\R$\+contramodules
$0\rarrow\C\rarrow\D\rarrow\E\rarrow0$, and any open right ideal
$\I\subset\R$, the short sequence of abelian groups
$$
 \C/(\I\tim\C)\lrarrow\D/(\I\tim\D)\lrarrow\E/(\I\tim\E)\lrarrow0
$$
is right exact.
 When one has $\I\tim\C=\C\cap(\I\tim\D)\subset\D$, this short sequence
is exact at is leftmost term, too.

 In the situation at hand, set $\E=\D/\C$ and consider the projective
system of short exact sequences
$$
 0\lrarrow\C/(\I\tim\C)\lrarrow\D/(\I\tim\D)
 \lrarrow\E/(\I\tim\E)\lrarrow0
$$
indexed by the poset $\bfB$ of open right ideals~$\I$.
 These are projective systems of abelian groups and surjective
morphisms between them, indexed by a countable directed poset.
 Hence the passage to the projective limits preserves exactness,
and we have a short exact sequence of abelian groups
$$
 0\lrarrow\varprojlim_{\I\in\bfB}\C/(\I\tim\C)\lrarrow
 \varprojlim_{\I\in\bfB}\D/(\I\tim\D)\lrarrow
 \varprojlim_{\I\in\bfB}\E/(\I\tim\D)\lrarrow0.
$$
 The natural morphism to the latter short exact sequence from
the short exact sequence $0\rarrow\C\rarrow\D\rarrow\E\rarrow0$
is an isomorphism at the middle term (since $\D$ is separated by
assumption and all left $\R$\+contramodules are complete by
Theorem~\ref{separated-vs-complete-contramodules-thm}) and
at the leftmost term (since $\C$ is separated as a subcontramodule
of a separated left $\R$\+contramodule~$\D$).
 It follows that this map of short exact sequences is also
an isomorphism at the rightmost terms, that is $\E$ is a separated
left $\R$\+contramodule.
\end{proof}

 Now we can finish the first proof of
Proposition~\ref{separated-kernel-of-homs}.
 It suffices to embed our separated left $\R$\+contramodule $\C$ into
a left $\R$\+contramodule $\D$ of the form $\D=\Hom_\boZ(\N,\boQ/\boZ)$
in such a way that $\C\cap(\I\tim\D)=\I\tim\C$ for every open right
ideal $\I\subset\R$.
 Recall from the proof of Lemma~\ref{embedding-into-hom-lemma} that
the subgroup $\I\tim\D$ is contained in the subgroup all abelian
group homomorphisms $\N\rarrow\boQ/\boZ$ annihilating the subgroup
$\N_\I\subset\N$.
 Thus it suffices to construct a discrete right $\R$\+module $\N$
and a left $\R$\+contramodule morphism $g\:\C\rarrow
\Hom_\boZ(\N,\boQ/\boZ)$ in such a way that for every open right ideal
$\I\subset\R$ and every element $c\in\C$, \ $c\notin\I\tim\C$
the abelian group homomorphism $g(c)\:\N\rarrow\boQ/\boZ$ does
\emph{not} annihilate $\N_\I$, i.~e., there exists an element $b\in\N$
for which $b\I=0$ in $\N$ and $g(c)(b)\ne0$ in $\boQ/\boZ$.

 For this purpose, for every open right ideal $\I\subset\R$ and
every element $c\in\C\setminus\I\tim\C$ we choose an abelian group
homomorphism $f_{\I,c}\:\C/(\I\tim\C)\rarrow\boQ/\boZ$ such that
$f_{\I,c}(c+\I\tim\C)\ne0$, and consider the related left
$\R$\+contramodule morphism $g_{\I,c}\:\C\rarrow
\Hom_\boZ(\N_{\I,c},\boQ/\boZ)$, where $\N_{\I,c}=\R/\I$ (as in
the proof of Lemma~\ref{embedding-into-hom-lemma}).
 Then the element $1+\I\in\R/\I=\N_{\I,c}$ is annihilated by~$\I$ and
one has $g_{\I,c}(c)(1+\I)=f_{\I,c}(c+\I\tim\C)\ne0$ in $\boQ/\boZ$.
 It remains to take $\N$ to be the direct sum of the discrete right
$\R$\+modules $\N_{\I,c}$ over all the pairs $(\I,c)$ as above, and
$g\:\C\rarrow\Hom_\boZ(\N,\boQ/\boZ)$ to be the morphism with
the components~$g_{\I,c}$.
\end{proof}

\begin{proof}[Second proof of
Proposition~\ref{separated-kernel-of-homs}]
 We will prove the following more general result, not depending on
the countability assumption on the topology of~$\R$: for any
complete, separated topological ring $\R$ in a right linear
topology~$\bfF$, and any covariant $\bfF$\+system of abelian groups $D$,
the left $\R$\+contramodule $\PL(D)$ can be presented as the kernel
of an $\R$\+contramodule morphism between two left $\R$\+contramodules
of the form $\Hom_\boZ(\N',\boQ/\boZ)$ and $\Hom_\boZ(\N'',\boQ/\boZ)$
in a certain functorial way.
 In particular, if $\C$ is a separated and complete left
$\R$\+contramodule, then the left $\R$\+contramodule $\C$ is isomorphic
to $\PL(\CT(\C))$ by
Proposition~\ref{contramodules-covariant-systems-prop}, so it will
follow that $\C$ can be presented as the kernel of a morphism between
two left $\R$\+contramodules of the desired form.
 Then it will remain to recall that, when the topology $\bfF$ on $\R$
has a countable base, all left $\R$\+contramodules are complete
by Theorem~\ref{separated-vs-complete-contramodules-thm}.

 Indeed, for any abelian group $V$ let us denote by $V^+$ the abelian
group $V^+=\Hom_\boZ(V,\boQ/\boZ)$.
 In particular, if $D\:\sQ_\bfF\rarrow\Ab$ is a covariant $\bfF$\+system
of abelian groups, then $D^+\:\sQ_\bfF^\sop\rarrow\Ab$ is
a contravariant $\bfF$\+system, and vice versa.
 If $M\:\sQ_\bfF^\sop\rarrow\Ab$ is a contravariant $\bfF$\+system of
abelian groups and $M^+\:\sQ_\bfF\rarrow\Ab$ is the dual covariant
$\bfF$\+system, then the left $\R$\+contramodule $\PL(M^+)$ can be
obtained by applying the functor $\Hom_\boZ({-},\boQ/\boZ)$ to
the discrete right $\R$\+module $\IL(M)$, i.~e., $\PL(M^+)=\IL(M)^+$
(since the functor $V\longmapsto V^+$ takes direct limits of abelian
groups to inverse limits of abelian groups).

 Now for any covariant $\bfF$\+system of abelian groups $D$ 
we have a natural short, left exact sequence of covariant
$\bfF$\+systems of abelian groups
$$
 0\lrarrow D\lrarrow D^{++}\lrarrow(D^{++}/D)^{++}.
$$
 The functor $\PL$ is left exact (because it is a right adjoint, or
since the functor of inverse limit is left exact); hence we obtain
a left exact sequence of left $\R$\+contramodules
$$
 0\lrarrow\PL(D)\lrarrow\PL(D^{++})\lrarrow\PL((D^{++}/D)^{++}).
$$
 It remains to recall that, according to the above, we have
$\PL(D^{++})=\IL(D^+)^+$ and $\PL((D^{++}/D)^{++})=
\IL((D^{++}/D)^+)^+$.
\end{proof}

\Section{Fully Faithful Contramodule Forgetful Functors}
\label{fully-faithful-secn}

 Let $R$ be an associative ring, $\bF$ be a right linear topology on
$R$, and $\R=\varprojlim_{I\in\bF}R/I$ be the completion of $R$ with
respect to $\bF$, viewed as a topological ring in the projective
limit topology~$\bfF$.
 Then we have the abelian category of left $\R$\+contramodules
$\R\contra$ (see Section~\ref{prelim-contramodules}), which is endowed
with an exact forgetful functor $\R\contra\rarrow\R\modl$.

 We also have the functor of restriction of scalars
$\rho_*\:\R\modl\rarrow R\modl$ induced by the natural morphism of
(topological) rings $\rho\:R\rarrow\R$.
 In this section we discuss conditions under which the composition
of these two forgetful functors
$$
 \R\contra\lrarrow\R\modl\lrarrow R\modl
$$
is a fully faithful functor $\R\contra\rarrow R\modl$.
 We follow the approach of~\cite[Theorem~1.1]{Psm}
and~\cite[Section~3]{Pper}, improving upon the results obtained there.

 As it was discussed in Section~\ref{F-systems-secn}, there is a natural
bijection between the set $\bF$ of all open right ideals $I\subset R$
and the set $\bfF$ of all open right ideals $\I\subset\R$.
 The ring homomorphism $\rho\:R\rarrow\R$ induces an isomorphism
of right $R$\+modules $R/I\simeq\R/\I$.

 We will say that an open right ideal $I\subset R$ is \emph{strongly
generated} (or, in a different language, the corresponding open right
ideal $\I\subset\R$ is \emph{strongly generated by elements coming
from~$R$}) if for every set $X$ the subgroups $\I\tim\R[[X]]=\I[[X]]
\subset\R[[X]]$ and $I\R[[X]]=I\cdot\R[[X]]\subset\R[[X]]$ coincide
in the group $\R[[X]]$, that is
$$
 I\R[[X]]=\I[[X]]\subset\R[[X]]
$$
(see Section~\ref{prelim-contratensor} for the notation).
 The inclusion $I\R[[X]]\subset\I[[X]]$ always holds for any open
right ideal $I\subset R$; the right ideal $I$ is said to be strongly
generated if the inverse inclusion holds as well.

\begin{lem} \label{strongly-generated-ideal-lemma1}
 Let $R$ be a topological ring with a right linear topology and
$I\subset R$ be an open right ideal, and $\I\subset\R$ be the related
open right ideal in the completion\/ $\R$ of the topological ring~$R$.
 Then the following conditions are equivalent:
\begin{enumerate}
\renewcommand{\theenumi}{\roman{enumi}}
\item the ideal $I\subset R$ is strongly generated;
\item for every left\/ $\R$\+contramodule\/ $\C$, one has
$I\C=\I\tim\C\subset\C$.
\end{enumerate}
\end{lem}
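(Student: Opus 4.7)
The plan is to prove the two implications separately, using the free $\R$\+contramodules $\R[[X]]$ as the test case. The key fact recorded at the end of Section~\ref{prelim-contratensor} is that $\J\tim\R[[X]]=\J[[X]]$ for any closed right ideal $\J\subset\R$.

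For the direction (ii)$\Rightarrow$(i), I would simply specialize (ii) to the free left $\R$\+contra\-module $\C=\R[[X]]$. By the identity above, $\I\tim\R[[X]]=\I[[X]]$, while $I\C=I\cdot\R[[X]]=I\R[[X]]$ by the very definition of the left $R$\+submodule $I\C\subset\C$ (applied via the underlying left $R$\+module structure of~$\C$). Equality (ii) then gives $I\R[[X]]=\I[[X]]$, which is precisely~(i).

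For the direction (i)$\Rightarrow$(ii), I would argue that both constructions $\C\mapsto I\C$ and $\C\mapsto\I\tim\C$ commute with surjections of left $\R$\+contramodules, in the following sense: if $f\:\D\twoheadrightarrow\C$ is a surjection, then $f$ sends $I\D$ onto $I\C$ and $\I\tim\D$ onto $\I\tim\C$. For $I\cdot$ this is immediate from the definition as the image of $I[\C]\rarrow R[\C]\rarrow\C$ and the surjectivity of the induced map $\D\rarrow\C$ (equivalently, of $I[\D]\rarrow I[\C]$). For $\I\tim$ the statement follows from the definition of $\I\tim\C$ as the image of $\I[[\C]]\rarrow\R[[\C]]\rarrow\C$, combined with the surjectivity of the induced push-forward map $\R[[\D]]\rarrow\R[[\C]]$, which restricts to a surjection $\I[[\D]]\rarrow\I[[\C]]$ (the image of an infinite formal linear combination in $\I[[\C]]$ can be lifted termwise to $\I[[\D]]$ by choosing preimages of generators; convergence is automatic because the coefficients remain the same).

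Given these observations, I would pick a surjective $\R$\+contramodule morphism $\R[[X]]\twoheadrightarrow\C$ from a free $\R$\+contramodule (which exists, e.g., by taking $X=\C$ and the contraaction map~$\pi_\C$). Applying the two functorialities, $I\C$ is the image of $I\R[[X]]$ in $\C$, and $\I\tim\C$ is the image of $\I\tim\R[[X]]=\I[[X]]$ in $\C$. Under assumption~(i) these two subgroups of $\R[[X]]$ coincide, hence so do their images in~$\C$. The only point that requires a moment of care is the surjectivity statement for $\I\tim$ under contramodule surjections; this is the step I expect to be most subtle, but it reduces to the elementary observation that the functor $\R[[-]]\:\Sets\rarrow\Ab$ preserves surjectivity, and that the preservation restricts to the subgroups $\I[[-]]\subset\R[[-]]$.
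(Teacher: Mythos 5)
Your proposal is correct and follows essentially the same route as the paper: (ii)$\Rightarrow$(i) by specializing to the free contramodules $\C=\R[[X]]$, and (i)$\Rightarrow$(ii) by choosing a surjection $\R[[X]]\twoheadrightarrow\C$ and observing that both $I\cdot({-})$ and $\I\tim({-})$ are carried onto their images under surjective contramodule morphisms. The extra care you take with the surjectivity of $\I[[\D]]\rarrow\I[[\C]]$ (termwise lifting of coefficients) is a correct elaboration of a step the paper leaves implicit.
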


\begin{proof}
 (ii)\,$\Longrightarrow$\,(i) Given a set $X$, take $\C=\R[[X]]$.

 (i)\,$\Longrightarrow$\,(ii) For any left $\R$\+contramodule $\C$,
there exists a set $X$ such that there is a surjective left
$\R$\+contramodule morphism $\R[[X]]\rarrow\C$ (e.~g., one can take
$X=\C$ and the contraaction morphism $\pi_\C\:\R[[\C]]\rarrow\C$).
 It remains to observe that for any surjective left $\R$\+contramodule
morphism $f\:\D\rarrow\C$ one has $I\C=f(I\D)$ and $\I\tim\C=
f(\I\tim\D)$, so the equation $I\D=\I\tim\D$ implies $I\C=\I\tim\C$.
\end{proof}

\begin{thm} \label{fully-faithful-thm}
 Let $R$ be a topological ring with a right linear topology\/ $\bF$
having a countable base\/~$\bB$.
 Then the following conditions are equivalent:
\begin{enumerate}
\renewcommand{\theenumi}{\roman{enumi}}
\item all the open right ideals $I\subset R$ are strongly generated;
\item all the open right ideals $J\in\bB$ are strongly generated in~$R$;
\item the forgetful functor\/ $\R\contra\rarrow R\modl$ is fully
faithful.
\end{enumerate}
\end{thm}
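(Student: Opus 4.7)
The plan is to establish the cycle (i)$\Rightarrow$(ii)$\Rightarrow$(i)$\Rightarrow$(iii)$\Rightarrow$(ii). The implication (i)$\Rightarrow$(ii) is trivial since $\bB\subset\bF$.

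For (ii)$\Rightarrow$(i), given an arbitrary $I\in\bF$, I pick $J\in\bB$ with $J\subset I$. Applying the contramodule projection $\R[[X]]\twoheadrightarrow\R[[X]]/\J[[X]]\simeq(R/J)[X]$, I compute the images of both $I\R[[X]]$ and $\I[[X]]$: each maps onto the subgroup $(I/J)[X]$, using that $\I/\J\simeq I/J$ and that $\I$ is a right ideal. Consequently $I\R[[X]]+\J[[X]]=\I[[X]]+\J[[X]]=\I[[X]]$. Since $J$ is strongly generated by hypothesis (ii), $\J[[X]]=J\R[[X]]\subset I\R[[X]]$, giving $\I[[X]]\subset I\R[[X]]$; the reverse inclusion is automatic.

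For (i)$\Rightarrow$(iii), I invoke Lemma~\ref{strongly-generated-ideal-lemma1} to rephrase (i) as $I\C=\I\tim\C$ for every open right ideal $I$ and every left $\R$-contramodule $\C$. Since free contramodules $\R[[X]]$ are projective generators, fully faithfulness amounts to showing that for every $\R$-contramodule $\D$ and every $R$-linear map $f\:\R[[X]]\rarrow\D$, one has $f=\tilde f$, where $\tilde f$ is the unique $\R$-contramodule extension of the set-map $f|_X\:X\rarrow\D$. Setting $h=f-\tilde f$, the map $h$ is $R$-linear and vanishes on $X$, hence on $R[X]$. For every $I\in\bF$, pass to the induced morphism of $R/I$-modules $\R[[X]]/(\I\tim\R[[X]])=\R[[X]]/I\R[[X]]\rarrow\D/(\I\tim\D)=\D/I\D$ (here using the equivalent form of (i)); it vanishes on the image of $R[X]$, which is all of $(R/I)[X]$. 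Therefore $h(\R[[X]])\subset I\D=\I\tim\D$ for every open $\I\subset\R$. Let $\E\subset\D$ be the subcontramodule generated by $h(\R[[X]])$; iterating the argument with $\D$ replaced by $\E$ shows $\E\subset\I\tim\E$, hence $\I\tim\E=\E$ for all open $\I$. The countable-base contramodule Nakayama lemma (\cite[Lemma~6]{PR} or \cite[Lemma~1.1]{Pper}) now yields $\E=0$, hence $h=0$.

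For (iii)$\Rightarrow$(ii), fix $J\in\bB$ and a set $X$, and aim to prove $\J[[X]]\subset J\R[[X]]$. Here the main obstacle is producing an $R$-linear map from $\R[[X]]$ to a suitable $\R$-contramodule whose failure to vanish on $\J[[X]]\setminus J\R[[X]]$ would contradict the uniqueness built into (iii). The plan is to exploit the Pontryagin-type construction of Section~\ref{prelim-contratensor}: given any subgroup $M\subset\R[[X]]$ annihilated on the left by $J$ (so that the abelian group $\R[[X]]/M$ has a natural right $R$-module structure on its $\boZ$-dual that is discrete, being killed by $J\in\bF$), the $\R$-contramodule structure on $\Hom_\boZ((\R[[X]]/M)^+,\boQ/\boZ)$ from Section~\ref{prelim-contratensor} provides an $\R$-contramodule envelope into which $\R[[X]]/M$ embeds $R$-linearly. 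Applying this with $M$ the $R$-submodule generated by $J\R[[X]]$ (together with Lemma~\ref{embedding-into-hom-lemma} and Proposition~\ref{separated-kernel-of-homs} to control the image) and comparing the two resulting $R$-linear maps out of $\R[[X]]$ via (iii) and the uniqueness of contramodule extensions should force $\J[[X]]/J\R[[X]]=0$; the countable base of $\bF$ enters here to make the underlying completeness and Nakayama arguments available.

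The main obstacle is the last implication (iii)$\Rightarrow$(ii): strongly generated is a statement about \emph{subgroups} (not submodules), and for a non-two-sided $J$ the quotient $\R[[X]]/J\R[[X]]$ is not naturally an $R$-module, so one cannot directly realize $J\R[[X]]$ as a kernel of an $R$-module map to a contramodule. The delicate point is to find the right intermediate $R$-module that carries a compatible $\R$-contramodule structure and use the uniqueness clause of (iii) to pin down its kernel.
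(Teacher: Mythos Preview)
Your arguments for (i)$\Leftrightarrow$(ii) and (i)$\Rightarrow$(iii) are correct. In (i)$\Rightarrow$(iii), a minor slip: since $I$ is only a right ideal, the quotients $\R[[X]]/I\R[[X]]$ and $\D/I\D$ are not $R/I$-modules, only abelian groups; but that is all you need. Your route here is in fact a bit more direct than the paper's: the paper proves (ii)$\Rightarrow$(iii) by passing to the separated quotient $\E''=\E/\bigcap_\I(\I\tim\E)$, embedding $\E''$ into a contramodule of the form $\Hom_\boZ(\N,\boQ/\boZ)$ via Lemma~\ref{embedding-into-hom-lemma}, and then checking that every $R$-linear map $\R[[X]]\to\Hom_\boZ(\N,\boQ/\boZ)$ is already an $\R$-contramodule morphism. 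You instead observe directly that $h(\R[[X]])\subset I\D=\I\tim\D$, pass to the generated subcontramodule $\E$, repeat to get $h(\R[[X]])\subset\bigcap_\I(\I\tim\E)$, note the latter is a subcontramodule (Proposition~\ref{contramodules-covariant-systems-prop}), conclude $\E=\bigcap_\I(\I\tim\E)$, and apply Nakayama. Both arguments ultimately rest on the same Nakayama lemma, but yours avoids the detour through character duals.

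Your (iii)$\Rightarrow$(ii), however, is not a proof: the construction you sketch is incoherent (a subgroup $M$ with $JM=0$ does not endow $\R[[X]]/M$ or its dual with any right $R$-module structure, and ``the $R$-submodule generated by $J\R[[X]]$'' is not annihilated by $J$), and you yourself flag the obstacle without resolving it. The paper's argument for (iii)$\Rightarrow$(i) is short and bypasses all of this by using the two adjunctions from Section~\ref{prelim-contratensor}: for any discrete right $\R$-module $\N$ and any $\R$-contramodule $\C$,
\[
\Hom_R(\C,\Hom_\boZ(\N,\boQ/\boZ))\simeq\Hom_\boZ(\N\ot_R\C,\,\boQ/\boZ),
\qquad
\Hom^\R(\C,\Hom_\boZ(\N,\boQ/\boZ))\simeq\Hom_\boZ(\N\ocn_\R\C,\,\boQ/\boZ).
\]
Full faithfulness of the forgetful functor forces these Hom groups to coincide, whence the natural surjection $\N\ot_R\C\to\N\ocn_\R\C$ is an isomorphism. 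Taking $\N=R/I$ and $\C=\R[[X]]$ gives $\R[[X]]/I\R[[X]]\simeq\R[[X]]/(\I\tim\R[[X]])$, i.e.\ $I\R[[X]]=\I[[X]]$. No countability is needed for this direction, and the difficulty you identified (that $J\R[[X]]$ is only a subgroup) simply does not arise: one works with the quotient $\C/I\C$ via the tensor product rather than trying to realize $I\C$ as a kernel.
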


\begin{proof}
 The implications (i)\,$\Longleftrightarrow$\,(ii)%
\,$\Longleftarrow$\,(iii) do not depend on the assumption of
countability of~$\bB$; the proof of the implication
(ii)\,$\Longrightarrow$\,(iii) below does.

 (i)\,$\Longrightarrow$\,(ii) Obvious.

 (ii)\,$\Longrightarrow$\,(i) Let $J\subset I$ be two embedded open
right ideals in $R$, and let $\J\subset\I$ be the two related open
right ideals in~$\R$.
 Then any $X$\+indexed family of elements in $\I$ converging to zero
in the topology of $\R$ has all but a finite subset of its elements
belonging to~$\J$.
 Furthermore, $\I=\rho(I)+\J\subset\R$, hence
$\I[[X]]=\rho(I)[X]+\J[[X]]\subset\R[[X]]$.
 Therefore, the equation $J\R[[X]]=\J[[X]]$ implies $I\R[[X]]=\I[[X]]$.

 (iii)\,$\Longrightarrow$\,(i)
 As in Section~\ref{prelim-contramodules}, we denote by $\Hom^\R(\C,\D)$
the group of all morphisms $\C\rarrow\D$ in the category of left
$\R$\+contramodules $\R\contra$; while, as usual, $\Hom_R(C,D)$
denotes the group of all morphisms $C\rarrow D$ in $R\modl$.
 Then for any left $R$\+module $C$ and right $R$\+module $N$ one has
$$
 \Hom_R(C,\Hom_\boZ(N,\boQ/\boZ))=\Hom_\boZ(N\ot_RC,\>\boQ/\boZ),
$$
while for any left $\R$\+contramodule $\C$ and discrete right
$\R$\+module $\N$ we have
$$
 \Hom^\R(\C,\Hom_\boZ(\N,\boQ/\boZ))=\Hom_\boZ(\N\ocn_\R\C,\>\boQ/\boZ)
$$
(see Section~\ref{prelim-contratensor}).
 Hence the natural surjective map of abelian groups $\N\ot_R\C\rarrow
\N\ocn_\R\C$ is an isomorphism for any left $\R$\+contramodule $\C$
and any discrete right $\R$\+module $\N$ whenever the forgetful
functor $\R\contra\rarrow R\modl$ is fully faithful.
 In particular, one has
$$
 \C/I\C=R/I\ot_R\C=\R/\I\ot_R\C \simeq \R/\I\ocn_\R\C=\C/(\I\tim\C),
$$
and it remains to take $\C=\R[[X]]$.

 (ii)\,$\Longrightarrow$\,(iii)
 Let $\C$ and $\D$ be two left $\R$\+contramodules, and let
$f\:\C\rarrow\D$ be a left $R$\+module morphism between them.
 Choose a set $X$ together with a surjective left $\R$\+contramodule
morphism $\R[[X]]\rarrow\C$.
 In order to show that the map~$f$ is a left $\R$\+contramodule
morphism, it suffices to check that so is the composition
$\R[[X]]\rarrow\C\rarrow\D$.
 Let us denote this composition by $f'\:\R[[X]]\rarrow\D$.

 Restricting~$f'$ to the subset $X\subset\R[[X]]$, we obtain a map
of sets $\bar f'\:X\rarrow\D$.
 The group of all left $\R$\+contramodule morphisms $\R[[X]]\rarrow\D$
is bijective to the group of all maps of sets $X\rarrow\D$; hence
there exists a unique left $\R$\+contramodule morphism
$f''\:\R[[X]]\rarrow\D$ such that $f''|_X=\bar f'$.
 Consider the difference $g=f'-f''$; then $g\:\R[[X]]\rarrow\D$
is a left $R$\+module morphism such that $g|_X=0$, and it remains
to check that $g=0$.

 Let $\E\subset\D$ denote the $\R$\+subcontramodule in $\D$ generated
by $\im g$ (that is, the intersection of all $\R$\+subcontramodules
in $\D$ containing~$\im g$).
 Then $\E$ is a left $\R$\+contramodule, $g\:\R[[X]]\rarrow\E$ is
a left $R$\+module morphism such that $g|_X=0$, there are no proper
$\R$\+subcontramodules in $\E$ containing $\im g$, and it remains
to prove that $\E=0$.

 According to Proposition~\ref{contramodules-covariant-systems-prop},
the kernel $\E'=\ker\lambda_{\R,\E}=\bigcap_{\I\in\bfF}\I\tim\E$ of
the completion morphism $\lambda_{\R,\E}\:\E\rarrow
\varprojlim_{\I\in\bfF}\E/(\I\tim\E)$ is an $\R$\+subcontramodule in
$\E$, and the quotient $\R$\+contramodule $\E''=\E/\E'$ is separated.
 Moreover, by the contramodule Nakayama lemma~\cite[Lemma~6.14]{PR}
the left $\R$\+contramodule $\E$ vanishes whenever its (maximal
separated) quotient contramodule~$\E''$ does.
 (This is where the assumption of a countable base of the topology
of $\R$ is needed.)

 Hence if $\E\ne0$, then $\E''\ne0$, \ $\E'\ne\E$, and by construction
the image of the left $R$\+module morphism $g\:\R[[X]]\rarrow\E$
is not contained in~$\E'$.
 The composition $g''\:\R[[X]]\rarrow\E''$ of the map~$g$ with
the natural surjection $\E\rarrow\E''$ is a left $R$\+module morphism
such that $g''|_X=0$ and $\im g''$ is not contained in any proper
$\R$\+subcontramodule of~$\E''$.
 It remains to show that $\E''=0$, and for this purpose it suffices
to check that $g''=0$, or even that $g''$~is actually
an $\R$\+contramodule morphism (as any $\R$\+contramodule morphism
from a free left $\R$\+contramodule taking the generators to zero
vanishes as a whole).

 By Lemma~\ref{embedding-into-hom-lemma}, the left $\R$\+contramodule
$\E''$ can be embedded into a left $\R$\+contramodule of the form
$\Hom_\boZ(\N,\boQ/\boZ)$, where $\N$ is a discrete right
$\R$\+module.
 In fact, following the proof of Lemma~\ref{embedding-into-hom-lemma},
one take $\N$ to be a direct sum of (sufficiently many) copies of
the discrete right $\R$\+modules $\R/\I$, where $\I$ ranges over any
topology base $\bfB$ of~$\R$.
 We take $\bfB$ to be the topology base of $\R$ corresponding to
our given topology base $\bB$ of~$R$.

 It remains to show that the composition $\R[[X]]\rarrow
\Hom_\boZ(\N,\boQ/\boZ)$ of the morphism $g''\:\R[[X]]\rarrow\E''$
with the embedding $\E''\rarrow\Hom_\boZ(\N,\boQ/\boZ)$ is a left
$\R$\+contramodule morphism, and for this purpose it suffices to
check that every left $R$\+module map $\R[[X]]\rarrow
\Hom_\boZ(\N,\boQ/\boZ)$ is actually an $\R$\+contramodule morphism.
 Indeed, for any left $\R$\+contramodule $\C$ we have
$$
 \Hom_R(\C,\Hom_\boZ(\R/\I,\boQ/\boZ))=
 \Hom_R(\C,\Hom_\boZ(R/I,\boQ/\boZ))\simeq
 \Hom_\boZ(\C/I\C,\boQ/\boZ)
$$
and
$$
 \Hom^\R(\C,\Hom_\boZ(\R/\I,\boQ/\boZ))\simeq
 \Hom_\boZ(\R/\I\ocn_\R\C,\>\boQ/\boZ)=
 \Hom_\boZ(\C/(\I\tim\C),\boQ/\boZ),
$$
so the equation $I\C=\I\tim\C$ for all $I\in\bB$ implies
$\Hom_R(\C,\Hom_\boZ(\N,\boQ/\boZ))=\Hom^\R(\C,\Hom_\boZ(\N,\boQ/\boZ))$.
\end{proof}

\begin{rem}
 We do \emph{not} know whether the implication
(ii)\,$\Longrightarrow$\,(iii) in Theorem~\ref{fully-faithful-thm}
holds true without the assumption of countability of~$\bB$.

 Following the above proof, the countability assumption was only used
in order to invoke the contramodule Nakayama lemma, claiming that any
nonzero $\R$\+contramodule has a nonzero separated quotient
$\R$\+contramodule.
 This form of the contramodule Nakayama lemma is \emph{not} true
without a countable topology base, generally speaking.
 Indeed, let $\R=R$ be the ring of (commutative) polynomials in
an uncountable set of variables~$x_i$ over a field~$k$, and let
$S\subset R$ be the multiplicative subset generated by
the elements~$x_i$.
 Endow $\R$ with the \emph{$S$\+topology}, in which the ideals
$s\R$, \,$s\in S$, form a base of neighborhoods of zero.
 By~\cite[Proposition~1.16]{GT}, \,$\R$ is a complete, separated
topological ring.
 One easily observes that no infinite family of nonzero elements in
$\R$ converges to zero in the $S$\+topology; so an $\R$\+contramodule
is the same thing as an $\R$\+module and the forgetful functor
$\R\contra\rarrow\R\modl$ is an equivalence of categories.
 In particular, the localization $\C=\R[S^{-1}]$ is
an $\R$\+contramodule for which $\bigcap_{s\in S}s\C=\C$; so $\C$
has no nonzero separated quotient $\R$\+contramodules.
 Still, the implication (ii)\,$\Longrightarrow$\,(iii) remains true
in this case (both (ii) and~(iii) are true).

 In some special situations, the contramodule Nakayama lemma is provable
without the countability assumption; notably for the topological
algebras dual to coalgebras over fields~\cite[Corollary~A.2]{Psemi}.
 Hence Theorem~\ref{fully-faithful-thm} holds for all such topological
algebras (cf.~\cite[Theorem~1.1]{Psm}).
 Another such special situation is that of contramodules over
the completion $\R$ of an associative ring $R$ with respect to
the $S$\+topology for some central multiplicative subsets $S\subset R$.
 In this context, a version of Theorem~\ref{fully-faithful-thm} holds
under a certain assumption not unrelated to countability but weaker than
that of a countable topology base; see~\cite[Example~3.7\,(1)]{Pper}.

 An example of topological ring/algebra $\R$ with a countable topology
base (dual to a certain coalgebra of countable dimension over a field)
for which the forgetful functor $\R\contra \rarrow\R\modl$ is \emph{not} fully  faithful (so none of the conditions (i\+-iii) is true) can be
found in~\cite[Section~A.1.2]{Psemi}.
\end{rem}

 The following lemma explains the intuition behind the ``strongly
generated ideal'' terminology.
 Extending the terminology of Section~\ref{prelim-contramodules},
even for a possibly nonseparated and noncomplete topological group $A$
we will say that an $X$\+indexed family of elements $a_x\in A$
converges to zero in the topology of $A$ if for every open subgroup
$U\subset A$ the set of all $x\in X$ such that $a_x\notin U$ is finite.

\begin{lem} \label{strongly-generated-ideal-lemma2}
 Let $R$ be a topological ring with a right linear topology with
a countable base, and let $I\subset R$ be an open right ideal.
 Then the following two conditions are equivalent:
\begin{enumerate}
\renewcommand{\theenumi}{\roman{enumi}}
\item the ideal $I\subset R$ is strongly generated;
\item for any set $X$ and any $X$\+indexed family of elements
$r_x\in I$ converging to zero in the topology of $R$ there exists
a finite set of elements $s_1$,~\dots, $s_m\in I$ and
a $(\{1,\dotsc,m\}\times X)$\+indexed family of elements
$t_{j,x}\in\R$ converging to zero in the topology of\/ $\R$ such that
$$
 \rho(r_x)=\sum_{j=1}^m \rho(s_j)t_{j,x}\in\R
 \qquad\text{for every \,$x\in X$.}
$$
\end{enumerate}
\end{lem}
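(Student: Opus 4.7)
The plan is to unravel both sides of the equivalence into explicit series in $\R$, using the countability of the topology base to convert elements of $\I[[X]]$ into convergent telescoping series of images of elements of~$I$. Recall from Section~\ref{prelim-contratensor} that $I\cdot\R[[X]]\subset\R[[X]]$ is the image of the composition $I[\R[[X]]]\rarrow R[\R[[X]]]\rarrow\R[[X]]$, so its elements are precisely the finite sums $\sum_{j=1}^m s_j\cdot\mathfrak{t}_j$ with $s_j\in I$ and $\mathfrak{t}_j=\sum_x t_{j,x}\cdot x\in\R[[X]]$; under the opening-of-parentheses rule, such an element unfolds to $\sum_{j,x}\rho(s_j)t_{j,x}\cdot x$. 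Strong generation of~$I$ is by definition the equality $I\cdot\R[[X]]=\I[[X]]$ for every set~$X$, so condition~(ii) is exactly the assertion that the specific elements $\mathfrak{r}=\sum_x\rho(r_x)\cdot x\in\I[[X]]$ (with $r_x\in I$ zero-convergent in $R$) belong to $I\cdot\R[[X]]$.

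For (i)$\Rightarrow$(ii), given such a family $\{r_x\}$ in $I$, the element $\mathfrak{r}:=\sum_x\rho(r_x)\cdot x$ lies in $\I[[X]]$ by continuity of $\rho$. By~(i) it equals $\sum_{j=1}^m s_j\cdot\mathfrak{t}_j$ with $s_j\in I$ and $\mathfrak{t}_j=\sum_x t_{j,x}\cdot x\in\R[[X]]$; reading off coefficients gives $\rho(r_x)=\sum_j\rho(s_j)t_{j,x}$, and the combined family $\{t_{j,x}\}_{(j,x)\in\{1,\dots,m\}\times X}$ is zero-convergent in $\R$ as a finite union of $X$\+indexed zero-convergent families.

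For (ii)$\Rightarrow$(i), choose a countable decreasing base $J_0\supset J_1\supset\cdots$ of the topology on $R$, arranged so that $J_0\subset I$ (possible because $I$ is open). Given $\mathfrak{r}=\sum_x\mathfrak{r}_x\cdot x\in\I[[X]]$, construct inductively a telescoping expansion $\mathfrak{r}_x=\sum_{n\ge 0}\rho(r_x^{(n)})$ in $\R$ with $r_x^{(n)}\in I$ and $\rho(r_x^{(n)})\in\J_n$ for $n\ge 1$, by lifting successive residues through the isomorphisms $\R/\J_n\simeq R/J_n$; at each stage arrange $r_x^{(n)}=0$ for all but finitely many~$x$ (possible because the remainder family $\{\mathfrak{r}_x-\sum_{k<n}\rho(r_x^{(k)})\}_x$ remains zero-convergent in $\R$). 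Reindex by $Y:=\omega\times X$ via $r_{(n,x)}:=r_x^{(n)}$; the family $\{r_y\}_{y\in Y}$ is zero-convergent in $R$ because, given an open $J\subset R$ and $N$ with $J_N\subset J$, one has $r_{(n,x)}\in J$ automatically for $n\ge N$, while for $n<N$ only finitely many $x$ contribute. Now apply~(ii) to this family to obtain $s_1,\dots,s_m\in I$ and a zero-convergent family $\{t_{j,(n,x)}\}\subset\R$ with $\rho(r_{(n,x)})=\sum_j\rho(s_j)t_{j,(n,x)}$. For each $(j,x)$ set $u_{j,x}:=\sum_{n\ge 0}t_{j,(n,x)}\in\R$: the sum converges because $\R$ is separated and complete and $t_{j,(n,x)}$ lies in any given open subgroup for all but finitely many $n$; the resulting family $\{u_{j,x}\}_{(j,x)}$ is zero-convergent in $\R$ because every open right ideal of $\R$ is a closed subgroup and therefore contains the limit of any series whose summands are all inside it. Summing the relation over~$n$ and using continuity of left multiplication by $\rho(s_j)$ yields $\mathfrak{r}_x=\sum_j\rho(s_j)u_{j,x}$, so $\mathfrak{r}=\sum_j s_j\cdot\bigl(\sum_x u_{j,x}\cdot x\bigr)\in I\cdot\R[[X]]$; the reverse inclusion is automatic.

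The main obstacle is the convergence and rearrangement bookkeeping in the last paragraph: verifying that the sequential sums $u_{j,x}=\sum_n t_{j,(n,x)}$ assemble into a family $\{u_{j,x}\}_{(j,x)}$ that is itself zero-convergent in $\R$, and that the exchange $\sum_n\sum_j=\sum_j\sum_n$ inside $\mathfrak{r}_x=\sum_n\rho(r_{(n,x)})=\sum_n\sum_j\rho(s_j)t_{j,(n,x)}$ is legitimate. Both points reduce to the basic facts that open subgroups of a topological group are automatically closed and that left multiplication in $\R$ is continuous; the countability of the base is used essentially to produce the telescoping decomposition of $\mathfrak{r}_x$ in the first place.
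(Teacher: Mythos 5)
Your proof is correct and follows essentially the same route as the paper's: the forward implication is the same direct unfolding of $\mathfrak r\in I\R[[X]]$ (and, as you implicitly show, needs no countability), while for the converse both arguments use the countable base to write each coefficient $\mathfrak r_x\in\I$ as a convergent series $\sum_n\rho(r_x^{(n)})$ with $r_x^{(n)}\in I$, apply condition~(ii) to the resulting doubly indexed zero-convergent family, and then resum over the auxiliary index. Your version merely spells out more of the convergence bookkeeping (closedness of open subgroups, continuity of left multiplication) that the paper leaves implicit.
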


\begin{proof}
 Let us start from discussing the particular case when the topological
ring $R$ is separated and complete, so $\R=R$, \ $\I=I$, and $\rho$~is
the identity map.
 In this case, the assertion of the lemma is completely tautological
and no countability assumption on the topology of $\R$ is needed.
 The datum of an $X$\+indexed family of elements $r_x\in\I$ converging
to zero in the topology of $\R$ is equivalent to the datum of
an element $\mathfrak r=\sum_{x\in X}r_xx\in\I[[X]]\subset\R[[X]]$.
 By the definition, the element~$\mathfrak r$ belongs to $I\R[[X]]
\subset\I[[X]]$ if and only if there exists a finite set of elements
$s_1$,~\dots, $s_m\in I$ and a finite set of elements
$\mathfrak t_1$,~\dots, $\mathfrak t_m\in\R[[X]]$ such that
$\mathfrak r=\sum_{j=1}^ms_j\mathfrak t_j$ in $\R[[X]]$.
 The datum of an element~$\mathfrak t_j=\sum_{x\in X} t_{j,x}x
\in\R[[X]]$ is equivalent to the datum of an $X$\+indexed family of
elements $t_{j,x}\in\R$ converging to zero in the topology of $\R[[X]]$;
and the equation $\mathfrak r=\sum_{j=1}^ms_j\mathfrak t_j$ in
$\R[[X]]$ is equivalent to the $X$\+indexed family of equations
$r_x=\sum_{j=1}^ms_jt_{j,x}$ for all $x\in X$.

 In the general case, the implication (i)\,$\Longrightarrow$\,(ii)
still does not require the countability assumption on the topology.
 Given an $X$\+indexed family of elements $r_x\in I$ converging to
zero in $R$, consider the element $\mathfrak r=\sum_{x\in X}\rho(r_x)x
\in\R[[X]]$.
 According to~(i), there exist finite sets of elements
$s_1$,~\dots, $s_m\in I$ and $\mathfrak t_1$,~\dots, $\mathfrak t_m
\in\R[[X]]$ such that $\mathfrak r=\sum_{j=1}^m\rho(s_j)\mathfrak t_j$
in $\R[[X]]$, which is a restatement of~(ii).

 To prove (ii)\,$\Longrightarrow$\,(i), one observes that, whenever
the topology of $R$ has a countable base, any element $p\in\R$ can be
presented as a sum $p=\sum_{i=1}^\infty\rho(t_i)$ for some elements
$t_1$, $t_2$, $t_3$,~\dots~$\in R$ (where the infinite sum is understood
as the limit of finite partial sums in the topology of~$\R$).
 Similarly, any element $q\in\I=\varprojlim_{J\in\bF,\,J\subset I}I/J$
can be presented as a countably infinite sum
$q=\sum_{i=1}^\infty\rho(r_i)$ for some sequence of elements
$r_1$, $r_2$, $r_3$,~\dots~$\in I$.
 Moreover, if $\mathfrak q=\sum_{x\in X}q_xx$ is an element of
$\I[[X]]$ (so the $X$\+indexed family of elements $q_x\in\I$ converges
to zero in the topology of~$\R$), then one can choose for every
$x\in X$ a sequence of elements $(r_{x,i}\in I)_{i=1}^\infty$ such that
$q_x=\sum_{i=1}^\infty\rho(r_{x,i})\in\I$ for every $x\in X$ and
the whole doubly indexed family of elements $r_{x,i}\in I$ converges
to zero in the topology of~$R$.

 By~(ii), there exists a finite set of elements $s_1$,~\dots, $s_m\in I$
and a family of elements $t_{j,x,i}\in\R$, indexed by the triples of
indices $1\le j\le m$, \ $x\in X$, and $i\in\boZ_{\ge1}$ and converging
to zero in the topology of $\R$, such that
$$
 \rho(r_{x,i})=\sum_{j=1}^m\rho(s_j)t_{j,x,i}\qquad
 \text{for all $x\in X$ and $i\in\boZ_{\ge1}$.}
$$
 Passing to the sum over~$i$ (understood as the limit of finite partial
sums in the topology of~$\R$), we obtain the equations
$$
 q_x=\sum_{j=1}^m\rho(s_j)\sum_{i=1}^\infty t_{j,x,i}
 \qquad\text{for all \,$x\in X$},
$$
that is, setting $p_{j,x}=\sum_{i=1}^\infty t_{j,x,i}\in\R$,
$$
 q_x=\sum_{j=1}^m\rho(s_j)p_{j,x}
 \qquad\text{for all \,$x\in X$.}
$$
 It remains to observe that the family of elements~$(p_{j,x})_{j,x}$
converges to zero in the topology of $\R$ (because the family of
elements~$(t_{j,x,i})_{j,x,i}$ does) and conclude that
$$
 \mathfrak q=\sum_{x\in X}q_xx=\sum_{j=1}^\infty\sum_{x\in X}
 \rho(s_j)p_{j,x}x=\sum_{j=1}^m\rho(s_j)\mathfrak p_j,
$$
where $\mathfrak p_j=\sum_{x\in X}p_{j,x}x\in\R[[X]]$.
 Thus $\mathfrak q\in I\R[[X]]$.
\end{proof}

\begin{rem}
 One can say that an open right ideal $I$ in a topological ring $R$
with a right linear topology is \emph{strongly finitely generated} if
there exists a finite set of elements $s_1$,~\dots, $s_m\in I$ that
can be used in the condition of
Lemma~\ref{strongly-generated-ideal-lemma2}(ii) for all
zero-convergent families of elements $r_x\in I$.
 Notice that a strongly generated open right ideal does not need to be
strongly finitely generated (e.~g., a right ideal in a discrete
associative ring does not need to be finitely generated).
 On the other hand, one easily observes that a strongly generated
open right ideal in a topological ring $R$ is strongly finitely
generated whenever it is finitely generated as a right ideal
in an abstract associative ring~$R$.

 Moreover, let us say that a subset of elements $Z\subset I$ of an open
right ideal $I\subset R$ is a \emph{set of strong generators} of $I$
if for any zero-convergent family of elements $(r_x\in I)_{x\in X}$
there exists a finite set of elements $s_1$,~\dots, $s_m\in Z$ that
can be used in the condition of
Lemma~\ref{strongly-generated-ideal-lemma2}(ii) for the family of
elements $r_x\in I$.
 Then any set of generators of a strongly generated right ideal is
a set of strong generators.
 Indeed, the equality $I=\sum_{z\in Z}zR$ implies $I\R[[X]]=
\sum_{z\in Z}z\R[[X]]$.
\end{rem}

\begin{thm} \label{gabriel-strongly-finitely-generated-thm}
 Let $R$ be an associative ring and\/ $\bG$ be a right Gabriel
topology on $R$ having a countable base\/ $\bB$ consisting of
finitely generated right ideals.
 Then all the right ideals $I\in\bB$ are strongly (finitely) generated
in the topological ring~$R$.
\end{thm}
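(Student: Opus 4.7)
The plan is to verify condition~(ii) of Lemma~\ref{strongly-generated-ideal-lemma2}, which characterizes strong generation under the countable-base hypothesis. Fix $I\in\bB$ and a finite generating set $s_1,\dotsc,s_m$ of~$I$, and let $(r_x)_{x\in X}$ be any family of elements of $I$ converging to zero in the topology of~$R$. I need to exhibit a family $(t_{j,x})_{1\le j\le m,\,x\in X}$ of elements of $\R$ converging to zero in the topology of $\R$ satisfying $\rho(r_x)=\sum_{j=1}^m\rho(s_j)t_{j,x}$ in $\R$ for every $x\in X$. Since any generating set will then be a set of strong generators, this simultaneously proves strong finite generation.

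The central construction is a decreasing chain $K_0=R\supset K_1\supset K_2\supset\dotsb$ inside $\bG$ that forms a base of the topology and satisfies $K_{n+1}\subset s_1K_n+\dotsb+s_mK_n$ for every $n\ge0$. I would build this recursively starting from $K_0=R$: given $K_n\in\bG$, the right ideal $s_1K_n+\dotsb+s_mK_n$ lies in $\bG$ by condition~(T4$'$) of Lemma~\ref{finitely-generated-gabriel-lemma}, and intersecting it with the $n$-th member of a fixed enumeration of $\bB$ keeps it in $\bG$ by~(T2); since $\bG$ has a base of finitely generated right ideals, I may choose $K_{n+1}\in\bG$ inside this intersection. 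Interleaving with $\bB$ ensures that $(K_n)_{n\ge0}$ is cofinal in $\bG$, hence a base of the topology.

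Having the chain, I would stratify $X$ into the ``shells'' $X_n=\{x\in X:r_x\in K_n\}\setminus\{x\in X:r_x\in K_{n+1}\}$ for $n\ge0$, together with $X_\infty=\{x\in X:r_x\in\bigcap_n K_n\}$. Since $(r_x)$ converges to zero, each $X_n$ is finite. For $x\in X_n$ with $n\ge1$ the inclusion $r_x\in K_n\subset s_1K_{n-1}+\dotsb+s_mK_{n-1}$ furnishes a decomposition $r_x=\sum_j s_j\tilde t_{j,x}$ with $\tilde t_{j,x}\in K_{n-1}$; for $x\in X_0$ I would use any decomposition $r_x=\sum_j s_j\tilde t_{j,x}$ with $\tilde t_{j,x}\in R$ coming from $r_x\in I$; and for $x\in X_\infty$ I would set $\tilde t_{j,x}=0$, which is permissible because $\rho(r_x)\in\bigcap_n\rho(K_n)\subset\bigcap_n\I_n=0$ ($\R$ being separated, where $\I_n\subset\R$ denotes the open right ideal corresponding to $K_n$). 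Setting $t_{j,x}=\rho(\tilde t_{j,x})\in\R$ then gives the required identity $\rho(r_x)=\sum_j\rho(s_j)t_{j,x}$ in $\R$ for every $x\in X$.

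To verify zero-convergence of $(t_{j,x})$, fix any $N$: for every pair $(j,x)$ with $x\in X_n$ and $n\ge N+1$ one has $\tilde t_{j,x}\in K_{n-1}\subset K_N$, hence $t_{j,x}\in\rho(K_N)\subset\I_N$, and the same holds for $x\in X_\infty$. Therefore $\{(j,x):t_{j,x}\notin\I_N\}\subset\{1,\dotsc,m\}\times\bigsqcup_{n=0}^{N}X_n$, a finite set. The main obstacle is the recursive construction of the chain $(K_n)$, since it simultaneously needs Gabriel's axiom~(T4$'$), the countability of $\bB$, and the availability of finitely generated right ideals in $\bG$ arbitrarily deep in the topology; once this chain has been arranged, the ``shell'' bookkeeping that completes the proof is routine.
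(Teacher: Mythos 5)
Your proof is correct and follows essentially the same route as the paper's: both verify condition~(ii) of Lemma~\ref{strongly-generated-ideal-lemma2} by using the (T4$'$) reformulation from Lemma~\ref{finitely-generated-gabriel-lemma} to place the ideals $s_1J+\dotsb+s_mJ$ in $\bG$, which forces the coefficients of the decompositions of the deeper $r_x$ into deeper members of the countable base, yielding zero-convergence. The only differences are bookkeeping (the paper enumerates $X$ by $\boZ_{\ge1}$ and uses cut-points $x_1<x_2<\dotsb$, while you stratify $X$ into finite shells along a nested chain with $K_{n+1}\subset s_1K_n+\dotsb+s_mK_n$), and your separate treatment of $X_\infty$ via separatedness of $\R$ is a valid, if optional, refinement.
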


\begin{proof}
 Let $s_1$,~\dots,~$s_m$ be a finite set of generators of an open
right ideal $I\subset R$, and let $r_x\in I$ be an $X$\+indexed
family of elements converging to zero in the Gabriel topology $\bG$
of~$R$.
 One can assume the set $X$ to be countable (as any family of elements
converging to zero in $R$ vanishes outside of a countable subset of
indices) and presume an identification $X=\boZ_{\ge1}$ to be chosen.

 One can also choose a (nonstrictly) decreasing sequence of open right
ideals $J_1\supset J_2\supset J_3\supset\dotsb$ in $R$ such that
$J_i\in\bB$ for all $i\ge1$ and the set of open right ideals
$\bJ=\{J_1,J_2,J_2,\dotsc\}$ is a base of the topology~$\bG$.
 By Lemma~\ref{finitely-generated-gabriel-lemma}, for every $i\ge1$
the right ideal $H_i=s_1J_i+\dotsb+s_mJ_i\subset R$ belongs to~$\bG$.

 Choose an integer $x_1\ge1$ such that $r_x\in H_1$ for all $x>x_1$.
 Proceeding by induction, for every $i\ge 2$ choose an integer
$x_i>x_{i-1}$ such that for every $x>x_i$ one has $r_x\in H_i$.
 For every $x\le x_1$, choose elements $u_{1,x}$,~\dots, $u_{m,x}\in R$
such that $r_x=\sum_{j=1}^m s_ju_{j,x}$.
 For every $x_i<x\le x_{i+1}$, choose elements $u_{1,x}$,~\dots,
$u_{m,x}\in J_i$ such that $r_x=\sum_{j=1}^m s_ju_{j,x}$.
 Set $t_{j,x}=\rho(u_{j,x})\in\R$.
 Now the family of elements $(t_{j,x})_{j,x}$ converges to zero in
the projective limit topology $\bfG$ of $\R$ (since the family of
elements $(u_{j,x})_{j,x}$ converges to zero in the topology $\bG$ of
$R$, because $\bJ$ is a base of the topology~$\bG$), and one has
$\rho(r_x)=\sum_{j=1}^\infty\rho(s_j)t_{j,x}$ for every $x\in X$, as
desired in Lemma~\ref{strongly-generated-ideal-lemma2}(ii).
\end{proof}

\begin{cor} \label{countable-gabriel-fully-faithful-cor}
 Let $R$ be an associative ring, $\bG$ be a right Gabriel topology
on $R$ having a countable base consisting of finitely generated
right ideals, and let\/ $\R=\varprojlim_{I\in\bG}R/I$ be the completion
of $R$ with respect to\/ $\bG$, viewed as a topological ring in
the projective limit topology\/~$\bfG$.
 Then the forgetful functor
$$
 \R\contra\lrarrow R\modl
$$
is fully faithful.
 Consequently, the natural surjective map of abelian groups
$$
 \N\ot_R\C\lrarrow\N\ocn_\R\C
$$
is an isomorphism for any left\/ $\R$\+contramodule\/ $\C$ and
any discrete right\/ $\R$\+module\/~$\N$.
 In particular, one has
$$
 I\C=\I\tim\C\subset\C
$$
for any left\/ $\R$\+contramodule\/ $\C$, any open right ideal $I\in\bG$
in $R$, and the related open right ideal\/ $\I\in\bfG$ in\/~$\R$.
\end{cor}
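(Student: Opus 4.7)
The plan is to assemble this corollary directly from the two preceding results, Theorem~\ref{gabriel-strongly-finitely-generated-thm} and Theorem~\ref{fully-faithful-thm}. Since $\bG$ is a right Gabriel topology with a countable base $\bB$ of finitely generated right ideals, Theorem~\ref{gabriel-strongly-finitely-generated-thm} gives that every $I\in\bB$ is strongly generated in $R$. Applying Theorem~\ref{fully-faithful-thm} with $\bF=\bG$, the equivalence (ii)\,$\Longleftrightarrow$\,(iii) then yields full-and-faithfulness of the forgetful functor $\R\contra\rarrow R\modl$. This is the first assertion.

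For the tensor/contratensor isomorphism, I would reuse the argument that already appears in the (iii)\,$\Longrightarrow$\,(i) part of the proof of Theorem~\ref{fully-faithful-thm}: for any left $\R$\+contramodule $\C$ and any discrete right $\R$\+module $\N$, there are natural identifications
$$
 \Hom_R(\C,\Hom_\boZ(\N,\boQ/\boZ))\simeq\Hom_\boZ(\N\ot_R\C,\>\boQ/\boZ)
$$
and
$$
 \Hom^\R(\C,\Hom_\boZ(\N,\boQ/\boZ))\simeq\Hom_\boZ(\N\ocn_\R\C,\>\boQ/\boZ).
$$
Full-and-faithfulness of the forgetful functor makes the left-hand sides coincide, so the two right-hand sides agree; since $\boQ/\boZ$ is an injective cogenerator of $\Ab$, the natural surjective map $\N\ot_R\C\rarrow\N\ocn_\R\C$ must be an isomorphism.

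Finally, for the equation $I\C=\I\tim\C$, I would specialize the just-established isomorphism to $\N=\R/\I\simeq R/I$ and observe that $\N\ot_R\C=\C/I\C$ while $\N\ocn_\R\C=\C/(\I\tim\C)$ (the latter by the displayed formula at the end of Section~\ref{prelim-contratensor}); since the surjection $\C/I\C\rarrow\C/(\I\tim\C)$ is always there, equality of these quotients is the same as $I\C=\I\tim\C$. Alternatively, I would just cite Lemma~\ref{strongly-generated-ideal-lemma1} once full-and-faithfulness provides (via the argument above) strong generation for every $I\in\bG$, not merely for $I\in\bB$; equivalently one invokes the implication (ii)\,$\Longrightarrow$\,(i) of Theorem~\ref{fully-faithful-thm} to enlarge the class of strongly generated ideals from $\bB$ to all of~$\bG$. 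No step is a real obstacle here: the whole argument is bookkeeping of previously-established results, and the only delicate input — the contramodule Nakayama lemma used inside Theorem~\ref{fully-faithful-thm} to exploit the countable base — is already in place.
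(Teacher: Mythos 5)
Your proposal is correct and follows essentially the same route as the paper: the first assertion from Theorem~\ref{gabriel-strongly-finitely-generated-thm} combined with the implication (ii)\,$\Longrightarrow$\,(iii) of Theorem~\ref{fully-faithful-thm}; the second by rerunning the duality argument from the (iii)\,$\Longrightarrow$\,(i) step of that theorem's proof; and the third by specializing to $\N=\R/\I=R/I$ (the paper also cites Lemma~\ref{strongly-generated-ideal-lemma1}, exactly as in your alternative). Nothing is missing.
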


\begin{proof}
 The first assertion follows from
Theorem~\ref{fully-faithful-thm}\,(ii)\,$\Longrightarrow$\,(iii)
and Theorem~\ref{gabriel-strongly-finitely-generated-thm}.
 The second assertion follows from the first one (see the proof of
Theorem~\ref{fully-faithful-thm}\,(iii)\,$\Longrightarrow$\,(i)).
 The third assertion is provided by
Lemma~\ref{strongly-generated-ideal-lemma1} (it is also a particular
case of the second one corresponding to the choice of the discrete
right $\R$\+module $\N=\R/\I=R/I$).
\end{proof}

\Section{Extending Full Abelian Subcategories}
\label{extending-subcategories-secn}

 Given a topological ring $R$ with a right linear topology $\bF$
having a countable base consisting of strongly generated open right
ideals, and denoting by $\R$ the completion of $R$ with respect
to $\bF$, Theorem~\ref{fully-faithful-thm} claims the forgetful functor
$\R\contra\rarrow R\modl$ is fully faithful.
 This functor is also always exact (and it preserves infinite products).
 It follows that the essential image of the forgetful functor
$\R\contra\rarrow R\modl$ is an exactly embedded abelian full
subcategory in $R\modl$.

 In other words, the full subcategory of left $\R$\+contramodules in
the abelian category of left $R$\+modules is closed under the kernels
and cokernels of morphisms (and also under infinite products).
 Generally speaking, it is not closed under extensions, though
(see~\cite[Examples~5.2\,(6\+-8)]{Pper}).
 The aim of this section is to develop a simple technique for
producing a sequence of bigger kernel- and cokernel-closed full
subcategories in a fixed abelian category out of a smaller one,
by adding extensions.

 Let $\sA$ be an abelian category and $\sC\subset\sA$ be a full
subcategory containing the zero object.
 Denote by $\sC_\sA^{(2)}\subset\sA$ the full subcategory formed by all
the objects $X\in\sA$ for which there exists a short exact sequence
$0\rarrow K\rarrow X\rarrow C\rarrow0$ in $\sA$ with the objects $K$
and $C$ belonging to~$\sC$.
 Obviously, one has $\sC\subset\sC^{(2)}_\sA\subset\sA$.

 More generally, for every integer $m\ge0$ let us denote by
$\sC_\sA^{(m)}\subset\sA$ the full subcategory consisting of all
the objects $X\in\sA$ admitting a filtration by subobjects
$0=X_0\subset X_1\subset\dotsb\subset X_{m-1}\subset X_m=X$ such
that $X_j/X_{j-1}\in\sC$ for all $1\le j\le m$.
 One has $\sC_\sA^{(0)}=\{0\}$, \ $\sC_\sA^{(1)}=\sC$, \
$\sC_\sA^{(m)}\subset\sC_\sA^{(m+1)}$, and
$\bigl(\sC_\sA^{(m)}\bigr)_\sA^{(n)}=\sC_\sA^{(mn)}$
for all $m$, $n\ge0$.

\begin{lem} \label{extension-ker-coker-lemma}
 Let\/ $\sC$ be a full subcategory closed under kernels and cokernels
in an abelian category\/ $\sA$, and let $m\ge0$ be an integer.
 Then the cokernel of any morphism in\/ $\sA$ from an object of\/
$\sC^{(m)}_\sA$ to an object of\/ $\sC$ belongs to\/~$\sC$.
 The kernel of any morphism in\/ $\sA$ from an object of\/
$\sC$ to an object of\/ $\sC^{(m)}_\sA$ belongs to\/~$\sC$.
\end{lem}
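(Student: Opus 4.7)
The plan is to prove both assertions simultaneously by induction on~$m$, with the two cases being dual to each other. The base cases $m=0$ (where $\sC^{(0)}_\sA=\{0\}$) and $m=1$ (where $\sC^{(1)}_\sA=\sC$) are immediate: in the former both the kernel and cokernel in question equal the object of $\sC$ itself, and in the latter the claim is a direct restatement of closure of $\sC$ under kernels and cokernels.

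For the inductive step in the cokernel assertion, suppose the result is known for $m-1$, let $X\in\sC^{(m)}_\sA$ with filtration $0=X_0\subset X_1\subset\dotsb\subset X_{m-1}\subset X_m=X$, so $X_{m-1}\in\sC^{(m-1)}_\sA$ and $X/X_{m-1}\in\sC$, and let $f\:X\rarrow Y$ be a morphism in $\sA$ with $Y\in\sC$. I would restrict~$f$ to the subobject $X_{m-1}\subset X$, obtaining a morphism $g\:X_{m-1}\rarrow Y$. By the inductive hypothesis, $Y'=\coker(g)\in\sC$. Since $f$ carries $X_{m-1}$ into $\im(g)$, it descends to an induced morphism $\bar f\:X/X_{m-1}\rarrow Y'$, and both the source and target of~$\bar f$ lie in~$\sC$. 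The closure of~$\sC$ under cokernels then gives $\coker(\bar f)\in\sC$. It only remains to identify $\coker(\bar f)$ with $\coker(f)$: as $\im(g)\subset\im(f)\subset Y$, one has $Y/\im(g)=Y'$ and $\im(\bar f)=\im(f)/\im(g)$, so
$$
 \coker(\bar f)=Y'/\im(\bar f)=\bigl(Y/\im(g)\bigr)\big/\bigl(\im(f)/\im(g)\bigr)=Y/\im(f)=\coker(f).
$$

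The kernel assertion is handled by the dual argument: given $f\:Y\rarrow X$ with $Y\in\sC$ and $X\in\sC^{(m)}_\sA$, compose with $X\rarrow X/X_{m-1}$ to obtain a morphism $Y\rarrow X/X_{m-1}$ whose kernel $Y''\in\sC$ by the $m=1$ case (closure under kernels). The restricted morphism $Y''\rarrow X_{m-1}$ has source in $\sC$ and target in $\sC^{(m-1)}_\sA$, so by the inductive hypothesis its kernel lies in $\sC$, and one checks that this kernel coincides with $\ker(f)$.

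There is no genuine obstacle here; the argument is essentially diagram-chasing. The only point one needs to be careful about is the identification of $\coker(\bar f)$ with $\coker(f)$ (and dually for kernels), which relies on the elementary observation that the image of $f|_{X_{m-1}}$ is contained in the image of~$f$ and that passing to the quotient by $X_{m-1}$ does not alter the cokernel when one simultaneously passes to the cokernel of~$g$ on the target side.
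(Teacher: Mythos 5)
Your proof is correct and is essentially the paper's argument: the paper's (terser) proof just records the key observation that for a short exact sequence $0\rarrow X\rarrow Y\rarrow Z\rarrow 0$ and a morphism $f\:Y\rarrow A$ one can compute $\coker f$ in two steps, first as $D=\coker(X\to Y\to A)$ and then as the cokernel of the induced map $Z\rarrow D$, leaving the induction on the filtration length implicit. You have simply spelled out that induction and its dual for kernels.
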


\begin{proof}
 For any short exact sequence $0\rarrow X\rarrow Y\rarrow Z\rarrow 0$
in $\sA$, an object $A$, and a morphism $f\:Y\rarrow A$, one can
compute the cokernel $C=\coker f$ in two steps: consider the cokernel
$D$ of the composition $X\rarrow Y\rarrow A$; then $C$ is
the cokernel of the induced morphism $Z\rarrow D$.
\end{proof}

\begin{prop} \label{extended-subcategory-prop}
 Let\/ $\sA$ be an abelian category and\/ $\sC\subset\sA$ be a full
subcategory closed under finite direct sums of objects and the kernels
and cokernels of morphisms.
 Let $m\ge0$ be an integer.
 Then the full subcategory\/ $\sC^{(m)}_\sA\subset\sA$ is also closed
under finite direct sums, kernels, and cokernels.
 So, from any two abelian categories\/ $\sC$ and\/ $\sA$ with a fully
faithful exact functor\/ $\sC\rarrow\sA$ one can produce a sequence
of abelian categories\/ $\sC^{(m)}_\sA$ with fully faithful exact
functors\/ $\sC^{(m)}_\sA\rarrow\sA$ in this way.
\end{prop}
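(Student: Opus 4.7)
The plan is to induct on $m$, with the trivial base cases $m=0,1$ since $\sC^{(0)}_\sA=\{0\}$ and $\sC^{(1)}_\sA=\sC$. Assuming $\sC^{(m)}_\sA$ is closed in $\sA$ under finite direct sums, kernels, and cokernels, I establish the same for $\sC^{(m+1)}_\sA$. Closure under finite direct sums is automatic, since the direct sum of two extensions is the obvious extension of the direct sums. Once closure under kernels and cokernels is secured, the final assertion of the proposition is formal: $\sC^{(m)}_\sA$ with the inherited morphisms is then abelian, and the fully faithful inclusion $\sC^{(m)}_\sA\hookrightarrow\sA$ is exact because kernels and cokernels in $\sC^{(m)}_\sA$ are computed in~$\sA$.

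For the induction step, fix $f\colon X\to Y$ with $X,Y\in\sC^{(m+1)}_\sA$, together with short exact sequences $0\to K_X\to X\to C_X\to 0$ and $0\to K_Y\to Y\to C_Y\to 0$ such that $K_X,K_Y\in\sC$ and $C_X,C_Y\in\sC^{(m)}_\sA$. The key idea is to replace the second sequence with one to which $f$ is compatible, and then apply the snake lemma. I set $\tilde K_Y:=K_Y+f(K_X)\subset Y$, and verify three things. First, $f(K_X)\in\sC$: it is the cokernel of the inclusion $\ker(K_X\to Y)\hookrightarrow K_X$, and the kernel lies in $\sC$ by Lemma~\ref{extension-ker-coker-lemma} applied to $K_X\in\sC$ and $Y\in\sC^{(m+1)}_\sA$. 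Second, $\tilde K_Y\in\sC$: realize it as the cokernel of $K_Y\cap f(K_X)\to K_Y\oplus f(K_X)$, \ $x\mapsto(x,-x)$, using that $K_Y\cap f(K_X)=\ker(f(K_X)\to C_Y)\in\sC$ by the same lemma. Third, $Y/\tilde K_Y\cong C_Y/\bar I$, where $\bar I$ is the image of $f(K_X)$ in $C_Y$; here $\bar I\in\sC$ (cokernel of the kernel from the previous step), and then $C_Y/\bar I\in\sC^{(m)}_\sA$ by the induction hypothesis applied to the inclusion $\bar I\hookrightarrow C_Y$ inside $\sC^{(m)}_\sA$.

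With this compatible extension, $f$ induces a morphism of short exact sequences with left vertical $f|_{K_X}\colon K_X\to\tilde K_Y$ (both in $\sC$) and right vertical $\bar f\colon C_X\to Y/\tilde K_Y$ (both in $\sC^{(m)}_\sA$). The snake sequence
$$0\to\ker(f|_{K_X})\to\ker(f)\to\ker(\bar f)\to\coker(f|_{K_X})\to\coker(f)\to\coker(\bar f)\to 0$$
has outer terms in $\sC$, $\sC^{(m)}_\sA$, $\sC$, $\sC^{(m)}_\sA$ respectively, by closure of $\sC$ and by the induction hypothesis. Splicing at $\ker(f)$ produces $0\to\ker(f|_{K_X})\to\ker(f)\to c'\to 0$ with $c'=\ker\bigl(\ker(\bar f)\to\coker(f|_{K_X})\bigr)$; this is a kernel of a morphism from $\sC^{(m)}_\sA$ into $\sC\subset\sC^{(m)}_\sA$, so $c'\in\sC^{(m)}_\sA$ by induction, and hence $\ker(f)\in\sC^{(m+1)}_\sA$. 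Dually, splicing at $\coker(f)$ gives $0\to d'\to\coker(f)\to\coker(\bar f)\to 0$ with $d'=\coker\bigl(\ker(\bar f)\to\coker(f|_{K_X})\bigr)$ a cokernel from $\sC^{(m)}_\sA$ to $\sC$; by Lemma~\ref{extension-ker-coker-lemma} one has $d'\in\sC$, so $\coker(f)\in\sC^{(m+1)}_\sA$.

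The main technical obstacle is the construction and verification of the modified subobject $\tilde K_Y$, where every invocation of Lemma~\ref{extension-ker-coker-lemma} must be tracked in the correct direction (source in $\sC$ with target in $\sC^{(m)}_\sA$ for kernels, source in $\sC^{(m)}_\sA$ with target in $\sC$ for cokernels, and same-category morphisms handled by the induction hypothesis). Once this compatible extension of $Y$ is in place, the snake lemma delivers the required presentations of $\ker(f)$ and $\coker(f)$ as extensions with left term in $\sC$ and right term in $\sC^{(m)}_\sA$ essentially mechanically.
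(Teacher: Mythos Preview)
Your proof is correct and follows essentially the same strategy as the paper's: modify the filtration on $Y$ so that $f$ respects it, then apply the snake lemma. The only cosmetic difference is that you place the single $\sC$-layer at the bottom of each filtration (with $\sC^{(m)}_\sA$ on top), whereas the paper places the single $\sC$-layer at the top (with $\sC^{(m-1)}_\sA$ at the bottom); the two arguments are mirror images of one another.
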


\begin{proof}
 We proceed by induction on $m\ge1$ (the cases $m=0$ and~$1$ being
obvious).
 Since the class of all short exact sequences in $\sA$ is closed under
finite direct sums, and so is the full subcategory $\sC\subset\sA$, it
follows that the full subcategory $\sC^{(m)}_\sA\subset\sA$ is also
closed under finite direct sums.

 To check that the full subcategory $\sC^{(m)}_\sA\subset\sA$ is closed
under kernels and cokernels, we consider a morphism $f\:X\rarrow Y$
between two objects from~$\sC^{(m)}_\sA$, \ $m\ge2$.
 By construction, there exist two short exact sequences $0\rarrow K
\rarrow X\rarrow C\rarrow0$ and $0\rarrow L\rarrow Y\rarrow D\rarrow0$
in $\sA$ with $K$, $L\in\sC^{(m-1)}_\sA$ and $C$, $D\in\sC$.
$$
\begin{diagram}
\node{0}\arrow{e}\node{K}\arrow{e}\node{X}\arrow{e}\arrow{s,r}{f}
\node{C}\arrow{e}\node{0}\\
\node{0}\arrow{e}\node{L}\arrow{e}\node{Y}\arrow{e}\node{D}
\arrow{e}\node{0}
\end{diagram}
$$

 Denote by $g\:K\rarrow D$ the composition of morphisms
$K\rarrow X\rarrow Y\rarrow D$, and denote by $L'$ the sum of
the subobject $L$ in $Y$ with the image of the composition $K\rarrow X
\rarrow Y$.
 Then there is a short exact sequence
$$
 0\lrarrow\ker g\lrarrow L\oplus K\lrarrow L'\lrarrow0
$$
in~$\sA$.
 By the induction assumption, we have $\ker g\in\sC^{(m-1)}_\sA$ 
(since $D\in\sC\subset\sC_\sA^{(m-1)}$) and
$L\oplus K\in\sC^{(m-1)}_\sA$, hence $L'\in\sC^{(m-1)}_\sA$.

 Set $D'=\coker g\in\sA$; by Lemma~\ref{extension-ker-coker-lemma},
we have $D'\in\sC$.
 Then there is a short exact sequence $0\rarrow L'\rarrow Y\rarrow D'
\rarrow 0$ in $\sA$ and a morphism of short exact sequences
$$
\begin{diagram}
\node{0}\arrow{e}\node{K}\arrow{e}\arrow{s,r}{k}\node{X}\arrow{e}
\arrow{s,r}{f}\node{C}\arrow{e}\arrow{s,r}{c}\node{0}\\
\node{0}\arrow{e}\node{L'}\arrow{e}\node{Y}\arrow{e}\node{D'}
\arrow{e}\node{0}
\end{diagram}
$$

 Now the Snake lemma provides a six-term exact sequence
$$
 0\rarrow\ker k\rarrow\ker f\rarrow\ker c\rarrow\coker k
 \rarrow\coker f\rarrow\coker c\rarrow0,
$$
where the kernel and cokernel of the morphism~$k$ belong to
$\sC^{(m-1)}_\sA$, while the kernel and cokernel of the morphism~$c$
belong to~$\sC$.
 It follows that for the boundary morphism
$\partial\:\ker c\rarrow\coker k$ we have $\ker\partial\in\sC$ and
$\coker\partial\in\sC^{(m-1)}_\sA$.

 Finally, from the short exact sequences $0\rarrow\ker k\rarrow
\ker f\rarrow\ker\partial\rarrow0$ and $0\rarrow\coker\partial
\rarrow\coker f\rarrow\coker c\rarrow0$ we conclude that the objects
$\ker f$ and $\coker f$ belong to $\sC^{(m)}_\sA$, as desired.
\end{proof}

\begin{cor}
 In the assumptions of Proposition~\ref{extended-subcategory-prop},
for any two objects $X\in\sC^{(m)}_\sA$ and $Y\in\sC^{(n)}_\sA$, where
$m$, $n\ge0$, and any morphism $f\:X\rarrow Y$ in\/ $\sA$, one has\/
$\ker f\in\sC^{(m)}_\sA$ and\/ $\coker f\in\sC^{(n)}_\sA$.
\end{cor}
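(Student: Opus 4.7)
The plan is to proceed by induction on $m+n$, invoking the snake lemma together with Proposition~\ref{extended-subcategory-prop} and Lemma~\ref{extension-ker-coker-lemma}. The cases $m=0$ and $n=0$ are immediate: if $X=0$ then $\ker f=0$ and $\coker f\cong Y$, and dually for $Y=0$.

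For the inductive step, assume $m,n\ge 1$ and that the corollary holds for all pairs with strictly smaller sum. It is convenient to treat the case $n=1$ separately. If $n=1$, then $Y\in\sC$, so $\coker f\in\sC$ is already given by Lemma~\ref{extension-ker-coker-lemma}. To show $\ker f\in\sC_\sA^{(m)}$, fix a short exact sequence $0\to K\to X\to C\to 0$ with $K\in\sC_\sA^{(m-1)}$ and $C\in\sC$, and set $g=f|_K\:K\to Y$. The inductive hypothesis applied to $g$ yields $\ker g\in\sC_\sA^{(m-1)}$ and $\coker g\in\sC$; the induced map $\bar g\:C\to\coker g$ is then a morphism between objects of $\sC$, so $\ker\bar g\in\sC$, and a direct diagram chase identifies $\ker\bar g$ with $(\ker f+K)/K\cong\ker f/\ker g$. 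The resulting short exact sequence $0\to\ker g\to\ker f\to\ker\bar g\to 0$ exhibits $\ker f$ as an object of $\sC_\sA^{(m)}$.

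For $n\ge 2$, choose a short exact sequence $0\to L\to Y\to D\to 0$ with $L\in\sC_\sA^{(n-1)}$ and $D\in\sC$, and let $\bar f\:X\to D$ be the composite of $f$ with the projection $Y\to D$. The inductive hypothesis applied to $\bar f$ (valid because $m+1<m+n$) yields $K:=\ker\bar f\in\sC_\sA^{(m)}$ and $\coker\bar f\in\sC$. The restriction $f'=f|_K\:K\to L$ then satisfies the inductive hypothesis in the form $(m,n-1)$, giving $\ker f'\in\sC_\sA^{(m)}$ and $\coker f'\in\sC_\sA^{(n-1)}$. Since $\ker f\subseteq K$, we have $\ker f=\ker f'$, proving the first claim. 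For the second, the snake lemma applied to the morphism of short exact sequences $(K\to X\to\im\bar f)\to(L\to Y\to D)$ produces an exact sequence $0\to\coker f'\to\coker f\to\coker\bar f\to 0$, exhibiting $\coker f$ as an object of $\sC_\sA^{(n)}$.

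The subtle point this argument must finesse is that Proposition~\ref{extended-subcategory-prop} alone, applied after embedding $X$ and $Y$ into $\sC_\sA^{(\max(m,n))}$, would only yield the weaker conclusion that both $\ker f$ and $\coker f$ lie in $\sC_\sA^{(\max(m,n))}$; obtaining the sharper asymmetric bounds forces one to truncate the filtration of the target one step at a time and to handle the kernel and cokernel by separate snake-lemma computations, so that each side of $f$ retains its own filtration length rather than being inflated to the maximum of the two.
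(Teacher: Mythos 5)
Your argument is correct, but it takes a genuinely different and considerably more laborious route than the paper. The paper's proof is essentially two lines: by Proposition~\ref{extended-subcategory-prop}, the full subcategory ${}'\mskip-.5\thinmuskip\sC=\sC^{(m)}_\sA$ is itself closed under finite direct sums, kernels, and cokernels, so Lemma~\ref{extension-ker-coker-lemma} applies verbatim with $'\mskip-.5\thinmuskip\sC$ in the role of $\sC$; since $Y\in\sC^{(n)}_\sA\subset\bigl(\sC^{(m)}_\sA\bigr)^{(n)}_\sA=\sC^{(mn)}_\sA$, the kernel half of that lemma immediately gives $\ker f\in\sC^{(m)}_\sA$, and the cokernel half applied symmetrically to ${}''\mskip-.5\thinmuskip\sC=\sC^{(n)}_\sA$ gives $\coker f\in\sC^{(n)}_\sA$. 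In other words, the asymmetric bounds are already packaged in Lemma~\ref{extension-ker-coker-lemma} once one knows the enlarged subcategories are kernel- and cokernel-closed; no further induction or snake-lemma computation is needed. Your closing remark — that the Proposition alone would only give membership in $\sC^{(\max(m,n))}_\sA$ and that the sharper bounds "force" a step-by-step truncation of the filtration — is therefore not quite right: the Lemma is precisely the device that avoids this. That said, your induction on $m+n$, with the separate treatment of $n=1$, the reduction of $\ker f$ to $\ker f'$ for the restriction $f'\:\ker\bar f\rarrow L$, and the snake-lemma identification $0\rarrow\coker f'\rarrow\coker f\rarrow\coker\bar f\rarrow0$, is a complete and valid self-contained proof; it reproves from scratch (and in somewhat greater detail) what the combination of the Proposition and the Lemma already delivers, at the cost of redoing diagram chases that the paper's formulation of Lemma~\ref{extension-ker-coker-lemma} was designed to encapsulate.
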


\begin{proof}
 By Proposition~\ref{extended-subcategory-prop}, the assertion of
Lemma~\ref{extension-ker-coker-lemma} is applicable to the full
subcategories ${}'\mskip-.5\thinmuskip\sC=\sC^{(m)}_\sA\subset\sA$ and
${}''\mskip-.5\thinmuskip\sC=\sC^{(n)}_\sA\subset\sA$.
 So it remains to observe that both $\sC^{(m)}_\sA$ and $\sC^{(n)}_\sA$
are contained in $'\mskip-.5\thinmuskip\sC^{(n)}_\sA=\sC^{(mn)}_\sA=
{}''\mskip-.5\thinmuskip\sC^{(m)}_\sA$ (assuming $m$, $n\ge1$).
\end{proof}

 Obviously, if the infinite product functors are (everywhere defined
and) exact in $\sA$ and the full subcategory $\sC\subset\sA$ is
closed under infinite products, then the full subcategories
$\sC^{(m)}_\sA\subset\sA$ are closed under infinite products, too.

\Section{Projective Dimension at Most~$1$}  \label{projdim-1-secn}

 In this section we prove the result announced in the abstract,
namely, that any left flat ring epimorphism $u\:R\rarrow U$ of
countable type has left projective dimension at most~$1$.
 We also describe the Geigle--Lenzing perpendicular subcategory
$U^{\perp_{0,1}}\subset R\modl$ of the left $R$\+module $U$ in terms
of $\R$\+contramodules.

 We start in the somewhat greater generality of a right linear
topology $\bF$ on an associative ring~$R$.
 Let $\R=\varprojlim_{I\in\bF}R/I$ be the completion of $R$ with
respect to $\bF$, viewed as a complete, separated topological ring
in the projective limit topology~$\bfF$.

\begin{lem} \label{ext-from-discrete}
 Let $A$ be an associative ring, $\N$ be an $A$\+$R$\+bimodule
that is discrete as a right $R$\+module, and $V$ be a left $A$\+module.
 Then for every $n\ge0$ the abelian group\/ $\Ext^n_A(\N,V)$ carries
a natural left\/ $\R$\+contramodule structure whose underlying left
$R$\+module structure is induced by the right $R$\+module structure
on\/~$\N$.
\end{lem}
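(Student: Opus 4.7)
The plan is to realize $\Ext^n_A(\N,V)$ as the value of the $n$\+th right derived functor of the $\Hom_A(\N,-)$ functor, but computed \emph{inside the abelian category} $\R\contra$ rather than inside $\Ab$ or $R\modl$. For $n=0$, the left $\R$\+contramodule structure on $\Hom_A(\N,V)$ is already defined in Section~\ref{prelim-contratensor}. First I would check that this construction is functorial in $V\in A\modl$: given an $A$\+module morphism $f\:V\rarrow V'$ and an element $\mathfrak r=\sum_d r_d d\in\R[[\Hom_A(\N,V)]]$, the equality
$$
 f\bigl(\pi_{\Hom_A(\N,V)}(\mathfrak r)(b)\bigr)
 =\pi_{\Hom_A(\N,V')}\bigl(\R[[f_*]](\mathfrak r)\bigr)(b)
$$
holds for all $b\in\N$; this is an essentially tautological manipulation with a finite sum (the defining sum is finite because the annihilator of $b$ in $\R$ is open and $(r_d)$ converges to zero). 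Thus $\Hom_A(\N,-)\:A\modl\rarrow\R\contra$ is a well-defined additive (and left exact) functor.

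Next, since the category $A\modl$ has enough injectives and $\R\contra$ is an abelian category in which kernels and cokernels are preserved by the forgetful functor to $\Ab$, one can compute the right derived functors of $\Hom_A(\N,-)$ inside $\R\contra$. Concretely, choose an injective resolution $V\rarrow I^\bu$ in $A\modl$, apply $\Hom_A(\N,-)$ termwise to obtain a cochain complex
$$
 \Hom_A(\N,I^0)\lrarrow\Hom_A(\N,I^1)\lrarrow\Hom_A(\N,I^2)\lrarrow\dotsb
$$
in $\R\contra$, and take its $n$\+th cohomology in $\R\contra$. This produces a left $\R$\+contramodule structure on the group that I call $\Ext^n_A(\N,V)$. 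To see that it really is the classical $\Ext$, I would invoke the fact (recalled in Section~\ref{prelim-contramodules}) that the forgetful functor $\R\contra\rarrow\Ab$ is exact; hence it commutes with the formation of cohomology, and the underlying abelian group of the derived functor computed in $\R\contra$ coincides with $H^n\Hom_A(\N,I^\bu)=\Ext^n_A(\N,V)$.

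For the final point, I would verify that the underlying left $R$\+module structure coming from the contramodule structure agrees, in degree~$n=0$, with the $R$\+module structure induced by the right action of $R$ on $\N$; this is a direct check from the formula $\pi_\D(\mathfrak r)(b)=\sum_d d(br_d)$ specialized to $\mathfrak r=r\cdot d$ for a single $r\in R$. For higher $n$ the compatibility is automatic by functoriality. Independence of the choice of injective resolution and naturality in $V$ follow by the standard derived-functor argument, since lifts of the identity between two injective resolutions are chain homotopic through $A$\+linear maps, and $\Hom_A(\N,-)$ sends such homotopies to $\R$\+contramodule chain homotopies. There is no serious obstacle here: the whole content of the lemma is the observation that $\Hom_A(\N,-)$ lands in $\R\contra$ rather than just in $R\modl$, after which standard homological algebra takes over.
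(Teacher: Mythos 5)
Your proposal is correct and follows essentially the same route as the paper: the paper's proof likewise observes that $\Hom_A(\N,{-})$ lands in $\R\contra$ (via the contramodule structure from Section~\ref{prelim-contratensor}), applies it to an injective resolution of $V$, and takes cohomology of the resulting complex of left $\R$\+contramodules. Your write-up merely makes explicit the functoriality, exactness-of-forgetful-functor, and degree-zero compatibility checks that the paper leaves implicit.
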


\begin{proof}
 Let us emphasize that the abelian groups $\Ext^n_A(N,V)$ have natural
left $R$\+module structures for any $A$\+$R$\+bimodule~$N$.
 It is claimed that this $R$\+module structure on $\Ext^n_A(\N,V)$
underlies a naturally defined left $\R$\+contramodule structure whenever
the $A$\+$R$\+bimodule $\N$ is discrete as a right $R$\+module.

 Indeed, according to Section~\ref{prelim-contratensor}, there is
a natural left $\R$\+contramodule structure on the abelian group
$\Hom_A(\N,V)$.
 Now let $0\rarrow V\rarrow E^0\rarrow E^1\rarrow E^2\rarrow\dotsb$
be an injective resolution of the left $A$\+module~$V$.
 Then $\Hom_A(\N,E^\bu)$ is naturally a complex of left
$\R$\+contramodules, hence its cohomology modules
$\Ext^n_A(\N,V)=H^n\Hom_A(\N,E^\bu)$ are left $\R$\+contramodules, too.
\end{proof}

 Given a complex of left $A$\+modules $M^\bu$ and a left $A$\+module
$B$, we will use the simplified notation $\Ext^n_A(M^\bu,B)$ for
the abelian groups of morphisms in the derived category of left
$A$\+modules
$$
 \Ext^n_A(M^\bu,B)=\Hom_{\sD(A\modl)}(M^\bu,B[n]).
$$
 When $M^\bu$ is a complex of $A$\+$R$\+bimodules, the right action of
$R$ in the derived category object $M^\bu\in\sD(A\modl)$ induces
left $R$\+module structures on the groups $\Ext^n_A(M^\bu,B)$.

 Assume that the complex $M^\bu$ is concentrated in the nonpositive
cohomological degrees, that is $M^i=0$ for $i>0$.
 Then $\Ext^n_A(M^\bu,B)=0$ for $n<0$.
 Moreover, denoting by $L$ the cokernel of the map $M^{-1}\rarrow M^0$,
one has $\Ext^0_A(M^\bu,B)=\Hom_A(L,B)$.
 However, one may well have $\Ext^n_A(M^\bu,E)\ne0$ for an injective
left $A$\+module $E$ and some $n>0$, if $H^i(M^\bu)\ne0$ for some $i<0$.
 (In fact, for $E$ injective, one has $\Ext^n_A(M^\bu,E)=
\Hom_A(H^{-n}(M^\bu),E)$ for all $n\in\boZ$, so $\Ext^n_A(M^\bu,E)\ne0$
whenever $E$ is an injective cogenerator of $A\modl$ and
$H^{-n}(M^\bu)\ne0$.)

 Now let us assume that the forgetful functor $\R\contra\rarrow
R\modl$ is fully faithful, so the abelian category $\sC=\R\contra$
can be viewed as a full subcategory in the abelian category
$\sA=R\modl$.
 Then the construction of Section~\ref{extending-subcategories-secn}
provides a sequence of abelian full subcategories $\sC=\sC^{(1)}_\sA
\subset\sC^{(2)}_\sA\subset\sC^{(3)}_\sA\subset\sC^{(4)}_\sA\subset
\sC^{(5)}_\sA\subset\dotsb\subset\sA$, with exact embedding functors
$\sC^{(m)}_\sA\rarrow\sA$, indexed by the integers $m\ge0$.

\begin{lem} \label{ext-from-two-term-complex-lemma}
 Let $M^\bu=(M^{-1}\to M^0)$ be a two-term complex of
$A$\+$R$\+bimodules whose cohomology bimodules $H^{-1}(M^\bu)=
\ker(M^{-1}\to M^0)$ and $H^0(M^\bu)=\coker(M^{-1}\to M^0)$ are discrete
as right $R$\+modules.
 Assume that the forgetful functor $\R\contra\rarrow R\modl$ is fully
faithful.
 Let $B$ be a left $A$\+module.
 Then for every $n\ge0$ the left $R$\+module\/ $\Ext^n_A(M^\bu,B)$
belongs to the full subcategory
$$
 \R\contra^{(2)}_{R\modl}=\sC^{(2)}_\sA\,\subset\,\sA=R\modl.
$$
\end{lem}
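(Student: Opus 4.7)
My plan is to realize $M^\bu$ in $\sD(A\modl)$ as an extension of its cohomologies in the derived-category sense, apply $\Hom_{\sD(A\modl)}(-,B[n])$, and read off the desired two-step filtration from the resulting long exact sequence. Concretely, set $K=H^{-1}(M^\bu)$ and $C=H^0(M^\bu)$; these are $A$\+$R$\+bimodules (as kernel and cokernel of an $A$\+$R$\+bimodule map) and discrete as right $R$\+modules by hypothesis. The truncation of $M^\bu$ at degree zero yields a canonical distinguished triangle
$$
 K[1]\lrarrow M^\bu\lrarrow C\lrarrow K[2]
$$
in $\sD(A\modl)$, where all three arrows respect the ambient right $R$\+action on the objects.

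Applying the contravariant functor $\Hom_{\sD(A\modl)}(-,B[n])$ produces, for each $n\ge 0$, a four-term exact sequence of left $R$\+modules
$$
 \Ext^{n-2}_A(K,B)\overset\delta\lrarrow\Ext^n_A(C,B)
 \lrarrow\Ext^n_A(M^\bu,B)\lrarrow\Ext^{n-1}_A(K,B)
 \overset\gamma\lrarrow\Ext^{n+1}_A(C,B).
$$
Here Lemma~\ref{ext-from-discrete} applies to both $K$ and $C$ (since they are discrete as right $R$\+modules), so each of $\Ext^i_A(K,B)$ and $\Ext^i_A(C,B)$ carries a natural left $\R$\+contramodule structure whose underlying $R$\+module is the one appearing in the sequence. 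The connecting homomorphisms $\delta$ and $\gamma$ are $R$\+linear by functoriality; since the forgetful functor $\R\contra\rarrow R\modl$ is fully faithful by assumption, they are automatically morphisms in $\R\contra$.

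Now the key structural input is that the fully faithful exact functor $\R\contra\rarrow R\modl$ makes $\sC=\R\contra$ an exactly embedded abelian full subcategory of $\sA=R\modl$, so kernels and cokernels of morphisms between objects of $\sC$ computed in $\sA$ remain in $\sC$. In particular $P=\coker\delta$ and $Q=\ker\gamma$ are $\R$\+contramodules, and from the four-term sequence above we extract a short exact sequence in $R\modl$
$$
 0\lrarrow P\lrarrow\Ext^n_A(M^\bu,B)\lrarrow Q\lrarrow0.
$$
By the definition of $\sC_\sA^{(2)}$, this exhibits $\Ext^n_A(M^\bu,B)$ as an object of $\R\contra_{R\modl}^{(2)}$, which is what we want. (The boundary cases are unproblematic: for $n=0$ both $\Ext^{-1}_A(K,B)$ and $\Ext^{-2}_A(K,B)$ vanish and $\Ext^0_A(M^\bu,B)=\Hom_A(C,B)$ lies already in $\sC\subset\sC_\sA^{(2)}$; for $n=1$ only $\Ext^{-1}_A(K,B)=0$ and one again gets the extension directly.)

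The only delicate point is to verify that $\delta$ and $\gamma$ really are $\R$\+contramodule morphisms; but this is precisely the content of full-and-faithfulness of $\R\contra\rarrow R\modl$, and is the reason this hypothesis appears in the statement. No countability hypothesis on the topology $\bF$ is needed beyond what is required to have this full-and-faithfulness available.
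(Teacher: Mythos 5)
Your proof is correct and follows essentially the same route as the paper's: the same truncation triangle $H^{-1}(M^\bu)[1]\rarrow M^\bu\rarrow H^0(M^\bu)\rarrow H^{-1}(M^\bu)[2]$, the same long exact sequence of Ext groups, Lemma~\ref{ext-from-discrete} to put contramodule structures on the outer terms, and full-and-faithfulness of $\R\contra\rarrow R\modl$ to promote the connecting maps to contramodule morphisms whose cokernel and kernel give the two-step filtration. The only differences are cosmetic (you name $P=\coker\delta$, $Q=\ker\gamma$ explicitly and note the boundary cases $n=0,1$); also note your displayed exact sequence has five terms, not four.
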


\begin{proof}
 Applying the contravariant cohomological functor
$\Hom_{\sD(A\modl)}({-},B[*])$ to the distinguished triangle
$$
 H^{-1}(M^\bu)[1]\lrarrow M^\bu\lrarrow H^0(M^\bu)\lrarrow
 H^{-1}(M^\bu)[2]
$$
in the derived category of $A$\+$R$\+bimodules $\sD(A\bimod R)$,
we obtain a long exact sequence of left $R$\+modules
\begin{multline*}
 \dotsb\lrarrow\Ext_A^{n-2}(H^{-1}(M^\bu),B)\lrarrow
 \Ext_A^n(H^0(M^\bu),B) \lrarrow\Ext_A^n(M^\bu,B) \\ \lrarrow
 \Ext_A^{n-1}(H^{-1}(M^\bu),B)\lrarrow\Ext_A^{n+1}(H^0(M^\bu),B)
 \lrarrow\dotsb
\end{multline*}
 By Lemma~\ref{ext-from-discrete}, the left $R$\+modules
$\Ext^n_A(H^i(M^\bu),B)$ are the underlying left $R$\+modules of
certain left $\R$\+contramodules.
 Since the forgetful functor $\R\contra\rarrow R\modl$ is fully
faithful, the left $R$\+module morphisms
$\Ext_A^{n-2}(H^{-1}(M^\bu),B)\rarrow\Ext_A^n(H^0(M^\bu),B)$
are, in fact, left $\R$\+contramodule morphisms.
 So the kernels and cokernels of these morphisms belong to the full
subcategory $\R\contra\subset R\modl$, and it follows that the left
$R$\+modules $\Ext^n_A(M^\bu,B)$ belong to the full subcategory
$\R\contra^{(2)}_{R\modl}\subset R\modl$.
\end{proof}

 Let $\bG$ be a right Gabriel topology on an associative ring~$R$,
and let $U=R_\bG$ be the ring of quotients of $R$ with respect to~$\bG$.
 The ring $U$ or, which is easier, the $R$\+$R$\+bimodule $U$ can be
obtained from $R$ by applying the sheafification functor
$N\longmapsto N_{(\bG)}$ twice: one has $R_\bG=R_{(\bG)(\bG)}$
(see Section~\ref{prelim-sheafification}).
 Alternatively, one can first pass to the quotient ring $R/t_\bG(R)$
of the ring $R$ by its maximal $\bG$\+torsion right $R$\+submodule
(which is a two-sided ideal), and then set $R_\bG=(R/t_\bG(R))_{(\bG)}$.

 There is a natural morphism of $R$\+$R$\+bimodules (in fact, even of
associative rings) $R\rarrow R_{\bG}=U$.
 It is important for us that both the kernel and the cokernel of
the map $R\rarrow U$ are $\bG$\+torsion right $R$\+modules.

 Consider the two-term complex of $R$\+$R$\+bimodules
$$
 K^\bu_{R,U}=(R\to U)
$$
with the term $R$ sitting in the cohomological degree~$-1$ and
the term $U$ sitting in the cohomological degree~$0$.
 There is a natural distinguished triangle in $\sD(R\bimod R)$
\begin{equation} \label{main-triangle}
 R\lrarrow U\lrarrow K^\bu_{R,U}\lrarrow R[1].
\end{equation}

 Set $A=R$, and suppose that we are given a left $R$\+module~$B$.
 Applying the contravariant cohomological functor
$\Hom_{\sD(R\modl)}({-},B)$ to the distinguished
triangle~\eqref{main-triangle}, we obtain a natural five-term
exact sequence of left $R$\+modules
\begin{multline} \label{main-five-term-sequence}
 0\lrarrow\Ext_R^0(K^\bu_{R,U},B)\lrarrow\Hom_R(U,B)\lrarrow
 B \\ \lrarrow\Ext_R^1(K^\bu_{R,U},B)\lrarrow\Ext_R^1(U,B)\lrarrow0,
\end{multline}
as well as natural isomorphisms of left $R$\+modules
\begin{equation} \label{higher-ext-isomorphism}
 \Ext_R^n(K^\bu_{R,U},B)\simeq\Ext_R^n(U,B), \qquad n\ge2.
\end{equation}
 Notice that the right $R$\+module structure on the ring $U$ is
obtained by restricting scalars from a (free) right $U$\+module
structure.
 So, for every $n\ge0$, the left $R$\+module structure on
$\Ext_R^n(U,B)$ can be obtained by restricting scalars from
a natural left $U$\+module structure.

 Let $\R=\varprojlim_{I\in\bG}R/I$ be the completion of the topological
ring $R$ with respect to its topology $\bG$, viewed as a complete,
separated topological ring in the projective limit topology~$\bfG$.
 Let us assume that a right Gabriel topology $\bG$ on $R$ has
a countable base consisting of finitely generated right ideals.
 Then, by Corollary~\ref{countable-gabriel-fully-faithful-cor},
the forgetful functor $\R\contra\rarrow R\modl$ is fully faithful.

\begin{cor} \label{exts-are-extensions-of-contramodules-cor}
 Let\/ $\bG$ be a right Gabriel topology on an associative ring
$R$ having a countable base consisting of finitely generated
right ideals.
 Then, in the above notation, one has
$$
 \Ext_R^n(K^\bu_{R,U},B)\in\R\contra^{(2)}_{R\modl}
$$
for all left $R$\+modules $B$ and all integers $n\ge0$.
\end{cor}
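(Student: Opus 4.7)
The corollary is a direct application of Lemma~\ref{ext-from-two-term-complex-lemma} to the specific case $A=R$ and $M^\bu=K^\bu_{R,U}=(R\to U)$, so the plan is simply to verify that the two hypotheses of that lemma are satisfied in the present setting.

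First, I would check the full-faithfulness hypothesis. Since $\bG$ is assumed to have a countable base of finitely generated right ideals, Corollary~\ref{countable-gabriel-fully-faithful-cor} applies and gives exactly what is needed: the forgetful functor $\R\contra\rarrow R\modl$ is fully faithful, where $\R=\varprojlim_{I\in\bG}R/I$ is the completion endowed with the projective limit topology~$\bfG$. This reduction is immediate.

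Next, I would verify that the two cohomology bimodules of $K^\bu_{R,U}$ are discrete as right $R$\+modules with respect to~$\bG$. By construction of the ring of quotients recalled in Section~\ref{prelim-sheafification}, one has
\[
 H^{-1}(K^\bu_{R,U})=\ker(R\to U)=t_\bG(R)
 \quad\text{and}\quad
 H^0(K^\bu_{R,U})=\coker(R\to U),
\]
and it was already noted in the paragraph introducing the complex $K^\bu_{R,U}$ that both the kernel and the cokernel of the morphism $R\rarrow U$ are $\bG$\+torsion right $R$\+modules. Since $\bG$\+torsion right $R$\+modules are, by the definition of Section~\ref{prelim-gabriel-topologies}, exactly the discrete right $R$\+modules with respect to the topology~$\bG$, this verifies the second hypothesis of the lemma.

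With both hypotheses in place, Lemma~\ref{ext-from-two-term-complex-lemma} applied to $M^\bu=K^\bu_{R,U}$ and to the left $R$\+module $B$ yields, for every $n\ge 0$, the desired conclusion $\Ext_R^n(K^\bu_{R,U},B)\in\R\contra^{(2)}_{R\modl}$. There is no genuine obstacle: the real content has been absorbed into Lemma~\ref{ext-from-two-term-complex-lemma} (which reduces the question, via the distinguished triangle $H^{-1}(M^\bu)[1]\to M^\bu\to H^0(M^\bu)\to H^{-1}(M^\bu)[2]$ and the long exact Ext sequence, to the fact that each $\Ext^n_R(H^i(M^\bu),B)$ is an $\R$\+contramodule by Lemma~\ref{ext-from-discrete}) and into Corollary~\ref{countable-gabriel-fully-faithful-cor} (which supplies the full-faithfulness needed to interpret the resulting connecting maps as morphisms in $\R\contra$, so that their kernels and cokernels land in $\R\contra\subset R\modl$).
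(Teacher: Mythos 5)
Your proof is correct and follows exactly the paper's route: the paper also derives the corollary as a particular case of Lemma~\ref{ext-from-two-term-complex-lemma}, with the full-faithfulness supplied by Corollary~\ref{countable-gabriel-fully-faithful-cor} and the discreteness of the cohomology bimodules coming from the observation that $\ker(R\to U)$ and $\coker(R\to U)$ are $\bG$\+torsion. You have merely spelled out the hypothesis checks that the paper leaves implicit in the surrounding text.
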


\begin{proof}
 This is a particular case of
Lemma~\ref{ext-from-two-term-complex-lemma}.
\end{proof}

 At this point we restrict the generality level even further in order
to pass to the situation we are interested in.
 Let $u\:R\rarrow U$ be an epimorphism of associative rings such that
$U$ is a flat left $R$\+module, and let $\bG$ be the related perfect
Gabriel topology on $R$, consisting of all the open right ideals
$I\subset R$ such that $R/I\ot_RU=0$ (see the discussion in
the beginning of Section~\ref{perfect-of-type-lambda-secn}).

 For any left $R$\+module $M$, we denote by $\pd{}_RM$ the projective
dimension of $M$ (as an object of $R\modl$).
 In particular, $\pd{}_RU$ denotes the projective dimension of the left
$R$\+module~$U$.
 As usual, we denote by $\Hom_R$ and $\Ext_R^*$ the Hom and Ext groups
computed in the abelian category of left $R$\+modules $R\modl$.

 Given a left $R$\+module $M$, one can consider the full subcategory
$M^{\perp_{0,1}}\subset R\modl$ of all left $R$\+modules $C$ satisfying
$\Hom_R(M,C)=0=\Ext^1_R(M,C)$.
 According to~\cite[Proposition~1.1]{GL}, the full subcategory
$M^{\perp_{0,1}}$, which we call the \emph{Geigle--Lenzing perpendicular
subcategory}, is closed under kernels and extensions in $R\modl$
(it is also closed under infinite products).
 When $\pd{}_RM\le1$, this full subcategory is closed under
cokernels as well; so the category $M^{\perp_{0,1}}$ is abelian in
this case, and its fully faithful identity inclusion functor
$M^{\perp_{0,1}}\rarrow R\modl$ is exact.

\begin{prop} \label{orthogonality-prop}
 Assume that the perfect Gabriel topology\/ $\bG$ on the ring $R$
associated with a left flat ring epimorphism $u\:R\rarrow U$ has
a countable base or, which is equivalent, the topology\/ $\bfG$ on
the ring\/ $\R$ has a countable base.
 Then \par
\textup{(a)} for any separated left\/ $\R$\+contramodule\/ $\C$,
one has\/ $\Hom_R(U,\C)=0=\Ext^1_R(U,\C)$; \par
\textup{(b)} for any left\/ $\R$\+contramodule\/ $\C$, one has\/
$\Hom_R(U,\C)=0$; \par
\textup{(c)} assuming that\/ $\pd{}_RU\le1$, for any left\/
$\R$\+contramodule\/ $\C$ one has\/ $\Hom_R(U,\C)\allowbreak
=0=\Ext^1_R(U,\C)$.
\end{prop}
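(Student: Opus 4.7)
The strategy is to exploit the two representation results from Section~\ref{separated-contramodules-secn}---Proposition~\ref{separated-kernel-of-homs} for separated $\R$-contramodules and Lemma~\ref{quotient-of-separated-lemma} for arbitrary ones---in order to reduce each assertion to the case of a left $\R$-contramodule of the particular shape $\D=\Hom_\boZ(\N,\boQ/\boZ)$ with $\N$ a discrete right $\R$-module.

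For such a $\D$, the Hom--tensor adjunction together with the injectivity of $\boQ/\boZ$ as a $\boZ$-module yields natural isomorphisms
$$
 \Ext^n_R(U,\Hom_\boZ(\N,\boQ/\boZ))\simeq\Hom_\boZ(\Tor^R_n(\N,U),\boQ/\boZ)\qquad(n\ge 0).
$$
Flatness of the left $R$-module $U$ kills the Tor groups for $n\ge 1$. For $n=0$, a discrete right $\R$-module is the same datum as a $\bG$-torsion right $R$-module, so $\N=\varinjlim_{I\in\bG}\N_I$ with $\N_I$ annihilated by~$I$; since $(R/I)\ot_R U=0$ for every $I\in\bG$ (the very definition of $\bG$ as the Gabriel topology attached to~$u$), one has $\N_I\ot_R U=0$, hence $\N\ot_R U=0$. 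Consequently $\Ext^n_R(U,\D)=0$ for every $n\ge 0$.

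To prove~(a), Proposition~\ref{separated-kernel-of-homs} presents a separated $\C$ as the kernel of an $\R$-contramodule morphism $f\:D'\rarrow D''$ with $D'$ and $D''$ of the form just discussed. By full-and-faithfulness (Corollary~\ref{countable-gabriel-fully-faithful-cor}), the image $D'''=\im f$ is simultaneously an $\R$-subcontramodule of $D''$ and an $R$-submodule, so the short exact sequence $0\rarrow\C\rarrow D'\rarrow D'''\rarrow 0$ and the inclusion $D'''\hookrightarrow D''$ live in both categories. The long exact $\Ext^*_R(U,-)$ sequence of the first, combined with the vanishing $\Hom_R(U,D')=0=\Ext^1_R(U,D')$, shows that $\Hom_R(U,\C)=0$ and exhibits $\Ext^1_R(U,\C)$ as a quotient of $\Hom_R(U,D''')$; the latter embeds into $\Hom_R(U,D'')=0$ by left-exactness of Hom, forcing $\Ext^1_R(U,\C)=0$.

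Parts~(b) and~(c) follow by applying Lemma~\ref{quotient-of-separated-lemma} to represent an arbitrary left $\R$-contramodule $\C$ as the cokernel of an injective morphism in a short exact sequence $0\rarrow\E\rarrow\D\rarrow\C\rarrow 0$ of separated contramodules. Part~(a) supplies $\Hom_R(U,\D)=0$ and $\Ext^1_R(U,\E)=0$, so the long exact $\Ext^*_R(U,-)$ sequence yields $\Hom_R(U,\C)=0$, which is~(b); under the extra hypothesis $\pd_R U\le 1$ of~(c), we moreover have $\Ext^2_R(U,\E)=0$, and the next segment of the same long exact sequence delivers $\Ext^1_R(U,\C)=0$. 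The only mildly delicate point in the whole argument is the need to match $R$-module and $\R$-contramodule structures in part~(a), for which one invokes full-and-faithfulness from Section~\ref{fully-faithful-secn}; this, like Proposition~\ref{separated-kernel-of-homs}, is precisely where the countable base hypothesis on~$\bG$ is used.
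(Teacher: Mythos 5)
Your proof is correct and follows essentially the same route as the paper's: reduce to contramodules of the form $\Hom_\boZ(\N,\boQ/\boZ)$ via Proposition~\ref{separated-kernel-of-homs} and Lemma~\ref{quotient-of-separated-lemma}, kill the $\Ext$ groups there by the $\Tor$-duality together with flatness and $\bG$-divisibility of $U$, and finish with long exact sequences. The only cosmetic differences are that you spell out the kernel-closure diagram chase instead of citing~\cite[Proposition~1.1]{GL}, and that your appeal to full-and-faithfulness in part~(a) is unnecessary (exactness of the forgetful functor $\R\contra\rarrow R\modl$ already identifies the kernel computed in the two categories, as the paper itself remarks).
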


\begin{proof}
 By Corollary~\ref{countable-gabriel-fully-faithful-cor}, the forgetful
functor $\R\contra\rarrow R\modl$ is fully faithful in our present
assumptions, but we do not need to use this fact now.
 Moreover, in the next Theorem~\ref{projdim-1-theorem} we will see that
the projective dimension of the left $R$\+module $U$ never exceeds~$1$
in the assumptions of this proposition, so the conclusion of part~(c)
always holds (but we do not know this yet).

 Part~(a): for any discrete right $R$\+module $\N$, we have
$\Hom_\boZ(\N,\boQ/\boZ)\in U^{\perp_{0,1}}$.
 In fact,
$$
 \Hom_R(U,\Hom_\boZ(\N,\boQ/\boZ))=\Hom_\boZ(\N\ot_RU,\>\boQ/\boZ)=0
$$
because $\N\ot_RU=0$ (as $U$ is a $\bG$\+divisible left $R$\+module)
and
$$
 \Ext_R^i(U,\Hom_\boZ(\N,\boQ/\boZ))=
 \Hom_\boZ(\Tor^R_i(\N,U),\,\boQ/\boZ)
 \qquad\text{for all \,$i>0$,}
$$
since $U$ is a flat left $R$\+module.

 By Proposition~\ref{separated-kernel-of-homs}, any separated left
$\R$\+contramodule $\C$ is the kernel of a morphism
$\Hom_\boZ(\N',\boQ/\boZ)\rarrow\Hom_\boZ(\N'',\boQ/\boZ)$ in
$\R\contra$ (hence in $R\modl$) for some $\N'$, $\N''\in\discr\R$.
 By~\cite[Proposition~1.1]{GL}, it follows that
$\C\in U^{\perp_{0,1}}\subset R\modl$.

 Parts~(b) and~(c): by Lemma~\ref{quotient-of-separated-lemma}, there
is a short exact sequence of left $\R$\+contra\-modules $0\rarrow\E
\rarrow\D\rarrow\C\rarrow0$ in which the $\R$\+contramodules
$\E$ and $\D$ are separated.
 Since $\Hom_R(U,\D)=0$ and $\Ext_R^1(U,\E)=0$ by part~(a), it follows
that $\Hom_R(U,\C)=0$.
 Assuming that $\pd{}_RU=1$, the map $\Ext_R^1(U,\D)\rarrow
\Ext_R^1(U,\C)$ is surjective, so $\Ext_R^1(U,\D)=0$ implies
$\Ext_R^1(U,\C)=0$.
\end{proof}

 Finally, we can prove the main three theorems of this section (which
are also among the main results of this paper).

\begin{thm} \label{projdim-1-theorem}
 Let $u\:R\rarrow U$ be an epimorphism of associative rings such that
$U$ is a flat left $R$\+module.
 Let\/ $\bG$ be the related perfect Gabriel topology of right ideals
in~$R$.
 Assume that\/ $\bG$ has a countable base.
 Then the projective dimension of the left $R$\+module $U$ does not
exceed\/~$1$.
\end{thm}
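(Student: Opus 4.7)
The plan is to show that $\Ext^n_R(U,B) = 0$ for every left $R$\+module $B$ and every integer $n \geq 2$, which gives $\pd{}_RU \leq 1$. First I would note that a perfect Gabriel topology automatically has a base of finitely generated right ideals, so intersecting such a base with the given countable base produces a countable base of finitely generated right ideals. Consequently, Corollaries~\ref{countable-gabriel-fully-faithful-cor} and~\ref{exts-are-extensions-of-contramodules-cor}, as well as Proposition~\ref{orthogonality-prop}, are all at my disposal.

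Fix $n \geq 2$ and a left $R$\+module $B$. The distinguished triangle~\eqref{main-triangle} together with the isomorphism~\eqref{higher-ext-isomorphism} identifies $\Ext^n_R(U,B) \simeq \Ext^n_R(K^\bu_{R,U},B)$. By Corollary~\ref{exts-are-extensions-of-contramodules-cor} this module lies in $\R\contra^{(2)}_{R\modl}$, so there is a short exact sequence of left $R$\+modules $0 \rarrow \C_1 \rarrow \Ext^n_R(U,B) \rarrow \C_2 \rarrow 0$ with $\C_1$, $\C_2 \in \R\contra$. Applying the left exact functor $\Hom_R(U,-)$ and invoking Proposition~\ref{orthogonality-prop}(b), which gives $\Hom_R(U,\C) = 0$ for every left $\R$\+contramodule $\C$, I conclude that $\Hom_R(U,\Ext^n_R(U,B)) = 0$.

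On the other hand, since $u\:R\rarrow U$ is a ring epimorphism, the right $R$\+action on $U$ extends canonically to a right $U$\+action (via $U \ot_R U \simeq U$), so $\Ext^n_R(U,B)$ carries a natural left $U$\+module structure. Again because $u$ is a ring epimorphism, restriction of scalars along~$u$ is fully faithful, and therefore $\Hom_R(U,E) = \Hom_U(U,E) = E$ for every left $U$\+module $E$. Specializing to $E = \Ext^n_R(U,B)$ and combining with the previous paragraph forces $\Ext^n_R(U,B) = 0$, completing the argument. The structural point is that this Ext module is simultaneously fixed by $\Hom_R(U,-)$ (being a $U$\+module) and killed by it (being a two\+step extension of $\R$\+contramodules), and the tension forces vanishing. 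The main technical obstacles are concentrated in the two inputs Corollary~\ref{exts-are-extensions-of-contramodules-cor} and Proposition~\ref{orthogonality-prop}(b); both ultimately depend on Corollary~\ref{countable-gabriel-fully-faithful-cor} and hence on the contramodule Nakayama lemma, which is precisely where the countable-base hypothesis is genuinely exploited.
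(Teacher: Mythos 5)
Your proposal is correct and follows essentially the same route as the paper: identify $\Ext^n_R(U,B)$ for $n\ge2$ with $\Ext^n_R(K^\bu_{R,U},B)$ via~\eqref{higher-ext-isomorphism}, place the latter in $\R\contra^{(2)}_{R\modl}$ by Corollary~\ref{exts-are-extensions-of-contramodules-cor}, kill its $\Hom_R(U,-)$ by Proposition~\ref{orthogonality-prop}(b) and left exactness, and then exploit the $U$\+module structure to force vanishing (the paper phrases this last step by noting every element of a left $U$\+module is the image of $1\in U$ under an $R$\+module map, which is equivalent to your appeal to full faithfulness of restriction of scalars). The only cosmetic quibble is that one \emph{refines} the countable base by finitely generated ideals rather than literally intersecting the two bases, but the substance is identical to the paper's argument.
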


\begin{proof}
 Let $U^{\perp_0}\subset R\modl$ denote the full subcategory in
the category of left $R$\+modules consisting of all the left
$R$\+modules $M$ such that $\Hom_R(U,M)=0$.
 By Proposition~\ref{orthogonality-prop}(b), the full subcategory
of left $\R$\+contramodules $\R\contra\subset R\modl$ is
contained in $U^{\perp_0}$.
 Hence it follows that $\R\contra^{(2)}_{R\modl}\subset U^{\perp_0}$.

 Applying Corollary~\ref{exts-are-extensions-of-contramodules-cor},
we see that $\Ext^n_R(K^\bu_{R,U},B)\in U^{\perp_0}$ for all
left $R$\+modules $B$ and all $n\ge0$.
 On the other hand, by~\eqref{higher-ext-isomorphism} we have
$\Ext^n_R(U,B)\simeq\Ext^n_R(K^\bu_{R,U},B)$ for $n\ge2$.
 Thus the left $R$\+module $\Ext^n_R(U,B)$ belongs to $U^{\perp_0}$
when $n\ge2$.

 Since the left $R$\+module structure on $D=\Ext^n_R(U,B)$ underlies
a left $U$\+module structure, every element $d\in D$
is the image of the unit $1\in U$ under a certain left $R$\+module
morphism (actually, a unique left $U$\+module morphism)
$U\rarrow D$.
 So $\Hom_R(U,D)=0$ implies $D=0$, and we can conclude that
$\Ext^n_R(U,B)=0$ for all left $R$\+modules $B$ and all $n\ge2$.
\end{proof}

\begin{rem} \label{left-countably-generated-remark}
 There is a much easier alternative proof of
Theorem~\ref{projdim-1-theorem} applicable in the case of
a left $\omega^+$\+Noetherian ring $R$ (i.~e., when every left ideal
in $R$ has a countable set of generators).
 The argument is based on the sheafification construction of
Section~\ref{prelim-sheafification}.
 Indeed, if $R$ is left $\omega^+$\+Noetherian, then
any submodule of a countably generated left $R$\+module is
countably generated, and any countably generated left
$R$\+module is countably presented.
 In this case, for any $R$\+$R$\+bimodule $N$ that is
countably generated as a left $R$\+module and every finitely generated
right ideal $I\subset R$, the left $R$\+module $\Hom_{R^\rop}(I,N)$
is countably generated (being a submodule of a finite direct sum of
copies of the left $R$\+module~$N$).
 Since the class of countably generated left $R$\+modules is closed
under countable direct limits, for any right Gabriel topology $\bG$
on $R$ with a countable base consisting of finitely generated right
ideals the $R$\+$R$\+bimodule $N_{(\bG)}$ is countably generated
as a left $R$\+module.
 Hence the $R$\+$R$\+bimodule $N_\bG$ is countably generated
as a left $R$\+module, too.
 In the context of Theorem~\ref{projdim-1-theorem}, we can then
conclude that the left $R$\+module $U$ is countably generated.
 Hence, in our assumptions, it is countably presented.
 By~\cite[Corollary~2.23]{GT}, any countably presented flat module
has projective dimension at most~$1$.
\end{rem}

\begin{cor} \label{direct-limit-of-projdim1-cor}
 Let $u\:R\rarrow U$ be an epimorphism of associative rings such that
$U$ is a flat left $R$\+module.
 Assume that the related perfect Gabriel topology\/ $\bG$ of right
ideals in $R$ satisfies the condition~(T$_\omega$) of
Section~\ref{perfect-of-type-lambda-secn} (e.~g., this holds if $R$ is
commutative; see Examples~\ref{T-lambda-examples} for further cases
when (T$_\omega$)~is satisfied).
 Assume further that the right $R$\+module $R$ has\/
$\omega$\+bounded\/ $\bG$\+torsion.
 Then there exists a diagram of epimorphisms of associative rings
$R\rarrow U_\upsilon$, indexed by an\/ $\omega^+$\+directed poset\/
$\Upsilon$, such that $U_\upsilon$ is a flat left $R$\+module of
projective dimension not exceeding\/~$1$ for every $\upsilon\in\Upsilon$
and the ring homomorphism $R\rarrow U$ is the direct limit of
the ring homomorphisms $R\rarrow U_\upsilon$, that is
$U=\varinjlim_{\upsilon\in\Upsilon}U_\upsilon$.
\end{cor}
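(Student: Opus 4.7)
The plan is to combine Corollary~\ref{perfect-gabriel-lambda-directed-cor} (the $\omega^+$-directed presentation of perfect Gabriel topologies of type~$\omega$) with Theorem~\ref{projdim-1-theorem} (projective dimension~$\le 1$ in the countably based case), taking $\lambda = \omega$ throughout. Since $u\:R\to U$ corresponds bijectively to its perfect Gabriel topology $\bG$ via the recipe $U\simeq R_\bG$ recalled at the start of Section~\ref{perfect-of-type-lambda-secn}, it suffices to work topologically and then take $U_\upsilon = R_\bP$.

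First I would invoke Corollary~\ref{perfect-gabriel-lambda-directed-cor} with $\lambda=\omega$: the hypotheses (T$_\omega$) on $\bG$ and $\omega$-boundedness of $t_\bG(R)$ are exactly what is assumed here. This produces an $\omega^+$-directed poset $\Upsilon$ of perfect Gabriel topologies $\bP\subset\bG$ each having a countable base, with $\bG=\bigcup_{\bP\in\Upsilon}\bP$ and a canonical identification of associative rings
\[
 R_\bG\;\simeq\;\varinjlim\nolimits_{\bP\in\Upsilon}R_\bP.
\]
For each $\bP\in\Upsilon$ I set $U_\bP:=R_\bP$, endowed with the natural ring homomorphism $u_\bP\:R\to R_\bP$. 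By the correspondence between perfect right Gabriel topologies and left flat ring epimorphisms, each $u_\bP$ is a flat ring epimorphism; and by Remark~\ref{unions-of-topologies-remark}, for any inclusion $\bP_1\subset\bP_2$ in $\Upsilon$ there is a canonical ring homomorphism $R_{\bP_1}\to R_{\bP_2}$ forming a commutative triangle with $u_{\bP_1}$ and $u_{\bP_2}$, so $(U_\bP)_{\bP\in\Upsilon}$ is genuinely a diagram of ring homomorphisms under~$R$.

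Next I would apply Theorem~\ref{projdim-1-theorem} to each flat ring epimorphism $u_\bP\:R\to U_\bP$. Its associated perfect Gabriel topology is $\bP$ itself, which by construction has a countable base; hence $\pd{}_R U_\bP\le 1$. Combining this with the limit identification from Corollary~\ref{perfect-gabriel-lambda-directed-cor} and the isomorphism $U\simeq R_\bG$ gives $U=\varinjlim_{\bP\in\Upsilon}U_\bP$ as a colimit of flat rings of projective dimension at most~$1$, as required.

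There is essentially no obstacle: the corollary is a direct assembly of two results already proved. The only point requiring a moment's care is verifying that the transition morphisms $R_{\bP_1}\to R_{\bP_2}$ for $\bP_1\subset\bP_2$ make the $U_\bP$ into a \emph{diagram} (not just a family), and that the colimit of this diagram recovers $U$; both are handled by Remark~\ref{unions-of-topologies-remark} and Lemma~\ref{perfect-topologies-and-direct-limits}, which underlie Corollary~\ref{perfect-gabriel-lambda-directed-cor}.
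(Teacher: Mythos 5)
Your proposal is correct and follows exactly the paper's route: the paper's proof is the one-line ``Follows from Corollary~\ref{perfect-gabriel-lambda-directed-cor} and Theorem~\ref{projdim-1-theorem},'' and you have simply unpacked that assembly (taking $\lambda=\omega$, setting $U_\bP=R_\bP$, and checking the diagram structure via Remark~\ref{unions-of-topologies-remark} and Lemma~\ref{perfect-topologies-and-direct-limits}). Nothing is missing.
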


\begin{proof}
 Follows from Corollary~\ref{perfect-gabriel-lambda-directed-cor}
and Theorem~\ref{projdim-1-theorem}.
\end{proof}

\begin{rem}
 The following converse assertion to
Corollary~\ref{direct-limit-of-projdim1-cor} holds for commutative
rings~$R$: if $R\rarrow U$ is a ring epimorphism such that
$\Tor_1^R(U,U)=0$ and $\pd{}_RU\le1$, then $U$ is a flat $R$\+module.
 This is~\cite[Remark~16.9]{BP}.
\end{rem}

 The next theorem is a generalization of~\cite[Examples~5.4(2)
and~5.5(2)]{Pper}.

\begin{thm} \label{u-contra=R-contra-2}
 Let $u\:R\rarrow U$ be an epimorphism of associative rings such that
$U$ is a flat left $R$\+module.
 Let\/ $\bG$ be the related perfect Gabriel topology of right ideals
in~$R$.
 Assume that\/ $\bG$ has a countable base.
 Then the Geigle--Lenzing perpendicular subcategory $U^{\perp_{0,1}}
\subset R\modl$ coincides with the full subcategory of two-object
extensions\/ $\R\contra^{(2)}_{R\modl}\subset R\modl$.
 In particular, it follows that the full subcategory\/
$\R\contra^{(2)}_{R\modl}$ is closed under extensions in $R\modl$.
 So one has\/ $\R\contra^{(2)}_{R\modl}=\R\contra^{(m)}_{R\modl}$
for all the integers $m\ge2$.
\end{thm}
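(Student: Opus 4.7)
The plan is to prove the two inclusions $\R\contra^{(2)}_{R\modl}\subset U^{\perp_{0,1}}$ and $U^{\perp_{0,1}}\subset\R\contra^{(2)}_{R\modl}$ separately, and then deduce the ``in particular'' statements by bootstrap.

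For the first inclusion, I would combine Theorem~\ref{projdim-1-theorem} (which gives $\pd{}_RU\le 1$) with Proposition~\ref{orthogonality-prop}(c) to conclude that every left $\R$\+contramodule lies in $U^{\perp_{0,1}}$, i.e.\ $\R\contra\subset U^{\perp_{0,1}}$ as full subcategories of $R\modl$. Since $\pd{}_RU\le 1$, the result of Geigle--Lenzing~\cite[Proposition~1.1]{GL} tells us that $U^{\perp_{0,1}}$ is closed under extensions in $R\modl$. A two-step extension of objects of $\R\contra$ is then automatically in $U^{\perp_{0,1}}$, which gives $\R\contra^{(2)}_{R\modl}\subset U^{\perp_{0,1}}$.

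For the reverse inclusion, I would feed an arbitrary $B\in U^{\perp_{0,1}}$ into the five-term exact sequence~\eqref{main-five-term-sequence}. The assumptions $\Hom_R(U,B)=0$ and $\Ext^1_R(U,B)=0$ force the outer terms to vanish, so the middle of that sequence collapses to an isomorphism of left $R$\+modules
\[
 B\;\xrightarrow{\ \sim\ }\;\Ext^1_R(K^\bu_{R,U},B).
\]
By Corollary~\ref{exts-are-extensions-of-contramodules-cor}, the right-hand side belongs to $\R\contra^{(2)}_{R\modl}$, hence so does $B$. This is the key point of the argument; the main thing to verify carefully is that this isomorphism, which a~priori is only one of abelian groups coming from the long exact Ext sequence, is actually an isomorphism of left $R$\+modules, but this is immediate because the triangle~\eqref{main-triangle} and the resulting five-term sequence are natural with respect to the right $R$\+module structure on $K^\bu_{R,U}$.

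For the remaining assertions: since we have now shown $\R\contra^{(2)}_{R\modl}=U^{\perp_{0,1}}$, closure of $\R\contra^{(2)}_{R\modl}$ under extensions in $R\modl$ is inherited from $U^{\perp_{0,1}}$. Finally, the inclusions $\R\contra^{(2)}_{R\modl}\subset\R\contra^{(m)}_{R\modl}\subset\R\contra^{(2)}_{R\modl}$ for $m\ge2$ follow by a trivial induction: the left inclusion is by definition, while the right one uses that an $m$\+step extension can be written as an extension of an $(m-1)$\+step extension by an object of $\R\contra$, both factors of which lie in $\R\contra^{(2)}_{R\modl}$ by induction, so their extension does too by the closure under extensions just established. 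I do not anticipate any genuine obstacle beyond the bookkeeping of the five-term sequence; the hard work has already been done in Theorem~\ref{projdim-1-theorem}, Proposition~\ref{orthogonality-prop}, and Corollary~\ref{exts-are-extensions-of-contramodules-cor}.
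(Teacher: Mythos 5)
Your proposal is correct and follows essentially the same route as the paper: Proposition~\ref{orthogonality-prop}(c) together with closure of $U^{\perp_{0,1}}$ under extensions gives one inclusion, and the five-term sequence~\eqref{main-five-term-sequence} plus Corollary~\ref{exts-are-extensions-of-contramodules-cor} gives the other; your concern about the $R$\+module structure is already settled since~\eqref{main-five-term-sequence} is derived as a sequence of left $R$\+modules from the triangle of $R$\+$R$\+bimodules. The closing bootstrap for $\R\contra^{(2)}_{R\modl}=\R\contra^{(m)}_{R\modl}$ is also as in the paper.
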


\begin{proof}
 Let us emphasize that the full subcategory $U^{\perp_{0,1}}\subset
R\modl$ is abelian and its embedding functor $U^{\perp_{0,1}}\rarrow
R\modl$ is exact by Theorem~\ref{projdim-1-theorem}
and~\cite[Proposition~1.1]{GL}; while the full subcategory
$\R\contra^{(2)}_{R\modl}\subset R\modl$ is abelian and its embedding
functor $\R\contra^{(2)}_{R\modl}\rarrow R\modl$ is exact by
Proposition~\ref{extended-subcategory-prop} (applied to the exact
forgetful functor $\R\contra\rarrow R\modl$, which is fully faithful
by Corollary~\ref{countable-gabriel-fully-faithful-cor}).

 Furthermore, the Geigle--Lenzing perpendicular subcategory
$U^{\perp_{0,1}}$ is obviously closed under extensions in $R\modl$.
 By Proposition~\ref{orthogonality-prop}(c), we know that
$\R\contra\subset U^{\perp_{0,1}}\subset R\modl$; hence
$\R\contra^{(2)}_{R\modl}\subset U^{\perp_{0,1}}$.
 It remains to prove that the inclusion in the opposite direction
holds as well.

 Let $B$ be a left $R$\+module belonging to $U^{\perp_{0,1}}$; so
$\Hom_R(U,B)=0=\Ext^1_R(U,B)$.
 Then from the exact sequence~\eqref{main-five-term-sequence} we
see that the natural left $R$\+module morphism $B\rarrow
\Ext^1_R(K^\bu_{R,U},B)$ is an isomorphism.
 Applying Corollary~\ref{exts-are-extensions-of-contramodules-cor},
we can conclude that $B\in\R\contra^{(2)}_{R\modl}$.
\end{proof}

 A second, alternative proof of the following theorem will be given in
Section~\ref{faithful-perfect-secn}; and an uncountable generalization
will be obtained in Theorem~\ref{faithful-topology-theorem}.

\begin{thm} \label{injective-epi-u-contra=R-contra}
 Let $u\:R\rarrow U$ be an injective epimorphism of associative rings
such that $U$ is a flat left $R$\+module.
 Let\/ $\bG$ be the related faithful perfect Gabriel topology of
right ideals in~$R$.
 Assume that\/ $\bG$ has a countable base.
 Then the Geigle--Lenzing perpendicular subcategory $U^{\perp_{0,1}}
\subset R\modl$ coincides with the full subcategory of\/ left
$\R$\+contramodules\/ $\R\contra\subset R\modl$.
 In particular, it follows that the full subcategory\/
$\R\contra$ is closed under extensions in $R\modl$.
\end{thm}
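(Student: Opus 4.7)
The inclusion $\R\contra\subset U^{\perp_{0,1}}$ already follows from Proposition~\ref{orthogonality-prop}(c) combined with Theorem~\ref{projdim-1-theorem} (which supplies $\pd_RU\le1$). So the plan is to prove the converse inclusion by equipping every $B\in U^{\perp_{0,1}}$ with a canonical $\R$-contramodule structure lifting its $R$-module structure; once this is done, Corollary~\ref{countable-gabriel-fully-faithful-cor} (full-and-faithfulness of the forgetful functor $\R\contra\to R\modl$) identifies $B$ with an object of $\R\contra$.

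The key observation is that injectivity of $u$ collapses the complex $K^\bu_{R,U}$: one has $H^{-1}(K^\bu_{R,U})=\ker u=0$ and $H^0(K^\bu_{R,U})=U/R$, so $K^\bu_{R,U}$ is quasi-isomorphic in $\sD(R\bimod R)$ to the $R$-$R$-bimodule $U/R$ concentrated in degree~$0$. Since the cokernel of $R\rarrow U=R_\bG$ is $\bG$-torsion (by the sheafification construction of Section~\ref{prelim-sheafification}), the bimodule $U/R$ is discrete as a right $R$-module. Consequently, for every left $R$-module $B$ and every $n$, we obtain an isomorphism of left $R$-modules
$$
\Ext^n_R(K^\bu_{R,U},B)\;\simeq\;\Ext^n_R(U/R,B).
$$

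Now take any $B\in U^{\perp_{0,1}}$. Substituting $\Hom_R(U,B)=0=\Ext^1_R(U,B)$ into the five-term exact sequence~\eqref{main-five-term-sequence} collapses it to an isomorphism $B\simeq\Ext^1_R(K^\bu_{R,U},B)\simeq\Ext^1_R(U/R,B)$ of left $R$-modules. By Lemma~\ref{ext-from-discrete} applied to the $R$-$R$-bimodule $\N=U/R$ (discrete as a right $R$-module), the right-hand side is the underlying left $R$-module of a naturally defined left $\R$-contramodule. Transporting this structure along the isomorphism, $B$ itself lies in the essential image of the forgetful functor $\R\contra\to R\modl$, whence $B\in\R\contra$. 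There is no real additional obstacle: the main content is simply the vanishing of $H^{-1}(K^\bu_{R,U})$ forced by injectivity of $u$, which is precisely what distinguishes this theorem from Theorem~\ref{u-contra=R-contra-2}, where the failure of this vanishing was what forced the appearance of the two-step filtration $\R\contra^{(2)}_{R\modl}$ in place of $\R\contra$.
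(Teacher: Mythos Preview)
Your proof is correct and follows essentially the same route as the paper's first proof: use injectivity of $u$ to replace $K^\bu_{R,U}$ by the discrete right $R$\+module $U/R$, collapse the five-term sequence to $B\simeq\Ext^1_R(U/R,B)$, and invoke Lemma~\ref{ext-from-discrete} to endow the right-hand side with an $\R$\+contramodule structure. The paper also gives a second proof (in Section~\ref{faithful-perfect-secn}) via the comparison map $\beta_{u,X}\colon\Delta_u(R[X])\to\R[[X]]$ and Lemma~\ref{contramodule-categories-comparison-criterion}, but your argument matches the first one almost line for line.
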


\begin{proof}[First proof]
 We already know that $\R\contra\subset U^{\perp_{0,1}}$.
 To prove the inverse inclusion, consider a left $R$\+module
$B\in U^{\perp_{0,1}}$.
 As in the previous proof, we have an isomorphism $B\simeq
\Ext_R^1(K^\bu_{R,U},B)$.
 Now the morphism $u\:R\rarrow U$ is injective by assumption, so
the two-term complex $K^\bu_{R,U}$ is quasi-isomorphic to the
quotient bimodule $K_{R,U}=U/R=\coker u$.
 It remains to apply Lemma~\ref{ext-from-discrete} in order to
conclude that $\Ext_R^1(K^\bu_{R,U},B)=\Ext_R^1(K_{R,U},B)\in\R\contra$.
\end{proof}

\Section{$U$-Strongly Flat and $U$-Weakly Cotorsion $R$-Modules}
\label{strongly-flat-weakly-cotorsion-secn}

 Given a left flat ring epimorphism $u\:R\rarrow U$, a left $R$\+module
$C$ is said to be \emph{$U$\+weakly cotorsion} if\/
$\Ext^1_R(U,C)=0$.
 A left $R$\+module $F$ is said to be \emph{$U$\+strongly flat} if
$\Ext^1_R(F,C)=0$ for all $U$\+weakly cotorsion left $R$\+modules~$C$.
 Since $U$ is a flat left $R$\+module, it follows
from~\cite[Lemma~3.4.1]{Xu} that all $U$\+strongly flat left
$R$\+modules are flat.
 Moreover, a left $R$\+module $F$ is $U$\+strongly flat if and only if
it is a direct summand of a left $R$\+module $G$ that can be included
into a short exact sequence of left $R$\+modules $0\rarrow V\rarrow G
\rarrow W\rarrow 0$, where $V$ is a free left $R$\+module and
$W$ is a free left $U$\+module (by~\cite[Corollary~6.13]{GT}
and since $\Ext^1_R(U,W)=0$ for any free left $U$\+module~$W$).
 In this section we apply the techniques developed above in this paper
in order to describe $U$\+strongly flat and $U$\+weakly cotorsion
left $R$\+modules with respect to a left flat ring epimorphism
$u\:R\rarrow U$ of countable type.
 
 We start in the greater generality of a right linear topology $\bF$ on
an associative ring~$R$.
 In Proposition~\ref{discrete-modules-contramodules-from-systems},
we have constructed a pair of adjoint functors
$\IL\:\Ab_{\bF}\rightleftarrows \discr R\,\,\:\!\Dh$ between
the category of contravariant $\bF$\+systems of abelian groups and
the category of discrete right $R$\+modules, and a similar pair of
adjoint functors $\PL\:{}_\bF\Ab\leftrightarrows\R\contra\,\,\:\!\CT$
between the category of covariant $\bF$\+systems of abelian groups
and the category of left $\R$\+contramodules.
 Generalizing to pseudo-$\bF$-systems, in
Lemma~\ref{modules-from-pseudo-systems-lem} we have constructed
an inductive limit functor $\il\:\Ab_{\{\bF\}}\rarrow\modr R$
from the category of contravariant pseudo-$\bF$-systems of abelian
groups to the category of right $R$\+modules, and a projective limit
functor $\pl\:{}_{\{\bF\}}\Ab\rarrow R\modl$ from the category of
covariant pseudo-$\bF$-systems of abelian groups to the category
of left $R$\+modules.

 The following proposition describes the right adjoint functor to
the composition $\Ab_\bF\rarrow\Ab_{\{\bF\}}\rarrow\modr R$ of
the fully faithful functor $\Ab_\bF\rarrow\Ab_{\{\bF\}}$ with
the functor~$\il$, and the left adjoint functor to the composition
${}_\bF\Ab\rarrow{}_{\{\bF\}}\Ab\rarrow R\modl$ of the fully faithful
functor ${}_\bF\Ab\rarrow{}_{\{\bF\}}\Ab$ with the functor~$\pl$.

\begin{prop} \label{systems-modules-adjunction}
\textup{(a)} For any right $R$\+module $N$, there is a contravariant\/
$\bF$\+system of abelian groups\/ $\mh(N)\in\Ab_\bF$ assigning to
every cyclic discrete right $R$\+module $R/I\in\sQ_\bF$ the abelian
group $N_I=\Hom_{R^\rop}(R/I,N)$.
 The functor\/ $\mh\:\modr R\rarrow\Ab_\bF$ is right adjoint to
the composition of functors\/ $\Ab_\bF\rarrow\Ab_{\{\bF\}}
\overset{\il}\rarrow\modr R$. \par
\textup{(b)} For any left $R$\+module $C$, there is a covariant\/
$\bF$\+system of abelian groups\/ $\tp(C)\in{}_\bF\Ab$ assigning to
every cyclic discrete right $R$\+module $R/I\in\sQ_\bF$ the abelian
group $C/IC=R/I\ot_RC$.
 The functor\/ $\tp\:R\modl\rarrow{}_\bF\Ab$ is left adjoint to
the composition of functors\/ ${}_\bF\Ab\rarrow{}_{\{\bF\}}\Ab
\overset{\pl}\rarrow R\modl$.
\end{prop}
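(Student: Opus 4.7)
The plan is to extend the adjunctions of Proposition~\ref{discrete-modules-contramodules-from-systems} from the discrete/contramodule setting to all right and left $R$-modules. First I would verify that $\mh$ and $\tp$ are well-defined additive $\bF$-systems. Since every morphism in $\sQ_\bF$ factors as $R/J\rarrow R/(I:s)\overset{s}\rarrow R/I$ with $J\subset(I:s)$, applying the bifunctor $\Hom_{R^\rop}({-},N)$ contravariantly produces a map $N_I\rarrow N_J$, so $\mh(N)\in\Ab_\bF$ is a genuine additive contravariant functor; applying ${-}\ot_RC$ covariantly produces a map $C/JC\rarrow C/IC$ (where the second step sends $c+(I:s)C$ to $sc+IC$), so $\tp(C)\in{}_\bF\Ab$ is additive and covariant.

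For part~(a), I would reduce directly to Proposition~\ref{discrete-modules-contramodules-from-systems}(a) by exploiting that $\il(M)=\IL(M)\in\discr R$ for every $M\in\Ab_\bF$. Consequently every morphism $\il(M)\rarrow N$ in $\modr R$ factors uniquely through the maximal discrete submodule $t_\bF(N)=\bigcup_{I\in\bF}N_I\subset N$. The adjunction $\IL\dashv\Dh$ then gives
\[
 \Hom_{\modr R}(\il(M),N)\simeq\Hom_{\discr R}(\IL(M),t_\bF(N))\simeq\Hom_{\Ab_\bF}(M,\Dh(t_\bF(N))).
\]
Since $\Dh(t_\bF(N))(R/I)=\Hom_R(R/I,t_\bF(N))=\Hom_R(R/I,N)=\mh(N)(R/I)$, the composed bijection is the sought-for adjunction isomorphism.

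For part~(b), I would argue more directly, as there is no analogous discreteness constraint on $\pl(D)$. Using the description of the left $R$-action on $\pl(D)=\varprojlim_{I\in\bF}D(R/I)$ supplied by Lemma~\ref{modules-from-pseudo-systems-lem}(b), a morphism of left $R$-modules $f\:C\rarrow\pl(D)$ amounts to a $\Pi_\bF$-compatible family of abelian group homomorphisms $f_I\:C\rarrow D(R/I)$ satisfying $f_I(rc)=D(r)(f_{(I:r)}(c))$ for every $r\in R$, $c\in C$, and $I\in\bF$.

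The main technical observation, analogous to one already used in the proof of Proposition~\ref{discrete-modules-contravariant-systems}, is that when $r\in I$ one has $(I:r)=R$, so $R/(I:r)=0$ is a zero object of $\sQ_\bF$ and $D(r)=0$ by additivity of~$D$. Hence each $f_I$ annihilates $IC$ and descends uniquely to a map $g_I\:C/IC=\tp(C)(R/I)\rarrow D(R/I)$. The $\Pi_\bF$-compatibility of the $f_I$'s together with the $R$-compatibility, re-expressed through the left-multiplication morphisms $R/(I:s)\overset{s}\rarrow R/I$, is precisely the naturality of $(g_I)$ with respect to all morphisms in $\sQ_\bF$; thus $(g_I)$ defines a morphism $\tp(C)\rarrow D$ in ${}_\bF\Ab$. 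The assignment is manifestly reversible, yielding the desired adjunction. I do not anticipate any serious obstacle, as both parts amount to mild extensions of arguments already carried out in Proposition~\ref{discrete-modules-contramodules-from-systems}.
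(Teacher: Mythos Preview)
Your proposal is correct. For part~(b) your direct argument is essentially the paper's: the paper just states that a left $R$\+module map $C\rarrow\varprojlim_{I\in\bF}D(R/I)$ amounts to a $\sQ_\bF$\+compatible family $C/IC\rarrow D(R/I)$, and you have merely unpacked this, including the zero-object observation that forces $f_I$ to kill~$IC$.

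For part~(a) you take a genuinely different, though equally short, route. The paper only proves~(b) explicitly and leaves~(a) to the analogous direct argument (data of a right $R$\+module map $\il(M)\rarrow N$ corresponds to a $\sQ_\bF$\+compatible family $M(R/I)\rarrow N_I$). You instead reduce to the already-established adjunction $\IL\dashv\Dh$ by inserting the maximal discrete submodule $t_\bF(N)$ and observing $\Dh(t_\bF(N))=\mh(N)$. Your reduction is slightly more conceptual---it makes transparent that $\mh$ is just $\Dh$ precomposed with the coreflector onto $\discr R$---while the paper's implicit direct argument avoids invoking the earlier proposition. Either works with no real difference in length or difficulty.
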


\begin{proof}
 The abbreviation ``$\mh$'' means ``module Hom'', while ``$\tp$''
stands for ``tensor product''.
 Given a right $R$\+module $N$, the contravariant $\bF$\+system
$\mh(N)\:\sQ_\bF^\sop\rarrow\Ab$ is constructed by restricting
the contravariant functor $\Hom_{R^\rop}({-},N)$ to the full subcategory
$\sQ_\bF\subset\discr R\subset\modr R$.
 Given a left $R$\+module $C$, the covariant $\bF$\+system
$\tp(C)\:\sQ_\bF\rarrow\Ab$ is constructed by restricting
the covariant functor ${-}\ot_RC$ to the same full subcategory of
cyclic discrete right $R$\+modules in $\modr R$.

 Let us explain why the two functors are adjoint in part~(b).
 This is similar to, though different from (and simper than)
the corresponding argument in the proof of
Proposition~\ref{discrete-modules-contramodules-from-systems}(b).
 The isomorphism of Hom groups $\Hom_R(C,\pl(D))\simeq
\Hom_{{}_\bF\Ab}(\tp(C),D)$ holds for any left $R$\+module $C$ and
any covariant $\bF$\+system of abelian groups $D$, because
the datum of a left $R$\+module morphism
$$
 C\lrarrow\varprojlim\nolimits_{I\in\bF}D(R/I)
$$
is equivalent to the datum of an $\bF$\+indexed family of abelian
group homomorphisms
$$
 C/IC\lrarrow D(R/I),
$$
defined for all the open right ideals $I\in\bF$ and satisfying
the compatibility equations for all the morphisms in
the category~$\sQ_\bF$.
\end{proof}

 Using the construction of
Proposition~\ref{systems-modules-adjunction}(b), we now develop
a (similar, but different and simpler) module version of the theory of
contramodule completion from Section~\ref{separated-contramodules-secn}.
 Let $C$ be a left $R$\+module.
 Denote by $\bB_C$ the set of all subgroups $IC\subset C$ of
the underlying abelian group of $C$, where $I\in\bF$ ranges over
the open right ideals in~$R$.
 Then $\bB_C$ is a linear topology on the underlying abelian group
of $C$, as one has $IC\cap JC\supset (I\cap J)C$ for any two
open right ideals $I$ and $J\subset R$.
 As in Section~\ref{separated-contramodules-secn}, we
notice that the open subgroups $IC\subset C$ are \emph{not}
submodules; so this is not a linear topology on a module in
the sense of~\cite[Section~VI.4]{St}.

 The completion $\varprojlim_{I\in\bF}C/IC$ of a left $R$\+module $C$
in the topology with the base $\bB_C$ can be described in
terms of Proposition~\ref{systems-modules-adjunction}(b) as
the left $R$\+module $\pl(\tp(C))$.
 The natural map $\lambda_{\bF,C}\:C\rarrow\varprojlim_{I\in\bF}C/IC$
is the adjunction morphism for the pair of adjoint functors
in part~(b) of the proposition.
 So the abelian group $\varprojlim_{I\in\bF}C/IC$ is, in fact,
a left $R$\+module (even though the abelian groups $C/IC$ have no such
module structures), and the completion map~$\lambda_{\bF,C}$ is
an $R$\+module morphism.

 Moreover, $\tp(C)$ is a covariant $\bF$\+system of abelian groups,
so one can apply the functor $\PL$ and obtain a left $\R$\+contramodule
$\PL(\tp(C))\in\R\contra$.
 In other words, according to 
Proposition~\ref{discrete-modules-contramodules-from-systems}(b),
\,$\pl(\tp(C))$ is the underlying left $R$\+module of the left
$\R$\+contramodule $\PL(\tp(C))$.
 We can conclude that the left $R$\+module structure of the abelian
group $\varprojlim_{I\in\bF}C/IC$ underlies a naturally
defined left $\R$\+contramodule structure.
 We will sometimes use the notation $\Lambda_\bF(C)=\PL(\tp(C))=
\varprojlim_{I\in\bF}C/IC$ for this left $\R$\+contramodule.

 A left $R$\+module $C$ is said to be $\bF$\+separated (respectively,
$\bF$\+complete) if it is a separated (resp., complete) abelian group
in the topology with a base~$\bB_C$.
 In other words, $C$ is called \emph{$\bF$\+separated} if
the map~$\lambda_{\bF,C}$ is injective and \emph{$\bF$\+complete} if
this map is surjective.
 Clearly, a left $R$\+module $C$ is $\bF$\+separated if and only if
$\bigcap_{I\in\bF}IC=0$.

 One easily observes that the left $R$\+module $\pl(D)$ is
$\bF$\+separated for any covariant $\bF$\+system of abelian groups $D$;
but this is not always true for a covariant pseudo-$\bF$-system $D$
(as, indeed, any left $R$\+module can be obtained by applying
the functor $\pl$ to a constant covariant pseudo-$\bF$-system; see
Section~\ref{F-systems-secn}).
 In particular, for any left $R$\+module $C$, the left $R$\+module
$\varprojlim_{I\in\bF}C/IC$ is $\bF$\+separated.

 The construction of the $\bF$\+completion of a left $R$\+module and
the related notions of $\bF$\+separatedness and $\bF$\+completeness of
a module, as defined in the previous several paragraphs, do \emph{not}
necessarily agree with the similar construction and notions for left
$\R$\+contramodules, as defined in
Section~\ref{separated-contramodules-secn}.
 However, they \emph{do} agree in the case of an associative ring
$R$ endowed with a right Gabriel topology $\bG$ \emph{with a countable
base of finitely generated right ideals}.
 Indeed, by Corollary~\ref{countable-gabriel-fully-faithful-cor},
one has $I\C=\I\tim\C$ for any left $\R$\+contramodule $\C$,
any open right ideal $I\subset R$, and the corresponding open
right ideal $\I\subset\R$ in this case.

 So, in the case of a right Gabriel topology $\bG$ with a countable
base of finitely generated right ideals, a left $\R$\+contramodule
is separated (resp., complete) if and only if it is $\bG$\+separated
(resp., $\bG$\+complete) as a left $R$\+module.
 In fact, an $\R$\+contramodule does not need to be separated,
but it is always complete by
Theorem~\ref{separated-vs-complete-contramodules-thm};
hence all left $\R$\+contramodules are $\bG$\+complete as left
$R$\+modules.

 The following lemma collects some assertions which may help
the reader feel more comfortable.

\begin{lem} \label{completions-unconfused-lem}
\textup{(a)} For any ring $R$ with a right linear topology\/ $\bF$
with a countable base and any left $R$\+module $C$, the left\/
$\R$\+contramodule\/ $\C=\varprojlim_{\I\in\bF}C/IC$ is separated and
complete.
 For any open right ideal $I\subset R$ and the related open right
ideal $\I\subset\R$, the completion map $\lambda_{\bF,C}\:C\rarrow\C$
and the projection map $\C\rarrow C/IC$ induce an isomorphism
$$
 C/IC\simeq\C/\I\tim\C.
$$ \par
\textup{(b)} For any ring $R$ with a right Gabriel topology\/ $\bG$
with a countable base of finitely generated ideals and any left
$R$\+module $C$, the left $R$\+module $\C=\varprojlim_{I\in\bG}C/IC$
is\/ $\bG$\+separated and\/ $\bG$\+complete.
 For any open right ideal $I\subset R$, the completion map
$\lambda_{\bF,C}\:C\rarrow\C$ and the projection map $\C\rarrow C/IC$
induce an isomorphism
$$
 C/IC\simeq\C/I\C.
$$ \par
\textup{(c)} For any ring $R$ with a right Gabriel topology\/ $\bG$
with a countable base of finitely generated ideals, a left
$R$\+module $C$ is\/ $\bG$\+separated and\/ $\bG$\+complete if and
only if it is the underlying left $R$\+module of a separated
left\/ $\R$\+contramodule.
\end{lem}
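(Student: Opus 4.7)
The plan is to deduce all three parts from the $\bF$\+system machinery of Sections~\ref{F-systems-secn}\+-\ref{separated-contramodules-secn} combined with Corollary~\ref{countable-gabriel-fully-faithful-cor}. For part~(a), my first step is to verify that the covariant $\bF$\+system $\tp(C)\:\sQ_\bF\rarrow\Ab$ of Proposition~\ref{systems-modules-adjunction}(b) is \emph{right exact}. This is immediate from the fact that the functor ${-}\ot_RC$ is right exact and takes direct sums to direct sums: applying it to the defining sequence~\eqref{I-J-sequence} for a pair of open right ideals $I\subset J$ yields precisely the right-exact sequence required of $\tp(C)$. Transporting $\tp(C)$ along the equivalence $\sQ_\bF\simeq\sQ_\bfF$, Proposition~\ref{contramodules-covariant-systems-prop} then tells us that $\C=\PL(\tp(C))$ is a separated left $\R$\+contramodule, and Theorem~\ref{separated-vs-complete-contramodules-thm} provides completeness because $\bfF$ has a countable base.

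Still in part~(a), to obtain the isomorphism $C/IC\simeq\C/\I\tim\C$, I would invoke the other half of Theorem~\ref{separated-vs-complete-contramodules-thm}: the adjunction $\CT\dashv\PL$ restricts to an equivalence between right-exact covariant $\bfF$\+systems and separated $\R$\+contramodules, so the unit gives a natural isomorphism $\CT(\C)\simeq\tp(C)$. Evaluating at the cyclic discrete module $\R/\I$ and recalling from the proof of Proposition~\ref{discrete-modules-contramodules-from-systems}(b) that $\CT(\C)(\R/\I)=\C/\I\tim\C$, while $\tp(C)(R/I)=C/IC$ by construction, yields the desired isomorphism; tracking the projection $\psi_\I$ from that same proof confirms that it is realized by the completion map $\lambda_{\bF,C}$ followed by the natural projection $\C\rarrow C/IC$.

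Parts~(b) and~(c) are then short bootstraps. For~(b), Corollary~\ref{countable-gabriel-fully-faithful-cor} gives $I\C=\I\tim\C$ for every $I\in\bG$ under the present Gabriel-topology hypothesis, so the isomorphism $C/IC\simeq\C/\I\tim\C$ from~(a) becomes $C/IC\simeq\C/I\C$; separatedness and completeness of $\C$ as an $\R$\+contramodule translate directly into $\bG$\+separatedness and $\bG$\+completeness of $\C$ as a left $R$\+module. For~(c), the ``if'' direction is immediate: a separated $\R$\+contramodule $\C'$ is complete by Theorem~\ref{separated-vs-complete-contramodules-thm}, and Corollary~\ref{countable-gabriel-fully-faithful-cor} converts these properties into the $\bG$\+statements for the underlying $R$\+module. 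For ``only if'', the hypotheses give $C\simeq\varprojlim_{I\in\bG}C/IC$ as $R$\+modules, and the right-hand side is a separated $\R$\+contramodule by~(a). The main piece of care needed is verifying that the $R$\+module structure on $\pl(\tp(C))$ of Lemma~\ref{modules-from-pseudo-systems-lem} coincides with the underlying $R$\+module of $\PL(\tp(C))$ and that the adjunction isomorphism $\CT(\PL(D))\simeq D$ is realized by the projections $\psi_\I$; no individual step is deep, but these identifications must be tracked carefully.
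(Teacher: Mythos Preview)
Your proposal is correct and follows essentially the same route as the paper's own proof: both derive part~(a) from Proposition~\ref{contramodules-covariant-systems-prop} and Theorem~\ref{separated-vs-complete-contramodules-thm} applied to the right-exact system $\tp(C)$, then obtain~(b) and~(c) by invoking Corollary~\ref{countable-gabriel-fully-faithful-cor}. One tiny terminological slip: the natural isomorphism $\CT(\C)\simeq\tp(C)$ you use is the \emph{counit} of the adjunction $\CT\dashv\PL$ evaluated at $\tp(C)$, not the unit.
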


\begin{proof}
 Part~(a): even without the assumption of a countable topology base,
all the left $\R$\+contramodules belonging to the image of the functor
$\PL$ are separated by
Proposition~\ref{contramodules-covariant-systems-prop};
so the left $\R$\+contramodule $\C=\PL(\tp(C))$ is separated.
 On the other hand, in the assumption of a countable topology base,
any left $\R$\+contra\-module is complete by
Theorem~\ref{separated-vs-complete-contramodules-thm}
(this is~\cite[Lemma~6.3(b)]{PR}).

 The last assertion in part~(a) holds, because the covariant
$\bF$\+system of abelian groups $\tp(C)\:R/I\longmapsto C/IC$
is right exact, and for any right exact covariant $\bF$\+system
of abelian groups $D$ the adjunction map $\CT(\PL(D))\rarrow D$
is an isomorphism.
 This is the result of~\cite[Lemma~6.3(a)]{PR};
see Theorem~\ref{separated-vs-complete-contramodules-thm}.

 Part~(b) follows from part~(a) and
Corollary~\ref{countable-gabriel-fully-faithful-cor}.

 In part~(c), if $C$ is $\bG$\+separated and $\bG$\+complete, then
$\lambda_{\bG,C}\:C\rarrow\varprojlim_{I\in\bG}C/IC$ is an isomorphism;
so $C$ acquires the left $\R$\+contramodule structure of
$\PL(\tp(C))$.
 Conversely, any separated left $\R$\+contramodule is
a $\bG$\+separated $\bG$\+complete left $R$\+module, as it was
explained in the paragraph preceding the lemma.

 Notice also that, in the assumptions of parts~(b\+c), there is always
at most one way to extend a given left $R$\+module structure on $C$
to a left $\R$\+contramodule structure (also by
Corollary~\ref{countable-gabriel-fully-faithful-cor}).
\end{proof}

 Now that we are finished with the preparatory work, let us proceed to
describe $U$\+weakly cotorsion and $U$\+strongly flat left $R$\+modules.
 We refer to~\cite[Section~3]{PSl0} and~\cite[Section~2]{PSl} for
the background discussion of \emph{simply right obtainable} modules.

\begin{thm} \label{weakly-cotorsion-obtainable}
 Let $u\:R\rarrow U$ be a ring epimorphism such that $U$ is a flat
left $R$\+module and the related right Gabriel topology\/ $\bG$ on $R$
has a countable base.
 Then a left $R$\+module is $U$\+weakly cotorsion if and only if
it can be obtained from left $R$\+modules belonging to the union of
the following two classes:
\begin{itemize}
\item the underlying left $R$\+modules of left $U$\+modules
\item $\bG$\+separated $\bG$\+complete left $R$\+modules
\end{itemize}
using the operations of the passage to an extension of two $R$\+modules
and to the cokernel of an injective $R$\+module morphism.
\end{thm}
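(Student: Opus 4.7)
The plan has two directions. First, I verify that both base classes consist of $U$\+weakly cotorsion modules and that the class of $U$\+weakly cotorsion modules is closed under extensions and cokernels of injections. Second, I realize every $U$\+weakly cotorsion module $C$ as obtainable by feeding the vanishing $\Ext^1_R(U,C)=0$ into the five-term sequence~\eqref{main-five-term-sequence}.

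For the ``only if'' direction, if $V$ is a left $U$\+module I choose a $U$\+injective resolution $V\to E^\bu$; because $U$ is $R$\+flat, the extension-of-scalars functor $U\ot_R{-}$ is exact and left adjoint to restriction, so the $E^i$ are also $R$\+injective. Since $u\:R\rarrow U$ is a ring epimorphism, $\Hom_R(U,E^i)=E^i$ and hence $\Ext^n_R(U,V)=H^n(E^\bu)=0$ for $n>0$. If $C$ is $\bG$\+separated and $\bG$\+complete, Lemma~\ref{completions-unconfused-lem}(c) identifies $C$ with a separated left $\R$\+contramodule, and Proposition~\ref{orthogonality-prop}(a) gives $\Ext^1_R(U,C)=0$. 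Closure of the $U$\+weakly cotorsion class under extensions comes from the long exact sequence of Ext, and closure under cokernels of injections uses $\Ext^2_R(U,{-})=0$, supplied by the projective dimension bound $\pd{}_RU\le1$ of Theorem~\ref{projdim-1-theorem}.

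For the ``if'' direction, let $C$ be $U$\+weakly cotorsion. The sequence~\eqref{main-five-term-sequence} collapses into a four-term exact sequence
\begin{equation*}
 0\rarrow E\rarrow V\rarrow C\rarrow W\rarrow 0,
\end{equation*}
with $V=\Hom_R(U,C)$ a left $U$\+module, $E=\Ext^0_R(K^\bu_{R,U},C)\simeq\Hom_R(\coker u,C)$, and $W=\Ext^1_R(K^\bu_{R,U},C)$. Lemma~\ref{ext-from-discrete} applied to the discrete $R$\+$R$\+bimodule $\coker u$ equips $E$ with an $\R$\+contramodule structure, and Corollary~\ref{exts-are-extensions-of-contramodules-cor} places $W$ in $\R\contra^{(2)}_{R\modl}$, so $W$ is an extension of two $\R$\+contramodules. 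Splitting the four-term sequence at $V'=\im(V\rarrow C)$ gives short exact sequences $0\rarrow E\rarrow V\rarrow V'\rarrow 0$ and $0\rarrow V'\rarrow C\rarrow W\rarrow 0$; thus $C$ is an extension of $V'$ by $W$, while $V'=\coker(E\hookrightarrow V)$. It remains to observe that each of $E$, the two $\R$\+contramodule subquotients of $W$, and thus also $W$ itself, is obtainable from the base classes: by Lemma~\ref{quotient-of-separated-lemma} every left $\R$\+contramodule is the cokernel of an injection between two separated left $\R$\+contramodules, and these are $\bG$\+separated $\bG$\+complete $R$\+modules by Lemma~\ref{completions-unconfused-lem}(c).

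The hard part is essentially bookkeeping. All deep ingredients — the projective dimension bound, the five-term sequence, the contramodule structures on the relevant Ext groups, and the quotient presentation of contramodules by separated ones — are already in place; the task is to chain them through the four-term sequence while distinguishing carefully between general $\R$\+contramodules, separated $\R$\+contramodules, and $\bG$\+separated $\bG$\+complete $R$\+modules, so that each intermediate piece is manifestly built from the two base classes by the two allowed operations.
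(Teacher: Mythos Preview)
Your proof is correct and follows essentially the same route as the paper's own argument: both directions rest on the five-term sequence~\eqref{main-five-term-sequence}, Corollary~\ref{exts-are-extensions-of-contramodules-cor}, Lemma~\ref{quotient-of-separated-lemma}, and Lemma~\ref{completions-unconfused-lem}(c), with the projective dimension bound of Theorem~\ref{projdim-1-theorem} supplying closure under cokernels of injections. One cosmetic slip: you have the labels ``if'' and ``only if'' swapped (the paragraph beginning ``For the `only if' direction'' is actually proving the ``if'' direction, and vice versa); also, your injective-resolution argument for $\Ext^1_R(U,V)=0$ when $V\in U\modl$ is a valid alternative to the paper's quicker $\Ext^1_R(U,V)\simeq\Ext^1_U(U,V)=0$.
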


\begin{proof}
 Notice that all the projective left $U$\+modules are $U$\+strongly
flat left $R$\+mod\-ules; but other left $U$\+modules do not have to be
$U$\+strongly flat as left $R$\+modules.
 On the other hand, any left $U$\+module $D$ is a $U$\+weakly
cotorsion left $R$\+module.
 Indeed, one has $\Ext_R^1(U,D)=\Ext_U^1(U,D)=0$, because $U\ot_RU=U$
and $\Tor_1^R(U,U)=0$.
 Furthermore, for any $\bG$\+separated $\bG$\+complete left
$R$\+module $C$ one has $\Hom_R(U,C)=0=\Ext^1_R(U,C)$ by
Lemma~\ref{completions-unconfused-lem}(c) and
Proposition~\ref{orthogonality-prop}(a).

 To prove the if-part, it remains to show that the class of all
$U$\+weakly cotorsion left $R$\+modules is closed under extensions
and the cokernels of injective morphisms.
 The former is obvious from the definition, and the latter means
that the two classes ($U$\+strongly flat left $R$\+modules,
$U$\+weakly cotorsion left $R$\+modules) form a \emph{hereditary
cotorsion pair} in $R\modl$ (see, e.~g., \cite[Section~5.2]{GT}).
 This is so because one has
$\pd{}_RU\le1$ by Theorem~\ref{projdim-1-theorem}.
 In fact, any cotorsion pair generated by a module of projective
dimension at most~$1$ is hereditary.

 Conversely, let $B$ be a $U$\+weakly cotorsion left $R$\+module,
i.~e., $\Ext^1_R(U,B)=0$.
 Then from the five-term exact sequence~\eqref{main-five-term-sequence}
(which reduces to a four-term exact sequence in this case) we see
that the left $R$\+module $B$ can be obtained using the cokernel of
an injective morphism and an extension from three left $R$\+modules,
namely, $\Ext^0_R(K^\bu_{R,U},B)$, $\Hom_R(U,B)$, and
$\Ext^1_R(K^\bu_{R,U},B)$.

 Now $\Hom_R(U,B)$ is a left $U$\+module.
 As to the left $R$\+modules $\Ext^n_R(K^\bu_{R,U},B)$, by
Corollary~\ref{exts-are-extensions-of-contramodules-cor} each of them
is an extension of at most two left $\R$\+contramodules.
 Finally, by Lemma~\ref{quotient-of-separated-lemma}, each left
$\R$\+contramodule is the cokernel of an injective morphism of
separated left $\R$\+contramodules.
 The latter are $\bG$\+separated $\bG$\+complete left $R$\+modules
by Lemma~\ref{completions-unconfused-lem}(c).
\end{proof}

\begin{rem}
 The above suffices to prove the theorem, but in fact one can say more
about the left $R$\+module $\Ext^0_R(K^\bu_{R,U},B)$, which is
$\bG$\+separated and $\bG$\+complete for any left $R$\+module~$B$.
 Indeed, let $L_{R,U}=H^0(K^\bu_{R,U})$ denote the cokernel of
the $R$\+$R$\+bimodule morphism $R\rarrow U$; then one has
$\Ext^0_R(K^\bu_{R,U},B)=\Hom_R(L_{R,U},B)$.
 Now $L_{R,U}=\coker(R\to R_\bG)$ is a $\bG$\+torsion right $R$\+module;
in other words, $L_{R,U}$ is an $R$\+$R$\+bimodule that is
discrete as a right $R$\+module.
 For any ring $A$, left $A$\+module $V$, and $R$\+discrete
$A$\+$R$\+bimodule $\N$, the left $R$\+module $\Hom_A(\N,V)$ is
a separated left $\R$\+contramodule (see
Section~\ref{prelim-contratensor} and the proof of
Lemma~\ref{embedding-into-hom-lemma}).

 Moreover, let $H_{R,U}=H^{-1}(K^\bu_{R,U})$ be the kernel of
the $R$\+$R$\+bimodule morphism $R\rarrow U$; then $H_{R,U}$ is also
a $\bG$\+torsion right $R$\+module.
 Looking into the proof of Lemma~\ref{ext-from-two-term-complex-lemma}
(on which Corollary~\ref{exts-are-extensions-of-contramodules-cor} is 
based; cf.\ Remark~\ref{ext-1-from-K-remark} below), we see that
the left $R$\+module $\Ext^1_R(K^\bu_{R,U},B)$ is an extension of
the left $R$\+module $\Ext^1_R(L_{R,U},B)$ and the kernel of the left
$R$\+module morphism $\Hom_R(H_{R,U},B)\rarrow\Ext^2_R(L_{R,U},B)$.
 The former one is a left $\R$\+contramodule; while the latter one
is a subcontramodule of a separated left $\R$\+contramodule
$\Hom_R(H_{R,U},B)$, hence a separated left
$\R$\+contramodule itself.
 So the left $R$\+module $\Ext^1_R(K^\bu_{R,U},B)$ is an extension
of one left $\R$\+contramodule and one separated left
$\R$\+contramodule.

 Summarizing the arguments above, one observes that any $U$\+weakly
cotorsion left $R$\+module $B$ can be obtained from one underlying
left $R$\+module of a left $U$\+module and four $\bG$\+separated
$\bG$\+complete left $R$\+modules using two passages to the cokernel
of an injective morphism and two passages to an extension of
$R$\+modules.
 Thus, in total, one needs to apply our operations four times.
\end{rem}

\begin{cor} \label{strongly-flat-characterized}
 Let $u\:R\rarrow U$ be a ring epimorphism such that $U$ is a flat
left $R$\+module and the related right Gabriel topology\/ $\bG$ on $R$
has a countable base.
 Then a left $R$\+module $F$ is $U$\+strongly flat if and only
if it satisfies the following two conditions:
\begin{enumerate}
\renewcommand{\theenumi}{\roman{enumi}}
\item $\Ext^1_R(F,D)=0$ for all left $U$\+modules $D$; and
\item $\Ext^1_R(F,C)=0=\Ext^2_R(F,C)$ for all\/ $\bG$\+separated\/
$\bG$\+complete left $R$\+modules~$C$.  \hbadness=1750
\end{enumerate}
\end{cor}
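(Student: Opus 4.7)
The plan is to deduce this characterization from Theorem~\ref{weakly-cotorsion-obtainable} together with the fact (Theorem~\ref{projdim-1-theorem}) that $\pd_R U\le 1$. The two directions require different uses of this structural description of $U$\+weakly cotorsion modules.

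For the ``only if'' direction, I would note that since $\pd_R U \le 1$, the cotorsion pair $(\mathcal{SF},\mathcal{WC})$ generated by $U$ is hereditary; concretely, a short resolution $0\to P_1\to P_0\to U\to 0$ by projective left $R$\+modules together with dimension shifting gives $\Ext^i_R(F,C)=0$ for all $i\ge1$ whenever $F$ is $U$\+strongly flat and $C$ is $U$\+weakly cotorsion. As observed in the proof of Theorem~\ref{weakly-cotorsion-obtainable}, both any left $U$\+module and any $\bG$\+separated $\bG$\+complete left $R$\+module are $U$\+weakly cotorsion (the latter via Lemma~\ref{completions-unconfused-lem}(c) and Proposition~\ref{orthogonality-prop}(a)), so (i) and (ii) are immediate.

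For the ``if'' direction, assume (i) and (ii) and let $B$ be any $U$\+weakly cotorsion left $R$\+module. I aim to show $\Ext^1_R(F,B)=0$. Because $\Ext^1_R(U,B)=0$, the five-term sequence~\eqref{main-five-term-sequence} collapses to a four-term exact sequence, which I split as $0\to A_1\to A_2\to Q\to 0$ and $0\to Q\to B\to A_3\to 0$, where $A_2=\Hom_R(U,B)$ is a $U$\+module, $A_1=\Hom_R(L_{R,U},B)$ is a separated $\R$\+contramodule (by the argument used in the proof of Lemma~\ref{embedding-into-hom-lemma}, since $L_{R,U}$ is $\bG$\+torsion), and $A_3=\Ext^1_R(K^\bu_{R,U},B)$ fits (by the remark following Theorem~\ref{weakly-cotorsion-obtainable}) into an extension $0\to \Ext^1_R(L_{R,U},B)\to A_3\to E_2\to 0$ with $E_2$ a separated $\R$\+contramodule and $\Ext^1_R(L_{R,U},B)$ a (possibly nonseparated) $\R$\+contramodule.

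Running $\Ext^*_R(F,-)$ through these short exact sequences, I would verify vanishing at each stage using (i) and (ii) in the expected places: $\Ext^1_R(F,A_2)=0$ by~(i); $\Ext^2_R(F,A_1)=0$ by~(ii), which kills $\Ext^1_R(F,Q)$; and $\Ext^1_R(F,E_2)=0$ by~(ii). The only remaining term is $\Ext^1_R(F,\Ext^1_R(L_{R,U},B))$, where the $\R$\+contramodule in question is a priori not separated. To handle this I invoke Lemma~\ref{quotient-of-separated-lemma} to present it as the cokernel of an injective $\R$\+contramodule map between separated contramodules, and apply (ii) once more (using both the $\Ext^1$ and the $\Ext^2$ hypotheses) to conclude vanishing. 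Combining these steps gives $\Ext^1_R(F,B)=0$, as required.

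The main technical obstacle will be the careful bookkeeping of where exactly the $\Ext^2$ hypothesis in~(ii) is needed: precisely at those places where a cokernel of an injective morphism between objects on which only $\Ext^1$ vanishing is a priori known is involved. This happens twice, once in writing $B$ as a cokernel built from $A_1\hookrightarrow A_2$ combined with an extension, and once in decomposing the nonseparated $\R$\+contramodule $\Ext^1_R(L_{R,U},B)$ by Lemma~\ref{quotient-of-separated-lemma}. The strength of the hypothesis on $\Ext^2$ against $\bG$\+separated $\bG$\+complete modules only (and not against all $\R$\+contramodules) is exactly calibrated to make these two reductions work.
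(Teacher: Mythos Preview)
Your proposal is correct and follows essentially the same approach as the paper. The paper first gives a cleaner high-level argument for the ``if'' direction (the class of $B$ with $\Ext^n_R(F,B)=0$ for all $n\ge1$ is closed under extensions and cokernels of injections, so Theorem~\ref{weakly-cotorsion-obtainable} applies), and then simply remarks that ``a slightly more careful analysis of the specific procedure'' shows that the $\Ext^1$ vanishing in~(i) and the $\Ext^{1,2}$ vanishing in~(ii) suffice; what you have written out is precisely that more careful analysis, carried through explicitly and correctly.

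One small remark on your ``only if'' direction: your sentence about the short resolution of $U$ and dimension shifting is a bit compressed. The paper makes the logic more explicit by observing (via the Eklof--Trlifaj description of $U$\+strongly flat modules as summands of extensions of free $R$\+modules by free $U$\+modules) that $\pd_RU\le1$ forces $\pd_RF\le1$ for every $U$\+strongly flat $F$, from which the $\Ext^2$\+vanishing in~(ii) follows for any $C$ whatsoever. Your hereditariness argument reaches the same conclusion; just make sure the reader can see why $\pd_RU\le1$ translates into $\pd_RF\le1$ (or equivalently into hereditariness of the cotorsion pair).
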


\begin{proof}
 The if-assertion of Theorem~\ref{weakly-cotorsion-obtainable}
implies the necessity of the condition~(i) and of the $\Ext^1$\+part of
the condition~(ii).
 Furthermore, since $\pd{}_RU\le1$, it follows easily from the Eklof
lemma and (the proof of) the Eklof--Trlifaj theorem~\cite[Lemma~1
and Theorem~10]{ET} (cf.~\cite[Corollary~6.14]{GT}) that $\pd{}_RF\le1$
for every $U$\+strongly flat left $R$\+module~$F$.
 Thus the $\Ext^2$\+part of the condition~(ii) is also necessary, and
in fact one has $\Ext^n_R(F,D)=0=\Ext^n_R(F,C)$ for all
$U$\+strongly flat left $R$\+modules $F$, left $R$\+modules $D$
and $C$ as in~(i) and~(ii), and all $n\ge1$.

 Conversely, for any given left $R$\+module $F$, the class of all
left $R$\+modules $B$ satisfying $\Ext_R^n(F,B)=0$ for $n\ge1$ is
closed under extensions and cokernels of injective morphisms.
 Hence, whenever $\Ext^n_R(F,D)=0=\Ext^n_R(F,C)$ for all $D$ and $C$
as in~(i) and~(ii) and all $n\ge1$, one has $\Ext^n_R(F,B)=0$ for all
left $R$\+modules $B$ that can be obtained from such $R$\+modules
as $D$ and $C$ using extensions and cokernels of injections.
 Since all $U$\+weakly cotorsion left $R$\+modules can be so obtained
by Theorem~\ref{weakly-cotorsion-obtainable}, we can conclude that
$F$ is $U$\+strongly flat.

 A slightly more careful analysis of the specific procedure for
producing $U$\+weakly cotorsion left $R$\+modules out of
the left $U$\+modules and the $\bG$\+separated $\bG$\+complete
$R$\+modules used in the proof of the only if-part of
Theorem~\ref{weakly-cotorsion-obtainable} reveals that
the $\Ext^1$ vanishing in~(i) and the $\Ext^{1,2}$ vanishing
in~(ii) are enough.
\end{proof}

 Let $F$ be a flat left $R$\+module.
 Then for any left $U$\+module $D$ there are natural isomorphisms of
abelian groups $\Ext^n_R(F,D)\simeq\Ext^n_U(U\ot_RF,\>D)$
for all $n\ge0$.
 (Indeed, given a projective resolution $P_\bu$ of the left $R$\+module
$F$, the complex $U\ot_RP_\bu$ is a projective resolution of the left
$U$\+module $U\ot_RF$, and the two complexes of abelian groups
$\Hom_R(P_\bu,D)$ and $\Hom_U(U\ot_RP_\bu,D)$ are naturally isomorphic
by the tensor-Hom adjunction.)
 Therefore, the condition~(i) in
Corollary~\ref{strongly-flat-characterized} holds if and only if
the left $U$\+module $U\ot_RF$ is projective.
 Assuming that the right Gabriel topology $\bG$ \emph{has a countable
base consisting of two-sided ideals}, the condition~(ii) can be
also reformulated in a similar way, as we will now show.

 Given an associative ring $R$ and a sequence of left $R$\+modules
$C_0$, $C_1$, $C_2$,~\dots\ indexed by nonnegative integers, we will
say that a left $R$\+module $C$ is an \emph{infinitely iterated
extension} of the left $R$\+modules $C_i$ \emph{in the sense of
the projective limit} if there exists a decreasing filtration
$C=G^0\supset G^1\supset G^2\supset\dotsb$ of $C$ by its $R$\+submodules
$G^i$ such that the natural $R$\+module morphism $C\rarrow
\varprojlim_iC/G^i$ is an isomorphism and the quotient module
$G^i/G^{i+1}$ is isomorphic to $C_i$ for every $i\ge0$.
 The dual Eklof lemma~\cite[Proposition~18]{ET} tells that if
a left $R$\+module $C$ is an infinitely iterated extension of
left $R$\+modules $C_i$ in the sense of the projective limit and $F$
is a left $R$\+module such that $\Ext^1_R(F,C_i)=0$ for all $i>0$,
then $\Ext_R^1(F,C)=0$.

\begin{thm} \label{two-sided-base-weakly-cotorsion-obtainable}
 Let $u\:R\rarrow U$ be a ring epimorphism such that $U$ is a flat
left $R$\+module and the related Gabriel topology of right ideals\/
$\bG$ on $R$ has a countable base consisting of two-sided ideals.
 Then a left $R$\+module is $U$\+weakly cotorsion if and only if
it can be obtained from left $R$\+modules belonging to the union of
the following classes:
\begin{itemize}
\item left $U$\+modules
\item left modules over quotient rings $R/H$ of the ring $R$ by its
two-sided ideals $H\subset R$ belonging to\/~$\bG$
\end{itemize}
using the operations of the passage to an extension of two $R$\+modules,
to an infinitely iterated extension of a sequence of $R$\+modules,
in the sense of the projective limit, and to the cokernel of
an injective $R$\+module morphism.
\end{thm}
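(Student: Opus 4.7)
The plan is to deduce this theorem from Theorem~\ref{weakly-cotorsion-obtainable} by replacing, on the one hand, the class of $\bG$\+separated $\bG$\+complete left $R$\+modules by the class of left $R/H$\+modules (with $H\in\bG$ two-sided), and, on the other hand, the class of left $U$\+modules underlying left $R$\+modules by itself; while simultaneously allowing the new operation of passage to an infinitely iterated extension in the sense of the projective limit.

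For the ``if'' direction, I would first show that a left $R/H$\+module $C$ with $H\in\bG$ a two-sided ideal is $U$\+weakly cotorsion, and moreover that $\Ext^n_R(U,C)=0$ for all $n\ge1$. The argument: let $P_\bu\to U$ be a projective resolution of $U$ as a left $R$\+module. Since $U$ is flat, $\Tor_i^R(R/H,U)=0$ for all $i\ge1$, while $R/H\ot_RU=0$ because $H\in\bG$. Hence $R/H\ot_RP_\bu$ is an acyclic, bounded-above complex of projective left $R/H$\+modules, and therefore contractible. Applying $\Hom_{R/H}({-},C)$ and using the natural isomorphism $\Hom_R(P_\bu,C)\simeq\Hom_{R/H}(R/H\ot_RP_\bu,C)$, one obtains $\Ext^n_R(U,C)=0$ for all $n\ge0$. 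That every left $U$\+module is $U$\+weakly cotorsion was already observed in the proof of Theorem~\ref{weakly-cotorsion-obtainable}. Closure of the class of $U$\+weakly cotorsion modules under extensions and cokernels of injections follows as before from $\pd{}_RU\le1$, while closure under infinitely iterated extensions in the sense of the projective limit is exactly the content of the dual Eklof lemma~\cite[Proposition~18]{ET} cited in the paragraph preceding the theorem.

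For the ``only if'' direction, by Theorem~\ref{weakly-cotorsion-obtainable} it suffices to show that every $\bG$\+separated $\bG$\+complete left $R$\+module $C$ can be obtained from left $R/H$\+modules (with $H\in\bG$ two-sided) using the allowed operations. Choose a countable base of $\bG$ consisting of two-sided ideals, and extract from it a nested decreasing base $R=H_0\supset H_1\supset H_2\supset\dotsb$ (possible since $\bG$ is a filter). Consider the decreasing filtration $G^i=H_iC\subset C$. For each $i\ge0$, the quotient $G^i/G^{i+1}=H_iC/H_{i+1}C$ is annihilated by the two-sided ideal $H_{i+1}$, hence is naturally a left $R/H_{i+1}$\+module. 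Moreover, since $\{H_i\}$ is a base of $\bG$, the $\bG$\+separatedness and $\bG$\+completeness of $C$ yield $C\simeq\varprojlim_i C/H_iC=\varprojlim_i C/G^i$. So $C$ is an infinitely iterated extension, in the sense of the projective limit, of the sequence of $R/H_{i+1}$\+modules $G^i/G^{i+1}$.

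The construction above is essentially routine; the only step requiring care is the identification $C\simeq\varprojlim_iC/H_iC$, for which one must know that the topology on $C$ with base $\{H_iC\}$ actually agrees with the completion structure used in Section~\ref{separated-contramodules-secn}. This is precisely the content of Lemma~\ref{completions-unconfused-lem}(c), which is applicable because $\bG$ has a countable base of finitely generated (in fact, two-sided) right ideals. Combining this with Theorem~\ref{weakly-cotorsion-obtainable} then yields the desired characterization.
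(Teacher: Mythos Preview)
Your proof is correct and follows essentially the same route as the paper: reduce to Theorem~\ref{weakly-cotorsion-obtainable}, invoke the dual Eklof lemma for closure under projective-limit extensions, and exhibit every $\bG$\+separated $\bG$\+complete module as an infinitely iterated extension of $R/H_{i+1}$\+modules via the filtration $G^i=H_iC$. The paper handles the ``if'' direction slightly more economically, by observing that every left $R/H$\+module is already $\bG$\+separated and $\bG$\+complete, so the ``if'' part of Theorem~\ref{weakly-cotorsion-obtainable} applies directly; your explicit $\Ext$ computation is a valid alternative.

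One correction to your final paragraph: the isomorphism $C\simeq\varprojlim_iC/H_iC$ follows immediately from the \emph{definition} of $\bG$\+separated and $\bG$\+complete for left $R$\+modules given in Section~\ref{strongly-flat-weakly-cotorsion-secn} (together with $\{H_i\}$ being a base of~$\bG$), with no appeal to Lemma~\ref{completions-unconfused-lem}(c) needed. Also, the parenthetical ``in fact, two-sided'' is a slip---two-sided ideals need not be finitely generated as right ideals---so if Lemma~\ref{completions-unconfused-lem}(c) were actually required, its hypothesis would not be obviously met from the two-sided base alone.
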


\begin{proof}
 By the dual Eklof lemma~\cite[Proposition~18]{ET}, the class of all
$U$\+weakly cotorsion left $R$\+modules is closed under infinitely
iterated extensions in the sense of the projective limit.
 Since all left $R/H$\+modules are $\bG$\+separated $\bG$\+complete
left $R$\+modules, the if-part follows from
Theorem~\ref{weakly-cotorsion-obtainable}.

 To prove the ``only if'', it remains to observe that every
$\bG$\+separated $\bG$\+complete left $R$\+module is an infinitely
iterated extension of a sequence left $R/H$\+modules.
 Indeed, let $R\supset H_1\supset H_2\supset\dotsb$ be a decreasing
sequence of open two-sided ideals belonging to $\bG$ such that
the collection $\bH$ of all the ideals $H_i$, \ $i\ge1$, is
a base of the topology~$\bG$.
 Given a $\bG$\+separated $\bG$\+complete left $R$\+module $C$, set
$G^i=H_iC\subset C$ for every $i\ge1$.
 This is a decreasing filtration of $C$ by its $R$\+submodules,
the natural map $C\rarrow\varprojlim_i C/G^i$ is an isomorphism,
and the quotient module $G^i/G^{i+1}$ is an $(R/H_{i+1})$\+module
for every $i\ge1$.
\end{proof}

 The following corollary can be thought of as confirming a version
of~\cite[Optimistic Conjecture~1.1]{PSl} for perfect Gabriel
topologies with a countable base of two-sided ideals.
 This corollary is also a generalization of~\cite[Theorem~1.3]{PSl}
(while the previous theorem is a generalization
of~\cite[Proposition~1.6]{PSl}).
 We refer to the introduction to~\cite{PSl}
(see~\cite[Sections~1.1\+-1.3]{PSl}) for a discussion.

\begin{cor} \label{two-sided-base-strongly-flat-characterized}
 Let $u\:R\rarrow U$ be a ring epimorphism such that $U$ is a flat
left $R$\+module and the related Gabriel topology of right ideals\/
$\bG$ on $R$ has a countable base\/ $\bB$ consisting of
two-sided ideals.
 Then a flat left $R$\+module $F$ is $U$\+strongly flat if and only
if it satisfies the following two conditions:
\begin{enumerate}
\renewcommand{\theenumi}{\roman{enumi}}
\item the left $U$\+module $U\ot_RF$ is projective;
\item for every open two-sided ideal $H\subset R$, \ $H\in\bB$,
the left $R/H$\+module $F/HF$ is projective.
\end{enumerate}
\end{cor}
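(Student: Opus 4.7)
The plan is to deduce this corollary from Corollary~\ref{strongly-flat-characterized} by translating its two Ext-vanishing conditions through standard change-of-rings isomorphisms. For a flat left $R$\+module $F$, the excerpt already records $\Ext^n_R(F,D)\simeq\Ext^n_U(U\ot_R F,D)$ for every left $U$\+module $D$; running the same argument with $R\to R/H$ in place of $R\to U$ (using $\Tor^R_i(R/H,F)=0$, which holds because $F$ is flat, to check that $R/H\ot_R P_\bu$ is a projective $R/H$\+resolution of $F/HF$ for any projective resolution $P_\bu\to F$) yields the analogous isomorphism $\Ext^n_R(F,C')\simeq\Ext^n_{R/H}(F/HF,C')$ for every two-sided ideal $H\subset R$ belonging to $\bG$, every left $R/H$\+module $C'$, and every $n\ge 0$.

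For the ``only if'' direction, I would simply translate: Corollary~\ref{strongly-flat-characterized}(i) becomes condition~(i) here via the first isomorphism, and the $\Ext^1$\+part of Corollary~\ref{strongly-flat-characterized}(ii), applied to left $R/H$\+modules for $H\in\bB$, becomes condition~(ii) via the second isomorphism. The observation that enables this restriction is that every left $R/H$\+module with $H\in\bG$ is trivially $\bG$\+separated and $\bG$\+complete, because $HC'=0$ makes the $\bG$\+induced topology on $C'$ discrete.

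For the ``if'' direction, I would likewise establish conditions~(i) and~(ii) of Corollary~\ref{strongly-flat-characterized}. Its~(i) is immediate from the present~(i). For its~(ii), the projectivity of each $F/HF$ as an $R/H$\+module upgrades via the second isomorphism to $\Ext^{\ge 1}_R(F,C')=0$ for every left $R/H$\+module $C'$ and every $H\in\bB$. By the argument in the proof of Theorem~\ref{two-sided-base-weakly-cotorsion-obtainable}, any $\bG$\+separated, $\bG$\+complete left $R$\+module $C$ is an infinitely iterated extension, in the sense of the projective limit, of left $R/H_i$\+modules for a decreasing sequence $H_i\in\bB$; in particular each finite quotient $C/G^n$ is an extension of such strata, hence $\Ext^{\ge 1}_R(F,C/G^n)=0$. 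Applying $\Ext^*_R(F,-)$ to the Milnor short exact sequence
$$
 0\lrarrow C\lrarrow\prod_n C/G^n\lrarrow\prod_n C/G^n\lrarrow 0
$$
and using that Ext commutes with products in the second argument, I would deduce $\Ext^{\ge 2}_R(F,C)=0$ directly; while $\Ext^1_R(F,C)=0$ would follow from the Mittag--Leffler condition on the tower $\{\Hom_R(F,C/G^n)\}$, whose transition maps are surjective because $\Ext^1_R(F,G^n/G^{n+1})=0$. The most delicate step is this higher-Ext strengthening of the dual Eklof lemma, but it amounts to a routine Milnor--Mittag--Leffler computation once the full positive-degree Ext vanishing on each stratum is in hand.
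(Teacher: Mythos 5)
Your proof is correct and follows essentially the same route as the paper's: the change-of-rings isomorphisms $\Ext^n_R(F,D)\simeq\Ext^n_U(U\ot_RF,D)$ and $\Ext^n_R(F,C')\simeq\Ext^n_{R/H}(F/HF,C')$ for flat $F$, the observation that left $R/H$\+modules with $H\in\bG$ are $\bG$\+separated and $\bG$\+complete, the decomposition of every $\bG$\+separated $\bG$\+complete module as an infinitely iterated extension of $R/H_i$\+modules in the projective-limit sense, and the dual Eklof lemma. The only notable differences are that the paper proves the ``only if'' direction by a more elementary argument (exhibiting a $U$\+strongly flat $F$ as a direct summand of an extension of a free left $U$\+module by a free left $R$\+module, which requires no countability hypothesis), and that your explicit Milnor--Mittag--Leffler computation spells out the $\Ext^{\ge2}$\+closure under projective-limit iterated extensions that the paper leaves implicit in its brief appeal to ``the dual Eklof lemma.''
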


\begin{proof}
 The only if-part is easy to prove, and it does not depend on
the assumption of countability of~$\bB$.
 Indeed, suppose that a left $R$\+module $F$ is a direct summand of
a left $R$\+module $G$ included in a short exact sequence of (flat) left
$R$\+modules $0\rarrow V\rarrow G\rarrow W\rarrow 0$, where
$V$ is a free left $R$\+module and $W$ is a free left $U$\+module.
 Then the left $U$\+module $U\ot_RG$ is free, since the left
$U$\+modules $U\ot_RV$ and $U\ot_RW\simeq W$ are;
and the left $R/H$\+module $G/HG$ is free for any two-sided ideal
$H\in\bG$, since the left $R/H$\+modules $V/HV$ and $W/HW=0$ are.
 Hence the left $U$\+module $U\ot_RF$ and the left $R/H$\+module
$F/HF$ are projective.

 Now we return to the assumption of a countable base of two-sided ideals
$\bB$ in $\bG$ and prove both the if- and only if-parts.
 The same argument as in the proof of
Corollary~\ref{strongly-flat-characterized}, but based on
Theorem~\ref{two-sided-base-weakly-cotorsion-obtainable} instead
of Theorem~\ref{weakly-cotorsion-obtainable} and using also
the dual Eklof lemma, shows that an $R$\+module $F$ is
$U$\+strongly flat if and only if $\Ext^n_R(F,D)=0=\Ext^n_R(F,C)$
for all left $U$\+modules $D$, all left $R/H$\+modules $C$
with $H\in\bB$, and all $n\ge1$.
 Assume that $F$ is flat left $R$\+module.
 We have already seen above that the condition $\Ext^n_R(F,D)=0$
for all $D\in U\modl$ and $n\ge1$ (or just $n=1$) holds if and
only if the left $U$\+module $U\ot_RF$ is projective.
 Similarly, given any fixed $H\in\bB$, one has $\Ext^n_R(F,C)\simeq
\Ext^n_{R/H}(F/HF,\>C)$ for all $C\in R/H\modl$ and all $n\ge0$.
 Thus the condition that $\Ext^n_R(F,C)=0$ for all such $C$ and
$n\ge1$ (or just $n=1$) holds if and only if the left
$R/H$\+module $F/HF$ is projective.
\end{proof}

\Section{When $\Delta$ equals $\Lambda$?}
\label{when-Delta=Lambda-secn}

 Let $u\:R\rarrow U$ be an epimorphism of associative rings such that
$U$ is a left $R$\+module of projective dimension at most~$1$.
 Let us introduce the name \emph{left $u$\+con\-tramodules} (or
\emph{$u$\+contramodule left $R$\+modules}) for objects of
the Geigle--Lenzing perpendicular subcategory
$U^{\perp_{0,1}}\subset R\modl$.
 Let us also introduce the notation $R\modl_{u\ctra}=U^{\perp_{0,1}}$
for the full subcategory of $u$\+contramodules in $R\modl$. 
 We recall that, according to~\cite[Proposition~1.1]{GL},
\,$R\modl_{u\ctra}$ is an abelian category and its embedding
$R\modl_{u\ctra}\rarrow R\modl$ is an exact functor.

 Given an associative ring homomorphism $u\:R\rarrow U$, we consider
the two-term complex of $R$\+$R$\+bimodules $K^\bu_{R,U}=(R\to U)$, as
in Section~\ref{projdim-1-secn}, and for any left $R$\+module $M$ set
$$
 \Delta_u(M)=\Ext^1_R(K^\bu_{R,U},M).
$$
 This defines a functor $\Delta_u\:R\modl\rarrow R\modl$.
 For every left $R$\+module $M$, there is a natural left $R$\+module
morphism $\delta_{u,M}\:M\rarrow\Delta_u(M)$ appearing in the exact
sequence~\eqref{main-five-term-sequence} (cf.~\cite[exact sequence~(9)
in Section~16]{BP}).

 The following result goes back to~\cite[Proposition~2.4]{Mat}
(see also~\cite[Theorem~3.4(b)]{PMat}).

\begin{lem} \label{reflector-delta}
 Let $u\:R\rarrow U$ be an epimorphism of associative rings such that\/
$\pd{}_RU\le1$.
 Then, for every left $R$\+module $M$, the left $R$\+module\/
$\Delta_u(M)$ is a $u$\+contramodule.
 The functor\/ $\Delta_u\:R\modl\rarrow R\modl_{u\ctra}$ is left adjoint
to the embedding\/ $R\modl_{u\ctra}\rarrow R\modl$.
 The natural map $\delta_{u,M}\:M\rarrow\Delta_u(M)$ is the adjunction
morphism.
\end{lem}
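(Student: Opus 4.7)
The plan is to deduce all three assertions of the lemma from the five-term exact sequence~\eqref{main-five-term-sequence} combined with one derived-category vanishing. The easy half: for any $C \in R\modl_{u\ctra}$, the defining conditions $\Hom_R(U, C) = 0 = \Ext^1_R(U, C)$, together with $\pd_RU \le 1$ (which forces $\Ext^n_R(U, C) = 0$ for all $n \ge 1$, and via the isomorphism~\eqref{higher-ext-isomorphism} also $\Ext^n_R(K^\bu_{R,U}, C) = 0$ for $n \ge 2$), collapse the sequence~\eqref{main-five-term-sequence} applied to $M = C$ into $\delta_{u, C}\: C \rarrow \Delta_u(C)$ being an isomorphism, together with $\Ext^0_R(K^\bu_{R,U}, C) = 0$. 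So the restriction of $\Delta_u$ to $R\modl_{u\ctra}$ agrees with the identity up to the natural isomorphism~$\delta$.

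The principal content is the first assertion: $\Delta_u(M) \in R\modl_{u\ctra}$ for every left $R$-module $M$. I would work in the derived category of left $R$-modules and compute the composite $\Hom_{\sD(R\modl)}(U, \Hom_{\sD(R\modl)}(K^\bu_{R,U}, M))$ in two ways. On one hand, the inner derived $\Hom$ is a complex whose cohomologies are $\Ext^0_R(K^\bu_{R,U}, M)$ in degree~$0$, $\Delta_u(M)$ in degree~$1$, and zero in higher degrees (by~\eqref{higher-ext-isomorphism} and $\pd_RU \le 1$), so the outer derived $\Hom$ fits into a hypercohomology spectral sequence $E_2^{p, q} = \Ext^p_R(U, \Ext^q_R(K^\bu_{R,U}, M)) \Rightarrow H^{p+q}$ that is concentrated in the four cells with $p, q \in \{0, 1\}$ (again using $\pd_RU \le 1$), with all higher $d_r$ differentials landing outside this range and hence vanishing, so $E_\infty = E_2$. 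On the other hand, the derived tensor-Hom adjunction identifies the composite with $\Hom_{\sD(R\modl)}(K^\bu_{R,U} \ot_R^{\mathbf L} U, M)$, the tensor being taken with respect to the right $R$-module structure on $K^\bu_{R,U}$. Tensoring the triangle~\eqref{main-triangle} on the right by $U$ produces $R \ot_R^{\mathbf L} U \rarrow U \ot_R^{\mathbf L} U \rarrow K^\bu_{R,U} \ot_R^{\mathbf L} U \rarrow R[1] \ot_R^{\mathbf L} U$, and by the ring-epimorphism identity $U \ot_R U \simeq U$ the first arrow is the identity $U \rarrow U$ in degree zero; provided $\Tor_i^R(U, U) = 0$ for $i \ge 1$ this forces $K^\bu_{R,U} \ot_R^{\mathbf L} U = 0$ in the derived category, and therefore the composite above vanishes. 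Combining both computations, every $E_2^{p, q} = 0$; specializing to $(p, q) \in \{(0, 1), (1, 1)\}$ yields $\Hom_R(U, \Delta_u(M)) = 0 = \Ext^1_R(U, \Delta_u(M))$, as required.

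The second and third assertions are then formal consequences of the first: composition with $\delta_{u, M}$ yields a natural map $\Hom_R(\Delta_u(M), C) \rarrow \Hom_R(M, C)$ whose two-sided inverse sends $f\: M \rarrow C$ to $\delta_{u, C}^{-1} \circ \Delta_u(f)\: \Delta_u(M) \rarrow C$, exhibiting $\Delta_u\: R\modl \rarrow R\modl_{u\ctra}$ as left adjoint to the inclusion with unit $\delta_{u, -}$. The main technical step is the derived-tensor vanishing $K^\bu_{R,U} \ot_R^{\mathbf L} U = 0$, which reduces to $\Tor_i^R(U, U) = 0$ for $i \ge 1$; this is automatic when $u$ is a (left) flat ring epimorphism (the central setting of the paper) and underlies the classical Matlis-style arguments referenced in the paragraph preceding the lemma.
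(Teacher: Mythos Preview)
The paper's own proof is simply a citation to~\cite[Proposition~17.2(b)]{BP}, so there is no argument in the paper to compare against in detail. Your spectral-sequence approach via the derived adjunction $\mathbf{R}\Hom_R(U,\mathbf{R}\Hom_R(K^\bu_{R,U},M))\simeq\mathbf{R}\Hom_R(K^\bu_{R,U}\ot_R^{\mathbf L}U,\>M)$ is a clean and standard route to the first assertion; the bookkeeping (cohomological ranges, $E_2=E_\infty$) is correct. One notational point: when you write ``the composite $\Hom_{\sD(R\modl)}(U,\Hom_{\sD(R\modl)}(K^\bu_{R,U},M))$'' you mean the iterated derived Hom $\mathbf{R}\Hom_R(U,\mathbf{R}\Hom_R(K^\bu_{R,U},M))$, not the Hom groups in~$\sD$; this should be said explicitly, since otherwise the spectral sequence and the adjunction make no sense.

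The substantive point you already flag is worth emphasizing: your argument for $\Delta_u(M)\in R\modl_{u\ctra}$ rests on $K^\bu_{R,U}\ot_R^{\mathbf L}U=0$, i.e.\ on $\Tor_1^R(U,U)=0$. The lemma as stated assumes only that $u$ is a ring epimorphism with $\pd{}_RU\le1$, not that $u$ is homological. If $\Tor_1^R(U,U)\ne0$, your abutment is $\mathbf{R}\Hom_R(\Tor_1^R(U,U)[1],M)$, which is nonzero in general, and the spectral sequence no longer forces $E_2^{0,1}$ and $E_2^{1,1}$ to vanish. So what you have written proves the lemma under the extra hypothesis that $u$ is homological---which holds in every application made in this paper (where $u$ is left flat)---but does not establish the lemma in the generality in which it is stated. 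Whether the cited result in~\cite{BP} covers the non-homological case, or whether the present statement tacitly carries that hypothesis, is something you would need to check against the source.

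Finally, your ``formal consequence'' paragraph is essentially right but slightly elliptical: to see that $g\mapsto \delta_{u,C}^{-1}\circ\Delta_u(g\circ\delta_{u,M})$ returns~$g$, one uses that the two maps $\Delta_u(\delta_{u,M})$ and $\delta_{u,\Delta_u(M)}$ agree after precomposition with~$\delta_{u,M}$, and that their difference, factoring through $\coker\delta_{u,M}=\Ext^1_R(U,M)$, is a map from a left $U$\+module into a $u$\+contramodule and hence zero (any $U$\+module is a quotient of a free one, and $\Hom_R(U,C)=0$). This small step is worth making explicit.
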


\begin{proof}
 This is~\cite[Proposition~17.2(b)]{BP}.
\end{proof}

 The aim of this section is to establish a sufficient condition for
an isomorphism between the $u$\+contramodule left $R$\+module
$\Delta_u(M)$ and (the underlying $R$\+module of) the left
$\R$\+contramodule $\Lambda_\bG(M)=\varprojlim_{I\in\bG}M/IM$ constructed
in the first half of Section~\ref{strongly-flat-weakly-cotorsion-secn}.
 This will provide a generalization of the classical result of
Matlis~\cite[Theorem~6.10]{Mat} (see also~\cite[Theorem~2.5]{PMat})
to our setting.

 In fact, we will even obtain a sufficient condition for an isomorphism
$\Delta_u(M)\simeq\Lambda_\bG(M)$ applicable for any left flat ring
epimorphism $u\:R\rarrow U$ (\emph{without} the assumption on
the projective dimension of the left $R$\+module $U$).
 But we will need some technical assumptions on the right Gabriel
topology $\bG$ related to~$u$, which we inherit from
Section~\ref{perfect-of-type-lambda-secn}.

 The next proposition is a generalization of
Proposition~\ref{orthogonality-prop}(c).
 It is also a generalization of~\cite[Examples~2.4(2) and~2.5(2)]{Pper}.

\begin{prop} \label{uncountable-orthogonality}
 Let $u\:R\rarrow U$ be an epimorphism of associative rings such that
$U$ is a flat left $R$\+module of projective dimension
not exceeding\/~$1$, let\/ $\bG$ be the perfect Gabriel topology of
right ideals in $R$ associated with the left flat epimorphism~$u$, and
let\/ $\R$ be the completion of $R$ with respect to\/ $\bG$, viewed as
a topological ring in the projective limit topology\/~$\bfG$.
 Then the image of the forgetful functor\/ $\R\contra\rarrow R\modl$ is
contained in the full subcategory $R\modl_{u\ctra}\subset R\modl$.
\end{prop}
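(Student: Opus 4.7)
The plan is to reduce the claim to free left $\R$-contramodules, exploiting that $R\modl_{u\ctra}=U^{\perp_{0,1}}$ is closed under both kernels and cokernels in $R\modl$: kernel-closure is part of \cite[Proposition~1.1]{GL}, while cokernel-closure is what the hypothesis $\pd{}_RU\le1$ buys us. Since free left $\R$-contramodules are projective generators of $\R\contra$ and the forgetful functor $\R\contra\rarrow R\modl$ is exact, every $\C\in\R\contra$ fits into a right exact sequence $\R[[Y]]\rarrow\R[[X]]\rarrow\C\rarrow0$ in $R\modl$; closure under cokernels will then yield $\C\in U^{\perp_{0,1}}$ as soon as we know that $\R[[X]]$ and $\R[[Y]]$ lie in $U^{\perp_{0,1}}$.

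First I would observe that $\Hom_\boZ(\N,\boQ/\boZ)\in U^{\perp_{0,1}}$ for every discrete right $\R$-module $\N$, exactly as in the proof of Proposition~\ref{orthogonality-prop}(a): the adjunction identifies $\Hom_R(U,\Hom_\boZ(\N,\boQ/\boZ))$ with $\Hom_\boZ(\N\ot_RU,\boQ/\boZ)$, which vanishes because $U$ is $\bG$-divisible, and it identifies $\Ext^1_R(U,\Hom_\boZ(\N,\boQ/\boZ))$ with $\Hom_\boZ(\Tor^R_1(\N,U),\boQ/\boZ)$, which vanishes because $U$ is a flat left $R$-module. Neither assertion relies on countability.

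The main step is to show that each free left $\R$-contramodule $\R[[X]]$ belongs to $U^{\perp_{0,1}}$. Using the identity $\I\tim\R[[X]]=\I[[X]]$ recalled at the end of Section~\ref{prelim-contratensor}, the completion $\PL(\CT(\R[[X]]))$ identifies with $\varprojlim_{\I\in\bfG}(\R/\I)[X]$, which by the very definition of $\R[[{-}]]$ equals $\R[[X]]$; thus $\R[[X]]$ is both separated and complete. The second, more general proof of Proposition~\ref{separated-kernel-of-homs} therefore applies (it requires only separatedness and completeness of the input, not a countable base for the topology of~$\R$) and presents $\R[[X]]$ as the kernel of an $\R$-contramodule morphism $\Hom_\boZ(\N',\boQ/\boZ)\rarrow\Hom_\boZ(\N'',\boQ/\boZ)$. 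Closure of $U^{\perp_{0,1}}$ under kernels gives $\R[[X]]\in U^{\perp_{0,1}}$, and the reduction from the first paragraph then finishes the proof. The main obstacle, compared with the countable-base case of Proposition~\ref{orthogonality-prop}(c), is that a general separated left $\R$-contramodule need not be complete, which prevents applying the embedding technique of Proposition~\ref{separated-kernel-of-homs} directly to all separated contramodules; passing through a free presentation bypasses this difficulty, but it relies crucially on cokernel-closure of $U^{\perp_{0,1}}$, which is exactly what $\pd{}_RU\le1$ provides.
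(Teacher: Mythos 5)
Your proposal is correct and follows essentially the same route as the paper's own proof: reduce to free contramodules via a free presentation and cokernel-closure of $U^{\perp_{0,1}}$ (which uses $\pd{}_RU\le1$), note that $\R[[X]]$ is separated and complete and hence, by the second (countability-free) proof of Proposition~\ref{separated-kernel-of-homs}, is a kernel of a morphism between contramodules of the form $\Hom_\boZ(\N,\boQ/\boZ)$, and finish with the explicit $\bG$-divisibility/flatness computation from Proposition~\ref{orthogonality-prop}(a). The only cosmetic difference is that the paper identifies $\R[[X]]$ directly as $\PL(F^{(X)})$ rather than passing through $\PL(\CT(\R[[X]]))$, which amounts to the same thing.
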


\begin{proof}
 The full subcategory of $u$\+contramodule left $R$\+modules
$R\modl_{u\ctra}$ is closed under kernels, cokernels, extensions, and
infinite products in $R\modl$.
 The forgetful functor $\R\contra\rarrow R\modl$ preserves the kernels,
cokernels, and infinite products.
 Any left $\R$\+contramodule is the cokernel of a morphism of free
left $\R$\+contramodules; so it suffices to show that the underlying
left $R$\+module of the free left $\R$\+contramodule $\R[[X]]$ belongs
to $R\modl_{u\ctra}$ for every set~$X$.

 For any complete, separated topological ring $\R$ in a right linear
topology $\bfF$, the free left $\R$\+contramodules $\R[[X]]$ are
separated and complete, since
$$
 \R[[X]]=\varprojlim\nolimits_{\I\in\bfF}(\R/\I)[X]=
 \varprojlim\nolimits_{\I\in\bfF}\R[[X]]/\I[[X]]=
 \varprojlim\nolimits_{\I\in\bfF}\R[[X]]/(\I\tim\R[[X]]).
$$
 One can also observe that $\R[[X]]=\PL(F^{(X)})$, where $F^{(X)}\:
\R/\I\longmapsto(\R/\I)^{(X)}$ is the coproduct of $X$ copies of
the identity/forgetful covariant $\bfF$\+system of abelian groups
$F\:\R/\I\longmapsto\R/\I$ assigning to a cyclic discrete right
$\R$\+module $\R/\I$ the abelian group $\R/\I$ for every $\I\in\bfF$.
 Following the second proof of
Proposition~\ref{separated-kernel-of-homs}, for every covariant
$\bfF$\+system $D\:\sQ_\bfF\rarrow\Ab$ the left $\R$\+contramodule
$\PL(D)$ is the kernel of a morphism between two left
$\R$\+contramodules of the form $\Hom_\boZ(\N',\boQ/\boZ)$ and
$\Hom_\boZ(\N'',\boQ/\boZ)$, where $\N'$ and $\N''$ are discrete right
$\R$\+modules.

 Thus, returning to the situation at hand, it remains to show that
$\Hom_\boZ(\N,\boQ/\boZ)\in R\modl_{u\ctra}$ for every discrete right
$\R$\+module~$\N$.
 The latter assertion is provable by the explicit argument from
the first half of the proof of Proposition~\ref{orthogonality-prop}(a).
\end{proof}

 Let $u\:R\rarrow U$ be a ring epimorphism such $U$ is a flat left
$R$\+module of projective dimension at most~$1$, and let $\bG$ be
the related perfect right Gabriel topology on~$R$.
 Let $M$ be a left $R$\+module.
 The left $R$\+module morphism $\lambda_{\bG,M}\:M\rarrow\Lambda_\bG(M)$
has a left $\R$\+contramodule as its target; by
Proposition~\ref{uncountable-orthogonality}, it follows that
$\Lambda_\bG(M)$ is a $u$\+contramodule left $R$\+module.
 By Lemma~\ref{reflector-delta}, the left $R$\+module map
$\delta_{u,M}\:M\rarrow\Delta_u(M)$ is the universal morphism from $M$
to a $u$\+contramodule left $R$\+module.
 Thus there exists a unique left $R$\+module morphism
$$
 \beta_{u,M}\:\Delta_u(M)\lrarrow\Lambda_\bG(M)
$$
making the triangle diagram of $R$\+module morphisms
$M\rarrow\Delta_u(M)\rarrow\Lambda_\bG(M)$ commutative.

\begin{rem} \label{free-module-beta}
 When $M=R[X]$ is a \emph{free} left $R$\+module, our arguments allow
to deduce the existence and uniqueness of a left $R$\+module
morphism
$$
 \beta_{u,X}=\beta_{u,R[X]}\:\Delta_u(R[X])\rarrow\Lambda_\bG(R[X])
$$
forming commutative triangle diagram with the maps~$\delta_{u,R[X]}$
and $\lambda_{\bG,R[X]}$ for \emph{any} left flat ring epimorphism
$u\:R\rarrow U$ (\emph{without} the assumption that $\pd{}_RU\le1$).

 Indeed, even more generally, let $\bF$ be a right linear topology on
an associative ring~$R$, and let $\R$ be the completion of $R$ with
respect to $\bF$, viewed as a topological ring in its projective limit
topology~$\bfF$.
 Then, for any set $X$, the $\bF$\+completion of the free left
$R$\+module $R[X]$ is the free left $\R$\+contramodule $\R[[X]]$,
$$
 \R[[X]]=\varprojlim\nolimits_{\I\in\bfF}(\R/\I)[X]=
 \varprojlim\nolimits_{I\in\bF}R[X]/IR[X]=\Lambda_\bF(R[X]).
$$
 In fact, one has $\R[[X]]=\PL(F^{(X)})$, where $F^{(X)}=\tp(R[X])$
is the coproduct of $X$ copies of the identity/forgetful covariant
$\bF$\+system of abelian groups $F\:R/I\longmapsto R/I$
(as in the proof of Proposition~\ref{uncountable-orthogonality}).
 The completion map~$\lambda_{\bF,R[X]}$ can be described as
the unique left $R$\+module morphism $R[X]\rarrow\R[[X]]$ whose
restriction to the subset of generators $X\subset R[X]$ is
the identity inclusion $X\rarrow\R[[X]]$.

 Following the proof of Proposition~\ref{uncountable-orthogonality},
even without the assumption on the projective dimension of the left
$R$\+module $U$ one has $\R[[X]]\in U^{\perp_{0,1}}\subset R\modl$ for
any set~$X$.
 This makes the existence and uniqueness of the map~$\beta_{u,X}$
provable by the classical Matlis' argument~\cite[Proposition~2.4]{Mat}
(see~\cite[Lemma~2.1(b)]{PMat} for a recent exposition), which can be
easily adopted to the situation at hand.
 (The very same argument is also used in the proof of
Lemma~\ref{reflector-delta}.)
\end{rem}

 Using an idea of Facchini and Nazemian~\cite[Sections~3\+-4]{FN}, we
will now construct a map in the opposite direction, under certain
assumption.
 Namely, let $u\:R\rarrow U$ be a left flat ring epimorphism and
$\bG$ be the related perfect Gabriel topology of right ideals in~$R$.
 Let $M$ be a left $R$\+module.
 Assume that the map $M\rarrow U\ot_RM$ induced by the ring homomorphism
$u\:R\rarrow U$ is injective.
 In the terminology of~\cite[Section~16]{BP}, this means that
the left $R$\+module $M$ is \emph{$u$\+torsion-free}.
 (Let the reader be warned that the class of all $u$\+torsion-free
left $R$\+modules does \emph{not} need to be a torsion-free class in
$R\modl$ in our assumptions; see the discussion in \emph{loc.\ cit}.)

 Then we have a short exact sequence of left $R$\+modules
$0\rarrow M\rarrow U\ot_RM\rarrow U/R\ot_RM\rarrow0$, where
$U/R$ is a shorthand notation for the $R$\+$R$\+bimodule $\coker u$.
 Applying the functor $\Ext^*_R(K^\bu_{R,U},{-})$, we obtain the induced
morphism (the connecting homomorphism in the long exact sequence)
of left $R$\+modules
$$
 \Hom_R(U/R,\>U/R\ot_RM)=\Ext^0_R(K^\bu_{R,U},\>U/R\ot_RM)
 \rarrow\Ext^1_R(K^\bu_{R,U},M)=\Delta_u(M).
$$
 The same morphism can be also constructed as the composition
\begin{equation} \label{hom-to-delta}
 \Hom_R(U/R,\>U/R\ot_RM)\rarrow\Ext^1_R(U/R,M)\rarrow
 \Ext^1_R(K^\bu_{R,U},M)=\Delta_u(M),
\end{equation}
where the left $R$\+module morphism $\Hom_R(U/R,\>U/R\ot_RM)
\rarrow\Ext^1_R(U/R,M)$ is the connecting homomorphism in the long
exact sequence obtained by applying the functor $\Ext^*_R(U/R,{-})$
to the same short exact sequence of left $R$\+modules, and
the morphism $\Ext^1_R(U/R,M)\rarrow\Ext^1_R(K^\bu_{R,U},M)$ is induced
by the natural morphism of complexes of $R$\+$R$\+bimodules
$K^\bu_{R,U}\rarrow U/R$.

 For any associative ring $A$, any $A$\+$R$\+bimodule $\N$ with
a $\bG$\+torsion underlying right $R$\+module, any left
$A$\+module $V$, and any right ideal $I\in\bG$, the subgroup
$I\Hom_A(\N,V)\subset\Hom_A(\N,V)$ is contained in the kernel
of the restriction map $\Hom_A(\N,V)\rarrow\Hom_A(\N_I,V)$, where
$\N_I\subset\N$ denotes the left $A$\+submodule of all elements
annihilated by the right action of $I$ in~$\N$.
 Hence we have a natural morphism of abelian groups
\begin{multline} \label{erasing-lambda}
 \Lambda_\bG(\Hom_A(\N,V))\,=\,
 \varprojlim\nolimits_{I\in\bG}\Hom_A(\N,V)/I\Hom_A(\N,V) \\ \lrarrow
 \varprojlim\nolimits_{I\in\bG}\Hom_A(\N_I,V)\,=\,\Hom_A(\N,V).
\end{multline}
 In fact, this is a left $\R$\+contramodule morphism, which can be
obtained by applying the functor $\PL$ to a natural morphism from
the covariant $\bG$\+system of abelian groups $\tp(\Hom_A(\N,V))$
to the covariant $\bG$\+system of abelian groups
$R/I\longmapsto\Hom_A(\Dh(\N)(R/I),V)$.
 The composition $\Hom_A(\N,V)\rarrow\Lambda_\bG(\Hom_A(\N,V))
\rarrow\Hom_A(\N,V)$ of the map $\lambda_{\bG,\Hom_A(\N,V)}$ with
our map~\eqref{erasing-lambda} is the identity map (so the left
$R$\+module $\Hom_A(\N,V)$ is a direct summand of the left $R$\+module
$\Lambda_\bG(\Hom_A(\N,V))$).

 Now we set $A=R$, \ $\N=U/R$, and $V=U/R\ot_RM$; and consider
the composition
\begin{multline} \label{theta-construction}
 \Lambda_\bG(M)\lrarrow\Lambda_\bG(\Hom_R(U/R,\>U/R\ot_RM)) \\
 \lrarrow\Hom_R(U/R,\>U/R\ot_RM)\lrarrow\Delta_u(M)
\end{multline}
of the $R$\+module morphism obtained by applying $\Lambda_\bG$ to
the natural morphism $M\rarrow\Hom_R(U/R,\>U/R\ot_RM)$,
the $R$\+module morphism~\eqref{erasing-lambda}, and
the $R$\+module morphism~\eqref{hom-to-delta}.
 We will denote the left $R$\+module morphism~\eqref{theta-construction}
so constructed by
$$
 \theta_{\bG,M}\:\Lambda_\bG(M)\lrarrow\Delta_u(M).
$$
 The morphism~$\theta_{\bG,M}$ is well-defined for any $u$\+torsion-free
left $R$\+module~$M$.

\begin{lem} \label{theta-commutative-triangle}
 Let $u\:R\rarrow U$ be an epimorphism of associative rings such that
$U$ is a flat left $R$\+module, and let\/ $\bG$ be the related perfect
Gabriel topology of right ideals in~$R$.
 Then for any $u$\+torsion-free left $R$\+module $M$ the triangle
diagram formed by the left $R$\+module morphisms $\lambda_{\bG,M}\:
M\rarrow\Lambda_\bG(M)$, \ $\delta_{u,M}\:M\rarrow\Delta_u(M)$,
and $\theta_{\bG,M}\:\Lambda_\bG(M)\rarrow\Delta_u(M)$ is commutative.
\end{lem}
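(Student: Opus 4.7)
The plan is to prove $\theta_{\bG,M} \circ \lambda_{\bG,M} = \delta_{u,M}$ by first simplifying the left-hand side, then producing a suitable morphism of distinguished triangles, and concluding via the naturality of connecting morphisms.

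First, let $\eta_M \colon M \to \Hom_R(U/R,\,U/R \ot_R M)$ denote the natural left $R$\+module map $m \mapsto \phi_m$, where $\phi_m(\bar u) = \bar u \ot m$. Naturality of $\lambda_{\bG,-}$ applied to $\eta_M$, combined with the fact (explicitly recorded just after \eqref{erasing-lambda}) that the composition $\Hom_R(U/R,\,U/R \ot_R M) \to \Lambda_\bG(\Hom_R(U/R,\,U/R \ot_R M)) \to \Hom_R(U/R,\,U/R \ot_R M)$ is the identity, allows a short diagram chase to reduce $\theta_{\bG,M} \circ \lambda_{\bG,M}$ to the composition of $\eta_M$ with the map~\eqref{hom-to-delta}. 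It therefore suffices to check that this composition equals $\delta_{u,M}$.

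Second, for fixed $m \in M$ let $\mu_m \colon R \to M$ be the $R$\+linear map $1 \mapsto m$. Since $M$ is $u$\+torsion-free, the sequence $0 \to M \to U \ot_R M \to U/R \ot_R M \to 0$ is short exact and gives a distinguished triangle in $\sD(R\modl)$. The pair $(\mu_m,\, 1 \ot \mu_m)$ forms a chain map between two-term complexes $(R \to U) \to (M \to U \ot_R M)$; by functoriality of the mapping cone construction, this extends to a morphism of distinguished triangles from~\eqref{main-triangle} to the lower triangle, and composition with the canonical quasi-isomorphism $\mathrm{cone}(M \to U \ot_R M) \xrightarrow{\sim} U/R \ot_R M$ (available precisely because $M \to U \ot_R M$ is injective) produces the third vertical map $\psi_m \colon K^\bu_{R,U} \to U/R \ot_R M$, given at the chain level by $0$ in degree~$-1$ and $u \mapsto \bar u \ot m$ in degree~$0$. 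Under the canonical identification $\Hom_{\sD(R\modl)}(K^\bu_{R,U},N) = \Hom_R(U/R,N)$ valid for any $N \in R\modl$ (which holds because $H^0(K^\bu_{R,U}) = U/R$ and $K^\bu_{R,U}$ is concentrated in nonpositive degrees), this $\psi_m$ corresponds precisely to $\phi_m$.

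Third, the commutativity of the rightmost square in the resulting morphism of distinguished triangles gives the identity $\mu_m[1] \circ \partial^{\mathrm{top}} = \partial^{\mathrm{bot}} \circ \psi_m$ in $\sD(R\modl)$, where $\partial^{\mathrm{top}} \colon K^\bu_{R,U} \to R[1]$ and $\partial^{\mathrm{bot}} \colon U/R \ot_R M \to M[1]$ are the connecting morphisms of the two triangles. By the definition of $\delta_{u,M}$ as the connecting map in the long exact sequence of $\Hom_R({-},M)$ applied to~\eqref{main-triangle}, the left-hand side is $\delta_{u,M}(m)$. By the alternative description of~\eqref{hom-to-delta} as the connecting map for the functor $\Ext^*_R(K^\bu_{R,U},{-})$ applied to the short exact sequence of the second paragraph, the right-hand side is the image under~\eqref{hom-to-delta} of $\psi_m \leftrightarrow \phi_m$. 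Combined with the reduction in the first paragraph, this yields the desired equality. The most delicate point is the second paragraph: one must exhibit $\psi_m$ explicitly at the chain level and confirm that the resulting diagram really is a morphism of distinguished triangles (with the particular representatives one is using for $\partial^{\mathrm{top}}$ and $\partial^{\mathrm{bot}}$), since in a general triangulated category the third vertical map is not uniquely determined by the other two.
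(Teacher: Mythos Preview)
Your proof is correct and follows essentially the same approach as the paper: both reduce via naturality of~$\lambda_\bG$ to checking that the composition of~$\eta_M$ with the map~\eqref{hom-to-delta} equals~$\delta_{u,M}$, and then verify this compatibility of connecting maps by an explicit construction. The paper carries out the final step in the more elementary language of Yoneda extension classes (factoring through $\Ext^1_R(U/R,M)$ and the short exact sequence $0\to\overline R\to U\to U/R\to0$, then comparing the two pushout/pullback extensions directly), whereas you phrase it as a morphism of distinguished triangles in $\sD(R\modl)$; these are equivalent packagings of the same computation.
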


\begin{proof}
 Applying the functor $\Lambda_\bG$ and the natural
transformation~$\lambda_\bG$ to the natural morphism $M\rarrow
\Hom_R(U/R,\>U/R\ot_RM)$ produces a commutative square diagram.
 The composition $\Hom_R(U/R,\>U/R\ot_RM)\rarrow
\Lambda_\bG(\Hom_R(U/R,\>U/R\ot_RM))\rarrow\Hom_R(U/R,\>U/R\ot_RM)$
is the identity map, as we mentioned above.

 It remains to check commutativity of the triangle diagram
$M\rarrow\Hom_R(U/R,\allowbreak\>U/R\ot_RM)\rarrow\Delta_u(M)$ formed
by the natural map $M\rarrow\Hom_R(U/R,\>U/R\ot_RM)$, the map
$\Hom_R(U/R,\>U/R\ot_RM)\rarrow\Delta_u(M)$\,~\eqref{hom-to-delta},
and the map $\delta_{u,M}\:M\rarrow\Delta_u(M)$.
 For this purpose, it suffices to show that the triangle diagram
$M\rarrow\Hom_R(U/R,\>U/R\ot_RM)\rarrow\Ext^1_R(U/R,M)$ is
commutative, as the morphism~$\delta_{u,M}$ is the composition
$M\rarrow\Ext^1_R(U/R,M)\rarrow\Ext^1_R(K^\bu_{R,U},M)$.

 Here the map $\Hom_R(U/R,\>U/R\ot_RM)\rarrow\Ext^1_R(U/R,M)$ is
obtained by applying the functor $\Ext^*_R(U/R,{-})$ to the short exact
sequence of left $R$\+modules $0\rarrow M\rarrow U\ot_RM\rarrow
U/R\ot_RM\rarrow0$.
 The map $M\rarrow\Ext^1_R(U/R,M)$ is obtained by applying the functor
$\Ext^*_R({-},M)$ to the short exact sequence of left $R$\+modules
$0\rarrow \overline R\rarrow U\rarrow U/R\rarrow 0$, where
$\overline R$ is the image of the map~$u$.
 Any left $R$\+module morphism $R\rarrow M$ factorizes through
the surjection $R\rarrow\overline R$, since $M\subset U\ot_RM$;
so one has $M=\Hom_R(R,M)=\Hom_R(\overline R,M)$.

 Given an element $m\in M$, one can explicitly check that the two
related Yoneda extension classes in $\Ext^1_R(U/R,M)$ indeed coincide,
by constructing an isomorphism between the two related short exact
sequences of left $R$\+modules.
\end{proof}

\begin{rem} \label{ext-1-from-K-remark}
 For any associative ring homomorphism $u\:R\rarrow U$ and any left
$R$\+module $M$ there is a natural short exact sequence of
left $R$\+modules
\begin{multline*}
 0\lrarrow\Ext^1_R(U/R,M)\lrarrow\Ext^1_R(K^\bu_{R,U},M)\\ \lrarrow
 \Hom_R(H,M)\lrarrow\Ext^2_R(U/R,M)\lrarrow\dotsb,
\end{multline*}
where $H=\ker u\subset R$ (cf.\ the proof of
Lemma~\ref{ext-from-two-term-complex-lemma}).
 In particular, the map $\Ext^1_R(U/R,M)\rarrow\Ext^1_R(K^\bu_{R,U},M)
=\Delta_u(M)$ is always injective.
 Hence it follows from the construction of the map~$\theta_{\bG,M}$
and from the next lemma that, in the assumptions of the latter,
the natural map $\Ext^1_R(U/R,M)\rarrow\Delta_u(M)$ is an isomorphism.
\end{rem}

\begin{lem} \label{xi-is-identity}
 Let $u\:R\rarrow U$ be an epimorphism of associative rings such that
$U$ is a flat left $R$\+module of projective dimension
not exceeding~$1$, and let\/ $\bG$ be the related perfect Gabriel
topology of right ideals in~$R$.
 Let $M$ be a $u$\+torsion-free left $R$\+module.
 Then the composition $\xi=\theta\beta\:\Delta_u(M)\rarrow\Delta_u(M)$
of the left $R$\+module morphisms $\beta_{u,M}\:\Delta_u(M)\rarrow
\Lambda_\bG(M)$ and $\theta_{\bG,M}\:\Lambda_\bG(M)\rarrow\Delta_u(M)$
is the identity map, $\xi=\id$.
\end{lem}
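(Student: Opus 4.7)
The plan is to reduce the claim $\xi=\id$ to the universal property of the adjunction from Lemma~\ref{reflector-delta}. By that lemma, $\Delta_u\:R\modl\rarrow R\modl_{u\ctra}$ is left adjoint to the embedding $R\modl_{u\ctra}\rarrow R\modl$, with adjunction unit given by the natural morphism $\delta_{u,M}\:M\rarrow\Delta_u(M)$. In particular, since $\Delta_u(M)$ belongs to $R\modl_{u\ctra}$, every left $R$\+module morphism $M\rarrow\Delta_u(M)$ factors uniquely through $\delta_{u,M}$: for any $\phi\:M\rarrow\Delta_u(M)$ there is a unique $R$\+module endomorphism $\widetilde\phi\:\Delta_u(M)\rarrow\Delta_u(M)$ with $\widetilde\phi\circ\delta_{u,M}=\phi$.

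Applying this to $\phi=\delta_{u,M}$, we see that the identity $\id_{\Delta_u(M)}$ is characterized by the equation $\id_{\Delta_u(M)}\circ\,\delta_{u,M}=\delta_{u,M}$. Hence to prove $\xi=\id$ it suffices to verify that $\xi\circ\delta_{u,M}=\delta_{u,M}$.

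To verify this, I would combine two triangle identities already established. By the very construction of $\beta_{u,M}$ (the unique morphism making the triangle $M\rarrow\Delta_u(M)\rarrow\Lambda_\bG(M)$ commute), one has
$$
 \beta_{u,M}\circ\delta_{u,M}=\lambda_{\bG,M}.
$$
On the other hand, Lemma~\ref{theta-commutative-triangle} states exactly that
$$
 \theta_{\bG,M}\circ\lambda_{\bG,M}=\delta_{u,M}.
$$
Composing these, we obtain
$$
 \xi\circ\delta_{u,M}
 =\theta_{\bG,M}\circ\beta_{u,M}\circ\delta_{u,M}
 =\theta_{\bG,M}\circ\lambda_{\bG,M}
 =\delta_{u,M},
$$
which by the uniqueness in the universal property of $\delta_{u,M}$ forces $\xi=\id_{\Delta_u(M)}$.

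There is essentially no obstacle here once Lemma~\ref{theta-commutative-triangle} and the construction of $\beta_{u,M}$ are at hand; the lemma is a purely formal consequence of the adjunction in Lemma~\ref{reflector-delta} together with the two commutative triangles just recalled. The substantive content, which required the hypothesis $\pd{}_RU\le1$ and $u$\+torsion-freeness of~$M$, has already been absorbed into the existence of $\beta_{u,M}$ (via Proposition~\ref{uncountable-orthogonality} and Lemma~\ref{reflector-delta}) and into the commutativity of the triangle in Lemma~\ref{theta-commutative-triangle}.
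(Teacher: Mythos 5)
Your proof is correct and follows essentially the same route as the paper: combine the two commutative triangles $\beta\delta=\lambda$ (definition of $\beta_{u,M}$) and $\theta\lambda=\delta$ (Lemma~\ref{theta-commutative-triangle}) to get $\xi\delta=\delta$, then invoke the uniqueness part of the adjunction in Lemma~\ref{reflector-delta} to conclude $\xi=\id$.
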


\begin{proof}
 By the definition, the left $R$\+module morphism~$\beta_{u,M}$
forms a commutative triangle diagram with the morphisms~$\delta_{u,M}$
and~$\lambda_{\bG,M}$.
 By Lemma~\ref{theta-commutative-triangle}, the left $R$\+module
morphism~$\theta_{\bG,M}$ forms a commutative triangle diagram with
the morphisms~$\delta_{u,M}$ and~$\lambda_{\bG,M}$ as well.
 Hence it follows that the composition $\xi=\theta\beta$ is
a left $R$\+module morphism forming a commutative triangle diagram
with the morphism $\delta_{u,M}\:M\rarrow\Delta_u(M)$,
that is $\xi\delta=\delta$.
 In view of Lemma~\ref{reflector-delta}, one can conclude that
$\xi=\id$ is the identity morphism.
\end{proof}

 Similarly, it also follows from Lemma~\ref{theta-commutative-triangle}
that the composition $\Lambda_\bG(M)\rarrow\Delta_u(M)\rarrow
\Lambda_\bG(M)$ is a left $R$\+module morphism $\zeta=\beta\theta\:
\Lambda_\bG(M)\rarrow\Lambda_\bG(M)$ forming a commutative triangle
diagram with the morphism $\lambda_{\bG,M}\:M\rarrow\Lambda_\bG(M)$,
that is $\zeta\lambda=\lambda$.
 The next lemma allows to prove that $\zeta$~is the identity morphism,
too, under certain assumptions.

\begin{lem} \label{zeta-is-identity}
 Let $R$ be an associative ring with a right linear topology\/ $\bF$
having a countable base, and let\/ $\R$ be the completion of $R$
with respect to\/ $\bF$, viewed as a topological ring in its projective
limit topology\/~$\bfF$.
 Let $M$ be a left $R$\+module, let $\lambda_{\bF,M}\:R\rarrow
\Lambda_\bF(M)$ be the natural left $R$\+module morphism from $M$ into
its\/ $\bF$\+completion $\Lambda_\bF(M)$, and let $\zeta\:
\Lambda_\bF(M)\rarrow\Lambda_\bF(M)$ be a left\/ $\R$\+contramodule
morphism such that $\zeta\lambda_{\bF,M}=\lambda_{\bF,M}$.
 Then $\zeta$~is the identity map, $\zeta=\id$.
\end{lem}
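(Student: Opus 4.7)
The strategy is to exploit the separatedness of the $\bF$\+completion together with the surjectivity of the induced map $M\rarrow\Lambda_\bF(M)/(\I\tim\Lambda_\bF(M))$ to conclude that the difference $\zeta-\id$ lands in the intersection of all open subgroups of the topology base, which is zero. Set $\C=\Lambda_\bF(M)=\varprojlim_{I\in\bF}M/IM$; by Lemma~\ref{completions-unconfused-lem}(a), the countable base assumption on~$\bF$ guarantees that $\C$ is a separated (and complete) left $\R$\+contramodule, and moreover that for every open right ideal $I\in\bF$ with corresponding open right ideal $\I\in\bfF$ the projection map $\C\rarrow M/IM$ induces an isomorphism $\C/(\I\tim\C)\simeq M/IM$ such that the composition $M\overset{\lambda_{\bF,M}}\lrarrow\C\lrarrow\C/(\I\tim\C)\simeq M/IM$ coincides with the canonical surjection $M\rarrow M/IM$.

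Now consider the difference $\eta=\zeta-\id_\C\:\C\rarrow\C$, which is a morphism of left $\R$\+contramodules by hypothesis. The equation $\zeta\lambda_{\bF,M}=\lambda_{\bF,M}$ translates into $\eta\circ\lambda_{\bF,M}=0$. Fixing any open right ideal $\I\in\bfF$ and passing to the quotient, the morphism~$\eta$ induces a map $\bar\eta\:\C/(\I\tim\C)\rarrow\C/(\I\tim\C)$ (using that left $\R$\+contramodule morphisms respect the subgroups $\I\tim(-)$). By the description above, the composition $M\rarrow\C\rarrow\C/(\I\tim\C)$ is the surjection $M\twoheadrightarrow M/IM$, and $\bar\eta$ vanishes on its image. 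Surjectivity then forces $\bar\eta=0$, i.e.\ $\eta(\C)\subset\I\tim\C$.

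Since this holds for every $\I\in\bfF$, we conclude $\eta(\C)\subset\bigcap_{\I\in\bfF}\I\tim\C=0$, where the last equality uses that $\C$ is a separated left $\R$\+contramodule. Hence $\eta=0$ and $\zeta=\id_\C$, as required. There is no genuine obstacle here: the whole argument reduces to the separatedness of $\Lambda_\bF(M)$ and the compatibility of the quotients $\C/(\I\tim\C)$ with the original quotients $M/IM$, both of which are supplied by Lemma~\ref{completions-unconfused-lem}(a) precisely under the countable base hypothesis. In fact, the proof closely parallels the standard uniqueness of extensions by continuity to a completion, but phrased contramodule-theoretically so as to use $\I\tim(-)$ rather than merely $I(-)$.
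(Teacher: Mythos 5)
Your proof is correct and follows essentially the same route as the paper's: both arguments rest on the surjectivity of $M\rarrow\C/(\I\tim\C)\simeq M/IM$ supplied by Lemma~\ref{completions-unconfused-lem}(a), the fact that an $\R$\+contramodule morphism preserves the subgroups $\I\tim\C$, and the separatedness of $\Lambda_\bF(M)$. The only difference is presentational — you work with the difference morphism $\eta=\zeta-\id$ and the induced maps on quotients, whereas the paper computes $m-\zeta(m)\in\I\tim\M$ elementwise — but the underlying argument is identical.
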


\begin{proof}
 By the definition, we have $\Lambda_\bF(M)=\PL(\tp(M))$.
 Set $\M=\Lambda_\bF(M)$; then, by
Lemma~\ref{completions-unconfused-lem}(a), we have
$M/IM\simeq\M/\I\tim\M$ for any open right ideal $I\in\bF$ in $R$
and the corresponding open right ideal $\I\in\bfF$ in~$\R$.
 So the map $M\rarrow\M/\I\tim\M$ is surjective.
 Hence for any element $m\in\Lambda_\bF(M)$ there exists an element
$m'\in M$ such that $m\in\lambda(m')+\I\tim\M\subset\M$.
 Applying the map~$\zeta$, we conclude that
$$
 \zeta(m)\,\in\,\zeta\lambda(m')+\zeta(\I\tim\M)\,=\,
 \lambda(m')+\zeta(\I\tim\M)\,\subset\,
 \lambda(m')+\I\tim\M,
$$
as $\zeta$~is a left $\R$\+contramodule morphism by assumption.
 Thus the difference $m-\zeta(m)$ belongs to $\I\tim\M\subset\M$
for every $\I\in\bfF$.
 Since $\M$ is a separated $\R$\+contramodule (again by
Lemma~\ref{completions-unconfused-lem}(a)), it follows that
$m-\zeta(m)=0$.
\end{proof}

\begin{cor} \label{countable-Delta=Lambda}
 Let $u\:R\rarrow U$ be an associative ring epimorphism such that $U$
is a flat left $R$\+module, and let\/ $\bG$ be the related perfect
right Gabriel topology on~$R$.
 Assume that\/ $\bG$ has a countable base.
 Let $M$ be a $u$\+torsion-free left $R$\+module.
 Then\/ $\beta_{u,M}\:\Delta_u(M)\rarrow\Lambda_\bG(M)$ and\/
$\theta_{\bG,M}\:\Lambda_\bG(M)\rarrow\Delta_u(M)$ are mutually inverse
isomorphisms of left $R$\+modules,
$$
 \beta_{u,M}\:\Delta_u(M)\simeq\Lambda_\bG(M):\!\theta_{\bG,M}.
$$
\end{cor}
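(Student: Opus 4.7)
The plan is to combine Lemma~\ref{xi-is-identity} (which gives $\theta\beta=\id$) with Lemma~\ref{zeta-is-identity} (which, applied to $\zeta=\beta\theta$, will give $\beta\theta=\id$), using the countable-base hypothesis on $\bG$ to arrange both the projective-dimension bound and the contramodule structure needed to feed into the two lemmas.

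First I would record the two general facts that the hypothesis ``$\bG$ is a perfect Gabriel topology with a countable base'' provides. On the one hand, by~\cite[Proposition~XI.3.4]{St} any perfect Gabriel topology has a base of finitely generated right ideals, so $\bG$ in fact has a countable base of finitely generated right ideals. On the other hand, by Theorem~\ref{projdim-1-theorem} we have $\pd{}_RU\le1$. Thus the hypotheses of Lemma~\ref{xi-is-identity} are satisfied, and the composition $\xi=\theta_{\bG,M}\circ\beta_{u,M}\:\Delta_u(M)\rarrow\Delta_u(M)$ equals the identity.

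For the other direction, set $\zeta=\beta_{u,M}\circ\theta_{\bG,M}\:\Lambda_\bG(M)\rarrow\Lambda_\bG(M)$. By Lemma~\ref{theta-commutative-triangle} (applied on the right) and the defining property of $\beta_{u,M}$ (applied on the left), both maps $\theta_{\bG,M}$ and $\beta_{u,M}$ form commutative triangles with $\delta_{u,M}$ and $\lambda_{\bG,M}$; composing these triangles yields $\zeta\circ\lambda_{\bG,M}=\lambda_{\bG,M}$. So $\zeta$ is an $R$\+module endomorphism of $\Lambda_\bG(M)$ fixing the image of $M$ under the completion map. To invoke Lemma~\ref{zeta-is-identity} I need $\zeta$ to be an $\R$\+contramodule morphism; and this is the only place where the argument requires an additional input. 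Here the main step is to observe that $\Lambda_\bG(M)=\PL(\tp(M))$ is genuinely a left $\R$\+contramodule, that $\bG$ has a countable base of finitely generated right ideals, and that Corollary~\ref{countable-gabriel-fully-faithful-cor} therefore makes the forgetful functor $\R\contra\rarrow R\modl$ fully faithful. Consequently any $R$\+linear map between two left $\R$\+contramodules is automatically $\R$\+contralinear, and in particular $\zeta\:\Lambda_\bG(M)\rarrow\Lambda_\bG(M)$ is a left $\R$\+contramodule morphism. Lemma~\ref{zeta-is-identity} then gives $\zeta=\id$.

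Combining $\theta_{\bG,M}\circ\beta_{u,M}=\id_{\Delta_u(M)}$ and $\beta_{u,M}\circ\theta_{\bG,M}=\id_{\Lambda_\bG(M)}$ shows that $\beta_{u,M}$ and $\theta_{\bG,M}$ are mutually inverse isomorphisms of left $R$\+modules, as claimed. The only non-routine point in the whole argument is the ``contramodule-morphism'' upgrade of $\zeta$; once full-and-faithfulness of the forgetful functor is in hand (which is exactly what the countable-base assumption buys us via Corollary~\ref{countable-gabriel-fully-faithful-cor}), the rest is an assembly of already-established lemmas.
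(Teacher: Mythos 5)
Your proposal is correct and follows essentially the same route as the paper's own proof: $\theta\beta=\id$ from Lemma~\ref{xi-is-identity}, then $\zeta=\beta\theta$ fixes $\lambda_{\bG,M}$, is upgraded to an $\R$\+contramodule morphism via Corollary~\ref{countable-gabriel-fully-faithful-cor} (using that the perfect topology $\bG$ has a countable base of finitely generated right ideals), and Lemma~\ref{zeta-is-identity} finishes. Your explicit remark that perfectness supplies the finitely generated base needed for Corollary~\ref{countable-gabriel-fully-faithful-cor} is a point the paper leaves implicit, but it is the same argument.
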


\begin{proof}
 By Theorem~\ref{projdim-1-theorem}, we have $\pd{}_RU\le1$, so
our construction of the map~$\beta_{u,M}$ is applicable.
 By Lemma~\ref{xi-is-identity}, \,$\xi=\theta\beta$ is the identity map.
 Concerning $\zeta=\beta\theta$, it follows from
Lemma~\ref{theta-commutative-triangle} and from the definition
of~$\beta_{u,M}$ that $\zeta\lambda=\lambda$,
as we have already mentioned.
 By construction, $\zeta$~is a left $R$\+module morphism.
 In our assumptions on $\bG$, it follows by virtue of
Corollary~\ref{countable-gabriel-fully-faithful-cor} that
$\zeta$~is a left $\R$\+contramodule morphism, too.
 Applying Lemma~\ref{zeta-is-identity}, we see that
$\zeta=\id$.
\end{proof}

 In the rest of this section, we use the results of
Section~\ref{perfect-of-type-lambda-secn} in order to extend
the result of  Corollary~\ref{countable-Delta=Lambda} to the case
of uncountable type.

 Assume that $u'\:R\rarrow U'$ and $u''\:R\rarrow U''$ are two
epimorphisms of associative rings such that both $U'$ and $U''$ are
flat left $R$\+modules of projective dimension at most~$1$.
 Let $\bG'$ and $\bG''$ be the related right Gabriel topologies on~$R$.
 Suppose that the morphism~$u''$ factorizes through~$u'$, i.~e.,
there is a ring homomorphism $U'\rarrow U''$ making the triangle
diagram $R\rarrow U'\rarrow U''$ commutative.
 Then one has $\bG'\subset\bG''$.

 Let $\R'$ and $\R''$ be the completions of the ring $R$ with respect
to its right linear (Gabriel) topologies $\bG'$ and~$\bG''$.
 As usual, we view the rings $\R'$ and $\R''$ as complete, separated
topological rings in their respective projective limit topologies
$\bfG'$ and~$\bfG''$.
 Then there is a natural continuous homomorphism of topological rings
$\R''\rarrow\R'$; so any left $\R'$\+contramodule can be also
considered as a left $\R''$\+contramodule.
 For any left $R$\+module $M$, there is a natural morphism of left
$R$\+modules (in fact, of left $\R''$\+contramodules)
$\Lambda_{\bG''}(M)\rarrow\Lambda_{\bG'}(M)$.

 Furthermore, one easily observes that $\Ext^*_R(E,C)=0$ for every
left $U'$\+module $E$ and every $u'$\+contramodule left $R$\+module~$C$.
 In particular, this holds for $E=U''$; hence $R\modl_{u'\ctra}\subset
R\modl_{u''\ctra}$ (cf.~\cite[Lemma~1.2]{PMat}
and~\cite[Lemma~1.1(2)]{BP}).
 Hence, for every left $R$\+module $M$, there is a unique left
$R$\+module morphism $\Delta_{u''}(M)\rarrow\Delta_{u'}(M)$ forming
a commutative triangle diagram with the morphisms
$\delta_{u'',M}\:M\rarrow\Delta_{u''}(M)$ and
$\delta_{u',M}\:M\rarrow\Delta_{u'}(M)$.
 This map $\Delta_{u''}(M)\rarrow\Delta_{u'}(M)$ is induced by
the morphism of complexes of $R$\+$R$\+bimodules $K_{R,U'}^\bu
\rarrow K_{R,U''}^\bu$.

 Finally, for any every left $R$\+module $M$ there is a commutative
diagram of left $R$\+module morphisms
\begin{equation} \label{delta-lambda-diagram}
\begin{diagram}
\node{\Delta_{u''}(M)}\arrow{e,t}{\beta_{u'',M}}\arrow{s}
\node{\Lambda_{\bG''}(M)} \arrow{s} \\
\node{\Delta_{u'}(M)}\arrow{e,t}{\beta_{u',M}}\node{\Lambda_{\bG'}(M)}
\end{diagram}
\end{equation}
 The square diagram is commutative, since $\Lambda_{\bG'}(M)\in
R\modl_{u''\ctra}$, so there exists a unique left $R$\+module
morphism $\Delta_{u''}(M)\rarrow\Lambda_{\bG'}(M)$ forming
a commutative triangle diagram with the maps $\delta_{u'',M}$
and~$\lambda_{\bG',M}$.

 Suppose that we are given a directed set $\Xi$ of right linear
topologies $\bH$ on an associative ring~$R$.
 Then their union $\bF=\bigcup_{\bH\in\Xi}\bH$ is also a right linear
topology (see Section~\ref{perfect-of-type-lambda-secn}).
 Denote by $\R^\bH$ and $\R^\bF$ the completions of the ring $R$
with respect to these topologies (viewed as complete, separated
topological rings in their projective limit topologies $\bfH$
and~$\bfF$).
 For any left $R$\+module $M$, one has a natural isomorphism of
left $R$\+modules (in fact, of left $\R^\bF$\+contramodules)
\begin{equation} \label{lambda-is-projlim-of-lambdas}
 \Lambda_\bF(M)=\varprojlim\nolimits_{\bH\in\Xi}\Lambda_\bH(M).
\end{equation}

\begin{lem} \label{projlim-spectral-sequence}
 Let\/ $(U_\upsilon)_{\upsilon\in\Upsilon}$ be a diagram of associative
rings indexed by a directed poset\/ $\Upsilon$ and commutative together
with associative ring homomorphisms $R\rarrow U_\upsilon$ given for
all\/ $\upsilon\in\Upsilon$.
 Set $U=\varinjlim_{\upsilon\in\Upsilon}U_\upsilon$, and let $M$ be
a left $R$\+module.
 Assume that for every\/ $\upsilon\in\Upsilon$ there exists\/
$\upsilon'\in\Upsilon$, \ $\upsilon\le\upsilon'$ such that all left
$R$\+module morphisms $U_{\upsilon'}/R\rarrow M$ vanish.
 Then the natural morphism of left $R$\+modules
$$
 \Ext^1_R(K^\bu_{R,U},M)\lrarrow
 \varprojlim\nolimits_{\upsilon\in\Upsilon}
 \Ext^1_R(K^\bu_{R,U_\upsilon},M)
$$
is an isomorphism.
\end{lem}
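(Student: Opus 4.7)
The natural map in the lemma is the edge morphism at bidegree $(0,1)$ of a spectral sequence for a derived inverse limit, and the essential input is the pro-vanishing of the inverse system $\{\Hom_R(U_\upsilon/R,M)\}$. The plan is as follows. Since $K^\bu_{R,-}$ commutes with filtered colimits of bimodules, one has $K^\bu_{R,U}=\varinjlim_{\upsilon\in\Upsilon}K^\bu_{R,U_\upsilon}$ in the category of complexes of $R$\+$R$\+bimodules. Fixing an injective resolution $M\to I^\bu$ in $R\modl$, the complex $\Hom^\bu_R(K^\bu_{R,U_\upsilon},I^\bu)$ has cohomology $\Ext^*_R(K^\bu_{R,U_\upsilon},M)$, and its termwise inverse limit $\Hom^\bu_R(K^\bu_{R,U},I^\bu)$ has cohomology $\Ext^*_R(K^\bu_{R,U},M)$. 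The associated first-quadrant hypercohomology spectral sequence takes the form
$$E_2^{p,q}=\varprojlim\nolimits^p_{\upsilon\in\Upsilon}\Ext^q_R(K^\bu_{R,U_\upsilon},M)\Longrightarrow\Ext^{p+q}_R(K^\bu_{R,U},M),$$
and the natural map of the lemma is precisely its edge morphism at $(0,1)$.

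To identify the bottom row of $E_2$: since $K^\bu_{R,U_\upsilon}$ is concentrated in cohomological degrees $-1$ and $0$, we have $\Ext^q_R(K^\bu_{R,U_\upsilon},M)=0$ for $q<0$, so $E_2^{p,q}=0$ for $q<0$. The initial segment of the five-term exact sequence~\eqref{main-five-term-sequence} (applied to $u_\upsilon\colon R\to U_\upsilon$ in place of~$u$) identifies $\Ext^0_R(K^\bu_{R,U_\upsilon},M)=\Hom_R(\coker u_\upsilon,M)=\Hom_R(U_\upsilon/R,M)$.

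The crux of the argument is showing that $\varprojlim^p_\upsilon\Hom_R(U_\upsilon/R,M)=0$ for all $p\ge 0$. The hypothesis makes this inverse system \emph{pro-zero}: the subposet $\Upsilon':=\{\upsilon\in\Upsilon\mid\Hom_R(U_\upsilon/R,M)=0\}$ is cofinal in $\Upsilon$, and it is itself directed (given $\upsilon_1',\upsilon_2'\in\Upsilon'$, pick $\upsilon_3\in\Upsilon$ dominating both using directedness of $\Upsilon$, then use the hypothesis to produce $\upsilon_4\ge\upsilon_3$ with $\upsilon_4\in\Upsilon'$). By the cofinality invariance of derived inverse limit functors for directed systems, $\varprojlim^p_\Upsilon\Hom_R(U_\upsilon/R,M)=\varprojlim^p_{\Upsilon'}0=0$ for every $p\ge 0$. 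Establishing this cofinality invariance for higher $\varprojlim^p$ over possibly uncountable directed posets is the main technical obstacle.

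Finally, I will verify that the edge map at $(0,1)$ is an isomorphism. In the filtration $\Ext^1_R(K^\bu_{R,U},M)=F^0\supseteq F^1\supseteq\dotsb$ with $F^p/F^{p+1}=E_\infty^{p,1-p}$, the quotient vanishes for $p\ge 2$ since $1-p<0$ forces $E_2^{p,1-p}=0$, and $F^1=E_\infty^{1,0}\subseteq E_2^{1,0}=0$ by the pro-vanishing. Hence $\Ext^1_R(K^\bu_{R,U},M)=E_\infty^{0,1}$. Dually, the outgoing differentials $d_r\colon E_r^{0,1}\to E_r^{r,2-r}$ all vanish: at $r=2$ one has $E_2^{2,0}=0$, and for $r\ge 3$ one has $E_2^{r,2-r}=0$ since $2-r<0$. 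Therefore $E_\infty^{0,1}=E_2^{0,1}=\varprojlim_\upsilon\Ext^1_R(K^\bu_{R,U_\upsilon},M)$, completing the proof.
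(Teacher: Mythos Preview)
Your proof is correct and follows essentially the same approach as the paper: both set up the first-quadrant spectral sequence $E_2^{p,q}=\varprojlim^p_\upsilon\Ext^q_R(K^\bu_{R,U_\upsilon},M)\Rightarrow\Ext^{p+q}_R(K^\bu_{R,U},M)$, observe that the hypothesis makes the inverse system $\{\Hom_R(U_\upsilon/R,M)\}$ pro-zero (so $E_2^{p,0}=0$ for all~$p$), and deduce that the edge map $E_\infty^1\to E_2^{0,1}$ is an isomorphism. The only cosmetic difference is that the paper packages the last step via the low-degree five-term exact sequence $0\to E_2^{1,0}\to E_\infty^1\to E_2^{0,1}\to E_2^{2,0}\to E_\infty^2$, whereas you unwind the filtration and differentials directly; your concern about cofinality invariance of $\varprojlim^p$ is legitimate but this is standard (derived limits over a directed poset agree with those over any cofinal directed subposet).
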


\begin{proof}
 There is a spectral sequence of left $R$\+modules
$$
 E_2^{p,q}=\varprojlim\nolimits_{\upsilon\in\Upsilon}^p
 \Ext_R^q(K^\bu_{R,U_\upsilon},M)
 \Longrightarrow E_\infty^n=\Ext_R^n(K^\bu_{R,U},M)
$$
with the differentials $d_r^{p,q}\:E_r^{p,q}\rarrow E_r^{p+r,q-r+1}$
and the limit term $E_\infty^{p,q}=\mathrm{gr}^pE_\infty^{p+q}$.
 One has $E_2^{p,q}=0$ whenever $p<0$ or $q<0$, so in low degrees
this spectral sequence reduces to an exact sequence
$$
 0\lrarrow E_2^{1,0}\lrarrow E_\infty^1\lrarrow E_2^{0,1}
 \lrarrow E_2^{2,0}\lrarrow E_\infty^2.
$$
 Now the assumption that $\Ext_R^0(K_{R,U_{\upsilon'}}^\bu,M)=
\Hom_R(U_{\upsilon'}/R,M)=0$ for a cofinal subset
$\Upsilon'\subset\Upsilon$ of indices~$\upsilon'$ implies
$E_2^{p,0}=0$ for all $p\in\boZ$.
 Hence the map $E_\infty^1\rarrow E_2^{0,1}$ is an isomorphism,
as desired.
\end{proof}

 The following theorem, generalizing
Corollary~\ref{countable-Delta=Lambda}, is the main result
of this section.

\begin{thm} \label{uncountable-Delta=Lambda-theorem}
 Let $u\:R\rarrow U$ be an epimorphism of associative rings such that
$U$ is a flat left $R$\+module of projective dimension at most\/~$1$.
 Let\/ $\bG$ be the related perfect Gabriel topology on $R$; assume
that\/ $\bG$ satisfies the condition~(T$_\omega$) of
Section~\ref{perfect-of-type-lambda-secn} and the right $R$\+module
$R$ has\/ $\omega$\+bounded\/ $\bG$\+torsion.
 Let $M$ be a $u$\+torsion-free left $R$\+module.
 Then\/ $\beta_{u,M}\:\Delta_u(M)\rarrow\Lambda_\bG(M)$ and\/
$\theta_{\bG,M}\:\Lambda_\bG(M)\rarrow\Delta_u(M)$ are mutually inverse
isomorphisms of left $R$\+modules,
$$
 \beta_{u,M}\:\Delta_u(M)\simeq\Lambda_\bG(M):\!\theta_{\bG,M}.
$$
\end{thm}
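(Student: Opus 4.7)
The plan is to reduce to the countable-type case proved in Corollary~\ref{countable-Delta=Lambda} by decomposing $\bG$ as a directed union of perfect Gabriel topologies with countable bases and passing to the projective limit on both sides. Under our hypotheses that $\bG$ satisfies~(T$_\omega$) and $R$ has $\omega$-bounded $\bG$-torsion, Corollary~\ref{perfect-gabriel-lambda-directed-cor} applied with $\lambda = \omega$ furnishes an $\omega^+$-directed set $\Upsilon$ of perfect Gabriel topologies $\bP \subset \bG$, each with a countable base, such that $\bG = \bigcup_{\bP \in \Upsilon} \bP$ and $U = \varinjlim_{\bP \in \Upsilon} U_\bP$ with $U_\bP := R_\bP$. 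Each $u_\bP\:R \to U_\bP$ is a flat ring epimorphism with $\pd{}_R U_\bP \le 1$ by Theorem~\ref{projdim-1-theorem}. The $u$-torsion-freeness of $M$ transfers to each $\bP$: the factorization $M \to U_\bP \ot_R M \to U \ot_R M$ is the given injection $M \hookrightarrow U \ot_R M$, so the first arrow is injective. Hence Corollary~\ref{countable-Delta=Lambda} applies for each $\bP$, yielding mutually inverse isomorphisms $\beta_{u_\bP, M}$ and $\theta_{\bP, M}$ between $\Delta_{u_\bP}(M)$ and $\Lambda_\bP(M)$.

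Next I identify both sides as projective limits over $\Upsilon$. The identification $\Lambda_\bG(M) = \varprojlim_{\bP \in \Upsilon} \Lambda_\bP(M)$ is a direct instance of~\eqref{lambda-is-projlim-of-lambdas}. For the $\Delta$-side, I apply Lemma~\ref{projlim-spectral-sequence} to the diagram $(U_\bP)_{\bP \in \Upsilon}$ with direct limit~$U$: the needed hypothesis reduces to the vanishing of $\Hom_R(U_\bP/R,M)$ for each $\bP \in \Upsilon$, which holds because $U_\bP/R$ is $\bP$-torsion and hence $\bG$-torsion (so $U \ot_R (U_\bP/R) = 0$), combined with the injection $M \hookrightarrow U \ot_R M$ into a $U$-module: explicitly, $\Hom_R(U_\bP/R, M) \hookrightarrow \Hom_R(U_\bP/R,\,U\ot_R M) = \Hom_U(U\ot_R(U_\bP/R),\,U\ot_R M) = 0$. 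This yields $\Delta_u(M) = \varprojlim_{\bP \in \Upsilon} \Delta_{u_\bP}(M)$.

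The commutative squares~\eqref{delta-lambda-diagram} (applied with $(u',u'') = (u_\bP, u_{\bP'})$ for $\bP \subset \bP'$ in $\Upsilon$) show that the $\beta_{u_\bP, M}$ form a compatible family for the transition maps of the two projective systems, while applying~\eqref{delta-lambda-diagram} with $(u',u'') = (u_\bP, u)$ together with the uniqueness in Lemma~\ref{reflector-delta} identifies $\beta_{u, M}$ with the projective limit $\varprojlim_\bP \beta_{u_\bP, M}$ under the above identifications. Since each $\beta_{u_\bP, M}$ is an isomorphism, so is $\beta_{u, M}$. Finally, Lemma~\ref{xi-is-identity} (whose proof does not depend on countability) gives $\theta_{\bG, M} \beta_{u, M} = \id$, so $\theta_{\bG, M}$ must equal $\beta_{u, M}^{-1}$ and is the desired two-sided inverse. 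The main technical point is the verification of the hypothesis of Lemma~\ref{projlim-spectral-sequence}; once this is in place, the rest is a formal assembly of the countable-type case via the directed union decomposition.
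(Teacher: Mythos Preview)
Your proof is correct and follows essentially the same approach as the paper's own argument: decompose $\bG$ as a directed union of countable-type perfect Gabriel topologies via Corollary~\ref{perfect-gabriel-lambda-directed-cor}, identify both $\Delta_u(M)$ and $\Lambda_\bG(M)$ as projective limits over $\Upsilon$ (using Lemma~\ref{projlim-spectral-sequence} and~\eqref{lambda-is-projlim-of-lambdas} respectively), invoke Corollary~\ref{countable-Delta=Lambda} termwise, and finish with Lemma~\ref{xi-is-identity}. The only cosmetic difference is that you justify $U\ot_R(U_\bP/R)=0$ by noting $U_\bP/R$ is $\bP$-torsion hence $\bG$-torsion, whereas the paper observes $U\ot_R U_\bP\simeq U$ directly from the ring epimorphism property; these are equivalent.
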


\begin{proof}
 We will prove that $\beta_{u,M}$ is an isomorphism.
 Let $\Upsilon$ denote the set of all perfect Gabriel topologies
$\bP\subset\bG$ having a countable base.
 According to Corollary~\ref{perfect-gabriel-lambda-directed-cor},
the set $\Upsilon$ is $\omega^+$\+directed by inclusion and we have
$\bG=\bigcup_{\bP\in\Upsilon}\bP$.
 For every perfect Gabriel topology $\bP\in\Upsilon$, denote by
$u_\bP\:R\rarrow U_\bP$ the related left flat ring epimorphism.
 Then Corollary~\ref{perfect-gabriel-lambda-directed-cor} also claims
that $U=\varinjlim_{\bP\in\Upsilon}U_\bP$.

 By Theorem~\ref{projdim-1-theorem}, for any $\bP\in\Upsilon$
the projective dimension of the left $R$\+module $U_\bP$ does
not exceed~$1$.
 So we have a commutative square diagram of left $R$\+module
morphisms (cf.~\eqref{delta-lambda-diagram})
$$
\begin{diagram}
\node{\Delta_u(M)}\arrow{e,t}{\beta_{u,M}}\arrow{s}
\node{\Lambda_\bG(M)} \arrow{s} \\ \node{\Delta_{u_\bP}(M)}
\arrow{e,t}{\beta_{u_\bP,M}}\node{\Lambda_\bP(M)}
\end{diagram}
$$
and a similar diagram for any two perfect Gabriel topologies
$\bP'\subset\bP''$ belonging to~$\Upsilon$.
 Passing to the projective limit, we obtain a diagram of
left $R$\+module morphisms
\begin{equation} \label{projlimit-comparison-diagram}
\begin{diagram}
\node{\Delta_u(M)}\arrow{e,t}{\beta_{u,M}}\arrow{s}
\node{\Lambda_\bG(M)}\arrow{s} \\
\node{\varprojlim\nolimits_{\bP\in\Upsilon}\Delta_{u_\bP}(M)}
\arrow{e}\node{\varprojlim\nolimits_{\bP\in\Upsilon}\Lambda_\bP(M)}
\end{diagram}
\end{equation}

 The natural map $M\rarrow U\ot_RM$, which is injective by assumption,
decomposes as $M\rarrow U_\bP\ot_RM\rarrow U\ot_RM$.
 So the map $M\rarrow U_\bP\ot_RM$ is injective as well.
 Hence the map $\beta_{u_\bP,M}\:\Delta_{u_\bP}(M)\rarrow\Lambda_\bP(M)$
is an isomorphism by Corollary~\ref{countable-Delta=Lambda}.
 Passing to the projective limit, we conclude that the lower
horizontal arrow $\varprojlim_{\bP\in\Upsilon}\Delta_{u_\bP}(M)
\rarrow\varprojlim_{\bP\in\Upsilon}\Lambda_\bP(M)$ in our
diagram~\eqref{projlimit-comparison-diagram} is an isomorphism.

 According to the discussion above in this section
(see~\eqref{lambda-is-projlim-of-lambdas}), the rightmost vertical map
$\Lambda_\bG(M)\rarrow\varprojlim_{\bP\in\Upsilon}\Lambda_\bP(M)$ is
an isomorphism, since $\bG=\bigcup_{\bP\in\Upsilon}\bP$.
 To prove that the leftmost vertical map $\Delta_u(M)\rarrow
\varprojlim_{\bP\in\Upsilon}\Delta_{u_\bP}(M)$ is an isomorphism,
we will check that the assumptions of
Lemma~\ref{projlim-spectral-sequence} hold for the diagram of
associative ring homomorphisms $R\rarrow U_\bP$ indexed by
the poset~$\Upsilon$.

 Indeed, let us show that all left $R$\+module morphisms
$U_\bP/R\rarrow M$ vanish for all $\bP\in\Upsilon$.
 Since $R\rarrow U_\bP$ is a ring epimorphism, we have
$U\ot_RU_\bP\simeq U\ot_{U_\bP}U_\bP\simeq U$.
 In other words, applying the functor $U\ot_R{-}$ to the morphism
$R\rarrow U_\bP$ produces an isomorphism.
 Hence $U\ot_R(U_\bP/R)=0$ and therefore
$$
 \Hom_R(U_\bP/R,M)\subset\Hom_R(U_\bP/R,\>U\ot_RM)=
 \Hom_U(U\ot_R(U_\bP/R),\>U\ot_RM)=0.
$$
 By Lemma~\ref{projlim-spectral-sequence}, the map $\Delta_u(M)
\rarrow\varprojlim_{\bP\in\Upsilon}\Delta_{u_\bP}(M)$ is an isomorphism.

 It follows that the upper horizontal arrow $\beta_{u,M}\:\Delta_u(M)
\rarrow\Lambda_\bG(M)$ in~\eqref{projlimit-comparison-diagram}
is an isomorphism, too, as desired.
 Concerning the map $\theta_{\bG,M}\:\Lambda_\bG(M)\rarrow\Delta_u(M)$,
we know from Lemma~\ref{xi-is-identity} that the composition
$\xi=\theta\beta$ is the identity map.
 Hence $\theta=\beta^{-1}$ is the inverse isomorphism.
\end{proof}

\begin{rem}
 Dropping the assumption that $\pd{}_RU\le1$ in
Theorem~\ref{uncountable-Delta=Lambda-theorem}, one can still prove
existence of a natural isomorphism of left $R$\+modules
$\Delta_u(M)\simeq\Lambda_\bG(M)$ forming a commutative triangle
diagram with the morphisms $\delta_{u,M}$ and~$\lambda_{\bG,M}$.
 The argument works in the same way as above, except that
the map~$\beta_{u,M}$ is not defined from the outset (but
the maps~$\beta_{u_\bP,M}$ are).
 So the diagram~\eqref{projlimit-comparison-diagram} takes the form
$$ 
\begin{diagram}
\node{\Delta_u(M)}\arrow{s} \node{\Lambda_\bG(M)}\arrow{s} \\
\node{\varprojlim\nolimits_{\bP\in\Upsilon}\Delta_{u_\bP}(M)}
\arrow{e}\node{\varprojlim\nolimits_{\bP\in\Upsilon}\Lambda_\bP(M)}
\end{diagram}
$$
 One shows that the leftmost vertical, righmost vertical, and lower
horizontal arrows are isomorphisms, and deduces the existence of
an upper horizontal isomorphism.
\end{rem}

\Section{Faithful Perfect Gabriel Topologies}
\label{faithful-perfect-secn}

 According to Theorem~\ref{injective-epi-u-contra=R-contra}, if $\bG$
is a faithful perfect Gabriel topology with a countable base on
an associative ring $R$ and $R\rarrow U$ is the related injective
left flat ring epimorphism, then the Geigle--Lenzing abelian
perpendicular subcategory $U^{\perp_{0,1}}\subset R\modl$ coincides with
the abelian full subcategory of left $\R$\+contramodules
$\R\contra\subset R\modl$.
 The aim of this section is to replace the assumption that $\bG$
has a countable base with the weaker assumption that $U$ is a left
$R$\+module of projective dimension~$1$ in this result
(cf.\ Theorem~\ref{projdim-1-theorem}).
 The argument is based on the results of
Section~\ref{when-Delta=Lambda-secn} and~\cite[Proposition~2.1]{Pper}.
 We will also give a second, alternative proof of
Theorem~\ref{injective-epi-u-contra=R-contra}, as
promised in Section~\ref{projdim-1-secn}.

 Now let us return to the context (of the formulation) of
Proposition~\ref{uncountable-orthogonality}.
 Let $u\:R\rarrow U$ be an epimorphism of associative rings such that
$U$ is a flat left $R$\+module of projective dimension at most~$1$,
let $\bG$ be the related Gabriel topology of right ideals in $R$,
and let $\R$ be the completion of $R$ with respect to~$\bG$, viewed
as a topological ring in the projective limit topology~$\bfG$.

 Then, as a particular case of the construction of
the map~$\beta_{u,M}$ in Section~\ref{when-Delta=Lambda-secn}
(cf.\ Remark~\ref{free-module-beta}),
we have a unique left $R$\+module morphism
\begin{equation} \label{delta-free-contra-compare}
 \beta_{u,X}=\beta_{u,R[X]}\:\Delta_u(R[X])\lrarrow\R[[X]]
\end{equation}
forming a commutative triangle diagram with the left
$R$\+module morphisms $\lambda_{\bG,R[X]}\:R[X]\rarrow\R[[X]]$ and
$\delta_{u,R[X]}\:R[X]\rarrow\Delta_u(R[X])$.  {\hbadness=1400\par}

\begin{lem} \label{contramodule-categories-comparison-criterion}
 Let $u\:R\rarrow U$ be an associative ring epimorphism such that
$U$ is a flat left $R$\+module of projective dimension
not exceeding\/~$1$, let\/ $\bG$ be the related perfect Gabriel
topology of right ideals in $R$, and let\/ $\R$ be the completion
of $R$ with respect to\/~$\bG$.
 Then the exact functor\/ $\R\contra\rarrow R\modl_{u\ctra}$, as
defined in Proposition~\ref{uncountable-orthogonality}, is 
an equivalence of categories if and only if
the map~$\beta_{u,X}$~\eqref{delta-free-contra-compare}
is an isomorphism for every set~$X$.
\end{lem}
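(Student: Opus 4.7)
The plan is to prove the two implications separately, with the ``if'' direction carrying the substantive content. For the ``only if'' direction I intend to argue by Yoneda: if $F\:\R\contra\rarrow R\modl_{u\ctra}$ is an equivalence, then $F(\R[[X]])$ inherits from $\R[[X]]$ the universal property of representing the functor $C\longmapsto C^X$ on $R\modl_{u\ctra}$. Since $\Delta_u(R[X])$ represents the same functor by Lemma~\ref{reflector-delta}, the Yoneda lemma produces a canonical isomorphism $\Delta_u(R[X])\simeq F(\R[[X]])$; tracing its construction through the units of the two adjunctions (both of which factor through the ``point measure'' map $X\rarrow\R[[X]]$) identifies it with $\beta_{u,X}$.

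For the ``if'' direction, assume $\beta_{u,X}\:\Delta_u(R[X])\rarrow\R[[X]]$ is an isomorphism for every set~$X$. The key consequence is that, for every $u$\+contramodule left $R$\+module $C$, both of the natural identifications
$$
 \Hom^\R(\R[[X]],C)\,=\,C^X\,=\,\Hom_R(\R[[X]],C)
$$
(the first when $C$ underlies an $\R$\+contramodule, the second using $\beta_{u,X}$ together with Lemma~\ref{reflector-delta}) are given by evaluation on the generators $X\subset\R[[X]]$. I would then establish essential surjectivity and full faithfulness of $F$ in turn.

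For essential surjectivity, given $C\in R\modl_{u\ctra}$, the universal property of $\R[[C]]\simeq\Delta_u(R[C])$ turns the canonical $R$\+module map $R[C]\rarrow C$ into a surjection $p\:\R[[C]]\rarrow C$ in $R\modl_{u\ctra}$. Letting $K=\ker p$ (which lies in $R\modl_{u\ctra}$ by exactness of the embedding) and repeating the construction with $K$ in place of $C$ yields an $R$\+module morphism $q\:\R[[K]]\rarrow\R[[C]]$ sitting in an exact sequence $\R[[K]]\rarrow\R[[C]]\rarrow C\rarrow 0$ in $R\modl$. Since $q$ is determined by its restriction to the generators $K\subset\R[[K]]$, it coincides with the underlying $R$\+module map of the unique $\R$\+contramodule morphism extending the same set map, so the cokernel of the latter in $\R\contra$ (transported into $R\modl$ by the exact forgetful functor) recovers $C$ with an $\R$\+contramodule structure. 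For full faithfulness, given $\C,\D\in\R\contra$ and a presentation $\R[[Y]]\rarrow\R[[X]]\rarrow\C\rarrow 0$ in $\R\contra$ (which remains exact in $R\modl$ by exactness of the forgetful functor), the displayed identifications present both $\Hom^\R(\C,\D)$ and $\Hom_R(F(\C),F(\D))$ as the kernel of the same induced map $\D^X\rarrow\D^Y$.

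The main obstacle throughout is the compatibility of the two a priori different identifications of $\Hom(\R[[X]],{-})$ with the $X$\+fold power functor: those arising from the freeness of $\R[[X]]$ in $\R\contra$ and from its being the reflection of $R[X]$ onto $R\modl_{u\ctra}$. This compatibility is precisely what the defining commutative triangle $\beta_{u,X}\circ\delta_{u,R[X]}=\lambda_{\bG,R[X]}$ encodes; once the bookkeeping is in place, both directions follow with no further work.
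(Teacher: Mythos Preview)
The proposal is correct. The paper does not give a self-contained proof but simply cites \cite[Proposition~2.1]{Pper}; your argument is essentially the direct proof of that proposition specialized to the present setting, comparing the two representing objects for the $X$-fold power functor and then propagating the comparison through free presentations, with the commutative triangle $\beta_{u,X}\circ\delta_{u,R[X]}=\lambda_{\bG,R[X]}$ supplying exactly the compatibility needed.
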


\begin{proof}
 This is a particular case of~\cite[Proposition~2.1]{Pper}.
\end{proof}

\begin{proof}[Second proof of
Theorem~\ref{injective-epi-u-contra=R-contra}]
 Let $u\:R\rarrow U$ be an injective ring epimorphism such that
$U$ is a flat left $R$\+module and the related faithful perfect
right Gabriel topology $\bG$ on $R$ has a countable base.
 Then, by Theorem~\ref{projdim-1-theorem}, one has $\pd{}_RU\le1$.
 Since $u$~is injective, the free left $R$\+module $R[X]$ is
$u$\+torsion-free for every set~$X$.
 Applying Corollary~\ref{countable-Delta=Lambda} for $M=R[X]$,
we conclude that the map~$\beta_{u,X}$ is an isomorphism.
 By Proposition~\ref{uncountable-orthogonality} and
Lemma~\ref{contramodule-categories-comparison-criterion}, it
follows that the forgetful functor $\R\contra\rarrow R\modl$
is fully faithful and its essential image coincides with
the full subcategory of $u$\+contramodule left $R$\+modules
$R\modl_{u\ctra}\subset R\modl$.
\end{proof}

 The following theorem is our uncountable generalization of
Theorem~\ref{injective-epi-u-contra=R-contra}.
 It is also a partial generalization
of~\cite[Examples~2.4(3) and~2.5(3)]{Pper}.

\begin{thm} \label{faithful-topology-theorem}
 Let $u\:R\rarrow U$ be an injective ring epimorphism such that
$U$ is a flat left $R$\+module of projective dimension
not exceeding\/~$1$.
 Let\/ $\bG$ be the related faithful perfect Gabriel topology
of right ideals in~$R$.
 Assume that\/ $\bG$ satisfies the condition (T$_\omega$) of
Section~\ref{perfect-of-type-lambda-secn} (e.~g., $\bG$ has a base
consisting of two-sided ideals; see Examples~\ref{T-lambda-examples}
for further cases when (T$_\omega$)~is satisfied).
 Let\/ $\R$ be the completion of $R$ with respect to the topology\/
$\bG$, viewed as a complete, separated topological ring in
the projective limit topology\/~$\bfG$.
 Then the left $R$\+module morphism~\eqref{delta-free-contra-compare}
$$
 \beta_{u,X}\:\Delta_u(R[X])\lrarrow\R[[X]]
$$
is an isomorphism for any set~$X$.
 Furthermore, the forgetful functor\/ $\R\contra\rarrow R\modl$ is
fully faithful, and its essential image coincides with the full
subcategory $R\modl_{u\ctra}\subset R\modl$, so there is an equivalence
of abelian categories
$$
 \R\contra\simeq R\modl_{u\ctra}.
$$
\end{thm}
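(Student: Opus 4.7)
The plan is to reduce this theorem to Theorem~\ref{uncountable-Delta=Lambda-theorem} applied to the free modules $M = R[X]$, and then invoke Lemma~\ref{contramodule-categories-comparison-criterion} for the equivalence of abelian categories. The hypotheses of Theorem~\ref{uncountable-Delta=Lambda-theorem} are set up precisely for this purpose: we need $\pd{}_RU\le1$ (given), (T$_\omega$) for $\bG$ (given), $\omega$-bounded $\bG$-torsion of the right $R$-module $R$, and $u$-torsion-freeness of $M = R[X]$.

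First, I would verify the torsion conditions. Since $\bG$ is faithful, $t_\bG(R)=0$, so the $\bG$-torsion of $R$ is trivially $\omega$-bounded (it is even $0$-bounded, taking $\bJ_0=\emptyset$ or $\bJ_0=\{R\}$). For $u$-torsion-freeness of $R[X]$: the map $R[X]\to U\otimes_R R[X]=U[X]$ is a direct sum of copies of $u\:R\rarrow U$, and since $u$~is injective by hypothesis, this map is injective. Hence $R[X]$ is $u$-torsion-free for every set~$X$.

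Next, I apply Theorem~\ref{uncountable-Delta=Lambda-theorem} with $M=R[X]$ to obtain that $\beta_{u,R[X]}\:\Delta_u(R[X])\rarrow\Lambda_\bG(R[X])$ is an isomorphism. As observed in Remark~\ref{free-module-beta}, the $\bG$-completion of a free left $R$-module is naturally identified with the free left $\R$-contramodule,
\[
 \Lambda_\bG(R[X])=\varprojlim\nolimits_{I\in\bG}R[X]/IR[X]=\R[[X]],
\]
and under this identification the map $\lambda_{\bG,R[X]}$ is the canonical map $R[X]\rarrow\R[[X]]$. Since the map $\beta_{u,X}$ of~\eqref{delta-free-contra-compare} is the unique left $R$-module morphism $\Delta_u(R[X])\rarrow\R[[X]]$ making the triangle with $\delta_{u,R[X]}$ and this canonical map commute, it coincides with $\beta_{u,R[X]}$ under the identification. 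Thus $\beta_{u,X}$ is an isomorphism for every set~$X$.

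Finally, to conclude, I combine Proposition~\ref{uncountable-orthogonality} and Lemma~\ref{contramodule-categories-comparison-criterion}. Proposition~\ref{uncountable-orthogonality} gives an exact functor $\R\contra\rarrow R\modl_{u\ctra}$, and Lemma~\ref{contramodule-categories-comparison-criterion} asserts that this functor is an equivalence of categories if and only if $\beta_{u,X}$ is an isomorphism for every set~$X$, which we have just established. It then follows that the forgetful functor $\R\contra\rarrow R\modl$ is fully faithful with essential image precisely $R\modl_{u\ctra}$, giving the desired equivalence $\R\contra\simeq R\modl_{u\ctra}$. The only real work in this proof is the preceding chain of results (notably Theorem~\ref{uncountable-Delta=Lambda-theorem} and the cited proposition/lemma); the remaining obstacle here is purely bookkeeping, namely checking that the hypotheses of Theorem~\ref{uncountable-Delta=Lambda-theorem} reduce correctly under the faithfulness assumption, which is straightforward.
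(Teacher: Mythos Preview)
Your proof is correct and follows essentially the same approach as the paper's own proof: verify that injectivity of~$u$ gives both $u$\+torsion-freeness of $R[X]$ and $\omega$\+bounded (in fact zero) $\bG$\+torsion of~$R$, apply Theorem~\ref{uncountable-Delta=Lambda-theorem} for $M=R[X]$, and then invoke Lemma~\ref{contramodule-categories-comparison-criterion}. The paper's proof is just a terser version of yours.
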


\begin{proof}
 Since $u$~is injective, the free left $R$\+module $R[X]$
is $u$\+torsion-free for every set~$X$, and the right $R$\+module $R$
has $\omega$\+bounded (in fact, zero) $\bG$\+torsion.
 Applying Theorem~\ref{uncountable-Delta=Lambda-theorem} for $M=R[X]$,
we obtain the first assertion of the theorem.
 The remaining assertions follow from the first one by virtue of 
Lemma~\ref{contramodule-categories-comparison-criterion}.
\end{proof}

\begin{rem} \label{two-topological-rings-remark}
 Let $u\:R\rarrow U$ be an injective ring epimorphism.
 Set $K=U/R$, so $K$ is an $R$\+$R$\+bimodule; and denote by
$\sS=\Hom_R(K,K)^\rop$ the opposite ring to the ring of endomorphisms
of the left $R$\+module~$K$.
 So the ring $\sS$ acts in $K$ on the right, making $K$
an $R$\+$\sS$\+bimodule; while the right action of $R$ in $K$
induces a ring homomorphism $R\rarrow\sS$.
 We endow the ring $\sS$ with the right linear topology $\bfF$ with
a base $\bfB$ formed by the annihilators of finitely generated
left $R$\+submodules in~$K$.
 Then $\sS$ is a complete, separated topological
ring~\cite[Theorem~7.1]{PS} and $K$ is a discrete right
$\sS$\+module~\cite[Lemma~7.5]{PS} (see also~\cite[Section~1.13]{BP}).
 The topological ring~$\sS$ is discussed at length
in~\cite[Sections~17 and~19]{BP} (where it is denoted by~$\R$).

 Assume that $U$ is a flat left $R$\+module.
 Let $\bG$ be the perfect Gabriel topology of right ideals in $R$
related to the left flat ring epimorphism~$u$, and let $\R$ be
the completion of $R$ with respect to $\bG$, viewed as a complete,
separated topological ring in its projective limit topology~$\bfG$.
 Then $U/R$ is a discrete right $R$\+module (since it is
a $\bG$\+torsion right $R$\+module, because $U/R\ot_RU=0$), and
consequently $U/R$ also has a discrete right $\R$\+module structure
(see Sections~\ref{prelim-discrete}\+-\ref{prelim-gabriel-topologies}).
 It follows that the right action of $\R$ in $U/R$ induces
a continuous homomorphism of topological rings $\R\rarrow\sS$.
 Hence for every set $X$ we have the induced map of sets $\R[[X]]
\rarrow\sS[[X]]$.
 In fact, we have a commutative triangle diagram of ring homomorphisms
$R\rarrow\R\rarrow\sS$; so the map $\R[[X]]\rarrow\sS[[X]]$ is
a left $R$\+module morphism.

 Now let us assume additionally that $U$ is a left $R$\+module of
projective dimension not exceeding~$1$.
 Then we also have the left $R$\+module morphism
$\beta_{u,X}\:\Delta_u(R[X])\rarrow\R[[X]]$.
 Every left $\sS$\+contramodule, and in particular $\sS[[X]]$, has
the underlying left $\R$\+contramodule structure.
 By Proposition~\ref{uncountable-orthogonality}, $\sS[[X]]$ is
a $u$\+contramodule left $R$\+module.
 Hence, by Lemma~\ref{reflector-delta}, there is a unique
left $R$\+module morphism $\Delta_u(R[X])\rarrow\sS[[X]]$ forming
a commutative triangle diagram with the map~$\beta_{u,X}$ and
the map $R[X]\rarrow\sS[[X]]$ induced by the ring homomorphism
$R\rarrow\sS$.
 The composition of our two maps $\Delta_u(R[X])\rarrow\R[[X]]
\rarrow\sS[[X]]$ has this diagram commutativity property.
 The isomorphism $\Delta_u(R[X])\simeq\sS[[X]]$ constructed
in~\cite[direct proof of Theorem~19.2]{BP} also has the same
commutativity property.
 Thus the composition $\Delta_u(R[X])\rarrow\R[[X]]\rarrow\sS[[X]]$
is an isomorphism.

 According to Theorem~\ref{faithful-topology-theorem},
the map~$\beta_{u,X}$ is an isomorphism provided that the Gabriel
topology $\bG$ on $R$ satisfies the condition~(T$_\omega$).
 Then it follows that the map $\R[[X]]\rarrow\sS[[X]]$ is bijective
for every set~$X$.
 In particular, the associative ring homomorphism $\R\rarrow\sS$
is an isomorphism.
 It still does \emph{not} seem to follow from anything that it is
an isomorphism \emph{of topological rings} (i.~e., that
the topologies $\bfG$ and $\bfF$ on $\R=\sS$ are the same);
but it is a bijective continuous ring homomorphism
(so $\bfF\subset\bfG$) inducing a bijective map $\R[[X]]\rarrow
\sS[[X]]$ for every set~$X$.
\end{rem}

\bigskip

\end{document}